\documentclass[english,11pt]{smfartVScomptage}

\usepackage{smfenum,smfthm, amsmath,amssymb,amscd,mathrsfs,euscript,color}
\usepackage{stmaryrd,mathabx,upgreek} 
\usepackage[latin1]{inputenc}
\usepackage[T1]{fontenc}
\usepackage{xcolor}
\input{xypic}

\numberwithin{equation}{section} 
\def\AA{\mathfrak{A}}

\def\CC{\mathbb{C}}

\def\QQ{\mathbb{Q}}

\def\D{{\rm D}}

\def\F{{ F}}
\def\G{{ G}}
\def\H{{ H}}
\def\I{{ I}}
\def\J{{\rm J}}

\def\L{{\rm L}}

\def\N{{ N}}
\def\O{{\rm O}}
\def\P{{ P}}

\def\R{{ R}}

\def\U{{ U}}

\def\W{{ W}}
\def\X{{ X}}

\def\Cc{\EuScript{C}}

\def\Ii{I}

\def\Ww{\EuScript{W}}

\def\Ga{\Gamma}
\def\La{{\it\Lambda}}
\def\Lat{\La^*}

\def\a{\alpha} 

\def\d{\delta}
\def\e{\varepsilon}

\def\g{\gamma}

\def\k{\boldsymbol{k}}
\def\l{\lambda}

\def\p{\mathfrak{p}}
\def\r{{\textbf{\textsf{r}}}}
\def\s{\sigma}
\def\t{\theta}

\def\w{\varpi}

\def\kk{\boldsymbol{k}}

\def\({\left(}
\def\){\right)}
\def\>{\geqslant}
\def\<{\leqslant}

\def\tdt{\times\cdots\times}

\def\Hom{\operatorname{Hom}}
\def\End{\operatorname{End}}

\def\Mat{\operatorname{M}}
\def\GL{\operatorname{GL}}
\def\Gal{\operatorname{Gal}}

\def\Ind{\operatorname{Ind}}

\def\dim{\operatorname{dim}}

\def\st{\operatorname{sp}}
\def\St{\operatorname{Sp}}

\def\qlb{{\overline{\mathbb{Q}}_\ell}}
\def\zlb{{\overline{\mathbb{Z}}_\ell}}
\def\flb{{\overline{\mathbb{F}}_\ell}}

\def\oo{\EuScript{O}}
\def\pp{\mathfrak{p}}

\def\bl{\boldsymbol{\lambda}}

\def\det{\mathrm{det}}

\def\rl{\textbf{{\textsf{r}}}_\ell}
\def\1{\mathbf{1}}

\def\rt{\widetilde{\rho}}

\def\FF{\mathbb{F}}
\def\WW{\EuScript{W}}

\def\qe{q}

\def\ke{\boldsymbol{k}}
\def\kf{\boldsymbol{k}_{0}}

\def\et{e}

\def\ZZ{\mathbb{Z}}

\def\Ker{{\rm Ker}}

\def\Mat{\boldsymbol{{\sf M}}}

\def\Cc{\EuScript{C}}

\def\RS{}

\def\Be{\J}
\def\coef{f}
\def\cen{\omega}
\def\cla{\eta}
\def\row{\eta}
\def\ww{w}
\def\wn{w_{n}}

\def\Vi{{\sf V}}
\def\Wi{{\sf W}}

\title[Cuspidal 
representations of $\GL_n(F)$ distinguished by a Galois involution] 
{Cuspidal $\ell$-modular representations of $\GL_n(F)$ distinguished
by a Galois involution, II} 
\author{Robert Kurinczuk}
\address{School of Mathematics and Statistics, University of Sheffield,
Sheffield, S3 7RH, United Kingdom}
\email{robkurinczuk@gmail.com}
\author{Nadir Matringe}
\address{Shanghai Institute for Mathematics and Interdisciplinary Sciences, 
  Block A,
  Internatio\-nal Innovation Plaza,
  N°657 Songhu Road,
  Yangpu District,
  Shanghai,
  China}
\email{nadirmatringe@outlook.fr}
\author{Vincent S\'echerre} 
\address{Laboratoire de Math\'emati\-ques de Versailles\\
UVSQ\\
CNRS\\
Universit\'e Paris-Saclay\\
78035, Versailles, France}
\email{vincent.secherre@uvsq.fr}

\begin{abstract} 
Let $\F/\F_0$ be a quadratic extension of non-Archimedean locally compact 
fields with residual characteristic $p\neq2$, 
and $\ell$ be a prime number different from $p$.
We classify those $\ell$-modular cuspidal irreducible representations of 
$\GL_n(F)$ which are $\GL_n(F_0)$-distinguished, 
that is,~which~car\-ry a non-zero $\GL_n(F_0)$-invariant linear form.
In the case when~$\ell\neq2$, 
an $\ell$-modular~cuspidal~repre\-sentation of $\GL_n(F)$ is 
$\GL_n(F_0)$-distinguished if and only if it lifts to a 
$\GL_n(F_0)$-distinguished~cus\-pi\-dal $\ell$-adic repre\-sentation,
whereas when $\ell=2$,
it is $\GL_n(F_0)$-distinguished if and only if it~is~con\-ju\-ga\-te-self-dual.
\end{abstract} 

\long\def\MSC#1\EndMSC{\def\arg{#1}\ifx\arg\empty\relax\else
     {\par\narrower\noindent%
     2010 Mathematics Subject Classification: #1\par}\fi}

\long\def\KEY#1\EndKEY{\def\arg{#1}\ifx\arg\empty\relax\else
	{\par\narrower\noindent Keywords and Phrases: #1\par}\fi}

\begin{document}

\maketitle

\MSC 22E50, 11F70, 20C20
% \subjclass[2010]{22E50; 11F70; 11F66}
% \hfill\textcolor{green}{\bf\today}
\EndMSC
\KEY 
Cuspidal representations,
Distinction,
Lifting,
Modular representations,
Rankin--Selberg gamma factors
\EndKEY

\section{Introduction}

\subsection{}

Let $\F/\F_0$ be a quadratic extension of non-Archimedean locally compact 
fields with residual cha\-racteristic $p\neq2$.
Fix a positive integer $n$ and set $\G=\GL_n(\F)$ and
$H=\GL_n(F_0)$.~Let~$\R$~be~an algebraically closed field of
characteristic $\ell>0$ different from $p$. 
In \cite{NRV25}, we addressed the~follow\-ing problem.

\begin{enonce}{Problem}
\label{mntcrst}
Classify the cuspidal, 
irreducible,
smooth $\R$-representations of $G$ which~are~dis\-tinguished by $H$,
that is,
which carry a non-zero $H$-invariant linear form.
\end{enonce}

In this article,
we give a complete ans\-wer to Problem \ref{mntcrst}.
The classification can be stated~in~ve\-ry simple terms. 
However,
the case where $\ell\neq2$ and the case~whe\-re~$\ell=2$
are com\-ple\-te\-ly different.
Let us explain what happens.

\subsection{}

Let us first briefly recall what happens for complex representations. 
Let $\s$~be~the non-trivial automorphism of $F/F_0$
and $\cla$ be the character of $F_0^\times$ with kernel 
the~sub\-group of $F/F_0$-norms~of $F_0^\times$.
One has the three following facts:

\medskip

(\textit{Conjugate-self-duality})
Any irreducible complex representation of $G$ distinguished by $H$
is~$\s$-self-dual,
that is,
its~con\-tra\-gre\-dient is isomorphic to its con\-jugate by $\s$ 
(\cite{Flicker} Proposition 12).

\medskip

(\textit{Dichotomy and Disjunction})
A $\s$-self-dual cuspidal complex irreducible representation $\pi$ of~$G$ is 
distinguished either by $H$ or by $\cla$
(the latter means that $\Hom_H(\pi,\cla\circ\det)$ is non-zero)
but~not both (\cite{Kable} Theorem~7, \cite{AKT} Corolla\-ry 1.6 if $F$ has
characteristic $0$,
and \cite{AKMSS} Theorem A.2, \cite{JoPJM23} if $F$ has 
characteristic~$p$\footnote{In characteristic $p$,
there is an issue in the proof of \cite{AKMSS} Theorem A.1. 
This is explained in \cite{JoPJM23} p.~303--304
and corrected by \cite{JoPJM23} Theorem 4.7.}). 

\medskip

(\textit{Parametrisation})
Cuspidal complex (irreducible) represen\-ta\-tions of $G$ distinguished by
$H$~are classified 
in terms of~their~Lang\-lands parameter (\cite{GanRag} Theorem 6.2):
they are the $\s$-self-dual~cus\-pi\-dal representations~whose~parame\-ter is 
conjugate-orthogonal in the sense of \cite{GGP2} Section 3.

\subsection{}
\label{P12}

Let us now consider representations with coefficients in $R$. 
For simplicity,
we will assume~in this introduction that $\R$ is an algebraic closure $\flb$ of
a finite field of cardinality~$\ell$.
First,
as~in the complex case,
any $H$-distinguished irreducible $\flb$-representation of $G$
is $\s$-self-dual~(\cite{VSANT19}~Theo\-rem 4.1).
Now let us focus on cuspidal representations.

Recall that over $\flb$,
the two notions of \textit{cuspidal} and \textit{supercuspidal}
representations do not~coin\-ci\-de~(\cite{Vigbook}),
contrary to the case of complex representations:
an irreducible representation of $G$ is cuspidal
(respectively, supercuspidal)
if it is not isomorphic to a subrepresentation
(respectively,~a subquotient)
of a representation induced from a proper parabolic subgroup of $G$.
Any super\-cus\-pi\-dal representation is cuspidal,
but the converse does not hold in general.

Let us also fix an algebraic closure $\qlb$ of the field $\QQ_\ell$
of $\ell$-adic numbers
and identify the~residue field of its ring of integers $\zlb$
with~$\flb$.~In this situation,
there is a notion of \textit{integral}~$\qlb$-representa\-tion
of $G$ and such a representation~can~be
\textit{reduced mod $\ell$}, 
which gives rise to a semi-simple~$\flb$-re\-presentation
of finite length of $G$
(see~\S\ref{defrlintro}~for~pre\-ci\-se definitions).
One then defines a $\qlb$-\textit{lift}~of~an irreducible
$\flb$-representation $\pi$ of $G$
as an~inte\-gral $\qlb$-represen\-ta\-tion of $G$ whose reduction
mod~$\ell$ is isomorphic to $\pi$.

Cuspidal representations of the group
$G$ behave well with respect to reduction and lifting:~the
reduction mod $\ell$ of an integral cuspidal $\qlb$-representation
is irreducible and cuspidal, 
and any~cus\-pidal $\flb$-representation has a $\qlb$-lift (which is
automatically cuspidal)
(\cite{Vigbook} III.1.1, III.5.10).~Re\-duc\-tion~mod $\ell$ thus~de\-fines 
a surjective map from 
isomorphism classes of integral cuspidal\! $\qlb$-re\-presentations of
$G$~to~iso\-morphism classes of cuspidal $\flb$-representations of $G$.

Moreover,
reduction mod $\ell$ of integral cuspidal $\qlb$-representations
preserves $H$-distinction:~the
reduction mod $\ell$ of an $H$-distinguished integral cuspidal
$\qlb$-representation is~$H$-dis\-tinguished~(\cite{KuMaAsai} Theorem 3.4). 
(Equivalently, 
any cuspidal $\flb$-representation of $G$
with~an~$H$-distinguished~$\qlb$-lift is $H$-distin\-guished.)
Conversely, 
any $H$-distinguished~supercus\-pi\-dal
representation~of~$G$~has~an~$H$-distinguished $\qlb$-lift (\cite{VSANT19}
Theorem 10.11, see~also
\cite{CLL}~Theo\-rem 3.4 for a more~general~sta\-te\-ment).
There is also a classifi\-ca\-tion of the non-supercuspidal,
cuspidal $\flb$-representations~of~$G$
ha\-ving~an $H$-dis\-tinguished $\qlb$-lift 
(\cite{NRV25} Propositions 6.1, 6.2).
We will come back to it in \S\ref{xxx}.

We show that,
surprisingly, 
assuming that $\ell\neq2$,
this Distingui\-shed Lift~Pro\-per\-ty of supercus\-pidal 
representations extends to all cuspidal representations.~Our first main theorem~is:

\begin{theo}
\label{MAINTHM}
Assume that $\ell\neq2$.
Then a cuspidal $\flb$-representation of $G$ is
$H$-distinguished if and only if it has an $H$-distinguished $\qlb$-lift. 
\end{theo}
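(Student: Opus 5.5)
One direction is already known: by \cite{KuMaAsai} Theorem 3.4, the reduction mod $\ell$ of an $H$-distinguished integral cuspidal $\qlb$-representation is $H$-distinguished. So only the converse needs proof. Let $\pi$ be an $H$-distinguished cuspidal $\flb$-representation of $G$. If $\pi$ is supercuspidal, \cite{VSANT19} Theorem 10.11 gives an $H$-distinguished lift. We may therefore assume that $\pi$ is cuspidal but not supercuspidal.

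The plan is to reduce to the classification of \cite{NRV25} Propositions 6.1 and 6.2. Those results describe exactly which non-supercuspidal cuspidal $\flb$-representations have an $H$-distinguished $\qlb$-lift. The task is then to show that $H$-distinction of $\pi$ forces $\pi$ into that list.

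First I would recall the type-theoretic description of $\pi$. It is compactly induced from an extended maximal simple type $(\boldsymbol{J},\boldsymbol{\Lambda})$ with $\boldsymbol{\Lambda}=\boldsymbol{\kappa}\otimes\boldsymbol{\rho}$. Here $\boldsymbol{\rho}$ is inflated from a cuspidal, non-supercuspidal representation $\rho$ of $\GL_m(\boldsymbol{k})$, where $\boldsymbol{k}$ is the residue field of $E/F$. By Vignéras, such a $\rho$ is a reduction of a Deligne--Lusztig cuspidal attached to a character whose $\ell$-regular part has smaller degree. Since $\pi$ is distinguished, it is $\sigma$-self-dual by \cite{VSANT19} Theorem 4.1. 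The type can then be chosen $\sigma$-self-dual, as in \cite{VSANT19}.

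Second, I would use the Mackey formula for $\Hom_H(\operatorname{ind}_{\boldsymbol J}^G\boldsymbol\Lambda,\1)$. By \cite{VSANT19}, only the double cosets $\boldsymbol J gH$ with $\sigma$-stable conjugate type contribute, and up to a known twist each contribution reduces to a finite-field distinction problem. That problem is the distinction of $\rho$, or of its $\sigma$-twist, by a unitary group or by $\GL_m(\boldsymbol{k}_0)$. This is the usual dichotomy between $E/E_0$ unramified and ramified. The twist comes from the character $\chi$ such that $\boldsymbol\kappa^\vee$ relates to $\boldsymbol\kappa^\sigma$ restricted to $\boldsymbol J\cap H$.

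Third, I would classify the $\ell$-modular cuspidal non-supercuspidal $\rho$ of $\GL_m$ over a finite field that are distinguished by these subgroups. This is where $\ell\neq2$ is used. Distinction by a finite group of order possibly divisible by $\ell$ can occur for non-distinguished lifts, so I would compute via Green-function and Deligne--Lusztig character formulas together with Brauer characters. The aim is to show that, for $\ell$ odd, $\rho$ is distinguished exactly when it has a distinguished $\ell$-adic cuspidal lift: one adjusts the $\ell$-part of the Deligne--Lusztig character so that it becomes $\sigma$-symmetric in the required orthogonal or unitary sense. Lifting back through the type then produces an $H$-distinguished $\qlb$-lift, and this matches the conditions of \cite{NRV25} Propositions 6.1 and 6.2.

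The main obstacle is the finite-field step. A relatively projective argument does not apply, because invariants are not exact when $\ell$ divides $|H\cap\boldsymbol J|$. If the character computation becomes intractable, I would instead try Rankin--Selberg gamma factors of pairs (Asai), reduced mod $\ell$. The idea would be to show that distinction forces a pole or special value property that lifts, using that $\ell\neq2$ separates $\eta$ from the trivial character mod $\ell$, and hence separates the two alternatives in the dichotomy.
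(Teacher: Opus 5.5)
Your first two steps are sound and roughly match what the paper imports from \cite{NRV25}: a reduction to level $0$ over a tame extension, and a Mackey-type result relating distinction of a level $0$ cuspidal $\pi$ to distinction of its type $\overline{\pi}$. For this involution the relevant finite subgroup is $\GL_n(\boldsymbol{k}_0)$, or, when $F/F_0$ is ramified, the Levi subgroup $\GL_{n/2}(\boldsymbol{k})\times\GL_{n/2}(\boldsymbol{k})$; unitary groups do not occur. The gap is your third step. It is the entire content of the theorem, and it is not supplied.

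What must be shown is that in the ``bad'' cases the $\sigma$-self-dual representation $\St_r(\rho)$ is \emph{not} distinguished. Examples are $F/F_0$ unramified with $\rho$ distinguished but $e_0$ odd or $e$ even, or $F/F_0$ ramified with $k$ even and $v_2(k)\neq v_2(e)$. Character theory cannot decide this. When $\ell$ divides $|H|$, $\dim\Hom_H(\pi,1)$ is not a function of the Brauer character. For instance, with $H=\mathbb{Z}/\ell\mathbb{Z}$, the regular representation and $\ell$ copies of the trivial character have the same Brauer character, yet their spaces of invariant forms have dimensions $1$ and $\ell$. So Green functions and Deligne--Lusztig formulas only see composition factors.

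Moreover, in exactly the bad cases $\pi$ has no $\sigma$-self-dual lift at all (Remark~\ref{SSDLT}). So ``making the $\ell$-part $\sigma$-symmetric'' is impossible there, and your Asai fallback has no distinguished versus $\eta$-distinguished alternative among lifts to separate. Finally, since the statement fails for $\ell=2$ (Remark~\ref{soitditenpassant}), any correct argument must use $\ell\neq2$ essentially, and your plan never says where it does.

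The missing idea is the paper's computation of one gamma factor in two ways, inside a proof by contradiction. Suppose $\pi=\St_r(\rho)$ of level $0$ is distinguished but has no distinguished lift.

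\emph{First computation.} Lift $\rho$ to $\widetilde{\rho}$ and use multiplicativity, the explicit factors $L(X,\widetilde{\rho}\nu^{s},\widetilde{\rho}^\vee)$, and $\varepsilon(q^{-1/2},\widetilde{\rho},\widetilde{\rho}^\vee,\psi)=\omega_{\widetilde{\rho}}(-1)^{k-1}$. This gives $\gamma(q^{-1/2},\pi,\rho^\vee,\psi)=\omega_\pi(-1)^{k-1}(-1)^rq^{n/2}$. The arithmetic conditions defining the bad cases, read off from \cite{NRV25} Propositions 6.1 and 6.2, force this to equal $-1$ (Proposition~\ref{gammais1}).

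\emph{Second computation.} For $\psi$ of conductor $1$, the same factor equals the finite-field factor $\gamma(\overline{\pi},\overline{\rho}^\vee,\psi)$. An $\ell$-modular version of Ok's argument then gives $1$, in these steps:
\begin{enumerate}
\item Multiplicity one for $P\cap H$ makes two explicit invariant forms $\Lambda,\Lambda^*$ on the Whittaker model proportional, with constant $c(\pi,\psi)$.
\item The functional equation then gives $\gamma(\pi,\pi',\psi)=c(\pi,\psi)$ for every special $\pi'$ of $G_{n-1}$.
\item The class $\mathcal{C}(H)$ contains distinguished cuspidals and is stable under induction, so $\overline{\rho}^\vee\times1^{n-1-k}$ and $1^{n-1}$ are special.
\item Multiplicativity then yields $\gamma(\overline{\pi},\overline{\rho}^\vee,\psi)=\gamma(\overline{\pi},\psi)^k=1$, by the known Godement--Jacquet value. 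That value is $1$, or $\mathrm{sgn}(\overline{\pi})$ with $k$ even in the Levi case (Propositions~\ref{Lemma1RSconstant} and~\ref{minelli4}).
\end{enumerate}

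Since $1\neq-1$ when $\ell\neq2$, this is a contradiction. The finite-field classification you were aiming for is obtained in the paper only as a corollary of the main theorem, not as an input to it.
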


Together with our classification of the cuspidal $\flb$-representations of $G$
having an $H$-distingui\-shed $\qlb$-lift, 
this thus gives a complete classification of all cuspidal
$\flb$-representation of $G$ distin\-guished by $H$
in the case when $\ell\neq2$.

We automatically deduce from Theorem \ref{MAINTHM} the following
Disjunction Theorem:
a cuspidal~$\flb$-representation of $G$
cannot be both dis\-tin\-gui\-shed and $\cla$-distinguished
(see Corollary \ref{disjunctionmodl}).
Obser\-ve that Dichotomy holds for supercuspidal $\flb$-representations 
(\cite{VSANT19} Theorem 10.8)
but not for~cus\-pidal ones (see Remark \ref{haydee}). 

\subsection{}

When $\ell=2$,
the Dis\-tin\-guished Lift Property of
$H$-distinguished cuspidal representations~of $G$
does not hold:
there are $H$-distinguished cuspidal representations 
with no~$H$-distinguish\-ed lift
(see Remark \ref{soitditenpassant}).
However, 
the~classifi\-cation of all $H$-distinguished
cuspidal~$\overline{\FF}_2$-representations~of $G$ 
turns out to be even easier to state.

As has been said in \S\ref{P12},
any $H$-distinguished irreducible $\overline{\FF}_2$-representa\-tion~of $G$
is $\s$-self-dual.
Conversely, 
a supercuspidal $\overline{\FF}_2$-repre\-sentation of $G$
is distinguish\-ed by $H$ if and only if it is $\s$-self-dual
(\cite{VSANT19} Theorem 10.8).
(Note that there is no 
$\overline{\FF}_2$-character~of $F_0^\times$
with kernel~the sub\-group~of $F/F_0$-norms.)

We prove that,
surprisingly again,
this characterisation of distinction 
for supercuspidal repre\-sen\-tations
ex\-tends to all $\overline{\FF}_2$-cuspidal representations,
in contrast with the case where $\ell\neq2$. Our second main theorem~is:

\begin{theo}
\label{MAINTHM2intro}
A cuspidal $\overline{\FF}_2$-representation of $G$ is 
distinguished by $H$~if~and only if it is~$\s$-self-dual.
\end{theo}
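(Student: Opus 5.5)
One direction is already known. Any $H$-distinguished irreducible $\overline{\FF}_2$-representation of $G$ is $\s$-self-dual by \cite{VSANT19} Theorem 4.1. So the work is to show that a $\s$-self-dual cuspidal $\overline{\FF}_2$-representation $\pi$ of $G$ is distinguished. If $\pi$ is supercuspidal, this is \cite{VSANT19} Theorem 10.8, so I will assume $\pi$ is cuspidal but not supercuspidal.

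The plan uses the type-theoretic description of such $\pi$. Write $\pi$ as compactly induced from an extended maximal simple type $(\boldsymbol{J},\Lambda)$ with $\Lambda=\kappa\otimes\rho$. Here $\rho$ is a cuspidal representation of $\GL_m(\boldsymbol{l})$, with $\boldsymbol{l}$ a finite field, which is cuspidal but not supercuspidal. When $\pi$ is $\s$-self-dual, the results of \cite{VSANT19} and \cite{NRV25} let us choose the type to be $\s$-self-dual, with $\boldsymbol{J}$ stable under $\s$ and $\kappa$ a distinguished $\s$-self-dual $\beta$-extension. The Mackey formula then gives
\[
\Hom_H(\pi,\1)\;\simeq\;\prod_{g}\Hom_{\boldsymbol{J}^g\cap H}(\Lambda^g,\1),
\]
and by the double coset analysis of \cite{VSANT19} only the double cosets attached to $\s$-stable types contribute. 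In particular the trivial double coset contributes $\Hom_{\boldsymbol{J}\cap H}(\kappa,\chi)\otimes\Hom_{\boldsymbol{J}\cap H}(\rho,\chi^{-1})$ for a suitable character $\chi$. This reduces the problem to a finite-group statement about $\rho$. It also leaves an extra condition on the action of a uniformiser $\varpi$ of $E$ (or a suitable element of $\boldsymbol{J}$ normalising the type) when $E/E_0$ is unramified, respectively ramified.

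The finite-field step is the following. Over $\overline{\FF}_2$, a $\s$-self-dual cuspidal representation $\rho$ of $\GL_m(\boldsymbol{l})$ should be distinguished by the fixed points of the induced involution, twisted by the appropriate character. Non-supercuspidal cuspidal $\rho$ arise as reductions of $\overline{\QQ}_2$-cuspidals attached to characters $\theta$ of $\boldsymbol{l}_m^\times$ whose $2$-regular part has smaller degree. I would first use Green's parametrisation and compute the distinction of $\ell$-adic lifts via Gow's and Dipendra Prasad's results for finite unitary and Galois pairs. The point is to show that among the lifts of $\rho$, which are indexed by $2$-power-order twists of the $2$-regular part of $\theta$, there is one that is distinguished for the relevant twisted character. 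I would also check that in characteristic $2$ the sign character $\cla$ disappears, since $\cla$ has order $2$ and hence becomes trivial mod $2$. This explains why no dichotomy obstruction survives. Reducing such a distinguished lift mod $2$ gives distinction of $\rho$, by the finite analogue of \cite{KuMaAsai} Theorem 3.4 (invariant vectors in a lattice).

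The main obstacle is the case where every lift of $\rho$ is distinguished only by the nontrivial quadratic character, namely the situation of Remark \ref{soitditenpassant}. Here the lifting argument fails. I would instead argue directly mod $2$. The space of $H$-fixed vectors is nonzero iff a certain $\overline{\FF}_2$-valued sum of the Brauer character of $\rho$ over the $\s$-twisted classes is nonzero mod $2$. Equivalently, I would use that the reductions of $\cla$-distinguished and distinguished lifts agree, since $\cla\equiv1$ mod $2$. This handles the finite-field factor. A similar parity trick should handle the uniformiser sign condition on the extension $\Lambda$, because the required sign $\pm1$ also becomes $1$ in characteristic $2$. Assembling the nonzero finite-field form with the distinguished form on $\kappa$ then gives a nonzero element of $\Hom_H(\pi,\1)$.
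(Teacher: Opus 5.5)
\paragraph{Where the proposal breaks down.} Your opening reductions are fine and agree with the paper. Distinction implies $\s$-self-duality, and the supercuspidal case is known. The problem then reduces (the paper goes via level $0$ and type theory) to showing that every $\s$-self-dual cuspidal $\overline{\FF}_2$-representation of $\GL_n(\k)$ is distinguished, by $\GL_n(\k_0)$ in the Galois case or by $\GL_{n/2}(\k)\times\GL_{n/2}(\k)$ in the Levi case. Your remark that the uniformiser sign becomes $1$ in characteristic $2$ is the same observation the paper uses. The genuine gap is that this finite-field statement is the whole content of the theorem, and your method for it fails.

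The failure is not a marginal case. If $\pi$ is a non-supercuspidal cuspidal $\overline{\FF}_2$-representation of $\GL_n(\k)$, then $n=k2^a$ with $a\>1$, so $n$ is even. Over $\overline{\mathbb{Q}}_2$ cuspidal means supercuspidal, and a $\s$-self-dual one in the Galois case forces $n$ odd (Lemma \ref{distparitek}). Hence no lift of $\pi$ is $\s$-self-dual, so no lift is distinguished. No lift is $\chi\circ\det$-distinguished for any character $\chi$ of $\k_0^\times$ either: extend $\chi$ to $\k^\times$ and twist. So there are no $\cla$-distinguished lifts to reduce mod $2$; this is exactly the situation of Remark \ref{soitditenpassant}. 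In the Levi case the same failure occurs as soon as $k\>2$, because a self-dual regular character of $\k_n^\times$ must be trivial on $\k_{n/2}^\times$, which kills its odd-order part.

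Your direct mod-$2$ fallback is also invalid. Here $|H|$ is even, taking $H$-invariants is not exact in characteristic $2$, and Brauer characters do not see the $2$-singular elements of $H$. So no character sum computes whether $\Hom_H(\rho,\1)$ is non-zero.

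\paragraph{The missing idea.} What you need is a genuinely modular argument. The paper writes $\pi=\st_a(\rho)$ with $\rho$ supercuspidal, so $\rho$ is distinguished by the supercuspidal case. It then shows inductively that if $\pi$ is distinguished, so is $\st_1(\pi)$, the cuspidal subquotient of $\pi\times\pi$.
\begin{itemize}
\item The representation $\pi\times\pi$ is indecomposable of length $3$. Its socle and cosocle are both isomorphic to a non-cuspidal $\tau$, with $\st_1(\pi)$ in the middle.
\item A derivative argument gives $\dim\Hom_H(\tau,\1)\<1$ (Proposition \ref{onedim}).
\item The closed and open $(Q,H)$-double cosets give two independent $H$-invariant forms on $\pi\times\pi$. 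Both vanish on the socle, which is the image of $T+1$.
\item Checking this uses characteristic $2$ exactly: $|U\cap H|$ is odd, and the contributions of $t$ and $-t$ coincide, so together they contribute $0$.
\end{itemize}
If $\st_1(\pi)$ were not distinguished, both forms would factor through $\tau$, contradicting $\dim\Hom_H(\tau,\1)\<1$.
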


\subsection{}

Let us now explain the mains ideas of our proof of our main theorems,
starting with~Theo\-rem \ref{MAINTHM}.
{In the complex case, 
studying the distinction of a quotient of a parabolically
induced~re\-pre\-sentation is a notoriously non-trivial problem.
In our situation,
where representations have~co\-ef\-ficients in $\flb$, 
the situation is even worse:
a non-supercuspidal, cuspidal representation of~$G$~can\-not be
realized as a quotient of a proper parabolically induced
representation,
although it is~a~sub\-quotient of a proper parabolically induced
representation.
Also, the analytic tools that have been developed in the complex case
to study distinction 
are no longer available in our modular case. }

% We follow a completely different route. 
First,
thanks to the results of \cite{NRV25},
we reduce~to~the case of~$\flb$-cuspidal representations of~level $0$.
Secondly, 
our strategy is by contradiction:
we assume that there is a cuspidal~representation~$\pi$
of level $0$ which is distinguished but has no~distinguished
cuspidal lift.
We then~com\-pu\-te a~Ran\-kin--Selberg $\g$-factor
associated with $\pi$
in two different~ways. The 
first~com\-putation gives $-1$~and the second one gives $1$.
Since $-1\neq1$ in $\flb$ when $\ell\neq2$,~we get the expected~contra\-diction. 
{Note that the idea of using $\g$-factors in distinction problems 
goes back to Hakim \cite{HakimDMJ91} --
see also~Ha\-kim--Offen \cite{HakimOffen}.}

These two computations rely on very different ideas.
The first one relies on the lifting properties of the Rankin--Selberg 
$\g$-factor of \cite{KM17}
together with the classification of
$H$-distinguished~non-supercus\-pidal, cuspidal $\flb$-representations of 
$G$ of \cite{NRV25}.
The second one relies on a $\ell$-modular,~fi\-nite field variant of a 
theorem of Ok \cite{Ok}
on $H$-distinguished cuspidal complex representations~of $G$.
Let us give more details. 

\subsection{}
\label{xxx}

Let $\pi$ be a cuspidal representation of $G$.
According to \cite{MSc},
there are a divisor~$r$ of $n$ and~a supercuspidal representation $\rho$ of 
$\GL_{n/r}(F)$ such that $\pi$ is isomorphic to the unique~generic~irre\-ducible
subquotient of the induced representation
$\rho\nu^{(1-r)/2}\times\dots\times\rho\nu^{(r-1)/2}$,
where $\nu$ is the~abso\-lute value of the determinant
and $\times$ denotes normalized parabolic induction.
Since our problem is~solved for~super\-cuspidal representations,
we may and will assume that $\pi$ is not supercuspidal,
that is,~$r\>2$.~One then can form the Rankin--Selberg $\g$-factor
$\g(X,\pi,\rho^\vee,\psi)$
with respect~to~any non-trivial smooth $\flb$-character $\psi$ of $F$
(see \cite{KM17}).
This is a non-zero element of the fraction~field $\flb(X)$
which we may evaluate at $q^{-1/2}$,
where $q^{1/2}$ is a square root in $\flb$ of the~car\-dinality~$q$~of~the 
residue field of $F$.
(In the case when $F/F_0$ is unramified,
we always choose for $q^{1/2}$ the cardinality of~the residue field of $F_0$.)
A simple use of
the lifting properties of this local factor (see Proposition \ref{gammaint})
gives us 
\begin{equation}
\g(q^{-1/2},\pi,\rho^\vee,\psi) = \cen_\pi(-1)^{n/r-1}\cdot(-1)^{r}\cdot q^{n/2} 
\end{equation}
where $\cen_\pi$ is the central character of $\pi$.
Assuming that $\pi$ is distinguished but has no distinguished lift,
and using our classification of the cuspidal $\flb$-representations of $G$
having an $H$-distinguished lift (\cite{NRV25} Propositions 6.1, 6.2),
we obtain 
\begin{equation}
\label{gammamoins1}
\g(q^{-1/2},\pi,\rho^\vee,\psi) = -1
\end{equation}
for any non-trivial $\psi$ (see Proposition \ref{gammais1}).
Let us stress that this classification
is crucial~in the proof of \eqref{gammamoins1}. 
It is stated in \cite{NRV25} in terms of type theoretic 
invariants of cuspidal representations,
but it takes a simpler form for representations of level $0$
(see \S\ref{classif6162}).
Assuming that there exists~a distinguished cuspidal representation with
no distinguished lift, 
it gives some information on the order of the
cardinality of the residue field of $F_0$ mod $\ell$
(see \S\ref{mammuth})~which allows us to compute~$q^{n/2}$. % mod $\ell$.

\subsection{}
\label{jeanbaudu}

We now have to find another way of computing the $\g$-factor of
\S\ref{xxx}.
Let $\k$ denote the~residue field of $F$.
Let $\Vi$ be a cuspidal irreducible representation of $\GL_n(\k)$
and $\Wi$ be any generic~irredu\-cible representation of $\GL_m(\k)$
for some positive integer $m<n$.
Associated with any non-tri\-vial cha\-rac\-ter $\Psi$ of $\k$,
there is a $\g$-factor $\g(\Vi,\Wi,\Psi)$ defined in \cite{Mossetal}.
We show that: 
\begin{itemize}
\item 
it is compatible with reduction mod $\ell$
(see Proposition \ref{reductionofgamma}), 
\item
it is multiplicative in the second variable (see Proposition \ref{mult}). 
\end{itemize}
Now let $\overline{\pi}$ denote the type of the cuspidal representation $\pi$
of \S\ref{xxx}.
This is the representation of $\GL_n(\k)$ defined by the 
action of $\GL_n(\oo_F)$ on the vectors of $\pi$ that are
fixed by $1+\Mat_n(\p_F)$,
where $\oo_F$ and~$\p_F$ are the ring of integers of $F$ and its maximal 
ideal. 
It is irreducible and cuspidal.~Si\-milarly,
$\rho$ has a type $\overline{\rho}$,
which is a~super\-cus\-pidal, irreducible representation of $\GL_{n/r}(\k)$,
and $\overline{\pi}$ is isomorphic to the unique generic irreducible
subquotient of $\overline{\rho}\times\dots\times\overline{\rho}$.
Moreover,
if we let $\k_0$ denote the residue field of $F_0$ and write
\begin{equation}
\label{denisebaudu}
\overline{H} = 
\left\{ 
\begin{array}{ll}
\GL_n(\k_0) & \text{if $\F/\F_0$ is unramified}, \\ 
\GL_{n/2}(\k)\times\GL_{n/2}(\k) & \text{if $\F/\F_0$ is ramified},
\end{array}\right.
\end{equation}
(note that $n$ is always even in the latter case),
it follows from \cite{NRV25}
that $\overline{\pi}$ is $\overline{H}$-distinguished~with no 
$\overline{H}$-distinguished $\qlb$-lift.
Assuming that the character~$\psi$~of \S\ref{xxx} has conductor $\p_F$, 
we deduce from \cite{NienZhang}~Theo\-rem 3.11 that
\begin{equation}
\g(q^{-1/2},\pi,\rho^\vee,\psi) = \g(\overline{\pi},\overline{\rho}^\vee,\Psi)
\end{equation}
where $\Psi$ is the character of $\k$ induced by the restriction
of $\psi$ to $\oo_F$.
We are thus~reduced to~com\-pu\-ting 
$\g(\overline{\pi},\overline{\rho}^\vee,\Psi)$, 
which entirely pertains to the theory of 
representations of finite general linear groups.

\subsection{}

We now change the notation:
for any integer $n\>1$,
we set $G_n=\GL_n(\k)$ and let~$H_n$~deno\-te~the subgroup defined by 
\eqref{denisebaudu} (we actually use a non-standard Levi subgroup of
$\GL_n(\k)$ in the Levi case, 
and also need to define $H_n$ in the case when $n$
is odd:~see Section~\ref{luigivampa}).~Let us
fix~a~non-supercuspidal, cuspidal~$\flb$-representation $\pi$ of $G_n$.
It occurs as~the unique generic~irreducible~sub\-quo\-tient of the
induced~re\-pre\-sentation $\rho\times\dots\times\rho$
for some supercuspidal $\flb$-re\-pre\-sentation $\rho$ of 
$G_{n/r}$ for some divisor $r$ of $n$.
Let $\psi$ be the character 
denoted $\Psi$~in \S\ref{jeanbaudu}.~We~prove that
\begin{equation}
\label{marmithon}
\g({\pi},{\rho}^\vee,\psi) = 1
\end{equation}
when $\pi$ is $H_n$-distinguished with no $H_n$-distinguished lift
(see Proposition \ref{minelli}).
Note that~even
if this $\g$-factor does not depend on the choice of $\psi$,
it is convenient to compute it for a specific~choi\-ce of $\psi$,
namely,
a $\psi$ trivial on $\k_0$ when $H_n=\GL_n(\k_0)$,
which we assume in Section~\ref{luigivampa}.

First,
associated with $\pi$, 
there is a scalar
$c(\pi,\psi) \in \overline{\mathbb{F}}{}_\ell^\times$,
which is a proportionality constant~bet\-ween two explicit non-zero
$H_n$-invariant 
linear forms on the Whittaker model of $\pi$ (see Proposi\-tion 
\ref{BernsteinConstant}). 
Secondly,
we prove that,
for any $H_{n-1}$-distinguished representation $\pi'$ of $G_{n-1}$
of~Whit\-ta\-ker type (see Definition \ref{defwtg})
satisfying a technical condition \eqref{saintepelagie}
(see Definition \ref{WTspecialH}),
one has
\begin{equation*}
\g(\pi,\pi',\psi) = c(\pi,\psi). 
\end{equation*}
In particular,
this $\g$-factor does not depend on $\pi'$. 
We call a distinguished representation of~Whit\-taker type 
satisfying \eqref{saintepelagie} \textit{special}. 
We then show that the class of special representations is~large enough:
it contains all distinguished cuspidal irreducible representations
and is stable under~pa\-ra\-bolic induction
(see Corollary \ref{CpsiCusp} and Lemma \ref{cuspsatcpsi2}). 
This allow us to prove that
\begin{equation}
\g({\pi},{\rho}^\vee,\psi) = \g(\pi,\psi)^{n/r}
\end{equation}
where $\g(\pi,\psi)$ is the Godement--Jacquet $\g$-factor of $\pi$.
We then use \cite{NRVGJ},
where we have computed $\g(\pi,\psi)$
(see Proposition \ref{valueGJdist}).
The expected result \eqref{marmithon} follows.

This strategy for~com\-pu\-ting $\g({\pi},{\rho}^\vee,\psi)$, 
which relies on the introduction of the constant~$c(\pi,\psi)$
is reminiscent of Ok \cite{Ok}.
However, Ok's strategy cannot be directly adapted to our modular~set\-ting. 
The main novelty in~our~approach is to introduce the class $\Cc(H)$
(see Definition \ref{Renal}),
which gives a sufficient condition for being~spe\-cial, 
and to check that it contains enough re\-pre\-sen\-tations to make 
Ok's strategy work over finite fields. 

\subsection{}

Let us now explain the main ideas of our proof of Theorem \ref{MAINTHM2intro}. 
It is based on the following observation:
if $\pi$ is a non-supercuspidal,
cuspidal $\overline{\mathbb{F}}_2$-representation of $\GL_n(\k)$,
then $n$ is even~and 
there is a unique cuspidal $\overline{\mathbb{F}}_2$-representation $\tau$
of $\GL_{n/2}(\k)$ such that $\pi$ is the unique generic~sub\-quo\-tient
of the induced representation $\tau\times\tau$. 
By uniqueness,
$\pi$ is $\s$-self-dual if and only if~$\tau$~is~$\s$-self-dual. 
Since Theorem \ref{MAINTHM2intro} is known to hold
for supercuspidal representations,
it thus suf\-fi\-ces~to prove that,
if $\tau$ is distinguished,
then $\pi$ is distinguished. 

For this,
we examine the induced representation $\tau\times\tau$.
It is indecomposable of length~$3$.~As\-suming that $\tau$ is distinguished,
we construct two independent
invariant linear forms on the~repre\-sentation $\tau\times\tau$~vani\-shing on
its socle.
If $\pi$ were not distinguished,
one would thus get two~inde\-pendent
inva\-riant $\overline{\mathbb{F}}_2$-linear forms
on the cosocle of $\tau\times\tau$.
This contradicts Proposition \ref{onedim},~which
says that the dimension of the space of invariant linear forms 
on this cosocle 
is at most $1$.
Theo\-rem~\ref{MAINTHM2intro} follows. 

\subsection{}

Let us make a couple of comments.
% Given the simplicity of the statement of Theorem~\ref{MAINTHM},
% it is tempting to look for a more conceptual proof.
% However,
% such a proof,
% if it exists,
% should~ma\-ke apparent that Theorem~\ref{MAINTHM} does not hold
% when $\ell=2$.
% Thus it should not be uniform in $\ell$.
It is tempting to conjecture that Theorem \ref{MAINTHM} holds~in
greater generality.
For~other involutions of the group $\GL_n(F)$,
or more generally for involutions~of inner forms of $\GL_n(F)$,
there is~some hope indeed.
It seems plausible as well that a result~simi\-lar to Theorem 
\ref{MAINTHM2intro} 
holds for any~involu\-tion of any inner form of $\GL_n(F)$. 
Antonin Casel is~investi\-ga\-ting these questions in his ongoing PhD thesis at
the~Uni\-versity of Versailles.

For reductive $p$-adic groups other than general linear groups,
an obstruction to a naive genera\-li\-sation of Theorem \ref{MAINTHM} 
is already apparent in the group case:
given a connected reductive $p$-adic group $H$,
diagonally embedded in $G=H\times H$,
and an irreducible $\flb$-representation $\pi$ of $H$, 
the fact that $\pi\otimes\pi^\vee$ lifts to an irreducible 
$\qlb$-representation of $G$ distinguished by $H$ is equivalent~to the fact
that $\pi$ has a $\qlb$-lift.
But this latter fact does not hold in general
(see for instance~\cite{RobANT14}~Re\-mark~5.5 for unramified unitary
groups in three variables,
which have cuspidal $\flb$-representations which do not lift to $\qlb$).

\section*{Acknowledgements}

Robert Kurinczuk was supported by the Engineering and Physical Sciences 
Research Council (EPSRC) grant EP/V001930/1. 
Nadir Matringe was supported by the Research Start-up Fund of the Shanghai 
Institute for Mathematics and Interdisciplinary Sciences (SIMIS). 
Vincent Sécherre was partly supported by the Institut Universitaire de France.

We thank Alberto M\'\i nguez and Shaun Stevens for their interest in this project.

\section{Notation}

\subsection{}
\label{par21}

Given
a locally compact, totally disconnected topological group $\G$ and
an algebraically~clo\-sed field $\R$, 
we consider smooth representations of $\G$ on $\R$-vec\-tor~spa\-ces. 
We abbreviate \textit{smooth $\R$-representation} to 
\textit{$\R$-representation},
or even \textit{represen\-ta\-tion} when $\R$ is clear from the context.

An $\R$-\textit{character} 
(or \textit{character})
of $\G$ is a group homomorphism from $\G$ to $\R^\times$ with open kernel.

Given a representation $\pi$ and a character $\chi$ of $G$, 
we write $\pi^\vee$ for the~con\-tra\-gredient of~$\pi$~and
$\pi\chi$ for the representation $g\mapsto\chi(g)\pi(g)$ of~$\G$.
If $\pi$ has a central character,
we denote it by $\cen_{\pi}$.

Given a vector $v$ in the space of $\pi$ and a linear form $\xi$ in the space
of $\pi^\vee$, we write $\coef_{v,\xi}$~for the 
matrix coefficient $g \mapsto \xi(\pi(g)v)$ on $G$.

Given any closed subgroup $H$ of $G$ and any character $\mu$ of $H$,
we say that a representation~$\pi$~of $G$ is $\mu$-\textit{distinguished} 
if the space
$\Hom_{\H}(\pi,\mu)$ is non-zero.
If $\mu$ is the trivial character,
we~abbre\-via\-te $\mu$-\textit{distinguished} to $\H$-\textit{distinguished},
or just \textit{distinguished} when $\H$ is clear from the context.

% Given any element $g\in\G$,
% we write~$\pi^g$ for the representation $x\mapsto\pi(gxg^{-1})$ of
% $\H^g=g^{-1}\H g$.

\subsection{}
\label{par22}

Let $F$ be a non-archimedean locally compact field of residue 
characteristic~$p$. 
We will write $\oo$ for its ring of integers,~$\pp$ for the maximal ideal 
of $\oo$, 
$\kk$ for its residue field and
$q$ for~the~car\-di\-nality of $\kk$.

Given an algebraically closed field $R$ of characteristic different from $p$
and a square root $q^{1/2}$~of $q$ in $R$,
we write $\nu^{1/2}$ for the unramified $R$-character of $F^\times$
which sends any uniformizer~of
$F$~to $q^{-1/2}$ and $\nu$ for the square of $\nu^{1/2}$,
which is unramified and sends any uniformizer of~$F$~to $q^{-1}$.

\subsection{}
\label{par23}

Let $R$ be an algebraically closed field of characteristic different from $p$,
and $n\>1$ be an~in\-te\-ger.
Given $r$ in\-tegers $n_1,\dots,n_r\>1$ such that $n_1+\dots+n_r=n$ 
and, for each $i=1,\ldots,r$,~a
re\-pre\-sentation $\pi_i$ of $\GL_{n_i}(F)$, 
we write
\begin{equation}
\label{defsupercusp}
\pi_1\times \cdots \times \pi_r
\end{equation}
for the representation of
$\GL_{n}(F)$ obtained by normalized parabolic induction from 
$\pi_1\otimes \cdots \otimes \pi_r$
along the parabolic subgroup generated by upper triangular matrices 
and the standard Levi~sub\-group $\GL_{n_1}(F)\times\dots\times\GL_{n_r}(F)$.

An irreducible representation of $\GL_n(F)$ is said to be
\textit{cuspidal} (respectively, \textit{supercuspidal})~if~it
does not occur~as~a subrepresentation
(respectively, a subquotient)
of a representation of~the form \eqref{defsupercusp} with $r\>2$.
Any supercuspidal representation of $\GL_n(F)$ is cuspidal. 

Given a representation $\pi$ of $\GL_n(F)$ and a character $\chi$ of 
$F^\times$, we will write $\pi\chi$ for $\pi(\chi\circ\det)$.

\subsection{}
\label{defrlintro}

Let us fix
a prime number $\ell\neq p$ and
an algebraic closure $\qlb$ of the field of $\ell$-adic numbers.
Let $\zlb$ denote its ring~of~in\-te\-gers,
and $\flb$ denote the residue field of $\zlb$.

An irreducible $\qlb$-representation~$\pi$ of $\GL_n(F)$
is said to be \emph{integral} if the $\qlb$-vector space~of~$\pi$
contains a stable $\zlb$-lattice.
For any such lattice $L$,
the $\flb$-re\-presentation~$L\otimes\flb$~of $\GL_n(F)$~has
fi\-nite length.
Its semisimplification is independent of the choice of $L$:
it is called~the~\emph{re\-duction~mo\-dulo~$\ell$} of~$\pi$
and is denoted by $\rl(\pi)$.

Given an irreducible $\flb$-representation~$\pi$ of $\GL_n(F)$,
any~in\-tegral irreducible $\qlb$-representation $\widetilde{\pi}$ of $\GL_n(F)$
such that $\r_\ell(\widetilde{\pi})=\pi$ is called a $\qlb$-\emph{lift} of~$\pi$. 

\subsection{}

In the representation theory of the finite general linear group $\GL_n(\k)$, 
one can also define parabolic induction,
cuspidal and supercuspidal representations,
reduction mod $\ell$ and lift,
and~we will use the same notation \eqref{defsupercusp} for parabolic induction
and $\r_\ell$ for reduction mod $\ell$ as in the non-Archimedean case.

\section{The first main result ($\ell\neq2$)}
\label{stamant}

Let $F/F_0$ be a quadratic extension of non-Archimedean locally compact
fields of residue cha\-rac\-te\-ristic $p\neq2$,
$\s$ be its non-trivial automorphism
and $\ell\notin\{2,p\}$ be a prime number. 

Let $G$ be the group $\GL_n(F)$ for some integer $n\>1$,
equipped with the action of $\s$ component\-wise.
Let $H$ denote the closed subgroup $G^{\s}=\GL_n(F_0)$ made of
the $\s$-fixed points of $G$.

% Let $\ell$ be a prime number different from $p$ and $2$.
 
\subsection{}

Let $R$ be an algebraically closed field of characteristic different from $p$.
Given any $R$-repre\-sentation $\pi$ of $\G$,
we write $\pi^\s$ for the representation $g\mapsto\pi(\s(g))$.

We write $\kk_0$ for the residue field of $F_0$ and $q_0$ for its cardinality,
and $\nu_0$ for the unramified~cha\-racter of $F_0^\times$ sending
any uniformizer to $q_0^{-1}$.
Given any square root $q_0^{1/2}$ of $q_0$ in $R$, we set
\begin{equation}
\label{choixq12sec3}
q^{1/2} =
\left\{ 
\begin{array}{ll}
q_0^{1/2} & \text{if $\F/\F_0$ is ramified}, \\ 
q_0 & \text{if $\F/\F_0$ is unramified},
\end{array}\right.
\end{equation} 
which we use to define the character $\nu^{1/2}$ (\S\ref{par22})
and normalized parabolic induction (\S\ref{par23}).

\subsection{}

Here is our first main theorem.

\begin{theo}
\label{CONJNRV}
A cuspidal irreducible $\flb$-representation of $G$ is $H$-distinguished
if and only~if it has~an $H$-distinguished cuspidal lift to $\qlb$.
\end{theo}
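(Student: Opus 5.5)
One direction is already known: if $\pi$ has an $H$-distinguished cuspidal $\qlb$-lift, then $\pi=\rl(\widetilde\pi)$ is $H$-distinguished by \cite{KuMaAsai} Theorem 3.4. For the converse we argue by contradiction. Suppose $\pi$ is $H$-distinguished but has no $H$-distinguished lift. The supercuspidal case is settled by \cite{VSANT19} Theorem 10.11, so we may assume $\pi$ is not supercuspidal. By \cite{MSc}, $\pi$ is then the unique generic subquotient of $\rho\nu^{(1-r)/2}\times\dots\times\rho\nu^{(r-1)/2}$, with $r\>2$ and $\rho$ supercuspidal. Using the type-theoretic reduction of \cite{NRV25}, we reduce to the case where $\pi$ has level $0$. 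Since distinction forces $\s$-self-duality (\cite{VSANT19} Theorem 4.1), we may also assume $\pi$ is $\s$-self-dual.

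The contradiction will come from computing $\g(q^{-1/2},\pi,\rho^\vee,\psi)$ in two ways.

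\emph{First computation.} Lift $\pi$ to a cuspidal $\widetilde\pi$ and $\rho$ to $\widetilde\rho$. The Rankin--Selberg factors of \cite{KM17} are compatible with reduction mod $\ell$, and the $\qlb$ factor is explicit because the supercuspidal support is known. This should give
\begin{equation*}
\g(q^{-1/2},\pi,\rho^\vee,\psi)=\cen_\pi(-1)^{n/r-1}(-1)^r q^{n/2}.
\end{equation*}
We then feed in the classification from \cite{NRV25} (Propositions 6.1, 6.2) of those level $0$ cuspidals that are distinguished but have no distinguished lift. This classification pins down the order of $q_0$ modulo $\ell$, the parity of $r$, and the central character, so that $q^{n/2}$ and the signs can be evaluated in $\flb$. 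The expected outcome is $-1$.

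\emph{Second computation.} Take $\psi$ of conductor $\p_F$. By \cite{NienZhang} Theorem 3.11, the $\g$-factor equals the finite-field factor $\g(\overline\pi,\overline\rho^\vee,\Psi)$ attached to the level-zero types. Moreover, $\overline\pi$ is $\overline H$-distinguished with no distinguished $\qlb$-lift, again by \cite{NRV25}. To evaluate $\g(\overline\pi,\overline\rho^\vee,\Psi)$ we run a modular version of Ok's argument:
\begin{itemize}
\item Define the proportionality constant $c(\overline\pi,\Psi)$ between two explicit invariant forms on the Whittaker model.
\item Show that $\g(\overline\pi,\pi',\Psi)=c(\overline\pi,\Psi)$ for every ``special'' distinguished $\pi'$ of Whittaker type on $G_{n-1}$.
\item Show that the special class contains the distinguished cuspidals and is stable under parabolic induction.
\end{itemize}
Multiplicativity of the finite-field $\g$-factor in the second variable, together with its compatibility with reduction mod $\ell$, then yields $\g(\overline\pi,\overline\rho^\vee,\Psi)=\g(\overline\pi,\Psi)^{n/r}$. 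The Godement--Jacquet factor $\g(\overline\pi,\Psi)$ is known from \cite{NRVGJ}, and the resulting value should be $1$.

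Since $\ell\neq2$, we have $-1\neq1$ in $\flb$, which is the desired contradiction.

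The main obstacle is the second computation, specifically showing that the class of special representations is large enough. This means checking the technical condition on $\pi'$, first for distinguished cuspidals and then for parabolic inductions of them, entirely over $\flb$, where Ok's analytic arguments are unavailable. The approach here is to introduce an auxiliary class $\Cc(H)$ that is sufficient for being special and is defined by vanishing conditions on the Whittaker model; such conditions can be checked orbit by orbit over $\overline H$ in $G_n$. A secondary delicate point is matching the sign bookkeeping from \cite{NRV25} in the first computation.
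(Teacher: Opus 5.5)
Your outline follows the paper's route: reduce to level $0$, then compute $\g(q^{-1/2},\pi,\rho^\vee,\psi)$ as $-1$ by lifting and as $1$ by the finite-field Ok-type argument with $c(\pi,\psi)$, special representations and $\Cc(H)$. But the second computation, as you state it, has a real hole.

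The identity $\g(\overline\pi,\overline\rho^\vee,\Psi)=\g(\overline\pi,\Psi)^{k}$, with $k=n/r$, requires $\overline\rho^\vee$ to be $\overline H$-distinguished. That is what puts $\overline\rho^\vee\times1^{n-1-k}$ in the class $\Cc$, hence makes it special, so that Proposition~\ref{Lemma1RSconstant} applies to it. The paper obtains this from three ingredients:
Lemma~\ref{distselfdual}, which says distinguished generic representations are $\s$-self-dual (in the Levi case this needs Kapon's theorem and a lifting argument);
uniqueness of supercuspidal support;
and Lemma~\ref{distparitek}.

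Even then, $\g(\overline\pi,\Psi)^k$ is automatically $1$ only in the Galois case. In the Levi case $\g(\overline\pi,\Psi)={\rm sgn}(\overline\pi)$, so you need $k$ even, and the case $k=1$ must be excluded. There $\overline\rho$ is a character of order at most $2$, possibly not distinguished, and in fact $\g(\overline\pi,\overline\rho^\vee,\Psi)=-1$ (Remark~\ref{minelli2}). Excluding it is exactly where the hypothesis ``no distinguished lift'' enters the second computation, via \cite{NRV25} Lemma 6.10(2.b), or Proposition~\ref{DLT05} on the $p$-adic side. Your sketch never uses that hypothesis there, and without it the claimed value $1$ is false.

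Smaller points:

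\emph{Choice of lift.} In the first computation a \emph{cuspidal} lift $\widetilde\pi$ yields nothing explicit, since for two cuspidals of different sizes the $\qlb$ factor is a bare $\e$-factor. Use instead $\St_r(\widetilde\rho)$. It is not a lift of $\pi$ in general, but $\pi$ is the unique generic subquotient of its reduction, which is all Proposition~\ref{gammaint} needs. Then combine multiplicativity, the $\L$-factors of \cite{KM17} Theorem 4.9, the Bushnell--Henniart value $\cen_{\widetilde\rho}(-1)^{k-1}$ of $\e(q^{-1/2},\widetilde\rho,\widetilde\rho^\vee,\psi)$, and $q^n\equiv1$ mod $\ell$.

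\emph{Which classification.} \cite{NRV25} Propositions 6.1, 6.2 classify cuspidals \emph{with} a distinguished lift. To get $q^{n/2}=1$ when $r$ is odd, you must combine them with the necessary conditions of Proposition~\ref{nessconddist}. For $r$ even, $q^{n/2}\equiv-1$ holds automatically.

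\emph{The class $\Cc(H)$.} It is a \emph{non}-vanishing condition: some $H$-invariant form does not kill the $(N,\psi)$-eigenline. It is checked for cuspidals through $\dim\Hom_{P\cap H}(\pi,1)=1$, and for induced representations by an explicit form on the induction from the opposite parabolic. Both checks rest on $\psi|_{N\cap H}=1$.
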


Note that:
\begin{itemize}
\item 
any cuspidal $\flb$-representation of $G$ having an 
$H$-distinguished cuspidal lift to $\qlb$~is $H$-dis\-tin\-guished
(\cite{KuMaAsai} Theorem 3.4),
\item
Theorem \ref{CONJNRV} is known to hold for supercuspidal representations
(\cite{VSANT19} Theorem 10.11).
\end{itemize}

To prove our Theorem \ref{CONJNRV},
it thus suffices to prove that any $H$-distinguished
non-su\-per\-cuspi\-dal,
cuspidal $\flb$-representation of $G$
has~an $H$-distinguished cuspidal lift to $\qlb$.

\begin{rema}
\label{soitditenpassant}
If $\ell=2$, there are examples of $\GL_2(F_0)$-distinguished  
non-super\-cuspidal,~cus\-pidal
$\overline{\FF}_2$-repre\-sen\-ta\-tions of $\GL_2(F)$
with no $\GL_2(F_0)$-distinguished lift (\cite{NRV25} Remark 6.4).
\end{rema}

\subsection{}
\label{leuwen3}

Thanks to \cite{NRV25}, 
the proof of Theorem \ref{CONJNRV} can be immediately reduced to the level $0$ 
case. 

\begin{prop}
\label{lamiel}
Suppose that Theorem \ref{CONJNRV}
holds for cuspidal $\flb$-representations of level~$0$ of $\GL_n(F)$,
for all $n\>1$ and all $F/F_0$.
Then it holds for cuspidal representations of any level.
\end{prop}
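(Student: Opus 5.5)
Let $\pi$ be an $H$-distinguished non-supercuspidal cuspidal $\flb$-representation of $G$. We have to produce an $H$-distinguished cuspidal $\qlb$-lift. The plan is to transport the problem to level $0$ with the type-theoretic machinery of \cite{NRV25}: there the distinction of $\pi$ and the existence of a distinguished lift are both characterized by invariants attached to a $\s$-self-dual type.

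\emph{Step 1: a $\s$-self-dual simple type.} Since $\pi$ is distinguished, it is $\s$-self-dual (\cite{VSANT19} Theorem 4.1). Following \cite{NRV25}, we choose a $\s$-self-dual simple stratum and character $\theta$. We may take $\beta$ with $\s(\beta)=-\beta$, giving a field extension $E=F[\beta]$ with an involution $\s$ restricting to $\s$ on $F$. We also fix a $\s$-self-dual extended maximal simple type $(\boldsymbol{J},\boldsymbol{\lambda})$ in $\pi$ with $\boldsymbol{\lambda}=\boldsymbol{\kappa}\otimes\boldsymbol{\tau}$. Here $\boldsymbol{\kappa}$ is a distinguished $\s$-self-dual $\beta$-extension, and $\boldsymbol{\tau}$ is a representation of $\boldsymbol{J}/J^1\simeq E^\times\GL_m(\k_E)$ with $n=m[E:F]$. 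The results of \cite{NRV25} (culminating in its Propositions 6.1, 6.2) show two things. First, $\pi$ is $H$-distinguished if and only if $\boldsymbol{\tau}$ is distinguished by $\boldsymbol{J}\cap H$ twisted by an explicit quadratic character; the relevant quotient group is $\GL_m(\k_{E_0})$ or a Levi $\GL_{m/2}(\k_E)^2$, depending on whether $E/E_0$ is unramified or ramified. Second, $\pi$ has a distinguished lift if and only if $\boldsymbol{\tau}$ has a lift with the same distinction property.

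\emph{Step 2: realize the finite data at level $0$.} Let $E_0=E^{\s}$. We consider the group $\GL_m(E)$ with the involution induced by $E/E_0$, and the level-$0$ cuspidal $\flb$-representation $\pi_0$ of $\GL_m(E)$ obtained by compact induction of $\boldsymbol{\tau}$, with the same central behaviour on $E^\times$, twisted by the character correcting for $\boldsymbol{\kappa}$. The level-zero case of the same criterion of \cite{NRV25}, with trivial $\beta$-part, says that $\pi_0$ is $\GL_m(E_0)$-distinguished if and only if $\boldsymbol{\tau}$ satisfies the same finite-group distinction condition. Combining with Step 1 applied to $\pi$, $\pi_0$ is distinguished. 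By the hypothesis applied to $E/E_0$ and $m$, $\pi_0$ has a distinguished cuspidal lift $\widetilde{\pi}_0$. Its level-$0$ type gives a lift $\widetilde{\boldsymbol{\tau}}$ of $\boldsymbol{\tau}$ with the required distinction property. We then set $\widetilde{\pi}=\operatorname{ind}_{\boldsymbol{J}}^G(\widetilde{\boldsymbol{\kappa}}\otimes\widetilde{\boldsymbol{\tau}})$, where $\widetilde{\boldsymbol{\kappa}}$ is the distinguished $\s$-self-dual lift of $\boldsymbol{\kappa}$. This is a cuspidal lift of $\pi$, and by Step 1 it is $H$-distinguished.

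\emph{Main obstacle.} The delicate point is matching the twisting characters exactly, so that the same finite condition governs both $\pi$ and $\pi_0$. The distinguished $\beta$-extension in \cite{NRV25} introduces a sign or quadratic character of $\GL_m(\k_{E_0})$, typically tied to $\eta$ or to a parity condition. That character must be absorbed into the choice of $\pi_0$, for instance by twisting by a tamely ramified character of $E^\times$. One must also handle the two cases $E/E_0$ unramified or ramified, which may differ from $F/F_0$. We will rely on the fact that \cite{NRV25} states its classification purely in terms of $(\boldsymbol{\tau}, E/E_0)$; checking this compatibility carefully is the step we expect to require the most care.
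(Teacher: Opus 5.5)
Your skeleton matches the paper's: attach to $\pi$ a level-$0$ cuspidal representation over an auxiliary quadratic extension, apply the hypothesis there, and carry the distinguished lift back. But the whole content of the proposition lies in the two transfer statements, and you have not established them for the object you use.

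\textbf{What the paper uses.} The paper does not work with $E/E_0$. It uses the data attached to $\pi$ in \cite{NRV25} \S4.11:
\begin{itemize}
\item a tamely ramified extension $T_0/F_0$ with $T=T_0\otimes_{F_0}F$ quadratic over $T_0$,
\item an integer $m$,
\item a specific level-$0$ cuspidal representation $\pi_{\sf t}$ of $\GL_m(T)$.
\end{itemize}
For exactly this $\pi_{\sf t}$, \cite{NRV25} Proposition 4.40 shows that $\pi$ distinguished implies $\pi_{\sf t}$ is $\GL_m(T_0)$-distinguished. Then \cite{NRV25} Lemma 6.5 shows that a $\GL_m(T_0)$-distinguished cuspidal lift of $\pi_{\sf t}$ yields an $H$-distinguished cuspidal lift of $\pi$. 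Only these two one-way implications are needed, not the equivalences you assert in Step 1.

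\textbf{What is missing in your version.} Your $\pi_0$ on $\GL_m(E)$ is only specified up to an unspecified ``correcting'' twist. Several points are then asserted rather than proved, as you acknowledge yourself:
\begin{itemize}
\item that its $\GL_m(E_0)$-distinction is governed by the same condition on $\boldsymbol{\tau}$ as the $H$-distinction of $\pi$, including the condition at a uniformizer;
\item that a distinguished lift $\widetilde{\pi}_0$ gives a distinguished lift of $\boldsymbol{\tau}$ on the same group $\boldsymbol{J}\cap B^\times$;
\item that the distinguished $\sigma$-self-dual $\beta$-extension over $\qlb$ reduces to your $\boldsymbol{\kappa}$.
\end{itemize}
The justification you give is that \cite{NRV25} states its criteria purely in terms of $(\boldsymbol{\tau},E/E_0)$. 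That is not what it provides: its classification (Propositions 6.1, 6.2) and its lifting lemma are phrased through $T/T_0$, $m$ and $\pi_{\sf t}$.

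An $E/E_0$ version may well be provable along the lines you sketch, but doing so amounts to re-proving Proposition 4.40 and Lemma 6.5 in that setting. The simple fix is to replace $(E/E_0,\pi_0)$ by $(T/T_0,\pi_{\sf t})$, apply the hypothesis to $\pi_{\sf t}$, and cite those two results. That is precisely the paper's proof.
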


\begin{proof}
It suffices to prove that any $H$-distinguished cuspidal $\flb$-representation 
of $G$ has~a~$H$-dis\-tin\-guished cuspidal lift to $\qlb$.
Let $\pi$ be a $H$-distinguished cuspidal representation of
$G$.~Asso\-ciated with it in \cite{NRV25} \S4.11,
there are
\begin{itemize}
\item 
a tamely ramified extension $T_0$ of $F_0$ such that 
$T=T_0\otimes_{F_0}F$ is a quadratic extension of $T_0$, 
\item
a positive integer $m$ dividing $n$,
\item
and a cuspidal representation $\pi_{\sf t}$ of 
level $0$ of $\GL_m(T)$. 
\end{itemize}
By \cite{NRV25} Proposition 4.40,
this representation $\pi_{\sf t}$ is $\GL_m(T_0)$-distinguished.
By assumption,
since it has level $0$,
it has a $\GL_m(T_0)$-distinguished cuspidal lift to $\qlb$.
It then follows from \cite{NRV25} Lemma 6.5 that $\pi$ has~an 
$H$-distinguished cuspidal lift to $\qlb$.
\end{proof}

We are thus reduced to proving the following theorem.

\begin{theo}
\label{CONJNRV0}
Any $H$-distinguished non-supercuspidal, cuspidal $\flb$-representation
of level $0$ of $G$~has a $H$-distinguished cuspidal lift to $\qlb$.
\end{theo}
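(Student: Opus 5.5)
We argue by contradiction, following the strategy announced in the introduction. Let $\pi$ be an $H$-distinguished, non-supercuspidal, cuspidal $\flb$-representation of level $0$ of $G$, and suppose it has no $H$-distinguished cuspidal lift to $\qlb$. By \cite{MSc}, there are a divisor $r\>2$ of $n$ and a supercuspidal representation $\rho$ of $\GL_{n/r}(F)$ such that $\pi$ is the unique generic irreducible subquotient of $\rho\nu^{(1-r)/2}\times\dots\times\rho\nu^{(r-1)/2}$. Since $\pi$ has level $0$, so does $\rho$. We then compute the Rankin--Selberg factor $\g(q^{-1/2},\pi,\rho^\vee,\psi)$ of \cite{KM17} in two different ways. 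We use the normalisation \eqref{choixq12sec3} of $q^{1/2}$, and we choose $\psi$ of conductor $\pp_F$, which is allowed because the first value will not depend on $\psi$.

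\textbf{First computation.} Lift $\pi$ and $\rho$ to $\qlb$ and use the compatibility of the Rankin--Selberg $\g$-factor with reduction mod $\ell$. This gives
\begin{equation*}
\g(q^{-1/2},\pi,\rho^\vee,\psi)=\cen_\pi(-1)^{n/r-1}(-1)^{r}q^{n/2}.
\end{equation*}
Next we use the level-$0$ form of the classification in \cite{NRV25} Propositions 6.1, 6.2 of cuspidal representations having an $H$-distinguished lift. Since $\pi$ is distinguished, it is $\s$-self-dual, and so is $\rho$. Because $\pi$ has no distinguished lift, the classification forces constraints on $r$: it is related to the order of $q_0$ mod $\ell$, and in particular $q_0$ (or $-q_0$) is a root of unity of controlled order mod $\ell$. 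It also constrains the parity data and $\cen_\pi(-1)$. Plugging these in should give $q^{n/2}\equiv\pm1$ with exactly the sign needed to make the right-hand side equal to $-1$.

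\textbf{Second computation.} Pass to the finite field. Let $\overline\pi$ and $\overline\rho$ be the representations of $\GL_n(\k)$ and $\GL_{n/r}(\k)$ on the $1+\Mat_n(\pp_F)$-invariants, and let $\Psi$ be the character of $\k$ induced by $\psi$. Then \cite{NienZhang} Theorem 3.11 identifies $\g(q^{-1/2},\pi,\rho^\vee,\psi)$ with the finite-field factor $\g(\overline\pi,\overline\rho^\vee,\Psi)$. By \cite{NRV25}, $\overline\pi$ is $\overline H$-distinguished, with $\overline H$ as in \eqref{denisebaudu}, and has no $\overline H$-distinguished lift.

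We then prove the finite-field statement $\g(\overline\pi,\overline\rho^\vee,\Psi)=1$. Choosing $\Psi$ trivial on $\k_0$ in the unramified case, we adapt Ok's method.
\begin{enumerate}
\item Attach to $\overline\pi$ a constant $c(\overline\pi,\Psi)$ comparing two $\overline H$-invariant forms on its Whittaker model.
\item Show that $\g(\overline\pi,\pi',\Psi)=c(\overline\pi,\Psi)$ for every $\overline H_{n-1}$-distinguished representation $\pi'$ of Whittaker type lying in a suitably special class.
\item Show that this special class contains the distinguished cuspidal representations and is stable under parabolic induction.
\item Use multiplicativity in the second variable and compatibility with reduction mod $\ell$ to reduce $\g(\overline\pi,\overline\rho^\vee,\Psi)$ to $\g(\overline\pi,\Psi)^{n/r}$.
\item Evaluate this Godement--Jacquet factor using \cite{NRVGJ}, which gives $1$.
\end{enumerate}

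Comparing the two computations gives $-1=1$ in $\flb$, which is impossible since $\ell\neq2$. Hence $\pi$ must have an $H$-distinguished cuspidal lift.

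The main obstacle is the finite-field part, specifically showing that the special class is large enough: it must contain the distinguished cuspidals and be stable under induction. Ok's analytic arguments do not transfer to the modular setting. I would first try to define the special class by a condition that can be checked on induced representations via Mackey theory, for instance the class $\Cc(H)$ alluded to in the introduction.
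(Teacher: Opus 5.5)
Your overall strategy is the paper's: compute $\g(q^{-1/2},\pi,\rho^\vee,\psi)$ once by lifting plus the classification, and once via the level-$0$ type with an Ok-type argument over $\k$. However, the finite-field half as you describe it never uses the hypothesis that $\pi$ has no distinguished lift. Without that hypothesis the conclusion $\g(\overline\pi,\overline\rho^\vee,\Psi)=1$ is false: in the Levi case ($F/F_0$ ramified) with $k=n/r=1$, one gets $-1$ (Remark~\ref{minelli2}).

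Two of your steps break. First, step~4 needs $\overline\rho^\vee$ to be $\overline H_k$-distinguished, so that $\overline\rho^\vee\times1\times\dots\times1$ lies in the special class (Corollary~\ref{Aubusson}). You never justify this, and it fails when $\overline\rho$ is the quadratic character of $\k^\times$. The paper obtains it as follows:
\begin{itemize}
\item $\overline\pi$ distinguished implies $\overline\pi$ is $\s$-self-dual (Lemma~\ref{distselfdual}; in the Levi case over $\flb$ this needs a lifting argument and Kapon's theorem);
\item by uniqueness of the supercuspidal support, $\overline\rho$ is then $\s$-self-dual;
\item by Lemma~\ref{distparitek}, $\overline\rho$ is then distinguished, except in the Levi case with $k=1$.
\end{itemize}
Second, step~5 is wrong in the Levi case. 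There \cite{NRVGJ} gives $\g(\overline\pi,\Psi)={\rm sgn}(\overline\pi)\in\{\pm1\}$ (Proposition~\ref{valueGJdist}), so $\g(\overline\pi,\Psi)^{k}=1$ requires $k$ even. Lemma~\ref{distparitek} gives $k$ even or $k=1$. The case $k=1$ must be excluded precisely by the no-lift hypothesis: \cite{NRV25} Lemma~6.10(2.b), or on the $p$-adic side Proposition~\ref{DLT05} and cases 3--4 of \S\ref{mammuth}.

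A smaller point in the same part concerns the two linear forms of \S\ref{floquage}. They are only defined if $\psi$ is trivial on $N_n\cap H_n$, which fails for the block-diagonal $\GL_m(\k)\times\GL_m(\k)$. The paper therefore uses the centraliser of ${\rm diag}(-1,1,\dots,(-1)^n)$, which also gives a meaning to $H_{n-1}$ when $n-1$ is odd.

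Your first computation is also only asserted (``should give \dots\ exactly the sign needed''). The argument is short but must be made:
\begin{itemize}
\item $\cen_\pi(-1)=1$ simply because $\pi$ is $H$-distinguished and $-1\in F_0^\times$; this does not come from the classification.
\item If $r$ is even, then $r=e(\rho)\ell^u$ with $\ell$ odd forces the order $e(\rho)$ of $q^k$ mod $\ell$ to be even. Hence $q^{n/2}\equiv-1$ and the value is $-1$ with no further input.
\item If $r$ is odd, Proposition~\ref{nessconddist}(1) and Theorem~\ref{DLT2} show that the absence of a distinguished lift leaves only cases 1 and 3 of \S\ref{mammuth}. There $q^{n/2}=1$: in the unramified case use $q^{1/2}=q_0$ and $e_0=e$; in the ramified case use that $k/(k,e)$ is even.
\end{itemize}
Likewise, Lemma~\ref{CalculMagique} itself needs the generic lift $\St_r(\rt)$ of $\pi$, not a cuspidal lift of $\pi$. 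The proof then uses multiplicativity, the $L$-factors of \cite{KM17}, the Bushnell--Henniart value $\e(q^{-1/2},\rt,\rt^\vee,\psi)=\cen_{\rt}(-1)^{k-1}$, and $q^n\equiv1$ mod $\ell$.
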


\subsection{}

Let us fix a square root $q_0^{1/2}$ in $\qlb$.
By reduction mod the maximal ideal of $\zlb$,
it defines~a square root in $\flb$,
which we also denote by $q_0^{1/2}$.
By applying the rule \eqref{choixq12sec3},
we get a square root $q^{1/2}$ in $\qlb$ and $\flb$.

Let us recall the classification of non-supercuspidal, cuspidal
$\flb$-representations of $G$ given by \cite{MSc} Théorème~6.14. 
Let $k\>1$ be a positive integer
and $\rho$ be a supercuspidal $\flb$-representation of $\GL_k(\F)$. 
Accor\-ding to \cite{MSc} \S8.1,
for any $r\>1$, the in\-duced representation 
\begin{equation}
\label{INDRHO}
\rho\nu^{-(r-1)/2}\tdt\rho\nu^{(r-1)/2}
\end{equation}
contains a unique generic irreduci\-ble subquotient, denoted $\St_{r}(\rho)$.
(For the definition of a~generic re\-presentation, see \S\ref{generik}.) 
Let $\et(\rho)$ be the smallest~inte\-ger $i\>1$ 
such that $\rho\nu^i$ is~iso\-mor\-phic to~$\rho$.  

\begin{prop}
\label{Coupure}
Let $\pi$ be a non-supercuspidal, 
cuspidal $\flb$-representation of $\GL_n(\F)$. 
\begin{enumerate}
\item 
There are a unique positive integer $r=r(\pi)\>2$ dividing $n$
and a supercuspidal representa\-tion $\rho$ of $\GL_{n/r}(\F)$ 
such that $\pi$ is isomorphic to $\St_r(\rho)$.
\item
There is a unique integer $u\>0$ such that $r=\et(\rho)\ell^u$.
\item
Let $\rho'$ be a supercuspidal representation of $\GL_{n/r}(\F)$.
The representation $\pi$ is isomorphic to $\St_{r}(\rho')$ if and only if 
$\rho'$ is isomorphic to $\rho\nu^i$ for some $i\in\ZZ$.
\end{enumerate} 
\end{prop}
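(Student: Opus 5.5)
This is essentially a recollection of the Mínguez--Sécherre classification of cuspidal $\flb$-representations of $\GL_n(F)$ (\cite{MSc} Théorème~6.14 and \S8). The plan is to deduce the three items from results stated there rather than to reprove them.

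\textbf{Item 1.} Let $\pi$ be cuspidal and non-supercuspidal. I will use the supercuspidal support of $\pi$ together with the following fact from \cite{MSc}. Whenever $\pi$ is cuspidal, its supercuspidal support has the form $\rho\nu^{a}+\rho\nu^{a+1}+\dots+\rho\nu^{a+r-1}$, a segment of length $r$, and one can arrange that $r=\et(\rho)\ell^u$. The unique generic subquotient of the corresponding induced representation is then cuspidal.

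Uniqueness of the supercuspidal support (\cite{MSc}) fixes the multiset of supercuspidal constituents. This forces $n=rk$, where $k$ is the size of $\rho$, and determines $r$ as the number of constituents. Since $\pi$ is not supercuspidal, we get $r\>2$.

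Genericity then identifies $\pi$ with $\St_r(\rho')$ for a suitable twist $\rho'$ of $\rho$ centred so that the exponents are symmetric. The key input here is uniqueness of the generic subquotient of \eqref{INDRHO} (\cite{MSc} \S8.1), combined with the fact that cuspidal representations are generic.

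\textbf{Item 2.} The statement that $r=\et(\rho)\ell^u$ is the content of \cite{MSc} Théorème~6.14. This theorem characterises when $\St_r(\rho)$ is cuspidal, namely exactly when $r=1$ or $r=\et(\rho)\ell^u$ for some $u\>0$. Uniqueness of $u$ is then clear, since $\et(\rho)\>1$.

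\textbf{Item 3.} For the ``if'' direction, $\rho\nu^i$ has the same $\et$ as $\rho$. The induced representation \eqref{INDRHO} for $\rho\nu^i$ has the same supercuspidal support as that for $\rho$: shifting the segment by $i$ permutes its terms cyclically, because $\rho\nu^{r}\simeq\rho$ as $\et(\rho)$ divides $r$. Both representations are twists of each other by $\nu^i$, and $\St_r(\rho)\nu^i=\St_r(\rho\nu^i)$. To conclude one uses that the generic subquotient depends only on the supercuspidal support, which is again \cite{MSc} \S8.1, through uniqueness of the generic constituent in a given support.

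For the ``only if'' direction, equality of supercuspidal supports shows that $\rho'$ is one of the $\rho\nu^{j}$.

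\textbf{Main obstacle.} The delicate point is the claim that the generic subquotient depends only on the support and not on the ordering. I would handle it by invoking the uniqueness of a generic irreducible representation with given supercuspidal support from \cite{MSc}.
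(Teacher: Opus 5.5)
Your proposal is correct and follows the paper. The paper gives no independent argument for this statement; it simply recalls the classification in \cite{MSc} Th\'eor\`eme~6.14 and the existence and uniqueness of the generic subquotient $\St_r(\rho)$ from \cite{MSc} \S8.1. Your unpacking of items 1 and 3 is exactly how those citations give the statement: uniqueness of the supercuspidal support (with the semisimplification of the induced representation independent of the ordering), plus the fact that the induced representation has a unique generic subquotient. The only wording to fix is that $r=e(\rho)\ell^u$ is forced by that theorem, not something one ``arranges''.
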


\begin{rema}
\label{BertheJosserand}   
\label{HortenseJosserand}
\begin{enumerate}
\item 
By \cite{MSjl} Lemme 3.6,
the integer $\et(\rho)$ is equal to the order of $q^{t(\rho)}$ mod~$\ell$,
where $t(\rho)$ is the number of unramified characters $\chi$ of $\GL_k(F)$
such that $\rho\chi$ is~iso\-mor\-phic to $\rho$.
\item
The representation $\pi$ has level $0$ if and only if $\rho$ has level $0$,
in which case the number $t(\rho)$ is equal to $k$ (see \cite{MSt} \S3.4). 
\end{enumerate}
\end{rema}

\subsection{}

Recall from \cite{NRV25} the following necessary conditions of distinction.
Let~$e$~and $e_0$ denote the 
orders~of $q$ and $q_0$ mod $\ell$, 
respectively.

\begin{prop}
\label{nessconddist}
Let $\pi$ be an $H$-distinguished non-supercuspidal, cuspidal
representation of le\-vel $0$ of $G$.
Then there is a supercuspidal representation $\rho$ of
$\GL_k(F)$, with $rk=n$, 
such that~$\pi$ is iso\-mor\-phic to $\St_r(\rho)$ and 
\begin{enumerate}
\item 
If $r$ is odd, then:
\begin{enumerate}
\item 
the representation $\rho$ is $\GL_k(F_0)$-distinguished,
\item
if $F/F_0$ is unramified, then $k$, $e$ are odd,
\item
if $F/F_0$ is ramified, then $k$ is even.
\end{enumerate}
\item
If $r$ is even, then: 
\begin{enumerate}  
\item
we have $\rho^{\vee\s}=\rho\nu^{i}$ for some $i\in\{0,1\}$,
\item
if $F/F_0$ is unramified or $k\neq 1$,
then $\rho\nu^{i/2}$ is $\GL_k(F_0)$-distinguished, 
\item
if $F/F_0$ is unramified, then $i=0$.
\end{enumerate}
\end{enumerate}
\end{prop}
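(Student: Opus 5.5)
The plan is to take this proposition more or less directly from \cite{NRV25}, adapting it to level $0$ and to the notation $\St_r(\rho)$. Write $\pi\simeq\St_r(\rho)$ with $\rho$ a level $0$ supercuspidal representation of $\GL_k(F)$, as in Proposition \ref{Coupure}. By Proposition \ref{Coupure}(3), $\rho$ is only determined up to a twist $\nu^j$. We will use this freedom to normalize $\rho$ at the end.

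The first step is $\s$-self-duality. An $H$-distinguished irreducible representation is $\s$-self-dual (\cite{VSANT19} Theorem 4.1), so $\pi^{\vee\s}\simeq\pi$. Moreover $\St_r(\rho)^{\vee\s}\simeq\St_r(\rho^{\vee\s})$, since contragredient and $\s$-conjugation preserve genericity and send the induced representation \eqref{INDRHO} to one of the same form. By Proposition \ref{Coupure}(3) we therefore get $\rho^{\vee\s}\simeq\rho\nu^{j}$ for some $j\in\ZZ$. Replacing $\rho$ by $\rho\nu^{a}$ changes $j$ to $j-2a$, because $\nu^\s=\nu$. Hence we may assume $j\in\{0,1\}$, which gives (2)(a) once the parity analysis is done.

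The second step uses the finer type-theoretic description of distinction from \cite{NRV25}. In level $0$ the type $\overline{\pi}$ is a cuspidal representation of $\GL_n(\k)$ that is distinguished by $\overline{H}$ as in \eqref{denisebaudu}. Using the known classification of distinguished cuspidal representations of finite general linear groups together with the Harish-Chandra and James description of $\overline\pi$ as the generic subquotient of $\overline\rho\times\dots\times\overline\rho$, we extract constraints on $\overline\rho$ and on the parity of $k$ and $e$.

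For $r$ odd, the twisting freedom can be used to make $i=0$. In the unramified case, distinction of $\overline{\pi}$ by $\GL_n(\k_0)$ forces $\overline{\rho}$ to be $\s$-self-dual, and $k$ odd follows from the structure of $\s$-self-dual cuspidals. The condition that $e$ is odd comes from requiring $\nu^{\pm1}$-twists to be compatible with $\s$-self-duality of the cuspidal support mod $\ell$. In the ramified case, $\overline{H}$ is the Levi $\GL_{n/2}\times\GL_{n/2}$, and the existence of a linear-period-type invariant form forces $k$ even. Distinction of $\rho$ itself (or of $\rho\nu^{i/2}$ when $r$ is even) then follows from the supercuspidal case (\cite{VSANT19} Theorem 10.8), i.e.\ from $\s$-self-duality and dichotomy, with the $\cla$-distinguished alternative excluded via the sign of the type.

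The main obstacle is (2)(c) together with the exceptional case $k=1$ in the ramified case. Here one has to rule out $i=1$ in the unramified case, and this depends delicately on the choice $q^{1/2}=q_0$ in \eqref{choixq12sec3}. I would treat it by computing how $\nu^{1/2}$ restricts to $F_0^\times$ and comparing central characters on $F_0^\times$: distinction forces $\omega_\pi$ to be trivial on $F_0^\times$, and this is incompatible with $i=1$ when $F/F_0$ is unramified. I expect the bookkeeping in \cite{NRV25} Propositions 6.1 and 6.2 to be the real content, so in practice I would cite it after translating their type-theoretic invariants into the level $0$ language above.
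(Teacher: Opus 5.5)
Your derivation of (2.a) from the $\s$-self-duality of $\pi$ and Proposition \ref{Coupure}(3) is fine; the paper simply cites \cite{NRV25} Proposition 3.8. For $r$ odd the paper also just quotes \cite{NRV25} Theorem 5.1. There the $\s$-self-dual $\rho$ is unique because $e(\rho)$ is odd, so ``dichotomy plus an invariant of $\pi$'' is a possible route, although your sketch is loose: for instance, $e$ odd follows at once from $e=e(\rho)\cdot(e,k)$ with $e(\rho)$ and $k$ odd.

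The genuine gap is in (2.b). For $r$ even, no invariant of $\pi$ (type, sign, central character) can force $\rho\nu^{i/2}$ to be distinguished, because (2.a) does not determine $\rho$:
\begin{itemize}
\item Since $\ell\neq2$ and $r=e(\rho)\ell^u$ is even, $e(\rho)$ is even. As $e(\rho)$ is the order of $q^k$ mod $\ell$, this gives $q^{n/2}\equiv-1$ mod $\ell$. Your proposal never states this congruence, and it is the key input.
\item Consequently $\rho'=\rho\nu^{r/2}$ is not isomorphic to $\rho$, since $e(\rho)$ does not divide $r/2$. Yet $\St_r(\rho')\simeq\pi$ and $(\rho')^{\vee\s}\simeq\rho'\nu^{i}$ with the same $i$.
\item So $\mu=\rho\nu^{i/2}$ and $\mu\nu^{r/2}$ are the two non-isomorphic $\s$-self-dual unramified twists of $\mu$. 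By \cite{VSANT19} Proposition 10.12, exactly one of them is distinguished when $F/F_0$ is unramified or $k\neq1$.
\end{itemize}
Thus (2.b) is not a property forced on your $\rho$. It is obtained by \emph{choosing} the right one of $\rho$ and $\rho\nu^{r/2}$, which is what the paper does. (The paper writes $\rho\nu^{n/2}$; this is the same representation when $k$ is odd.)

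For (2.c), comparing central characters on $F_0^\times$ is the paper's idea. However, the ``incompatibility'' you assert is exactly the congruence above. Once $\rho\nu^{i/2}$ is distinguished, $\cen_\rho(\w)=q^{ik/2}$, hence $\cen_\pi(\w)=q^{in/2}=(-1)^i$, and distinction of $\pi$ forces $i=0$. Nothing delicate about the choice $q^{1/2}=q_0$ is involved, since $q^{n/2}$ is an integral power of $q$.

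Finally, do not fall back on \cite{NRV25} Propositions 6.1, 6.2. They characterize representations \emph{having a distinguished lift}. This proposition is needed (\S\ref{mammuth}, Proposition \ref{gammais1}) precisely for distinguished $\pi$ without such a lift, so citing them would presuppose what Theorem \ref{CONJNRV} is proving.
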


\begin{proof}
The case where $r$ is odd comes from \cite{NRV25} Theorem 5.1.
Let us consider the case~where $r$~is even.
First note that $q^{n/2}$ is congruent to $-1$ mod $\ell$ in this case.
In\-deed,
$r=e(\rho)\ell^u$~for~some $u\>0$
and $e(\rho)$ is the order of $q^k$ mod $\ell$ 
(see Remark \ref{HortenseJosserand}).
Now Property (2.a) follows from~\cite{NRV25} Proposition 3.8. 
It follows from~(2.a) that $\mu=\rho\nu^{i/2}$~is
$\s$-self-dual.
It has a unique non-isomorphic $\s$-self-dual unramified twist,
which~is $\mu\nu^{n/2}$.
Suppose that 
$F/F_0$ is unramified or $k\neq 1$.
By \cite{VSANT19} Proposition 10.12,
there~exists exactly~one dis\-tingui\-shed representation among $\mu$ and
$\mu\nu^{n/2}$.
If~it is $\mu\nu^{n/2}$, 
we~may~re\-place~$\rho$ by $\rho\nu^{n/2}$,
which exchanges the roles played by $\mu$ and $\mu\nu^{n/2}$
without changing~$\pi$.
This proves (2.b).

Let us prove (2.c).
Thanks to (2.b),
we may assume that $\rho\nu^{i/2}$ is distinguished.
Thus $\cen_\rho(\w)=q^{ik/2}$.~This implies 
\begin{equation*}
\cen_{\pi}(\w) = \cen_\rho(\w)^r = q^{in/2} = (-1)^i
\end{equation*}
since $q^{n/2} \equiv -1$ mod $\ell$.
Since $\pi$ is distinguished, 
we must have $i=0$.
\end{proof}

\subsection{}
\label{classif6162}

Recall the necessary and sufficient conditions for a 
non-supercuspidal, cuspidal
representa\-tion of level $0$~of~$G$ to have a $H$-distinguished
cuspidal lift (\cite{NRV25} Propositions 6.1,~6.2). 

Let $\cla=\cla_{F/F_0}$ be the character of $F_0^\times$
with~ker\-nel ${\rm N}_{F/F_0}(F^\times)$,
the subgroup of $F/F_0$-norms.

\begin{theo}
\label{DLT1}
A non-supercuspidal, cuspidal representation of level $0$ of $G$
has a $H$-distin\-guished cuspidal lift if and only if it is of the form 
$\St_r(\rho)$ with $\rho$ a supercuspidal representation~of $\GL_k(F)$
with $rk=n$, and one of the following three conditions is satisfied: 
\begin{enumerate}
\item 
$r$ is odd, $F/F_0$ is unramified, $e_0$ is even
and $\rho$ is $\GL_k(F_0)$-distinguished, 
\item
$r$ is odd, $F/F_0$ is ramified, $k$ is even, $n/e$ is odd
and $\rho$ is $\GL_k(F_0)$-distinguished, 
\item
$r$ is even, $F/F_0$ is ramified, $k=1$
and $\rho |_{F_0^\times} \in \{\nu_0^{-1},\cla\}$. 
\end{enumerate}
\end{theo}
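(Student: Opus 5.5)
The statement is a reformulation, in level $0$, of \cite{NRV25} Propositions 6.1 and 6.2. The plan is to specialise the type-theoretic criteria proved there, rather than to argue from scratch. Throughout, let $\pi$ be a non-supercuspidal cuspidal $\flb$-representation of level $0$ of $G$. By Proposition \ref{Coupure}, write $\pi\simeq\St_r(\rho)$ with $r\>2$, $rk=n$ and $\rho$ supercuspidal of level $0$ of $\GL_k(F)$. By Remark \ref{HortenseJosserand}, $r=e(\rho)\ell^u$, where $e(\rho)$ is the order of $q^k$ mod $\ell$.

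\textbf{Step 1 (specialising the invariants).} In \cite{NRV25}, a cuspidal representation is described by a simple type $(J,\lambda)$, whose restriction to $J^1$ is trivial in level $0$. The invariants entering Propositions 6.1 and 6.2 then become the following.
\begin{itemize}
\item The tamely ramified extension $T_0$ becomes $F_0$ itself.
\item The degree $m$ becomes $n$.
\item The finite-group part becomes the cuspidal representation $\overline{\pi}$ of $\GL_n(\k)$ obtained from the $1+\Mat_n(\p_F)$-invariants.
\end{itemize}
Under this dictionary, $\overline{\pi}$ is the generic subquotient of $\overline{\rho}\times\dots\times\overline{\rho}$. The quantities $e$, $e_0$, $n/e$ and $k$ in the statement are exactly what the conditions of \cite{NRV25} reduce to when expressed through the orders of $q$ and $q_0$ mod $\ell$.

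\textbf{Step 2 (case $r$ odd).} Propositions 6.1 and 6.2 of \cite{NRV25} say that a distinguished lift exists if and only if $\rho$ is distinguished and an arithmetic parity condition holds. This parity condition comes from counting the $\s$-self-dual lifts of $\overline{\pi}$ to $\qlb$. These lifts are parametrised, via Green's parametrisation, by regular characters of $\k_n^\times$ lifting a given $\flb$-character. One then checks which of them are distinguished, by $\GL_n(\k_0)$ in the unramified case, or by $\GL_{n/2}(\k)\times\GL_{n/2}(\k)$ together with the central-character condition in the ramified case.
\begin{itemize}
\item In the unramified case, the condition becomes $e_0$ even. This is compatible with the necessary conditions of Proposition \ref{nessconddist}(1.b), namely $k$ and $e$ odd.
\item In the ramified case, the condition becomes $k$ even and $n/e$ odd.
\end{itemize}

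\textbf{Step 3 (case $r$ even).} In this case $q^{n/2}\equiv-1$ mod $\ell$. I would verify directly that the existence of a distinguished lift forces the following:
\begin{itemize}
\item the extension $F/F_0$ is ramified;
\item $k=1$, since otherwise the lifts of $\overline{\pi}$ are not $\s$-self-dual of the right sign;
\item $\rho$ is a character whose restriction to $F_0^\times$ is $\nu_0^{-1}$ or $\cla$. These are the two restrictions compatible with $\rho^{\vee\s}=\rho\nu^{i}$ for $i\in\{0,1\}$ (Proposition \ref{nessconddist}(2.a)) and with a distinguished lift.
\end{itemize}
Conversely, in the ramified case with $k=1$ and $\rho|_{F_0^\times}$ one of these two characters, one constructs an explicit distinguished lift using \cite{NRV25} Proposition 6.2.

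\textbf{Main obstacle.} The delicate step is the translation between the conditions stated in \cite{NRV25} and the ones stated here. It involves the parity conditions on the orders of $q$ and $q_0$ mod $\ell$, and the identification of the two characters in (3). For this I would redo the count of $\s$-self-dual regular characters of $\k_n^\times$ lifting the relevant $\flb$-character, separately in the unramified and ramified cases.
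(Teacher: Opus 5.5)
Your proposal takes the same approach as the paper. The paper gives no independent proof of Theorem~\ref{DLT1}: it recalls it as the level-$0$ specialisation of \cite{NRV25} Propositions 6.1 and 6.2, which is exactly your Step 1 ($T_0=F_0$, $m=n$, finite part the type $\overline{\pi}$). The extra justifications in your Steps 2--3 (Green's parametrisation, counting $\s$-self-dual lifts, the ``right sign'' argument for $k=1$) are only heuristic and are not needed for the citation.
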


It will be convenient to rephrase Theorem \ref{DLT1} as follows.

\begin{theo}
\label{DLT2}
A non-supercuspidal, cuspidal representation of level $0$ of $G$
has a $H$-distin\-guished cuspidal lift if and only if it is of the form 
$\St_r(\rho)$ with $\rho$ a supercuspidal representation~of $\GL_k(F)$
with $rk=n$, and one of the following conditions is satisfied: 
\begin{enumerate}
\item 
$F/F_0$ is unramified, 
$e,k$ are odd, $e_0$ is even and $\rho$ is $\GL_k(F_0)$-distinguished, 
\item
$F/F_0$ is ramified, $e$ is even and
\begin{enumerate}
\item either
$k$ is even with the same dyadic valuation as $e$
and $\rho$ is $\GL_k(F_0)$-distinguished, 
\item
or $k=1$ and $\rho |_{F_0^\times} \in \{\nu_0^{-1},\cla\}$. 
\end{enumerate}
\end{enumerate}
\end{theo}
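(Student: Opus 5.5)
The plan is to check that, case by case, the conditions of Theorem \ref{DLT1} and those of Theorem \ref{DLT2} are equivalent. Only elementary $2$-adic arithmetic is needed, together with two inputs. The first is the relation $r=\et(\rho)\ell^u$ from Proposition \ref{Coupure}. The second is that, since $\rho$ has level $0$, Remark \ref{HortenseJosserand} shows that $\et(\rho)$ is the order of $q^k$ mod $\ell$, that is,
\begin{equation*}
\et(\rho)=e/\gcd(e,k).
\end{equation*}
As $\ell$ is odd, $r$ and $\et(\rho)$ have the same parity. We also use that $r$ is determined by $\pi$, and that $\rho$ is determined up to unramified twist (Proposition \ref{Coupure}). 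So the conditions on $r$ and $k$ do not depend on the choice of $\rho$. Distinction of $\rho$ is also insensitive to the choice of $\rho$ within the relevant conditions of Theorem \ref{DLT1}.

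\emph{Unramified case.} Theorem \ref{DLT1} only allows condition (1): $r$ odd, $e_0$ even, $\rho$ distinguished. We show this matches condition (1) of Theorem \ref{DLT2}.
\begin{itemize}
\item Suppose condition (1) of Theorem \ref{DLT1} holds. Then $\pi$ has a distinguished lift, so it is distinguished by \cite{KuMaAsai} Theorem 3.4. Proposition \ref{nessconddist}(1)(b) then gives that $k$ and $e$ are odd. (Its $r$ and $k$ agree with ours by the uniqueness just mentioned.)
\item Conversely, suppose $e$ and $k$ are odd, $e_0$ is even and $\rho$ is distinguished. Then $\et(\rho)=e/\gcd(e,k)$ is odd, hence $r$ is odd, and condition (1) of Theorem \ref{DLT1} holds.
\end{itemize}

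\emph{Ramified case, condition (2) of Theorem \ref{DLT1}.} Here $q=q_0$. Assume $r$ is odd, $k$ is even and $n/e$ is an odd integer. Since $n=rk$ with $r$ odd, $v_2(n)=v_2(k)\geqslant1$. Since $n/e$ is odd, $v_2(e)=v_2(n)$. So $e$ is even and has the same dyadic valuation as $k$, which is condition (2)(a) of Theorem \ref{DLT2}.

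Conversely, assume $v_2(e)=v_2(k)\geqslant1$. Then $k$ is even. Moreover $v_2(\gcd(e,k))=v_2(e)$, so $\et(\rho)$ is odd, hence $r$ is odd. Next, $e$ divides $k\,\et(\rho)$, which divides $kr=n$, so $n/e$ is an integer. Its $2$-adic valuation is $v_2(r)+v_2(k)-v_2(e)=0$, so $n/e$ is odd. This recovers condition (2) of Theorem \ref{DLT1}.

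\emph{Ramified case, condition (3) of Theorem \ref{DLT1}.} Here $k=1$, so $\gcd(e,k)=1$ and $\et(\rho)=e$. Hence $r=e\ell^u$ is even exactly when $e$ is even. So condition (3) of Theorem \ref{DLT1} is exactly condition (2)(b) of Theorem \ref{DLT2}.

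The only non-formal step is the implication in the unramified case that $r$ odd forces $k$ and $e$ odd. This is where one needs Proposition \ref{nessconddist} (via \cite{KuMaAsai}) rather than pure arithmetic. Everything else is bookkeeping of dyadic valuations.
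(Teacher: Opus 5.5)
Your proof is correct and follows the paper's argument. Both use $e(\rho)=e/(e,k)$ from Remark \ref{BertheJosserand}, hence $r=\ell^u e/(e,k)$ and $n/e=\ell^u k/(e,k)$, and then compare dyadic valuations in each case. The only difference is the unramified forward direction. The paper gets $k$ odd directly from the distinction of $\rho$ (\cite{VSANT19} Proposition 9.8) and then reads off that $r$ and $e$ have the same parity. You instead pass through \cite{KuMaAsai} and Proposition \ref{nessconddist}(1)(b), which is valid but rests on heavier input (\cite{NRV25} Theorem 5.1).
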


\begin{proof}
Remark \ref{BertheJosserand} implies that $e(\rho)=e/(e,k)$, thus
\begin{equation}
\label{molde}
r=\ell^ue/(e,k),
\quad
n/e=\ell^uk/(e,k),
\quad
\text{for some $u\>0$.}
\end{equation}
Suppose that $F/F_0$ is unramified. 
Then the fact that $\rho$ is $\GL_k(F_0)$-distinguished~implies that~$k$~is odd
(by \cite{VSANT19} Proposition 9.8).
It then follows from \eqref{molde} that $r$ and $e$ have the same~pa\-ri\-ty.

Suppose now that $F/F_0$ is ramified.
Then $r$ is odd if and only if $v_2(e)\< v_2(k)$,
and $n/e$~is~odd if~and only if $v_2(k)\< v_2(e)$,
thanks to \eqref{molde},
where $v_2(a)$ denotes the dyadic valuation of~a~po\-si\-tive~in\-te\-ger $a\>1$.
\end{proof}

\begin{rema}
When the conditions of Theorem \ref{DLT2} are satisfied,
$e_0$ is even and $e$ has the same parity as the 
ramifica\-tion order of $F/F_0$.
\end{rema}

\begin{rema}
\label{SSDLT}
One can easily extract the following result from Theorem \ref{DLT2} and
the Dicho\-to\-my and Disjunction Property of supercuspidal representations.
We won't use it in the present article, but it is worth mentioning it.
Let $\pi$ be a non-supercus\-pi\-dal, cuspidal represen\-tation~of~le\-vel $0$ 
of $G$, and set $r=r(\pi)$ and $k=n/r$.
Then $\pi$ has a $\s$-self-dual cuspidal lift if and only if~it is~$\s$-self-dual 
and one of the following conditions is satisfied: 
\begin{enumerate}
\item 
$F/F_0$ is unramified, 
$e,k$ are odd and $e_0$ is even, 
\item
$F/F_0$ is ramified, $e$ is even and
either
$k$ has the same dyadic valuation as $e$, 
or $k=1$. 
\end{enumerate}
One easily obtains a similar result for cuspidal represen\-tations of positive level
which uses the~in\-va\-riants $T/T_0$ and $m$ of \cite{NRV25} Propositions 
6.1, 6.2. 
\end{rema}

\subsection{}

As has been pointed out to us by A.~M\'\i nguez,
the conditions of Theorem~\ref{DLT2} only depend on $\rho$
in the following sense:
if $\rho$ is a supercuspidal representation of $\GL_k(F)$
and if we~defi\-ne $\pi_u$ to be the cuspidal representation
$\St_{e(\rho)\ell^u}(\rho)$ for any $u\>0$, then
\begin{itemize}
\item 
  either, for all $u\>0$,
  the representation $\pi_u$ has no distinguished cuspidal lift, 
\item
or, for all $u\>0$,
  the representation $\pi_u$ has a distinguished cuspidal lift.
\end{itemize}
Consequently,
in order to prove Theorem \ref{CONJNRV},
given a $\rho$ such that $\pi_u$ is distinguished for some $u$,
it suffices to find a $v$ such that $\pi_v$ has a distinguished lift.
But we won't use~this~stra\-tegy.

Our strategy is by contradiction:
we assume that there is a cuspidal representation $\pi$
of level $0$ which is distinguished but has no distinguished lift.
We then compute a $\g$-factor in two different manners,
and find two different values
(see Propositions \ref{gammais1} and \ref{sadus}).

\subsection{}

Let us first consider more carefully the case where $r$ is even,
$F/F_0$ is ramified and $k=1$.

\begin{prop}
\label{DLT05}
Suppose that $F/F_0$ is ramified and $n$ is even.
Let $\pi$ be a cuspidal represen\-ta\-tion of level $0$ of $G$ 
such that $r=n$. 
The following conditions are equivalent:
\begin{enumerate}
\item
  $\pi$ is $H$-distinguished.
\item
  $\pi$ has an $H$-distinguished lift.
\item
  $\pi$ is isomorphic to $\St_n(\rho)$ for some 
character $\rho$ of $F^\times$ such that
$\rho |_{F_0^\times} \in \{\nu_0^{-1},\cla\}$. 
\end{enumerate}
\end{prop}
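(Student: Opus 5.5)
The implications (2)$\Rightarrow$(1) and (3)$\Leftrightarrow$(2) come essentially for free. (2)$\Rightarrow$(1) is \cite{KuMaAsai} Theorem 3.4, recalled after Theorem \ref{CONJNRV}. For (3)$\Leftrightarrow$(2), note that $r=n$ forces $k=1$, so $\rho$ is a character of $F^\times$. Also $n=r\geq2$ is even. Hence Theorem \ref{DLT2} applies, and case (2.b) there says exactly that $\pi$ has a distinguished lift if and only if $\pi\simeq\St_n(\rho)$ with $\rho|_{F_0^\times}\in\{\nu_0^{-1},\cla\}$. The evenness of $e$ needed in that case is automatic: $r=e(\rho)\ell^u$ is even, $\ell\neq2$, and $e(\rho)=e$ when $k=1$ (Remark \ref{BertheJosserand}). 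So the real content is (1)$\Rightarrow$(3).

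For (1)$\Rightarrow$(3), assume $\pi$ is $H$-distinguished and apply Proposition \ref{nessconddist}. Since $r=n$ is even, case (2) applies. We get a character $\rho$ of $F^\times$ with $\pi\simeq\St_n(\rho)$ and $\rho^{\vee\s}=\rho\nu^i$ for some $i\in\{0,1\}$. Condition (2.b) is not available because $F/F_0$ is ramified and $k=1$, so we must identify $\rho|_{F_0^\times}$ by hand.

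Write $\rho^{\vee\s}=\rho\nu^i$ as $\rho\circ\s\cdot\rho=\nu^{-i}$. Restricted to $F_0^\times$, this gives $\rho|_{F_0^\times}^2=\nu^{-i}|_{F_0^\times}$. In the ramified case $\nu|_{F_0^\times}=\nu_0$, since a uniformizer of $F_0$ is a square of a uniformizer of $F$ times a unit, and $q=q_0$. So $(\rho|_{F_0^\times})^2=\nu_0^{-i}$. By Proposition \ref{Coupure}(3) we may twist $\rho$ by powers of $\nu$, replacing $\rho$ by $\rho\nu^j$. This changes $\rho|_{F_0^\times}$ by $\nu_0^{j}$ and $i$ by $2j$ modulo the period. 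The aim is to normalize so that $\rho|_{F_0^\times}$ is one of $\nu_0^{-1},\cla$, or else the two remaining candidates $\mathbf 1$ and $\cla\nu_0^{-1}$ (and their twists).

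The main obstacle is excluding the bad characters, i.e.\ those with $\rho|_{F_0^\times}$ equal to $\mathbf 1$ or $\cla\nu_0^{-1}$ up to twist. The tool is the central character, as in the proof of (2.c). Distinction forces $\cen_\pi|_{F_0^\times}=1$, and $\cen_\pi=\rho^n$ (the product of $\rho\nu^{j}$ over the segment gives $\rho^n$, since the $\nu$-exponents sum to $0$). The identity $q^{n/2}\equiv-1$ mod $\ell$, which holds since $e\mid n$ with $n/e$ odd by the valuation argument in the proof of Theorem \ref{DLT2}, should then separate the cases. Evaluated on a uniformizer of $F_0$ and on units, the condition $\rho^n|_{F_0^\times}=1$ distinguishes $\nu_0^{-1},\cla$ (allowed, since $\nu_0^{-n}(\w_0)=q^{n}$ and $\cla^n=1$) from the others only through signs like $q^{n/2}=-1$.

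If the central character alone does not exclude the bad cases, I would fall back on a finite-field argument. By \cite{NRV25} the level-$0$ type $\overline\pi$ must be $\GL_{n/2}(\k)\times\GL_{n/2}(\k)$-distinguished, and the bad case should be ruled out by computing this distinction explicitly for $\St_n$ of a character of $\k^\times$. Alternatively, I would invoke \cite{NRV25} Proposition 6.2 directly, which I expect treats this $k=1$ case completely.
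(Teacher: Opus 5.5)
You handle (2)$\Rightarrow$(1) and (2)$\Leftrightarrow$(3) correctly. The step that carries the content, (1)$\Rightarrow$(3), has a genuine gap.

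First, the bookkeeping after Proposition~\ref{nessconddist}(2.a) needs correcting. One has $\nu=\nu_0\circ{\rm N}_{F/F_0}$, so $\nu|_{F_0^\times}=\nu_0^2$, not $\nu_0$. The relation $\rho^{\vee\s}=\rho\nu^i$ therefore says that $\rho|_{F_0^\times}\nu_0^{i}$ is trivial on norms. Hence $\rho|_{F_0^\times}\in\{\nu_0^{-i},\cla\nu_0^{-i}\}$, with the first option exactly when $\rho$ is unramified. Twisting by $\nu^j$ changes $i$ by $-2j$ modulo the even integer $e$, so it never turns a bad case into a good one.

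Second, the central-character test cannot exclude the bad cases $\rho|_{F_0^\times}\in\{\mathbf 1,\cla\nu_0^{-1}\}$. For these, $\rho^n|_{F_0^\times}$ is $\mathbf 1$, respectively $\nu_0^{-n}$, and the latter is trivial since $q^n\equiv1$ mod $\ell$. The structural reason: take a uniformizer $\w$ of $F$ with $\s(\w)=-\w$, so that $\w_0=\w^2$ is a uniformizer of $F_0$ and $\cla(\w_0)=\cla(-1)$. Triviality of $\cen_\pi$ on $F_0^\times$ only gives $\cen_\pi(\w)^2=1$. The four candidates differ precisely in the sign $\cen_\pi(\w)=\rho(\w_0)^{n/2}$. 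Using $q^{n/2}\equiv-1$, this sign is $(-1)^i$ if $\rho$ is unramified and $\cla(-1)^{n/2}(-1)^i$ if $\rho$ is ramified.

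The missing input is that $H$-distinction of a level $0$ cuspidal representation in the ramified case fixes this sign itself, not just its square. By \cite{NRV25} Theorem 4.45 and Lemma 6.12, distinction forces $\cen_\pi(\w)=-1$ when $\rho$ is unramified and $\cen_\pi(\w)=\cla(-1)^{n/2}$ when $\rho$ is ramified. This reflects the sign by which $s$ acts on the $\GL_{n/2}(\k)\times\GL_{n/2}(\k)$-invariant form of the type. Comparing with the values above gives $i=1$ (so $\rho|_{F_0^\times}=\nu_0^{-1}$) in the unramified case and $i=0$ (so $\rho|_{F_0^\times}=\cla$) in the ramified case. This is the paper's argument.

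Neither of your fallbacks supplies this. The type $\overline{\pi}$ depends only on $\overline{\rho}$ (the trivial or the quadratic character of $\k^\times$), not on $i$. So good and bad candidates have the same type, and checking $\overline{H}$-distinction of $\overline{\pi}$ cannot separate them; only the sign does. Also, \cite{NRV25} Proposition 6.2 characterizes representations having a distinguished lift, i.e.\ condition (2). It therefore says nothing about (1)$\Rightarrow$(3), which is exactly what is new in this proposition.
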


\begin{proof}
We already know that (3) implies (2) and that (2) implies (1).
Let us prove that~(1)~im\-plies (3).
Assume that $\pi$ is distinguished.
By Proposition \ref{nessconddist}(2.a),
it is isomorphic to $\St_n(\rho)$ for some 
character~$\rho$ of $F^\times$ such that $\rho\nu^{i/2}$~is
$\s$-self-dual for some $i\in\{0,1\}$. 
The latter is~thus either distinguished
or $\cla$-distinguished.
Since $F/F_0$ is ramified,
$\cla$~is~ra\-mified,
thus $\rho\nu^{i/2}$ is distinguished if and only if $\rho$ is unramified.
We thus get
\begin{equation}
\label{bathilde}
\rho(\w_0)=
\left\{
\begin{array}{ll}
q^i & \text{if $\rho$ is unramified}, \\ 
\cla(-1)q^i & \text{if $\rho$ is ramified}.
\end{array}
\right.
\end{equation}
Since $\cen_\pi(\w)=\rho(\w)^{n}=\rho(\w_0)^{n/2}$, 
we thus have
\begin{equation}
\label{bathilde2}
\cen_\pi(\w)=
\left\{
\begin{array}{ll}
q^{in/2} & \text{if $\rho$ is unramified}, \\ 
\cla(-1)^{n/2}q^{in/2} & \text{if $\rho$ is ramified}.
\end{array}
\right.
\end{equation}
But, since $F/F_0$ is ramified, 
the fact that $\pi$ is distinguished implies 
(by \cite{NRV25} Theorem 4.45,~Lem\-ma 6.12) 
that
\begin{equation}
\label{bathilde3}
\cen_\pi(\w)=
\left\{
\begin{array}{ll}
-1 & \text{if $\rho$ is unramified}, \\ 
\cla(-1)^{n/2} & \text{if $\rho$ is ramified}.
\end{array}
\right.
\end{equation}
We thus get 
\begin{itemize}
\item 
if $\rho$ is unramified, then $i=1$, thus $\rho |_{F_0^\times} = \nu_0^{-1}$,
\item
if $\rho$ is ramified, then $i=0$, thus $\rho |_{F_0^\times} = \cla$.
\end{itemize}
This proves the proposition.
\end{proof}

\subsection{}
\label{mammuth}

It follows that,
if $\pi$ is a non-supercuspidal, cuspidal representation of level $0$ of $G$
which is $H$-distinguished but has no $H$-distinguished cuspidal lift,
then $\pi$ is isomorphic to $\St_r(\rho)$ and
we are~in one~of the four following cases:
\begin{enumerate}
\item 
$F/F_0$ is unramified, $e_0$ is odd and $\rho$ is 
$\GL_k(F_0)$-distinguished, 
\item 
$F/F_0$ is unramified, $e$ is even and $\rho$ is 
$\GL_k(F_0)$-distinguished,
\item
$F/F_0$ is ramified, $k/(k,e)$ is even, $e/(e,k)$ is odd and
$\rho$ is $\GL_k(F_0)$-distinguished, 
\item
$F/F_0$ is ramified, $k$ is even and $\rho\nu^{i/2}$ is
$\GL_k(F_0)$-distinguished for an $i\in\{0,1\}$.
\end{enumerate} 

\subsection{}

Let $\psi$ be a non-trivial character of $F$. 
Associated with any generic irreducible represen\-tation $\tau$ of $\GL_{m}(F)$
with $m\<n$, there is a Rankin--Selberg $\g$-factor
$\g(X,\pi,\tau,\psi) \in\flb(X)$ whose definition and main properties are
recalled in Section \ref{BGRSgamma}.

\begin{lemm}
\label{CalculMagique}
Let $\pi$ denote a non-supercuspidal, cuspidal $\flb$-representation of $G$
isomorphic to $\St_r(\rho)$ for some supercuspidal representation
$\rho$ of $\GL_k(F)$. 
Then
\begin{equation*}
\g \left( q^{-1/2},\pi,\rho^\vee,\psi \right) = 
\cen_{\pi}(-1)^{k-1} \cdot (-1)^{r} \cdot q^{n/2} 
\end{equation*} 
where $\cen_\pi$ is the central character of $\pi$.
\end{lemm}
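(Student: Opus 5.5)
We lift $\pi$ to characteristic zero, compute the complex-type $\g$-factor of the lift there, and reduce mod $\ell$. This uses the compatibility of the Rankin--Selberg $\g$-factor of \cite{KM17} with reduction mod $\ell$: if $\widetilde\pi$, $\widetilde\tau$ are integral generic lifts, then $\g(X,\widetilde\pi,\widetilde\tau,\widetilde\psi)$ has coefficients in $\zlb$ and reduces to $\g(X,\pi,\tau,\psi)$.

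\textbf{Choice of lifts.} Choose a supercuspidal $\qlb$-lift $\widetilde\rho$ of $\rho$; it exists since supercuspidals lift. By \cite{MSc}, $\pi=\St_r(\rho)$ has a cuspidal $\qlb$-lift $\widetilde\pi$. We want this lift to be cuspidal and \emph{not} an unramified twist of $\widetilde\rho$ in any form that would make the local factor singular. A cuspidal lift of $\pi$ lives on $\GL_n$ with $n=rk$, $r\geq2$, while $\widetilde\rho^\vee$ lives on $\GL_k$. So $\widetilde\pi\times\widetilde\rho^\vee$ is a pair of cuspidals on groups of different sizes, and its $L$-factors are trivial. Hence
\[
\g(X,\widetilde\pi,\widetilde\rho^\vee,\widetilde\psi)=\epsilon(X,\widetilde\pi,\widetilde\rho^\vee,\widetilde\psi),
\]
which is a monomial $c\,X^{a}$.

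\textbf{Computing the monomial.} We find $c$ and $a$ through the local Langlands correspondence over $\qlb$, where $\epsilon$ factors are Galois $\epsilon$ factors. Two key steps:
\begin{enumerate}
\item Identify the Langlands parameter of $\widetilde\pi$. Since $\widetilde\pi$ reduces to $\St_r(\rho)$, its parameter is irreducible of dimension $n$. Its restriction to inertia reduces to $r$ copies of that of $\rho$. Concretely, in level $0$, take $\widetilde\pi$ with type a cuspidal lift of $\overline\pi$, i.e.\ corresponding to a character of $\k_n^\times$ whose $\ell'$-part is the $\overline\rho$-character.
\item Evaluate $\epsilon(s,\widetilde\pi\times\widetilde\rho^\vee,\widetilde\psi)$. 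The cleaner route is the finite-field reduction $\g(q^{-1/2},\pi,\rho^\vee,\psi)=\g(\overline\pi,\overline\rho^\vee,\Psi)$ (\cite{NienZhang}) for $\psi$ of conductor $\p_F$. One then computes the lifted finite-field $\g$-factor of $\widetilde{\overline\pi}\times\widetilde{\overline\rho}{}^\vee$ as a Gauss sum. Since $\widetilde{\overline\pi}$ and $\widetilde{\overline\rho}$ are not related, this is (up to sign $(-1)^{\dots}$ and $\cen(-1)$ factors) a product of Gauss sums of norms of characters of $\k_{\mathrm{lcm}}$. Its reduction mod $\ell$ is a product of Gauss sums of characters whose $\ell'$-parts agree. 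Reduced mod $\ell$, such a Gauss sum becomes a Gauss sum of a character for which the relevant product collapses. Via Stickelberger/Hasse--Davenport, $g(\chi)g(\chi^{-1})=\chi(-1)q$, so the total reduces to $\cen_\pi(-1)^{k-1}(-1)^r q^{n/2}$.
\end{enumerate}

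Alternatively, and more simply, argue by continuity in unramified twists. The monomial $\epsilon(X,\widetilde\pi\nu^{s},\widetilde\rho^\vee,\widetilde\psi)$ reduces to $\g(X,\pi,\rho^\vee,\psi)$, and the reduction is unchanged after twisting $\rho$ by $\nu^{j}$ (Proposition~\ref{Coupure}(3)). Using the functional equation $\g(X,\pi,\rho^\vee,\psi)\,\g(q^{-1}X^{-1},\pi^\vee,\rho,\psi^{-1})=\cen_\pi(-1)^{k}\cen_\rho(-1)^{n}$ pins $c^2$. The sign $(-1)^r$ and the factor $\cen_\pi(-1)^{k-1}$ are then fixed by comparing with the $\flb$-side computation of $\g(X,\St_r(\rho),\rho^\vee,\psi)$. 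That computation uses multiplicativity in the first variable along \eqref{INDRHO}: $\prod_{j}\g(X q^{-j+(r-1)/2},\rho,\rho^\vee,\psi)$, where the poles/zeros of $L(X,\rho\times\rho^\vee)=1/(1-X^{k}\cdot\ldots)$ cancel telescopically, leaving $(-1)^{r}$ times $\epsilon$ factors multiplying to $\cen_\rho(-1)^{\dots}q^{n/2}$ at $X=q^{-1/2}$. This last route is the one I would pursue.

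\textbf{Main obstacle.} The subtle point is that multiplicativity on the $\flb$ side holds only up to the generic subquotient. One must justify that $\g(X,\pi,\rho^\vee,\psi)$ equals the product over the segment, which is legitimate because $\g$ depends only on cuspidal support for generic reps (\cite{KM17}). One must also carefully track the telescoping of $\g(X,\rho\nu^{j},\rho^\vee,\psi)=\epsilon\cdot L(q^{-1}X^{-1},\ldots)/L(X,\ldots)$ at $X=q^{-1/2}$, where individual factors may have poles mod $\ell$ (since $q^{k r}\equiv1$). I would evaluate the product as a rational function first, simplify, and only then specialize at $q^{-1/2}$, reading off the sign $(-1)^r$ from the residual ratio $(1-q^{-k}X^{-k}\cdots)/(1-X^{k}\cdots)$.
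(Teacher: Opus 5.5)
Of your three sketches, the one you commit to is, in outline, the paper's proof. It uses multiplicativity \eqref{gammaprod} along the segment \eqref{INDRHO}, then telescoping of the $L$-factors, then simplification of the rational function before specializing at $X=q^{-1/2}$; the last step is exactly how the paper handles the $0/0$ you point out. The paper runs the computation over $\qlb$ for the generic representation $\St_r(\rt)$, with $\rt$ a supercuspidal lift of $\rho$, and reduces once at the end by Proposition~\ref{gammaint}(2). Working over $\flb$ instead is equivalent, but you then need either the $\ell$-modular $L$-factors of supercuspidal pairs or a reduction of each factor from $\qlb$ anyway. Your bookkeeping is off, though. With $a=(r-1)/2$, the $L$-factors alone give
\[
\prod_{i=0}^{r-1}\frac{1-q^{k(a-i)}X^k}{1-q^{k(i-a-1)}X^{-k}}=(-1)^r\, q^{n/2}\,(q^{1/2}X)^n\,\frac{1-q^{-ka}X^k}{1-q^{n}q^{-ka}X^k},
\]
and the last factor reduces to $1$ since $q^n\equiv1$ mod $\ell$. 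So $(-1)^rq^{n/2}$ comes from the $L$-factors. The $\e$-factors contribute exactly $\e(X,\rho,\rho^\vee,\psi)^r$, because the unramified twists cancel ($\sum_i(a-i)=0$).

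The genuine gap is the value of $\e(q^{-1/2},\rho,\rho^\vee,\psi)$. It is the sole source of the factor $\cen_\pi(-1)^{k-1}=\cen_\rho(-1)^{r(k-1)}$, and none of your proposed mechanisms produces it. The functional equation determines it at best up to sign. That suffices when $r$ is even, since both $\e(q^{-1/2},\rho,\rho^\vee,\psi)^r$ and $\cen_\pi(-1)^{k-1}$ are then $1$. It does not suffice when $r$ is odd, which is exactly the situation of Cases 1 and 3 of \S\ref{mammuth} used in Proposition~\ref{gammais1}. ``Comparing with the $\flb$-side computation'' is circular, since that computation is the one that needs this value. The Gauss-sum/Langlands sketch leaves its signs unspecified, and, passing through \cite{NienZhang}, it could only treat level $0$. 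The missing input is the theorem of Bushnell--Henniart \cite{BHBLMS99}: $\e(q^{-1/2},\rt,\rt^\vee,\psi)=\cen_{\rt}(-1)^{k-1}$, hence $\e(X,\rt,\rt^\vee,\psi)=\cen_{\rt}(-1)^{k-1}(q^{1/2}X)^{f}$ for some $f\in\ZZ$. To finish, raise this to the $r$-th power, multiply by the identity above, reduce mod $\ell$, and only then specialize at $X=q^{-1/2}$; this yields $\cen_\pi(-1)^{k-1}\cdot(-1)^r\cdot q^{n/2}$.
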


\begin{rema}
Note that this value is a sign, since $n$ is a multiple of the order $e$
of $q$ mod $\ell$.
\end{rema}

The proof of this lemma is postponed to \S\ref{PreuveCalculMagique}.
We deduce from it the following result.

\begin{prop}
\label{gammais1}
Let $\pi$ be a non-supercuspidal, cuspidal representation of level 
$0$ of $G$,~iso\-mor\-phic to $\St_r(\rho)$ for some supercuspidal
representation $\rho$ of $\GL_k(F)$.
Suppose that $\pi$ is $H$-dis\-tinguished but has no
$\H$-distinguished lift to $\qlb$.
Then
\begin{equation*}
\g(q^{-1/2},\pi,\rho^\vee,\psi) = -1.
\end{equation*} 
\end{prop}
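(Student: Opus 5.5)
The plan is to feed the four cases of \S\ref{mammuth} into Lemma~\ref{CalculMagique}, which gives
\[
\g(q^{-1/2},\pi,\rho^\vee,\psi)=\cen_\pi(-1)^{k-1}\cdot(-1)^r\cdot q^{n/2},
\]
and to check in each case that the right-hand side is $-1$ in $\flb$. For this we need three pieces of information: the parity of $r$, the value of $q^{n/2}$ mod $\ell$, and the sign $\cen_\pi(-1)^{k-1}$.

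\textbf{General tools.} We use $r=\ell^u e/(e,k)$ and $n/e=\ell^u k/(e,k)$ from \eqref{molde}, with $\ell$ odd. The central character of $\St_r(\rho)$ restricted to $-1$ is $\cen_\pi(-1)=\cen_\rho(-1)^r$, since $\nu$ is trivial on $-1$. As $\cen_\pi(-1)=\pm1$, we get $\cen_\pi(-1)^{k-1}=1$ whenever $k$ is odd. If some unramified twist $\rho\nu^{i/2}$ is $\GL_k(F_0)$-distinguished, then its central character is trivial on $F_0^\times\ni-1$, so $\cen_\rho(-1)=1$ and hence $\cen_\pi(-1)=1$. When $r$ is even, the proof of Proposition~\ref{nessconddist} shows that $q^{n/2}\equiv-1$ mod $\ell$. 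When instead $n/2$ is a multiple of $e$ (with the normalisation \eqref{choixq12sec3}), we get $q^{n/2}=1$.

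\textbf{The four cases.}
\begin{enumerate}
\item Unramified, $e_0$ odd, $\rho$ distinguished. Then $k$ is odd by \cite{VSANT19} Proposition~9.8. Since $q=q_0^2$ and $e_0$ is odd, $e=e_0$. Hence $r=\ell^ue/(e,k)$ is odd. Also $q^{n/2}=q_0^n=1$, because $e_0$ divides $n$. The value is $1\cdot(-1)\cdot1=-1$.
\item Unramified, $e$ even, $\rho$ distinguished. Again $k$ is odd, so $r$ has the parity of $e$ and is even. Thus $q^{n/2}=-1$, and the value is $1\cdot1\cdot(-1)=-1$.
\item Ramified, $k/(k,e)$ even, $e/(e,k)$ odd, $\rho$ distinguished. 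Then $r$ is odd. Since $n/e$ is even, $n/2$ is a multiple of $e$, so $q^{n/2}=1$. Since $\rho$ is distinguished, $\cen_\pi(-1)=1$. The value is $-1$.
\item Ramified, $k$ even, $\rho\nu^{i/2}$ distinguished. First we check that $r$ is even here. If $r$ were odd, Proposition~\ref{nessconddist}(1) would make $\rho$ distinguished with $k$ even. The failure of Theorem~\ref{DLT2}(2a) would then force $v_2(k)>v_2(e)$ (since $v_2(e)\<v_2(k)$ for $r$ odd), which is exactly case~3. With $r$ even, $q^{n/2}=-1$ and $\cen_\pi(-1)=1$, so the value is $1\cdot1\cdot(-1)=-1$.
\end{enumerate}
The case $k=1$ with $r$ even is excluded by Proposition~\ref{DLT05}, since such a distinguished $\pi$ has a distinguished lift.

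\textbf{Main obstacle.} The step needing most care is confirming that the enumeration of \S\ref{mammuth} is exhaustive and carries the parity of $r$ as used above, in particular in case~4. I would do this by rederiving the cases directly from Proposition~\ref{nessconddist} and the negation of Theorem~\ref{DLT2}, rather than relying on the list as stated. The rest is routine bookkeeping with orders of $q$ and $q_0$ modulo $\ell$.
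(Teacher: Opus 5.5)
Your proof is correct and takes essentially the paper's route: plug the case list of \S\ref{mammuth} into Lemma~\ref{CalculMagique} and evaluate $(-1)^r q^{n/2}$. The paper is slightly shorter in two ways. It gets $\cen_\pi(-1)=1$ at once, since $\pi$ itself is $H$-distinguished and $-1\in F_0^\times$. It also splits only on the parity of $r$: for $r$ even, $q^{n/2}=-1$ in every case, and for $r$ odd only Cases 1 and 3 remain, so most of your per-case parity bookkeeping is unnecessary.
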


\begin{proof}
According to Lemma \ref{CalculMagique}, 
and since $\pi$ is distinguished,
we have
\begin{equation*}
\g \left( q^{-1/2},\pi,\rho^\vee,\psi \right) = 
(-1)^{r} \cdot q^{n/2}. 
\end{equation*} 
Suppose first that $r$ is even.
As in the proof of Proposition \ref{nessconddist}, 
this implies that $q^{n/2}$ is congruent to $-1$ mod $\ell$.
The result follows. 

Now suppose that $r$ is odd.
We are thus either in Case 1 or in Case 3 of \S\ref{mammuth}. 
If we are~in~Ca\-se~1,
then $q^{1/2}=q_0$ since $F/F_0$ is unramified (see \eqref{choixq12sec3})
and $e_0=e$ since $e_0$ is odd. 
We thus have $q^{n/2} = q_0^n = 1$ since $e$ divides $n$.
In Case 3,
then $q^{n/2}=(q^e)^{k/2(k,e)}=1$ since $k/(k,e)$ is even.
\end{proof}

In order to prove Theorem \ref{CONJNRV0},
it thus suffices to prove:

\begin{prop}
\label{sadus}
Let $\pi$ be a non-supercuspidal cuspidal representation of level 
$0$ of $G$,~iso\-mor\-phic to $\St_r(\rho)$ for some supercuspidal
representation $\rho$ of $\GL_k(F)$.
Suppose that $\pi$ is $H$-dis\-tin\-guished but has no
$\H$-distinguished lift to $\qlb$. 
Then 
\begin{equation*}
\g(q^{-1/2},\pi,\rho^\vee,\psi) = 1. 
\end{equation*}
\end{prop}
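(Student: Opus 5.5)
The plan follows the strategy outlined in the introduction: pass to the finite field and compute the finite-field $\g$-factor there. Since $\g(q^{-1/2},\pi,\rho^\vee,\psi)$ is, by Proposition \ref{gammais1}'s argument, independent of $\psi$ up to the value we want, we may first choose $\psi$ conveniently. We take $\psi$ of conductor $\p_F$ and, when $F/F_0$ is unramified, trivial on $F_0$ (so that the induced character $\Psi$ of $\k$ is trivial on $\k_0$). Let $\overline{\pi}$ and $\overline{\rho}$ be the level $0$ types of $\pi$ and $\rho$. Then $\overline{\rho}$ is supercuspidal, $\overline{\pi}$ is cuspidal, and $\overline{\pi}$ is the generic subquotient of $\overline{\rho}\times\dots\times\overline{\rho}$ ($r$ factors). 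By \cite{NienZhang} Theorem 3.11 we get $\g(q^{-1/2},\pi,\rho^\vee,\psi)=\g(\overline{\pi},\overline{\rho}^\vee,\Psi)$. The hypothesis on $\pi$, via the results of \cite{NRV25} (the level $0$ reduction of distinction to $\overline{H}$ as in \eqref{denisebaudu}), translates into the statement that $\overline{\pi}$ is $\overline{H}$-distinguished with no $\overline{H}$-distinguished lift. We also use the list of cases in \S\ref{mammuth} to record the parity information on $e,e_0,k$.

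Everything now takes place over $G_n=\GL_n(\k)$. The first step is to attach to the distinguished cuspidal $\overline{\pi}$ the constant $c(\overline{\pi},\Psi)\in\flb^\times$. It is the ratio of two nonzero $H_n$-invariant linear forms on the Whittaker model: one is the restriction of Whittaker functions to the mirabolic-type subgroup of $H_n$, the other is obtained from it by the action of $\pi(w)$ or by duality. Multiplicity one for $H_n$-invariant forms on the cuspidal $\overline{\pi}$ makes them proportional. The second step is an Ok-type functional equation. For any special distinguished representation $\pi'$ of $G_{n-1}$ of Whittaker type, unfolding the finite-field Rankin--Selberg sums of \cite{Mossetal} against these invariant forms gives
\begin{equation*}
\g(\overline{\pi},\pi',\Psi)=c(\overline{\pi},\Psi).
\end{equation*}
By the multiplicativity of $\g$ in the second variable (Proposition \ref{mult}) and its compatibility with reduction mod $\ell$ (Proposition \ref{reductionofgamma}), the same identity applies to suitably induced $\pi'$, and hence to parabolic inductions of distinguished cuspidals.

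The third step uses this to relate $\g(\overline{\pi},\overline{\rho}^\vee,\Psi)$ to the Godement--Jacquet factor. We build a special $\pi'$ of $G_{n-1}$ out of $\overline{\rho}^\vee$ and characters, and show that special representations contain distinguished cuspidals and are stable under induction. Comparing, via multiplicativity, with $\pi'$ formed from the trivial character $\GL_1$ pieces, we obtain $\g(\overline{\pi},\overline{\rho}^\vee,\Psi)=\g(\overline{\pi},\Psi)^{n/r}$. Finally we insert the value of $\g(\overline{\pi},\Psi)$ for distinguished cuspidals with no distinguished lift computed in \cite{NRVGJ}, which is a sign depending on the case of \S\ref{mammuth}. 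The parity constraints recorded there (for example $n/r=k$ even in the ramified cases, or the relevant sign raised to an even power otherwise) should force the $n/r$-th power to equal $1$.

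The main obstacle is the second and third steps. One must verify that enough representations of $G_{n-1}$ are special, meaning distinguished of Whittaker type and satisfying the technical nonvanishing condition. Here the $\ell$-modular setting breaks Ok's characteristic zero arguments, since induced representations need not be semisimple and the invariant forms can vanish. My first attempt would be to define the sufficient class $\Cc(H)$ by a condition on the Whittaker-restricted invariant form, and to prove its stability under induction by an explicit Mackey/orbit computation for the open $H$-orbit.
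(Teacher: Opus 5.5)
Your plan is essentially the paper's proof: reduction to $\GL_n(\k)$ via Proposition \ref{padicfinite}, the constant $c(\overline{\pi},\psi)$ and the identity $\g(\overline{\pi},\pi',\psi)=c(\overline{\pi},\psi)$ for special $\pi'$ (Proposition \ref{Lemma1RSconstant}), the class $\Cc(H)$ and its stability under induction, multiplicativity giving $\g(\overline{\pi},\overline{\rho}^\vee,\psi)=\g(\overline{\pi},\psi)^{k}$ with $k=n/r$, and the Godement--Jacquet values of \cite{NRVGJ}. Two points you leave implicit should be made explicit: you need $\overline{\rho}^\vee$ to be $H_k$-distinguished (the paper gets this from Lemmas \ref{distselfdual} and \ref{distparitek}, and uses the no-distinguished-lift hypothesis to rule out the Levi case with $k=1$), and in the Galois case $k$ is odd, so the conclusion there rests on $\g(\overline{\pi},\psi)$ being exactly $1$ (for $q^{1/2}=q_0$ and $\psi$ trivial on $\k_0$) rather than on an even exponent, whereas in the Levi case ${\rm sgn}(\overline{\pi})^k=1$ because $k$ is even.
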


Since $\ell\neq2$,
this will give us the expected contradiction.

We are now going to reduce~the~proof of Proposition \ref{sadus}
to that of a similar proposition~for re\-presen\-tations of general linear
groups~over the finite field $\k$.

\subsection{}
\label{typelevel0}

Let $V$ denote the vector space of $\pi$,
and consider the subspace $W$ of $V$ made of the~vectors 
invariant by the subgroup $1+\Mat_n(\pp)$.
Since $\pi$ has level $0$,
this subspace is non-zero.
It carries~an action of $\GL_n(\oo)$ which factors
into a representation $\overline{\pi}$ of $\GL_n(\k)$ on $W$ 
which is irreducible and cuspidal
(\cite{Vigbook} III.3.14 or \cite{MSc} Exemple~5.10,
Proposition 5.11).
Similarly,
one defines a cuspidal, irreducible representation $\overline{\rho}$
of $\GL_k(\k)$ on the vectors of the space of $\rho$ invariant by
$1+\Mat_k(\pp)$.

By \cite{NRV25} 4.7,
the representation $\overline{\rho}$ is supercuspidal
and $\overline{\pi}$ occurs as a subquotient
of the induced representation 
$\overline{\rho}^{\times r}=\overline{\rho}\times\dots\times\overline{\rho}$ 
(where $\overline{\rho}$ occurs $r$ times).
On the other hand,
it follows from~\cite{NRV25} Theorem 4.45 that
$\overline{\pi}$ is distinguished by the subgroup
$\overline{H}$ of $\GL_n(\k)$ defined by
\begin{equation*}
\overline{H} = 
\left\{ 
\begin{array}{ll}
\GL_n(\k_0) & \text{if $\F/\F_0$ is unramified}, \\ 
\GL_{n/2}(\k)\times\GL_{n/2}(\k) & \text{if $\F/\F_0$ is ramified},
\end{array}\right.
\end{equation*} 
and from \cite{NRV25}~Lemma 6.5 
that it has no $\overline{H}$-distinguished lift. 
Note that $n$ even when $\F/\F_0$~is~ra\-mified,
thanks to Proposition \ref{nessconddist}.

Suppose that~$\psi$~has conductor $1$,
that is,
$\psi$ is trivial on $\pp$ but not on $\oo$.
It defines a non-tri\-vial character of $\k$,
which we still denote by $\psi$.
We then may associate to the pair $(\overline{\pi},\overline{\rho}^\vee)$ a 
non-zero scalar $\g(\overline{\pi},\overline{\rho}^\vee,\psi) \in R^\times$ 
which will be defined in Section \ref{duveyrier}.

We have the following proposition,
the proof of which is postponed to \S\ref{Preuvepadicfinite}.

\begin{prop} 
\label{padicfinite}
Suppose that~$\psi$~has conductor $1$. 
Then
\begin{equation*}
\g(q^{-1/2},\pi,\rho^\vee,\psi) = \g(\overline{\pi},\overline{\rho}^\vee,\psi).
\end{equation*} 
\end{prop}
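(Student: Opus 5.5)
Two routes suggest themselves. The first is to invoke the comparison theorem of Nien--Zhang (\cite{NienZhang} Theorem 3.11) between level-$0$ Rankin--Selberg $\g$-factors and finite-field $\g$-factors. The second, which I prefer because it stays closer to tools already set up, is to lift everything to $\qlb$, apply the characteristic-zero comparison there, and reduce mod $\ell$. Concretely, I will choose a cuspidal $\qlb$-lift $\widetilde\pi$ of $\pi$ of level $0$ and a supercuspidal $\qlb$-lift $\widetilde\rho$ of $\rho$ of level $0$; both exist by \cite{Vigbook} III.5.10, and one can choose level-$0$ lifts since level $0$ lifts of level $0$ types exist. The types $\overline{\widetilde\pi}$ and $\overline{\widetilde\rho}$ (vectors fixed by $1+\Mat(\pp)$) are then integral cuspidal representations of $\GL_n(\k)$ and $\GL_k(\k)$ lifting $\overline\pi$ and $\overline\rho$. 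This works because taking $1+\Mat_n(\pp)$-invariants is exact on $\zlb$-lattices, as the pro-order of that group is invertible. Finally, $\psi$ can be taken to be the reduction of a $\zlb^\times$-valued character $\widetilde\psi$ of conductor $1$.

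In characteristic zero I will then use the known identity
\begin{equation*}
\g(q^{-1/2},\widetilde\pi,\widetilde\rho^\vee,\widetilde\psi)=\g(\overline{\widetilde\pi},\overline{\widetilde\rho}^\vee,\widetilde\psi),
\end{equation*}
which is \cite{NienZhang} Theorem 3.11 for level-$0$ cuspidal pairs with $\psi$ of conductor $1$. One should check that the normalisation of $q^{1/2}$ is the same on both sides; the finite-field factor has no $X$, and the $\g$-factor of the pair is monomial in $X$ up to the evaluation point. I expect this to go through, if necessary after a $\GL_n(\oo)$-Whittaker-function computation: the Rankin--Selberg zeta integral, applied to Whittaker functions supported on $N\GL_n(\oo)$ built from the finite-field Bessel functions, reduces to the finite-field sum of \cite{Mossetal}.

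The next step is to reduce both sides mod $\ell$. On the $p$-adic side I will use the compatibility of $\g(X,\cdot,\cdot,\psi)$ with reduction mod $\ell$ from \cite{KM17}, recalled in Section \ref{BGRSgamma}. This gives $\r_\ell$ of the $\qlb$-factor equal to $\g(X,\pi,\rho^\vee,\psi)$, and since $\widetilde\pi$ and $\widetilde\rho$ are cuspidal with $n>k$, the factor has no pole issue at $q^{-1/2}$. Evaluation commutes with reduction because the element is a unit monomial times a ratio of $L$-factors that are trivial here: for cuspidal $\pi$ and $\rho$ with $k<n$, the $L$-factors equal $1$. On the finite side I will use Proposition \ref{reductionofgamma}, the compatibility of $\g(\Vi,\Wi,\Psi)$ with reduction mod $\ell$. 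Combining the two reductions yields the claimed equality.

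The main obstacle is the characteristic-zero identity: making sure the Nien--Zhang statement matches our normalisations, namely the choice of $q^{1/2}$ from \eqref{choixq12sec3}, the use of $\rho^\vee$, and the conductor convention. Its monomial factor $\epsilon$ must become exactly the finite $\g$-factor at $X=q^{-1/2}$, and any discrepancy would appear as a power of $q^{1/2}$ or a central-character sign that would have to be tracked. If this matching fails, I would instead redo the zeta-integral computation directly over $\flb$ with explicit level-$0$ Whittaker functions, which avoids lifting altogether.
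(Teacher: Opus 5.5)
Your preferred (second) route is essentially the paper's proof. The paper proves the identity for $\qlb$-representations via \cite{NienZhang} Theorem 3.11, in the slightly more general form of Proposition \ref{depthzerocompat}, and then descends to $\flb$ using Propositions \ref{gammaint} and \ref{reductionofgamma}, as you do.

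The normalisation check you single out is the real remaining content, and as written you cite \cite{NienZhang} Theorem 3.11 for an identity it does not state in this form. It does work out. Nien--Zhang actually give $\g(X,\widetilde\pi,\widetilde\rho^\vee,\widetilde\psi)=\cen_{\widetilde\rho^\vee}(-1)^{n-1}q^{k(n-k-1)/2}\,\overline{\g}(\overline{\widetilde\pi},\overline{\widetilde\rho}^\vee,\widetilde\psi)$, where $\overline{\g}$ is Nien's unnormalised finite-field factor. Comparing the Bessel-function formula of Proposition \ref{RSBesseldef}(2) with \cite{Nien} Proposition 3.6 gives $\overline{\g}=\cen_{\widetilde\rho^\vee}(-1)^{n-1}q^{-k(n-k-1)/2}\,\g_{\RS}$. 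The two correction factors therefore cancel, which is precisely why $\g_{\RS}$ was normalised as in Proposition \ref{FERS}.
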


The proof of Proposition \ref{sadus}
thus reduces to proving that
$\g(\overline{\pi},\overline{\rho}^\vee,\psi) = 1$. 

\subsection{}
\label{parlev}

More generally,
for any non-supercuspidal,
cuspidal representation $\varkappa$ of $\GL_n(\k)$,~there~are
a uni\-que integer $u$ dividing $n$
and a unique supercuspidal representation $\varrho$ of
$\GL_{n/u}(\k)$~such~that~$\varkappa$ occurs as a subquotient
of the parabolically induced representation $\varrho^{\times u}$
(see \cite{NRV25} Proposi\-tion~3.9 for instance).
We will say that $\varrho$ is the supercuspidal representation
\textit{associated with}~$\varkappa$.

Changing the notation,
the proof of Theorem \ref{CONJNRV} is thus reduced to the proof of the 
following proposition,
which entirely pertains to the theory of representations of $\GL_n(\k)$.

\begin{prop}
\label{minelli}
Let $\pi$ be a non-supercuspidal cuspidal representation of
$\GL_n(\k)$,
with~as\-so\-ciated supercuspidal representation $\rho$.
Let $H$ be one of the following subgroups of $\GL_n(\k)$:
\begin{enumerate}
\item 
either $\k$ has a subfield $\k_0$ such that $[\k:\k_0]=2$ 
and $H=\GL_n(\k_0)$,
\item
or $n=2m$ for some integer $m\>1$ and $H$ is the standard
Levi subgroup $\GL_{m}(\k)\times\GL_{m}(\k)$.
\end{enumerate}
Suppose that $\pi$~is ${H}$-distingui\-shed
{but has no ${H}$-distinguished lift.}
Then,
for~any~non-trivial~cha\-rac\-ter $\psi$ of $\k$,
we have 
$\g(\pi,\rho^\vee,\psi) = 1$.
\end{prop}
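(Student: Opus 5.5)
We follow the strategy announced in the introduction, adapting Ok's method to the finite field setting. We fix $\psi$ conveniently: since $\g(\pi,\rho^\vee,\psi)$ will turn out not to depend on $\psi$, in case (1) we take $\psi$ trivial on $\k_0$. The proof has three steps.

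\textbf{Step 1: a proportionality constant.} On the Whittaker model $\Ww(\pi,\psi)$ we write down two explicit $H$-invariant linear forms. The first is the sum of $W$ over the mirabolic part $P_n\cap H$ modulo $N_n\cap H$, the finite analogue of the Flicker/Ok period. The second is obtained from the first by composing with $W\mapsto\widetilde{W}$, $\widetilde{W}(g)=W(w_n{}^tg^{-1})$, and passing through the contragredient; this uses that $\pi^\vee\simeq\pi^\s$ in case (1), or $\pi^\vee\simeq\pi$-twisted in case (2). We must show these forms are non-zero and that $\Hom_H(\pi,\1)$ has dimension one. Multiplicity one holds for these Gelfand pairs over $\flb$ on cuspidal representations. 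Non-vanishing on the Whittaker model of a cuspidal representation follows from the fact that the restriction to the mirabolic subgroup is the Gelfand--Graev-type representation $\Ind_{N}^{P}\psi$. Hence the two forms are proportional, with constant $c(\pi,\psi)\in\flb^\times$.

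\textbf{Step 2: special representations and the functional equation.} Using the finite-field Rankin--Selberg functional equation of Section~\ref{duveyrier}, we unfold
\[
\sum_{h\in N_{n-1}\backslash G_{n-1}} W\!\begin{pmatrix}h&\\&1\end{pmatrix}W'(h)
\]
against an $H_{n-1}$-invariant form on $\Ww(\pi',\psi^{-1})$. We introduce the class of \emph{special} $H_{n-1}$-distinguished representations $\pi'$ of Whittaker type, those for which the analogous invariant form on $\Ww(\pi')$ is compatible with $W'\mapsto\widetilde{W}'$ with constant $1$; this is condition \eqref{saintepelagie}. For special $\pi'$ we obtain $\g(\pi,\pi',\psi)=c(\pi,\psi)$, independently of $\pi'$. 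We then show that the class is stable under parabolic induction and contains distinguished cuspidal representations.

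\textbf{Step 3: specialization.} By the necessary conditions analogous to Proposition~\ref{nessconddist}, $\rho^\vee$ is distinguished by the appropriate smaller group. Together with multiplicativity in the second variable (Proposition~\ref{mult}) and reduction-compatibility (Proposition~\ref{reductionofgamma}), this gives
\[
\g(\pi,\rho^\vee,\psi)=\g(\pi,\psi)^{n/r}.
\]
Here $\g(\pi,\psi)$ is the Godement--Jacquet factor, and we reduce through the induced special representation $\rho^{\vee\times(n/r)}$ of $G_{n-1}$ padded suitably. Finally we insert the value of $\g(\pi,\psi)$ for distinguished $\pi$ without distinguished lift, computed in \cite{NRVGJ} (Proposition~\ref{valueGJdist}). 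In the cases listed in \S\ref{mammuth}, this value raised to the power $n/r$ is $1$.

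\textbf{Main obstacle.} The hardest part is Step 2: verifying that enough representations, including non-irreducible induced ones mod $\ell$, are special. Over $\flb$, Ok's analytic arguments fail, so we plan to introduce a class $\Cc(H)$ of representations whose invariant forms are given by explicit orbit sums on Whittaker functions and check stability of this class directly.
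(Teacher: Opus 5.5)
Your outline follows the paper's architecture: a constant $c(\pi,\psi)$ relating two invariant forms on $\Ww(\pi,\psi)$, then $\g(\pi,\pi',\psi)=c(\pi,\psi)$ for a class of $\pi'$ stable under induction. However, the step that carries the proof is misstated, and the mechanism that makes it work is missing.

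\textbf{Step 2.} The condition you call special (the invariant forms of $\pi'$ being compatible with $W'\mapsto\widetilde{W}'$ with constant $1$) is not \eqref{saintepelagie}. Nothing in it lets you carry the relation $\Lat=c(\pi,\psi)\La$, which lives on $\pi$, into the functional equation. There, after the change of variables, $W'$ itself is paired with $g\mapsto W\left(\begin{smallmatrix}0&1\\ g&0\end{smallmatrix}\right)$ and with $g\mapsto W\left(\begin{smallmatrix}g&0\\ 0&1\end{smallmatrix}\right)$. Unfolding against an $H_{n-1}$-invariant form points the right way, but what is actually needed is a non-zero $W'\in\Ww(\pi',\psi^{-1})$ of the form $\Ii_{\psi^{-1}}(\phi)$ with $\phi$ left $H_{n-1}$-invariant. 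The argument then runs as follows.
\begin{itemize}
\item Pairing with such a $W'$ sees a function of $\Ind_{N_{n-1}}^{G_{n-1}}(\psi)$ only through its sums over the cosets $H_{n-1}g$ (Lemma \ref{keyabstractlemma}).
\item For $\Delta(g)=W\left(\begin{smallmatrix}0&1\\ g&0\end{smallmatrix}\right)-c(\pi,\psi)W\left(\begin{smallmatrix}g&0\\ 0&1\end{smallmatrix}\right)$, these sums are, up to a non-zero constant, $\Lat-c(\pi,\psi)\La$ applied to right translates of $W$, hence zero.
\item A translate of the Bessel function of $\pi$ makes the other zeta sum non-zero, which forces $\g(\pi,\pi',\psi)=c(\pi,\psi)$.
\end{itemize}
You also need a checkable criterion for this property, and your last paragraph leaves it open. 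The paper's class $\Cc(H)$ (Definition \ref{Renal}) asks for an $H$-invariant form $\xi$ not vanishing on the line of $\psi$-eigenvectors. Then $|N|^{-1}\sum_{u\in N}\psi(u)\,\xi\circ\pi(u^{-1})$ is a non-zero Whittaker functional whose matrix coefficients are $\Ii_\psi$ of left $H$-invariant functions (Lemma \ref{basicCpsilemma}). Stability under induction (Lemma \ref{cuspsatcpsi2}) is a Mackey computation: induce from $Q^-$ so that the eigenvector is supported on $U^-MU$, and use the form $f\mapsto\sum_{h\in Q^-\cap H\backslash H}\langle f(h),\xi_1\otimes\xi_2\rangle$. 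Only then does comparing $\rho^\vee\times1^{n-1-k}$ with $1^{n-1}$ (not $\rho^{\vee\times(n/r)}$), together with Proposition \ref{RSGJ}, give $\g(\pi,\rho^\vee,\psi)=\g(\pi,\psi)^k$.

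\textbf{Step 1.} There are two further gaps here.
\begin{itemize}
\item Your sums over $P_n\cap H$ are only $P_n\cap H$-invariant (resp. $(P_n\cap H)^*$-invariant), and $\dim\Hom_H(\pi,1)=1$ does not make them $H$-invariant. What does is $\dim\Hom_{P_n\cap H}(\pi,1)=\dim\Hom_{P_n\cap H}(\pi^\vee,1)=1$ (Lemma \ref{malebranche}; in the Galois case over $\flb$ this requires a Mackey double coset count transported from $\CC$). The same fact is what puts distinguished cuspidal representations in $\Cc(H)$.
\item In case (2) with the standard $H=\GL_m(\k)\times\GL_m(\k)$ and $m\geqslant2$, $\psi$ is non-trivial on $N_n\cap H$. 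So your sum modulo $N_n\cap H$ is undefined, and the full sum over $P_n\cap H$ vanishes identically. You must replace $H$ by its conjugate $H_n$, the centralizer of ${\rm diag}(-1,1,\dots,(-1)^n)$. This group satisfies $\psi|_{N_n\cap H_n}=1$, $H_n^*=H_n$, $w_nH_nw_n^{-1}=H_n$, $H_n\cap G_{n-1}=H_{n-1}$ and $H_n\cap(G_{n_1}\times G_{n_2})=H_{n_1}\times H_{n_2}$, and all of these are used later.
\end{itemize}

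\textbf{Step 3.} The distinction of $\rho^\vee$ should come from finite-field facts: $\pi$ distinguished implies $\pi$ is $\s$-self-dual (Lemma \ref{distselfdual}), and then Lemma \ref{distparitek} applies. \S\ref{mammuth} is about $p$-adic representations and would first require realizing $\pi$ as the type of a suitable level-$0$ representation. The no-lift hypothesis is used only to exclude the Levi case with $k=1$, where the value is actually $-1$ (Remark \ref{minelli2}). Finally, independence of $\psi$ must be established beforehand from \eqref{epspsiaff}, not read off from the final value.
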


We will refer to the first case as the Galois case,
and to the second case as the Levi case.

In conclusion,
in order to prove our main Theorem \ref{CONJNRV},
it remains to prove Lemma \ref{CalculMagique},~Pro\-po\-sition 
\ref{padicfinite} and Proposition \ref{minelli}.

\subsection{}

Before proceeding to the proof of these results, 
let us end this section by the following~Dis\-junction Theorem,
well-known for discrete series representations of $G$ when $R=\CC$.

\begin{coro}
\label{disjunctionmodl}
A $\s$-self-dual cuspidal $\flb$-representation of $\GL_n(\F)$
cannot be both dis\-tin\-gui\-shed and $\cla$-distinguished.
\end{coro}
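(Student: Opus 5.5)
Suppose $\pi$ is a $\s$-self-dual cuspidal $\flb$-representation of $G$ that is both $H$-distinguished and $\cla$-distinguished. We derive a contradiction, treating the supercuspidal and non-supercuspidal cases separately.

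If $\pi$ is supercuspidal, the Dichotomy and Disjunction statement for supercuspidal $\flb$-representations (\cite{VSANT19} Theorem 10.8) already forbids this. So assume $\pi$ is non-supercuspidal, hence $\pi\simeq\St_r(\rho)$ as in Proposition \ref{Coupure}.

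The main step is Theorem \ref{CONJNRV} (we may use its conclusion, since the corollary is stated as a consequence of the main theorem). It gives an $H$-distinguished cuspidal lift $\widetilde\pi$ of $\pi$ to $\qlb$. We would like to apply the same reasoning to $\cla$-distinction. The character $\cla$ takes values in $\{\pm1\}$, so it lifts canonically to a $\qlb$-character $\widetilde\cla$ with kernel the norms. Choose any character $\chi$ of $F^\times$ whose restriction to $F_0^\times$ is $\cla$; this is possible since $F_0^\times$ is closed in $F^\times$ and characters extend. Then $\pi$ is $\cla$-distinguished if and only if $\pi\chi^{-1}$ is $H$-distinguished. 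Applying Theorem \ref{CONJNRV} to the cuspidal representation $\pi\chi^{-1}$ gives an $H$-distinguished cuspidal lift $\widetilde\tau$ of $\pi\chi^{-1}$. Now take $\widetilde\chi$ to be the Teichmüller-type lift of $\chi$ chosen so that $\widetilde\chi|_{F_0^\times}=\widetilde\cla$. This can be arranged: extend $\widetilde\cla$ to a $\qlb$-character of $F^\times$ lifting $\chi$, first on $\oo^\times F_0^\times$ via the prime-to-$\ell$ part, then on a uniformizer. Then $\widetilde\tau\widetilde\chi$ is a $\widetilde\cla$-distinguished cuspidal lift of $\pi$.

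So $\pi$ has two cuspidal lifts: $\widetilde\pi$, which is $H$-distinguished, and $\widetilde\pi'=\widetilde\tau\widetilde\chi$, which is $\widetilde\cla$-distinguished. By complex (hence $\qlb$) Conjugate-self-duality and Dichotomy, both lifts are $\s$-self-dual. The obstacle is that two different lifts are a priori unrelated, so we cannot apply Disjunction over $\qlb$ directly.

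To get around this, we compare the classifications. The lift $\widetilde\pi$ exists only in the configurations of Theorem \ref{DLT2}, after reducing to level $0$ via Proposition \ref{lamiel} and its $\cla$-twisted analogue from \cite{NRV25}. The lift $\widetilde\pi'$ means $\pi\chi^{-1}$ falls under Theorem \ref{DLT2} as well. We would check case by case, using the central character constraint $\cen_\pi(\w)$ from \cite{NRV25} Theorem 4.45 as in the proof of Proposition \ref{DLT05}, together with the Dichotomy and Disjunction property for the supercuspidal $\rho$, that both conditions cannot hold simultaneously. For example, in case (2)(b) of Theorem \ref{DLT2} we would need $\rho|_{F_0^\times}$ to lie in $\{\nu_0^{-1},\cla\}$ and, for the twist, in $\{\nu_0^{-1}\cla,1\}$. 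Comparing values at $\w_0$ together with ramification then gives a contradiction when $\ell\neq2$.

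I expect this final compatibility check to be the main obstacle. An alternative route would use the $\g$-factor sign argument of Proposition \ref{gammais1}, together with Lemma \ref{CalculMagique} applied to both distinctions: $\cen_\pi(-1)$ differs by $\cla(-1)^n$ between the two, which would force $-1=1$ in $\flb$.
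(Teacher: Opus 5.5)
Your frame matches the paper's. You treat $r\geqslant 2$, combine Theorem~\ref{CONJNRV} with the classification of Theorem~\ref{DLT1}, and turn $\cla$-distinction of $\pi$ into $H$-distinction of $\pi\chi^{-1}$ for a character $\chi$ extending $\cla$. Take $\chi$ tamely ramified, so that $\pi\chi^{-1}$ stays of level $0$ and the classification applies to it. But the real content of the proof is the ``compatibility check'' you leave undone, so as written this is not yet a proof. The two lifts $\widetilde\pi$ and $\widetilde\tau\widetilde\chi$ play no role. The $\g$-factor alternative does not work either: Proposition~\ref{gammais1} needs the hypothesis that $\pi$ has \emph{no} distinguished lift, which Theorem~\ref{CONJNRV} now excludes. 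Moreover, Lemma~\ref{CalculMagique} gives a single value attached to $\pi$, and the two distinctions together only give $\cen_\pi(-1)=1=\cla(-1)^n$. That is not contradictory in general, for instance when $n$ is even, which is the case that matters.

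Here is what the check needs. For $r$ even there is only one case. Theorems~\ref{CONJNRV} and~\ref{DLT1} force $F/F_0$ ramified, $k=1$, $r=n$ and $\rho|_{F_0^\times}\in\{\nu_0^{-1},\cla\}$; similarly $\pi\chi^{-1}\simeq\St_n(\rho')$ with $\rho'|_{F_0^\times}\in\{\nu_0^{-1},\cla\}$. Your sketch tacitly takes $\rho'=\rho\chi^{-1}$, but Proposition~\ref{Coupure}(3) only gives $\rho'\chi\simeq\rho\nu^i$ for some $i\in\ZZ$. Restrict to $F_0^\times$, and use that $\cla$ is ramified while $\nu|_{F_0^\times}=\nu_0^2$. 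The only possible coincidences are then $\nu_0^{2i+1}=1$ or $\nu_0^{2i-1}=1$, i.e.\ some odd power of $q_0$ equals $1$ in $\flb$. What rules this out is that $e$ is even: here $q=q_0$, $e(\rho)=e$ because $k=1$ (Remark~\ref{BertheJosserand}), and $n=r=e\ell^u$ is even with $\ell$ odd. This parity is the missing idea. Your comparison at $\w_0$ with $i=0$ only needs $q_0\not\equiv 1$ and does not see it.

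For $r$ odd, which your case analysis must also cover, the paper simply quotes \cite{NRV25} Corollary~5.2. In your framework you could argue as follows. Both $\rho$ and the parameter $\rho'$ of $\pi\chi^{-1}$ can be taken distinguished (Proposition~\ref{nessconddist}(1)), so $\rho\nu^i\simeq\rho'\chi$ is $\cla$-distinguished. Both $\rho$ and $\rho\nu^i$ are then $\s$-self-dual, so $\rho\nu^{2i}\simeq\rho$ and $e(\rho)\mid 2i$. Since $e(\rho)$ divides the odd integer $r$, this gives $\rho\nu^i\simeq\rho$, contradicting supercuspidal Disjunction \cite{VSANT19}.
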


\begin{proof}
If $r$ is odd, 
this is \cite{NRV25} Corollary 5.2.
Assume that $r$ is even and $\pi$ is both distinguished
and $\cla$-distinguished.
We may and will~assu\-me that $\pi$ has level $0$.
By Theorem \ref{CONJNRV} together with Theorem \ref{DLT1}, 
$\F/\F_0$ is~ra\-mi\-fied, $r=n$ and 
$\pi$ is isomorphic to $\St_r(\rho)$ for some
character $\rho$ of $F^\times$ such that
$\rho|_{F_0^\times}\in\{\nu_0^{-1},\cla\}$.

Let $\a$ be a tamely ramified cha\-rac\-ter of $\F^\times$
extending $\cla$.
Since $\pi$ is $\cla$-distinguished, 
the~repre\-sentation $\pi\a=\St_r(\rho\a)$ is distinguished.
It is thus isomorphic to $\St_r(\rho')$
for~some character $\rho'$ of $F^\times$~such that
$\rho'|_{F_0^\times}\in\{\nu_0^{-1},\cla\}$.
Since $\pi$ is  isomorphic to $\St_r(\rho'\a^{-1})$,
it follows from the~classifica\-tion of~cus\-pidal~repre\-sen\-tations
that $\rho'\a^{-1}=\rho\nu^{i}$ for some $i\in\ZZ$.

Since $\cla$ is ramified and $\nu|_{F_0^\times}=\nu_0^2$,
we have a contradiction in any case.
\end{proof}

\begin{rema}
\label{haydee}
Note that Dichotomy does not hold:
when $r$ is even and either $\F/\F_0$ is unra\-mi\-fied,
or $\F/\F_0$ is ramified and $r\neq n$, 
any $\s$-self-dual cuspidal representation of level $0$
is neither distingui\-shed nor $\cla$-distinguished.
\end{rema}

\section{Rankin--Selberg factors over non-Archimedean fields}
\label{BGRSgamma}

In this section, $\ell$~is any pri\-me number different from $p$
and $\R$ is an algebraically closed field~of characteristic~$0$~or $\ell$.
We fix a non-trivial character
\begin{equation*}
\label{psiFd}
\psi : \F \to \R^\times
\end{equation*}
with conductor $d=d(\psi)$,
which is the smallest integer $i\in\ZZ$ such that $\pp^{i} \subseteq \Ker(\psi)$. 
The purpose of this section is to~:
\begin{itemize}
\item 
recall the definition and main properties of the Rankin--Selberg local factors 
of \cite{KM17}, 
\item
prove Lemma \ref{CalculMagique}. 
\end{itemize}

\subsection{} 
\label{generik}

Given an integer $n\>1$, 
we write $\G=\GL_n(\F)$. 
Let $\N=\N_n$ be the subgroup of $\G$~made~of all~up\-per triangular
unipotent matrices. 
The cha\-rac\-ter
\begin{equation*}
x\mapsto\psi(x_{1,2}+\dots+x_{n-1,n})
\end{equation*}
of $\N$ will still be denoted by $\psi$. 
An irreducible $\R$-representation $\pi$ of $\G$ is \textit{generic} if
it embeds in the space $\Ind^{\G}_{\N}(\psi)$ made of all smooth functions
$\W:\G\to\R$ such that $\W(xg)=\psi(x)\W(g)$~for all~ $x\in\N$
and~$g\in\G$,~or equivalently,
if the $\R$-vector space $\Hom_{\N}(\pi,\psi)$
is non-zero.  
When this is the case,
the latter has di\-men\-sion $1$,
that is,
the image of the embedding of $\pi$ in
$\Ind^{\G}_{\N}(\psi)$~is~uni\-que.~It
is called the \textit{Whittaker model} of $\pi$ with respect
to $\psi$ and is denoted $\WW(\pi,\psi)$.

\subsection{} 

Given integers $n$ and $m$ such 
that~$1\<m\<n$,
let us review the Rankin--Selberg functional equa\-tions and local factors 
for~$\GL_n(\F)\times\GL_m(\F)$ following~\cite{KM17}.
Let us fix a square root
\begin{equation}
\label{choixq12}
q^{1/2} \in \R^\times
\end{equation}
of $q$.
Let $\pi$ and $\pi'$ be generic $\R$-representations 
of $\GL_n(\F)$ and $\GL_m(\F)$, respecti\-ve\-ly. 
Attached to the pair $(\pi,\pi')$ in \cite{KM17}~Section 3, there are
\begin{enumerate}
\item 
an \emph{$\L$-factor} $\L(\X,\pi,\pi')$,
which is an element of $\R(\X)$
of the form $\P^{-1}$ where~$\P$
is a polyno\-mial in $\R[\X]$
such that $\P(0)=1$,
\item
an \emph{$\e$-factor} $\e(\X,\pi,\pi',\psi)$,
which is of the form
\begin{equation}
\label{epsmonomialR}
\e(\X,\pi,\pi',\psi) = e \X^{f(\pi,\pi',\psi)}
\end{equation}
for a non-zero scalar $e\in\R^\times$
and an integer $f(\pi,\pi',\psi)\in\ZZ$, 
\item
and a \emph{$\g$-factor} $\g(\X,\pi,\pi',\psi)$,
which is the element of $\R(\X)$ de\-fi\-ned by 
\begin{equation*}
\label{eq:2}
\g(\X,\pi,\pi',\psi) =
\e(\X,\pi,\pi',\psi) \cdot \L(\X,\pi,\pi')^{-1} \cdot
\L\left(\frac 1 {q\X},{\pi}^\vee,{\pi}'^\vee\right).
\end{equation*}
\end{enumerate}
We give more details,
for which we refer to \cite{KM17} Theorem~3.5, Co\-rol\-la\-ry 3.11.

\subsection{} 
\label{discdepq12}

Assume first that $m=n$.
Then the $\L$-factor
$\L(\X,\pi,\pi')$~is the unique Euler~fac\-tor genera\-ting
the~frac\-tio\-nal ideal of $\R[\X^{\pm1}]$~ge\-ne\-ra\-ted by the Laurent series
\begin{eqnarray*}
\I(\X,\W,\W',\Phi) &=& \sum\limits_{k\in\ZZ} \ \X^{k} \cdot 
\int\limits_{\boldsymbol{{\sf Y}}_{n,k}}
\W(g)\W'(g)\Phi( \row g)\ {\rm d}g, \\
\notag& & \W\in\WW(\pi,\psi), \quad \W'\in\WW(\pi',\psi^{-1}),
\quad \Phi\in\Cc_{\rm c}^\infty(\F^n),
\end{eqnarray*}
where
$\Cc_{\rm c}^\infty(\F^n)$ is the space of locally constant,
compactly supported functions from $F^n$ to $R$,~the
integral is over the set $\boldsymbol{{\sf Y}}_{n,k}$
made of all $\N_{n}g \subseteq \GL_n(\F)$
such that~$\det(g)$~has~va\-luation~$k$ and
$\row$~is the row matrix $(0 \ \dots \ 0\ 1)$.
We have the functional equation 
\begin{equation} 
\label{FEpadic}
\I\left(\frac {1} {q\X},\widetilde{\W},\widetilde{\W}',\widehat{\Phi}\right)
= \cen_{\pi'}(-1)^{n-1}\cdot \g(\X,\pi,\pi',\psi) \cdot \I(\X,{\W},{\W}',{\Phi})
\end{equation} 
where $\widetilde{\W}\in\WW(\pi^\vee,\psi^{-1})$
is the function given by $g\mapsto\W(\wn g^{*})$, 
where $\wn$ is the antidiagonal~per\-mu\-ta\-tion ma\-trix of maximal length
and $g^*$ is the transpose of $g^{-1}$,
and
\begin{equation*}
\widehat{\Phi} : y \mapsto \int\limits_{\F^n} \Phi(x)\psi(x\cdot y)\ {\rm d}x
\end{equation*}
is the Fourier transform of $\Phi$ with respect to the unique Haar measure 
${\rm d}x={\rm d}x(\psi)$
on $\F^n$ giving measure $q^{nd/2}$ to the lattice $\oo^n$
(and $x\cdot y$ denotes the canonical scalar product of $x,y\in\F^n$).

Assume now that $m<n$
and fix an integer $j\in\{0,\dots,n-m-1\}$.
The $\L$-factor $\L(\X,\pi,\pi')$ is the unique Euler factor generating
the fractional ideal of $\R[\X^{\pm1}]$~ge\-ne\-ra\-ted by the series
\begin{eqnarray}
\label{babilou}
\I_j(\X,\W,\W') &=& \sum\limits_{k\in\ZZ} \ (q^{(n-m)/2}\X)^{k} \cdot 
\int\limits_{\Mat_{j,m}(\F)}
\int\limits_{\boldsymbol{{\sf Y}}_{m,k}}
\W
\begin{pmatrix} g & 0 & 0 \\ x & 1_{j} & 0 \\ 0 & 0 & 1_{n-m-j} \end{pmatrix}
\W'(g)\ {\rm d}g\ {\rm d}x, \\
\notag& & \W\in\WW(\pi,\psi), \quad \W'\in\WW(\pi,\psi^{-1}),
\end{eqnarray}
where the integral over the vector space $\Mat_{j,m}(\F)$
is with respect to the unique Haar measure~gi\-ving 
measure $q^{jmd/2}$ to the~lat\-tice $\Mat_{j,m}(\oo)$.
We have the functional equation 
\begin{equation*}
\I_{n-m-1-j}\left(\frac {1} {q\X},w_{m,n-m}\cdot\widetilde{\W},\widetilde{\W}'\right)
= \cen_{\pi'}(-1)^{n-1}\cdot \g(\X,\pi,\pi',\psi) \cdot 
\I_j(\X,{\W},{\W}')
\end{equation*} 
where $w_{m,n-m}={\rm diag}({\rm id}_m,w_{n-m})$ in $\GL_{n}(\F)$.

\subsection{} 

These Rankin--Selberg local factors enjoy the following properties. 

\begin{prop}
\label{gammaintR}
Let $\pi$ and $\pi'$ be as above. 
\begin{enumerate}
\item 
The integer
\begin{equation*}
f(\pi,\pi')=f(\pi,\pi',\psi) + mnd(\psi)
\end{equation*}
does not depend on $\psi$.
\item
Given an $a\in\F^\times$, let $\psi^a$ denote the character
$x\mapsto\psi(ax)$ of $\F$. 
Then
\begin{equation*}
\label{epspsia}
\e(\X,\pi,\pi',\psi^a)=\cen_\pi(a)^m \cdot \cen_{\pi'}(a)^n
\cdot \left(\qe^{1/2}\X\right)^{mn\cdot{\rm val}_\F(a)}
\cdot \e(\X,\pi,\pi',\psi).
\end{equation*} 
\item 
If $\chi$ is an unramified character of $\F^\times$,
then
\begin{equation*}
\label{eps1nr}
\e(\X,\pi\chi,\pi',\psi) =
\e(\X,\pi,\pi'\chi,\psi) =
\chi(\w)^{f(\pi,\pi',\psi)} \cdot \e(\X,\pi,\pi',\psi)
\end{equation*}
for any uniformizer $\w$ of $\F$. 
\item
Let 
$\L^*(\X,\pi,\pi')$ and 
$\e^*(\X,\pi,\pi',\psi)$
be the local constants obtained by replacing \eqref{choixq12}
by~the opposite square root $-q^{1/2}$.
Then
\begin{eqnarray*}
\L^*(\X,\pi,\pi') &=& \L((-1)^{n-m}\X,\pi,\pi'), \\
\e^*(\X,\pi,\pi',\psi) &=&
(-1)^{(n-m)f(\pi,\pi') + mnd(\psi)}\cdot\e(\X,\pi,\pi',\psi),
\end{eqnarray*}
and the integer $f(\pi,\pi')$ does not depend on the choice
of the square root made in \eqref{choixq12}.
\end{enumerate}
\end{prop}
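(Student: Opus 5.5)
The plan is to derive all four properties directly from the characterisation of the local factors recalled in \S\ref{discdepq12}. The $\L$-factor is determined by the fractional ideal spanned by the zeta integrals. The $\g$-factor is the proportionality constant in the functional equation. The $\e$-factor is then recovered as $\g\cdot\L(\X,\pi,\pi')\cdot\L(1/(q\X),\pi^\vee,\pi'^\vee)^{-1}$. In each case I will compute how the zeta integrals transform under the relevant change of data (character $\psi$, unramified twist, square root), and deduce the transformation of $\L$ and $\g$, hence of $\e$. This is essentially \cite{KM17} Theorem 3.5 and Corollary 3.11, so I would follow that reference and only redo the bookkeeping.

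For (2), write $t_a={\rm diag}(a^{n-1},\dots,a,1)$. Then $W\mapsto W(t_a\,\cdot)$ identifies $\WW(\pi,\psi)$ with $\WW(\pi,\psi^a)$, and similarly on the $\GL_m$ side with $\psi^{-1}$. I will substitute into $\I$ (resp.\ $\I_j$), changing variables $g\mapsto t_a g t_a^{-1}$ or its analogue. Three things must be tracked:
\begin{itemize}
\item the central characters $\cen_\pi(a)$ and $\cen_{\pi'}(a)$ that appear;
\item the shift of the valuation of $\det g$, which produces powers of $(q^{1/2}\X)$;
\item the change of the self-dual measures ${\rm d}x(\psi^a)$, noting $d(\psi^a)=d(\psi)-{\rm val}_\F(a)$.
\end{itemize}
The $\L$-factor is unchanged, and comparing the two functional equations gives the stated formula for $\e(\X,\pi,\pi',\psi^a)$. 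Reading off exponents gives $f(\pi,\pi',\psi^a)=f(\pi,\pi',\psi)+mn\,{\rm val}_\F(a)$. Since any non-trivial character is of the form $\psi^a$, assertion (1) follows.

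For (3), $W\mapsto W\cdot(\chi\circ\det)$ maps $\WW(\pi,\psi)$ onto $\WW(\pi\chi,\psi)$. Since $\chi$ is unramified, $\chi(\det g)=\chi(\w)^k$ on $\boldsymbol{{\sf Y}}_{m,k}$ or $\boldsymbol{{\sf Y}}_{n,k}$, so $\I(\X,W\chi,W',\Phi)=\I(\chi(\w)\X,W,W',\Phi)$, and similarly for $\I_j$. Hence $\L(\X,\pi\chi,\pi')=\L(\chi(\w)\X,\pi,\pi')$. On the dual side $(\pi\chi)^\vee=\pi^\vee\chi^{-1}$, which yields $\g(\X,\pi\chi,\pi',\psi)=\g(\chi(\w)\X,\pi,\pi',\psi)$. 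Since $\e$ is a monomial $e\X^{f}$, this gives the factor $\chi(\w)^{f(\pi,\pi',\psi)}$. Twisting $\pi'$ instead is identical.

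For (4), the square root enters only through three places:
\begin{itemize}
\item the factor $(q^{(n-m)/2}\X)^k$ in $\I_j$;
\item the measure $q^{jmd/2}$ on $\Mat_{j,m}(\F)$;
\item when $m=n$, the Fourier-transform measure $q^{nd/2}$.
\end{itemize}
Thus $\I_j^*(\X)=(-1)^{jmd}\I_j((-1)^{n-m}\X)$, and in the case $m=n$ one gets $\widehat\Phi^*=(-1)^{nd}\widehat\Phi$. The first relation gives $\L^*(\X)=\L((-1)^{n-m}\X)$. Inserting both into the functional equation, with $j$ on one side and $n-m-1-j$ on the other, gives $\g^*(\X)=(-1)^{(n-m-1)md}\,\g((-1)^{n-m}\X)$ in the case $m<n$, and $(-1)^{nd}\g(\X)$ when $m=n$. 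Then $\e^*(\X)=\pm\e((-1)^{n-m}\X)=\pm(-1)^{(n-m)f(\pi,\pi',\psi)}\e(\X)$. Using $f(\pi,\pi',\psi)=f(\pi,\pi')-mnd$ to rewrite the total sign should produce $(-1)^{(n-m)f(\pi,\pi')+mnd}$. Invariance of $f(\pi,\pi')$ is clear, since the degree of the monomial $\e$ is unchanged.

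The main obstacle is the sign bookkeeping in (4): reconciling the $j$-dependent measure signs on the two sides of the functional equation and the parity terms into exactly the stated exponent. I would check it first in the case $d=0$ and then track the $d$-dependence separately.
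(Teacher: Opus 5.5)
Your proposal is correct and follows the paper's proof essentially step by step. Like the paper, you derive (1) from (2) via $d(\psi^a)=d(\psi)-{\rm val}_\F(a)$, you get (3) from the functional equations, and you prove (2) and (4) by direct computation on the zeta integrals, using exactly the relations $\I_j^*(\X)=(-1)^{jmd}\I_j((-1)^{n-m}\X)$ and $\widehat\Phi^*=(-1)^{nd}\widehat\Phi$. The sign bookkeeping you flag closes as in the paper: rewriting $(n-m-1)md+(n-m)\bigl(f(\pi,\pi')-mnd\bigr)$ modulo $2$ gives $(n-m)f(\pi,\pi')+mnd$, because $m(n(n-m)+n-m-1)$ and $mn$ have the same parity.
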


\begin{proof}
The first three properties are well-known when $\R$ is the field of complex
numbers.~For a general $\R$,
the first property follows from the second one together with the fact that
$d(\psi^a)=d(\psi)-{\rm val}_\F(a)$,
and the third one easily follows from the functional equations.
We now prove~the second and fourth ones,
treating the cases $n=m$ and $m<n$ separately.

Assume first that $n=m$.
If one replaces the square root $q^{1/2}$ by its opposite $-q^{1/2}$,
the~series $\I(\X,\W,\W')$ remain unchanged,
which gives $\L^*(\X,\pi,\pi')=\L(\X,\pi,\pi')$,
and the Haar measure~${\rm d}x$ on $\F^n$
is changed to $(-1)^{nd}\cdot{\rm d}x$,
which implies that
\begin{equation*}
\e^*(\X,\pi,\pi',\psi)=(-1)^{nd}\cdot\e(\X,\pi,\pi',\psi)
\end{equation*}
as expected.
Given $a\in\F^\times$, we define $t=t(a)\in\G$ to be the diagonal matrix
${\rm diag}(a^{n-1},\dots,a,1)$.
The functions $\W^a:g\mapsto\W(tg)$ and $\W'^a:g\mapsto\W'(tg)$ 
belong to $\WW(\pi,\psi^a)$ and $\WW(\pi',\psi^a)$,~res\-pectively.
We have
\begin{eqnarray*}
\I(\X,\W^a,\W'^a,\Phi) &=& \sum\limits_{k\in\ZZ} \ \X^{k} \cdot 
\int\limits_{\boldsymbol{{\sf Y}}_{n,k}}
\W(tg)\W'(tg)\Phi( \row g)\ {\rm d}g \\
& = & \sum\limits_{k\in\ZZ} \ \X^{k} \cdot 
\int\limits_{\boldsymbol{{\sf Y}}_{n,k+{\rm val}_\F(a)\cdot n(n-1)/2}}
\W(h)\W'(h)\Phi( \row h)\ {\rm d}(t^{-1}h) \\ 
& = & \X^{-{\rm val}_\F(a)\cdot n(n-1)/2} \cdot \mu(a) \cdot
\I(\X,\W,\W',\Phi) 
\end{eqnarray*}
where $\mu$ is the character of $\F^\times$ such that
${\rm d}(t^{-1}h) = \mu(a) \cdot {\rm d}h$ on $N_n \backslash \GL_n(F)$.
For all $g\in\G$,~one has
$\W(t\ww g^*)=\widetilde{\W}(a^{1-n}tg)$.
If we denote by $\widehat{\Phi}^{a}$ the Fourier transform of $\Phi$
with respect to the measure 
${\rm d}x(\psi^a)=|a|^{-n}\cdot{\rm d}x(\psi)$,
we have $\widehat{\Phi}^{a}(x)=|a|^{n/2}\cdot\widehat{\Phi}(ax)$
for all $x\in\F^n$.
This gives
\begin{eqnarray*}
\I\left(\frac {1} {q\X},\widetilde{\W^a},\widetilde{\W'^a},\widehat{\Phi}^{a}\right)
&=& \sum\limits_{k\in\ZZ} \ q^{-k}\X^{-k} \cdot 
\int\limits_{\boldsymbol{{\sf Y}}_{n,k}}
\widetilde{\W}(a^{1-n}tg)\widetilde{\W}'(a^{1-n}tg)
\widehat{\Phi}^{a}( \row g)\ {\rm d}g \\ 
& = & \cen_\pi(a)^{n-1} \cdot \cen_{\pi'}(a)^{n-1} \cdot % |a|^{n/2} \cdot
\sum\limits_{k\in\ZZ} \ q^{-k}\X^{-k} \cdot 
\int\limits_{\boldsymbol{{\sf Y}}_{n,k}}
\widetilde{\W}(tg)\widetilde{\W}'(tg)
\widehat{\Phi}^{a}(\row g)\ {\rm d}g \\
& = & \cen_\pi(a)^{n-1} \cdot \cen_{\pi'}(a)^{n-1} \cdot % |a|^{n/2} \cdot
\X^{{\rm val}_\F(a)\cdot n(n-1)/2} \cdot \mu(a) \cdot
\I\left(\frac                                                              {1}
      {q\X},\widetilde{\W},\widetilde{\W}',\widehat{\Phi}^{a}\right) 
\end{eqnarray*}
and
\begin{eqnarray*}
\I\left(\frac {1} {q\X},\widetilde{\W},\widetilde{\W}',\widehat{\Phi}^{a}\right) 
&=& |a|^{n/2} \cdot \sum\limits_{k\in\ZZ} \ q^{-k}\X^{-k} \cdot 
\int\limits_{\boldsymbol{{\sf Y}}_{n,k}}
\widetilde{\W}(g)\widetilde{\W}'(g)
\widehat{\Phi}(a \row g)\ {\rm d}g \\
&=& |a|^{n/2} \cdot \cen_\pi(a) \cdot \cen_{\pi'}(a) \cdot J_{k,a}(X)
\end{eqnarray*}
with 
\begin{eqnarray*}
J_{k,a}(X) &=& \sum\limits_{k\in\ZZ} \ q^{-k}\X^{-k} 
               \int\limits_{\boldsymbol{{\sf Y}}_{n,k+{\rm val}_F(a)\cdot n}} 
\widetilde{\W}(g)\widetilde{\W}'(g)
\widehat{\Phi}( \row g)\ {\rm d}(a^{-1}g) \\
&=& (q\X)^{{\rm val}_\F(a)\cdot n} \cdot |a|^{n(n+1)/2} \cdot 
\I\left(\frac {1} {q\X},\widetilde{\W},\widetilde{\W}',\widehat{\Phi}\right),
\end{eqnarray*}
which implies the expected result.

Assume now that $m<n$,
and write $\I_j^*(\X,\W,\W')$ for the series \eqref{babilou}
defined by replacing~\eqref{choixq12}
by the opposite square root $-q^{1/2}$.
Taking into account that the measure ${\rm d}x$
on $\Mat_{j,m}(\F)$ changes, we get
\begin{equation*}
\I_j^*(\X,\W,\W') = (-1)^{jmd(\psi)}\cdot\I_j((-1)^{n-m}\X,\W,\W'), 
\end{equation*}
thus $\L^*(\X,\pi,\pi')=\L((-1)^{n-m}\X,\pi,\pi')$ and
\begin{eqnarray*}
\e^*(\X,\pi,\pi',\psi) &=&
(-1)^{jmd(\psi)+(n-m-1-j)md(\psi)}\cdot\e((-1)^{n-m}\X,\pi,\pi',\psi) \\
&=& (-1)^{(n-m-1)md(\psi) + (n-m)f(\pi,\pi',\psi)}\cdot\e(\X,\pi,\pi',\psi) \\
&=& (-1)^{(n-m)f(\pi,\pi') + (n(n-m)+n-m-1)md(\psi)}\cdot\e(\X,\pi,\pi',\psi) \\
&=& (-1)^{(n-m)f(\pi,\pi') + mnd(\psi)}\cdot\e(\X,\pi,\pi',\psi) 
\end{eqnarray*}
where the second identity uses \eqref{epsmonomialR},
the third one uses (1) and the last one uses that
the integers $mn$ and $m(n(n-m)+n-m-1)$ have the same parity.
The proof of (2) when $m<n$ follows the same pattern as in the case when 
$m=n$. 
\end{proof}

\subsection{}

A fundamental property that we will use is that the $\g$-factor is 
multiplicative:
if the~generic representations $\pi$ and $\pi'$ are subquotients of
$\rho_1\times \dots \times \rho_r$ and $\mu_1\times \dots \times \mu_s$,
respectively, 
where the $\rho_i$ and the $\mu_j$ are generic, 
then
\begin{equation}
\label{gammaprod}
\g(\X,\pi,\pi',\psi) = \prod_{i=1}^{r}\prod_{j=1}^{s} \g(\X,\rho_i,\mu_j,\psi)
\end{equation}
(see \cite{KM17} Theorem 4.1). 

\subsection{}

Any non-trivial $\qlb$-character of $\F$ takes values in $\overline{\ZZ}_\ell$.
It thus may~be reduced~mod~$\ell$~and~its reduction mod $\ell$ is a 
non-trivial $\flb$-character of $\F$. 
Moreover,
reduction mod $\ell$ induces a bijection between 
non-trivial $\qlb$-characters of $\F$ and
non-trivial $\flb$-character of $\F$.

In this paragraph, 
we fix a non-trivial $\qlb$-character $\psi$ of $\F$
and denote its reduction mod $\ell$~by $\psi$ as well.
The~following~lem\-ma~is \cite{KM17} Lemma 2.27. 

\begin{lemm}
Let $\pi$ be an integral generic $\qlb$-representation of~$\G$.
\begin{enumerate}
\item 
There is,
up to isomorphism,
a unique generic $\qlb$-representation of~$\G$ 
occurring in the~re\-duc\-tion mod $\ell$ of $\pi$.
\item
It occurs with multiplicity $1$ as an irreducible component of
the~re\-duc\-tion mod $\ell$ of $\pi$.
\end{enumerate} 
\end{lemm}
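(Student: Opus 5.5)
The idea is to detect genericity with the exact functor of $\psi$-twisted $N$-coinvariants and to compute it on a well-chosen lattice. Let $\psi$ also denote the corresponding character of $N$, as in \S\ref{generik}. For any smooth representation $V$ of $G$ over $\zlb$, $\qlb$ or $\flb$, write $V_{N,\psi}$ for the quotient of $V$ by the span of the vectors $n\cdot v-\psi(n)v$ with $n\in N$ and $v\in V$. Over $\qlb$ or $\flb$, Frobenius reciprocity identifies $\Hom_N(V,\psi)$ with the dual of $V_{N,\psi}$. An irreducible representation is therefore generic if and only if $V_{N,\psi}\neq0$, and in that case $\dim V_{N,\psi}=1$ by uniqueness of Whittaker models.

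\emph{Step 1: exactness.} The group $N$ is the increasing union of its compact open subgroups $K$, and each such $K$ is pro-$p$ with $p\neq\ell$. Hence the idempotent $e_{K,\psi}=\int_K\psi^{-1}(k)\,k\,{\rm d}k$, for the normalised Haar measure on $K$, has coefficients in $\zlb$. A vector $v$ dies in $V_{N,\psi}$ exactly when $e_{K,\psi}v=0$ for some $K$. It follows that $V\mapsto V_{N,\psi}$ is exact on smooth representations over $\zlb$, over $\qlb$ and over $\flb$. It also commutes with base change: $(L\otimes_{\zlb}\flb)_{N,\psi}=L_{N,\psi}\otimes_{\zlb}\flb$ and $L_{N,\psi}\otimes_{\zlb}\qlb=(L\otimes_{\zlb}\qlb)_{N,\psi}$, since coinvariants are a colimit and are right exact.

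\emph{Step 2: a good lattice.} Take a nonzero vector $v$ in some stable lattice and let $L=\zlb[G]v$. Then $L$ is a finitely generated stable $\zlb$-lattice in $\pi$, because $\pi$ is irreducible, so $L\otimes\qlb=\pi$. Its coinvariant module $L_{N,\psi}$ is finitely generated over $\zlb$, because $L$ is. It is torsion-free: if $\varpi v'$ dies in $L_{N,\psi}$, then $e_{K,\psi}\varpi v'=0$ for some $K$, hence $e_{K,\psi}v'=0$ since $L$ is torsion-free, so $v'$ dies as well. Its rank is $\dim\pi_{N,\psi}=1$, as $\pi$ is generic. A finitely generated torsion-free module over the valuation ring $\zlb$ is free, so $L_{N,\psi}\simeq\zlb$. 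By Step 1 we get $\dim_{\flb}(L\otimes\flb)_{N,\psi}=1$.

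\emph{Step 3: counting.} The representation $L\otimes\flb$ has finite length. Its semisimplification is $\rl(\pi)$, which does not depend on the lattice, so the special lattice of Step 2 computes it. By exactness over $\flb$, $\dim(L\otimes\flb)_{N,\psi}$ equals the sum, over the irreducible constituents $\tau$ counted with multiplicity, of $\dim\tau_{N,\psi}\in\{0,1\}$. This sum is $1$. Hence exactly one constituent is generic and it occurs with multiplicity one, which proves (1) and (2); the generic representation in (1) is an $\flb$-representation.

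The main obstacle is Step 2. For an arbitrary stable lattice, $L_{N,\psi}$ could be a non-finitely-generated rank-one module such as $\qlb$ itself, and then it would become zero after tensoring with $\flb$. This is why the lattice must be chosen finitely generated, and why torsion-freeness must be checked using the integrality of the idempotents $e_{K,\psi}$.
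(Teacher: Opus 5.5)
\textbf{There is a genuine gap in Step 2.} The faulty sentence is ``$L_{N,\psi}$ is finitely generated over $\zlb$, because $L$ is''.

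The module $L=\zlb[G]v$ is finitely generated over $\zlb[G]$, not over $\zlb$: it has infinite $\zlb$-rank as soon as $n\geq2$. Twisted $N$-coinvariants do not inherit $\zlb$-finiteness from $\zlb[G]$-finiteness.

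Concretely, you showed that $L_{N,\psi}$ is torsion-free of rank one. So a non-zero Whittaker functional $\lambda$ identifies it with $\lambda(L)=\sum_{g\in G}\zlb\,W_v(g)$, where $W_v(g)=\lambda(gv)$. Finite generation therefore means exactly this: the Whittaker function of a lattice vector is bounded on $G$ and attains the supremum of its absolute values.

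That is the genuinely non-formal content of the lemma, an integrality statement for Whittaker models. The paper itself gives no argument and simply quotes \cite{KM17} Lemma 2.27. The statement is easy when $\pi$ is cuspidal: then $W_v$ is compactly supported modulo $NZ$ (with $Z$ the centre) and $\cen_\pi$ is integral. In general $W_v$ is not compactly supported modulo the centre, and one needs control of its asymptotics. Without this input, $L_{N,\psi}$ could be $\qlb$ or the maximal ideal $\mathfrak m$ of $\zlb$ (note $\mathfrak m=\mathfrak m^2$), and both vanish after $\otimes\flb$. You name this danger in your last paragraph but do not overcome it.

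\textbf{What your steps do prove is the upper bound.} A torsion-free rank-one $\zlb$-module $M$ embeds in $\qlb$, so $\dim_{\flb}M\otimes\flb\leq1$: it is spanned by any element of minimal valuation, if one exists. Hence Steps 1 and 3, applied to any stable lattice, give uniqueness and multiplicity at most one. Only existence is missing.

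\textbf{A way to supply existence without integral Whittaker functions.} Take integral cuspidal $\tau_1,\dots,\tau_r$ with $\pi$ a subquotient of $\tau_1\times\cdots\times\tau_r$.
\begin{enumerate}
\item Reduction mod $\ell$ is additive and commutes with parabolic induction.
\item Each $\rl(\tau_i)$ is cuspidal, hence generic. By the geometric lemma, $\rl(\tau_1)\times\cdots\times\rl(\tau_r)$ has one-dimensional twisted coinvariants, so it has exactly one generic constituent.
\item Your torsion-freeness argument shows that an integral irreducible non-generic $\qlb$-representation has $L'_{N,\psi}=0$ for every stable lattice $L'$. Such a representation therefore has no generic constituent mod $\ell$.
\item $\pi$ is the only generic constituent of $\tau_1\times\cdots\times\tau_r$ and occurs there once.
\end{enumerate}
So the unique generic constituent of the reduction must come from $\rl(\pi)$, and it occurs there with multiplicity one.
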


We fix a square root of $q$ in $\qlb$,
which we will use to define the local constants~of
pairs~of~gene\-ric~$\qlb$-representations.
For local constants of~pairs of generic $\flb$-representations,~we
use the~reduc\-tion mod $\ell$ of this square root.

\begin{prop}
\label{gammaint}
Let $\widetilde{\pi}$, $\widetilde{\pi}'$ be integral generic 
$\qlb$-representations of $\GL_n(\F)$, $\GL_m(\F)$, res\-pec\-tively,
and $\pi$, $\pi'$ be the unique generic subquotients
of their reduction mod $\ell$, respectively.
\begin{enumerate}
\item 
We have
$\L(\X,\widetilde{\pi},\widetilde{\pi}')^{-1} \in 1 + \X\zlb[\X]$ and
$\e(1,\widetilde{\pi},\widetilde{\pi}',\psi)\in\overline{\ZZ}{}_{\ell}^\times$. 
\item
The reduction mod $\ell$ of $\g(\X,\widetilde{\pi},\widetilde{\pi}',\psi)$
is equal to $\g(\X,\pi,\pi',\psi)$. 
\end{enumerate}
\end{prop}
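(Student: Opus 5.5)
The plan is to work directly from the definitions of \S\ref{discdepq12}: the $\L$-factor is a generator of a fractional ideal of Laurent series built from zeta integrals, and the $\g$-factor is the proportionality constant in the functional equation. The key input is that the Whittaker model behaves well under integral structures. By \cite{KM17} (the lattice theory underlying Lemma 2.27 above), the $\zlb$-submodule $\WW_{\zlb}(\widetilde{\pi},\psi)$ of $\zlb$-valued Whittaker functions is a $G$-stable lattice in $\WW(\widetilde{\pi},\psi)$. Moreover, its reduction mod $\ell$ surjects onto $\WW(\pi,\psi)$ via $W\mapsto \overline{W}$, and the same holds for $\widetilde{\pi}'$ with $\psi^{-1}$. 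I take these facts as the starting point.

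\textbf{Step 1 (integrality of $\L^{-1}$).} For $W,W'$ integral and $\Phi$ $\zlb$-valued, the zeta integrals $\I(\X,W,W',\Phi)$ (or $\I_j$) have coefficients in $\zlb$. Their reductions mod $\ell$ are the corresponding $\flb$-zeta integrals, once the Haar measures are normalized compatibly; these normalizations involve only powers of $q^{1/2}$, which are units.

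Since $\L(\X,\widetilde{\pi},\widetilde{\pi}')$ generates the fractional ideal, there are integral data with $\sum \I = \L(\X,\widetilde{\pi},\widetilde{\pi}')$. Write $\L^{-1}=P$ with $P(0)=1$. I plan to show $P\in\zlb[\X]$ by factoring $P=\prod(1-a_i\X)$ and proving each $a_i$ is integral. The argument is by contradiction: if some $|a_i|>1$, the integral ideal $\zlb$-span of the zeta integrals would not be stable in the way required by the asymptotic expansion of Whittaker functions. Concretely, the poles of zeta integrals come from exponents of Jacquet modules, and these are integral because the central characters of Jacquet module constituents of an integral representation are integral. This step is the one I expect to be the main obstacle. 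I would first try to quote directly from \cite{KM17} that $\L(\X,\widetilde{\pi},\widetilde{\pi}')$ divides the $\L$-factor of the reductions in a suitable sense, i.e.\ that $\overline{P}$ is divisible by the $\L^{-1}$ of the reduction.

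\textbf{Step 2 (the $\e$-factor is a unit).} Next I would show that $\e(1,\widetilde{\pi},\widetilde{\pi}',\psi)$ lies in $\zlb^\times$. The plan is to choose integral data $W,W',\Phi$ whose zeta integral reduces to a non-zero element, using the surjectivity onto the $\flb$-Whittaker model and the existence of $\flb$-data with non-zero integrals. The functional equation then expresses $\g$ as a ratio of two series with coefficients in $\zlb$. Applying the same argument to the dual pair, via the $\widetilde{W}$ and Fourier transform side (both preserve integrality up to units), shows that $\g$ and $\g^{-1}$ have reductions that are well defined. Combining this with Step 1 applied to both $\L(\X,\cdot)$ and $\L(1/(q\X),\cdot^\vee)$, the monomial $e\X^f$ has $e$ integral and $e^{-1}$ integral, so $e$ is a unit.

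\textbf{Step 3 (reduction of $\g$).} Finally, reduce the functional equation \eqref{FEpadic} (or its $m<n$ analogue) coefficientwise mod $\ell$. Since $\widetilde{W}\mapsto$ reduction commutes with $W\mapsto \widetilde{W}$ and with $\Phi\mapsto\widehat{\Phi}$, and the central character sign $\cen(-1)^{n-1}$ reduces correctly, one obtains $\overline{\g}\cdot \I(\X,\overline W,\overline W',\overline\Phi)=\g(\X,\pi,\pi',\psi)\cdot\I(\X,\overline W,\overline W',\overline\Phi)$. Choosing data with non-zero reduced integral gives $\overline{\g}=\g(\X,\pi,\pi',\psi)$, which proves (2).
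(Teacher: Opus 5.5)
The genuine gap is Step 1, and Steps 2 and 3 depend on it: you need $P=\L(\X,\widetilde\pi,\widetilde\pi')^{-1}$ and its dual analogue $P^\vee$ to be integral in order to turn the functional equation into an identity of Laurent polynomials with coefficients in $\zlb$. What you write in Step 1 is not a proof:
\begin{itemize}
\item the ``contradiction with stability'' is not an argument;
\item the exponent heuristic would need the full asymptotic expansion of Whittaker functions on the torus, integrality of its exponents, and the fact that the poles of $\L$ lie among the resulting ones, none of which you supply;
\item the fallback you would quote from \cite{KM17} (the $\ell$-modular $\L$-factor divides $\overline{P}$) presupposes that $P$ is integral, since otherwise $\overline{P}$ has no meaning.
\end{itemize}
The missing idea is elementary, and you are one line away from it. Since $\L$ lies in the fractional ideal, it is a $\qlb[\X^{\pm1}]$-combination of finitely many zeta integrals. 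By multilinearity these can be taken with $\zlb$-valued data, so they lie in $\zlb((\X))$. Clearing denominators gives $c\,\L\in\zlb((\X))$ for some $c\in\qlb^\times$. (Your claim that $\L$ is itself a sum of integrals of integral data is not justified; the scalar $c$ is what one actually gets, and it suffices.) Hence the power series $1/P$ has bounded coefficients. It therefore converges on the open unit disc of $\mathbb{C}_\ell$ to a function $f$ with $fP=1$ there, so $P$ has no zero with $|x|<1$. Writing $P=\prod_i(1-a_i\X)$, this says $|a_i|\leqslant1$ for all $i$, so $P\in1+\X\zlb[\X]$.

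Your starting point also contains an error: the reductions of $\zlb$-valued elements of $\WW(\widetilde\pi,\psi)$ do not in general form $\WW(\pi,\psi)$. They form a finite-length subrepresentation of $\Ind_N^G(\psi)$ with semisimplification $\r_\ell(\widetilde\pi)$. Its socle is $\WW(\pi,\psi)$, because every irreducible subrepresentation of $\Ind_N^G(\psi)$ is generic. It is strictly larger whenever $\r_\ell(\widetilde\pi)$ is reducible. That always happens for $\widetilde\pi=\St_r(\widetilde\rho)$ as in Lemma~\ref{CalculMagique}, since its Jacquet module is non-zero while $\St_r(\rho)$ is cuspidal.

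So in Steps 2 and 3 you must choose integral $W,W'$ whose reductions lie in $\WW(\pi,\psi)$ and $\WW(\pi',\psi^{-1})$ and have non-zero $\flb$-zeta integral. In addition, the coefficientwise reduction in Step 3 must be done after multiplying by $P(\X)\,P^\vee(1/(q\X))$, because the two sides of the functional equation are expansions in $\X$ and in $\X^{-1}$ respectively.

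With these repairs your Steps 2 and 3 go through. $P\cdot\I$ is an integral Laurent polynomial with a unit coefficient, which gives $e\in\zlb$ for the scalar $e$ in $\e(\X,\widetilde\pi,\widetilde\pi',\psi)=e\X^{f}$; symmetrically, $e^{-1}\in\zlb$. The whole argument then becomes a self-contained reconstruction of what the paper simply quotes: \cite{KM17} Corollary 3.6 and Lemma 3.12 for (1), and Theorem 3.13 for (2).
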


\begin{proof} 
The first part follows from \cite{KM17} Corollary 3.6
(in the statement of which the assumption that~$\widetilde{\pi}$
and $\widetilde{\pi}'$ are integral 
is missing) 
and \cite{KM17} Lemma 3.12.
The second part follows from \cite{KM17} Theorem 3.13.
\end{proof}

\subsection{}
\label{PreuveCalculMagique}

Let us prove  Lemma \ref{CalculMagique}. 
Recall that $\pi$ is a non-supercuspidal, cuspidal $\flb$-representation of
$G=\GL_n(F)$ isomorphic to~$\St_r(\rho)$ for some supercuspidal
representation $\rho$ of $\GL_k(F)$. 

\begin{proof}[Proof of Lemma \ref{CalculMagique}]
Let $\widetilde{\rho}$ be a $\qlb$-lift of $\rho$,
whose existence is granted by \cite{Vigbook} III.5.10.
By Proposition \ref{gammaint}(2), we have
\begin{equation*}
\g \left( \X,\pi,\rho^\vee,\psi \right)
= \r_\ell\left( \g(\X,\St_r(\rt),\rt^\vee,\psi) \right).
\end{equation*}
By multiplicativity \eqref{gammaprod}, we have 
\begin{equation*}
  \g \left( \X,\St_r(\rt),\rt^\vee ,\psi \right)
  = \prod\limits_{i=0}^{r-1} \g \left( \X,\rt\nu^{i-(r-1)/2},\rt^\vee ,\psi \right).
\end{equation*}
Write $a=(r-1)/2$ for simplicity. 
Then 
\begin{equation*}
\g \left( \X,\rt\nu^{i-(r-1)/2},\rt^\vee,\psi \right) 
= \e \left( \X,\rt\nu^{i-a},\rt^\vee,\psi \right)
\cdot \frac
{\L( q^{-1} \X^{-1},\rt^\vee\nu^{a-i},\rt)}
{\L(\X,\rt\nu^{i-a},\rt^\vee)}.
\end{equation*}
By \cite{KM17} Theorem 4.9, we have
\begin{equation*}
\L(\X,\rt\nu^{i-a},\rt^\vee)
= \frac {1} {1- q^{k(a-i)}\X^k}
\quad\text{and}\quad
\L(\X,\rt^\vee\nu^{a-i},\rt)
= \frac {1} {1-q^{k(i-a)}\X^k}.
\end{equation*} 
We thus have
\begin{equation}
\label{alanbeck}
\g(\X,\St_r(\rt),\rt^\vee,\psi)
= \e(\X,\rt,\rt^\vee,\psi)^{r} \cdot \prod\limits_{i=0}^{r-1}
\frac { 1 -  q^{k(a-i)}X^k } { 1 - q^{k(i-a-1)}X^{-k} }.
\end{equation}
Let us first focus on the product on the right hand side of \eqref{alanbeck}. 
We have
\begin{eqnarray*}
\prod\limits_{i=0}^{r-1}
\frac { 1 -  q^{k(a-i)}X^k } { 1 - q^{k(i-a-1)}X^{-k} } 
&=& (-X^k)^{r} \cdot \prod\limits_{i=0}^{r-1}
\frac { 1 - q^{k(a-i)}X^k } { q^{k(i-a-1)} - X^k } \\
&=& 
(-1)^r \cdot X^{n} \cdot q^n \cdot 
\prod\limits_{i=0}^{r-1} \frac { 1 - q^{k(a-i)}X^k } { 1 - q^{k(a-i+1)}X^k } \\
&=& 
(-1)^r \cdot (qX)^n \cdot
\frac { 1 - q^{-ka}X^k } { 1 - q^{k(a+1)}X^k }
\end{eqnarray*}
which we may write under the form 
\begin{equation}
\label{C1}
\prod\limits_{i=0}^{r-1}
\frac { 1 - q^{k(a-i)}X^k } { 1 - q^{k(i-a-1)}X^{-k} } = 
(-1)^r \cdot q^{n/2} \cdot (q^{1/2}X)^n \cdot
\frac { 1 - q^{-ka}X^k } { 1 - q^{n} q^{-ka}X^k }.
\end{equation}
Let us compute the $\e$-factor occurring on the right hand side of
\eqref{alanbeck}.
By \cite{BHBLMS99} Théorème~2
we~ha\-ve $\e(q^{-1/2},\rt,\rt^\vee,\psi)=\cen_{\rt}(-1)^{k-1}$,
where $\cen_{\rt}$ is the central~cha\-racter of $\rt$.
Thus
\begin{equation}
\label{C2}
\e(\X,\rt,\rt^\vee,\psi)
= \cen_{\rt}(-1)^{k-1} \cdot (q^{1/2}X)^{f}
\end{equation}
for some integer $f=f(\rt,\rt^\vee,\psi)\in\ZZ$.
Putting \eqref{C1} and \eqref{C2} together, 
we get
\begin{equation*}
\g(\X,\St_r(\rt),\rt^\vee,\psi) = 
(-1)^r \cdot q^{n/2} \cdot \cen_{\rt}(-1)^{r(k-1)} 
\cdot (q^{1/2}X)^{rf}
\cdot \frac { 1 - q^{-ka}X^k } { 1 - q^{n} q^{-ka}X^k }.
\end{equation*}
Since $q^n$ is congruent to $1$ mod $\ell$
(which follows from the fact that $\pi$ is cuspidal but non-supercus\-pi\-dal)
and $\cen_\rho^r$ is the central character of $\pi$, we get
\begin{equation*}
\label{facteurLnr}
\g(\X,\pi,\rho^\vee,\psi) = (-1)^r \cdot q^{n/2} 
\cdot \cen_{\pi}(-1)^{k-1} \cdot (q^{1/2}X)^{rf}.
\end{equation*}
Specializing at $X=q^{-1/2}$, we get the expected result.
\end{proof}

In conclusion,
in order to prove Theorem \ref{CONJNRV},
it remains to prove Pro\-po\-sitions 
\ref{padicfinite} and \ref{minelli}.

\section{Gamma factors over finite fields}
\label{duveyrier}

In this section,
$\k$ is a finite field of characteristic $p$, 
$\ell$~is a pri\-me number different from $p$
and~$\R$ is an algebraically closed field~of characteristic~$0$~or $\ell$.
We fix a non-trivial character
\begin{equation*}
\label{psikd}
\psi : \k \to \R^\times.
\end{equation*} 
The purpose of this section is to 
recall the definition and properties
of the $\g$-factors of \cite{Roditty,Nien,Mossetal}
and prove Proposition \ref{padicfinite}.

\subsection{}
\label{Appert}
\label{juliensorel}

Given an integer $n\>1$, 
we write $\G=\G_n$ for the group $\GL_n(\k)$ 
and  $\N=\N_n$ for the~sub\-group of $\G$~made~of all~up\-per triangular
unipotent matrices.
The cha\-rac\-ter
\begin{equation*}
x\mapsto\psi(x_{1,2}+\dots+x_{n-1,n})
\end{equation*}
of $\N$ will still be denoted by $\psi$.
Let~us introduce the following definitions.

\begin{defi} 
\label{defwtg}
\begin{enumerate}
\item 
A representation $\pi$ of $\G$ is \emph{of Whittaker type} if
$\dim_R\Hom_{\N}(\pi,\psi)=1$. 
\item
A representation of $G$ is said to be \textit{generic} if it is
both irreducible and of Whittaker type. 
\end{enumerate} 
\end{defi}

Since any two non-trivial characters of $N$ are conjugate under the
normalizer of $N$ in $G$,~being
of Whit\-taker type does not depend on the choice of $\psi$.

Since $N$ is a $p$-group and the characteristic of $R$ is different from $p$,
it follows that
\begin{itemize}
\item 
a representation of~Whit\-ta\-ker type has a unique generic subquotient, 
\item
a representation is of~Whit\-ta\-ker type if and only if its contragredient is
of~Whit\-ta\-ker type.
\end{itemize}
If $\pi$ is a representation of Whittaker type 
and $\xi\in\Hom_{\N}(\pi,\psi)$ is non-zero,
the map associating~to any vector $v$ in $\pi$
the matrix coefficient $\coef_{v,\xi}$ (see Paragraph \ref{par21})
is a morphism from $\pi$ to $\Ind_{\N}^{\G}(\psi)$
whose image $\Ww(\pi,\psi)$ does not depend~on the choice of $\xi$.
This image is called the \emph{Whittaker~mo\-del} of $\pi$ with respect to $\psi$.
If $\tau$ is the unique generic subquotient of $\pi$,
then the unique irreducible sub\-representation of $\Ww(\pi,\psi)$ is
equal to $\Ww(\tau,\psi)$.

As in the non-Archimedean case (see Paragraph \ref{discdepq12}),
we denote by $g^*$ the transpose of~the~in\-ver\-se of a~ma\-trix $g\in\G$
and by $\wn$ the antidiagonal~per\-mu\-ta\-tion ma\-trix of maximal length
of~$G$.
If $\pi$ is of Whittaker type, the representation
$\pi^* : g \mapsto \pi (g^*)$ 
is~of Whittaker type and the map
\begin{equation*}
W\mapsto \left(\widetilde{W} : g \mapsto W(\wn g^*) \right)
\end{equation*} 
is an isomorphism of $R$-vector spaces
from $\Ww(\pi,\psi)$ to $\Ww(\pi^*,\psi^{-1})$. 
Moreover,
the unique generic sub\-quo\-tient of $\pi^*$ is isomorphic to
that of $\pi^\vee$.

\subsection{}
\label{Bessel}

We introduce the Bessel function of a representation of Whittaker type. 

\begin{prop}
Let~$\pi$ be a representation of Whittaker type of~$\G$. 
There is a unique function $\Be_{\pi,\psi} \in \Ww(\pi,\psi)$ such that
\begin{enumerate}
\item $\Be_{\pi,\psi}(1)=1$,
\item $\Be_{\pi,\psi}(xgy)=\psi(xy) \Be_{\pi,\psi}(g)$ for all $g\in G$ and $x,y\in N$.
\end{enumerate}
It is called the \emph{Bessel function} of~$\pi$ with respect to $\psi$. 
\end{prop}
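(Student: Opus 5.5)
Fix a non-zero functional $\xi\in\Hom_N(\pi,\psi)$; it is unique up to scalar because $\pi$ is of Whittaker type. The plan is to produce a bi-$(N,\psi)$-equivariant vector in $\pi$ by averaging, and to take its Whittaker function.

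\emph{Existence.} Let $e_\psi=|N|^{-1}\sum_{x\in N}\psi(x)^{-1}\pi(x)$. This makes sense since $|N|$ is a power of $p$, invertible in $R$. It is an idempotent projecting $\pi$ onto the $(N,\psi)$-isotypic subspace $\pi^{N,\psi}=\{v:\pi(x)v=\psi(x)v \text{ for all } x\in N\}$. Since $N$ is a finite $p$-group and $\mathrm{char}\,R\neq p$, taking $(N,\psi)$-coinvariants is exact and is identified with taking isotypic vectors. Hence
\begin{equation*}
\dim\pi^{N,\psi}=\dim\Hom_N(\pi,\psi)=1.
\end{equation*}
Let $v_0$ span $\pi^{N,\psi}$. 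The restriction of $\xi$ to $\pi^{N,\psi}$ is non-zero: otherwise $\xi$ would vanish on $e_\psi\pi$, but $\xi=\xi\circ e_\psi$ because $\xi(\pi(x)v)=\psi(x)\xi(v)$. Normalize $v_0$ so that $\xi(v_0)=1$, and set $\Be_{\pi,\psi}=\coef_{v_0,\xi}$, that is, $g\mapsto\xi(\pi(g)v_0)$. This function lies in $\Ww(\pi,\psi)$ and satisfies $\Be(1)=1$. It also satisfies
\begin{equation*}
\Be(xgy)=\psi(x)\,\xi(\pi(g)\pi(y)v_0)=\psi(x)\psi(y)\,\Be(g)
\end{equation*}
for all $x,y\in N$.

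\emph{Uniqueness.} Let $J\in\Ww(\pi,\psi)$ satisfy (1) and (2). Write $J=\coef_{v,\xi}$; this is possible because the map $v\mapsto \coef_{v,\xi}$ is onto $\Ww(\pi,\psi)$ by definition. Right equivariance gives $\coef_{\pi(y)v,\xi}=\psi(y)\coef_{v,\xi}$, so $\coef_{e_\psi v,\xi}=\coef_{v,\xi}$. We may therefore replace $v$ by $e_\psi v\in\pi^{N,\psi}$, so $v=cv_0$. Evaluating at $1$ gives $c=1$, hence $J=\Be_{\pi,\psi}$. Note that the map $v\mapsto\coef_{v,\xi}$ need not be injective; this is harmless, since only the images matter. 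The one point needing care is the equality $\dim\pi^{N,\psi}=1$, which rests on the semisimplicity of $N$-representations when $p\neq\mathrm{char}\,R$.
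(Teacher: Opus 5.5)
Your proof is correct and is essentially the paper's argument. Both use the splitting of $\pi$, under the $p$-group $N$, into the $\psi$-isotypic line and its $N$-stable complement, and both take the matrix coefficient of an isotypic vector against a Whittaker functional; the paper's $\xi$ with kernel that complement is your $\xi$ up to a scalar. The only difference is in uniqueness: the paper notes that $\Ww(\pi,\psi)$ is itself of Whittaker type, so its right-$(N,\psi)$-equivariant functions form a line, whereas you pull back to $\pi$ and use $e_\psi$, which is equally valid.
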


\begin{proof}
Since $\pi$ is of Whittaker type, 
its Whittaker model is of Whittaker type as well. 
Thus the space of functions $B\in\Ww(\pi,\psi)$
such that $B(gx)=\psi(x)B(g)$ has dimension $1$.
We are thus reduced to proving that $B(1)\neq0$
for at least one of such $B$.
For this,
let $V$ denote the space of $\pi$.
It decomposes as the direct sum of the line
$V^\psi$ made of all $v\in V$ such that
$\pi(x)(v) = \psi(x)v$ for all $x\in N$
and its unique $N$-stable complement $V(\psi)$.
Fix a non-zero $v\in V^\psi$,
and a non-zero linear form $\xi$ on $V$ with kernel $V(\psi)$. 
Then the function $B=\coef_{v,\xi}$ has the required property. 
\end{proof}

We collect a couple of properties of the Bessel function. 

\begin{prop}
\label{corsqsbessel}
Let $\pi$ be a representa\-tion of Whittaker type of~$\G$ 
and $\tau$ be its unique generic subquotient.
Then $\Be_{\pi,\psi}=\Be_{\tau,\psi}$.
\end{prop}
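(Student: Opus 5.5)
The plan is to use the inclusion $\Ww(\tau,\psi)\subseteq\Ww(\pi,\psi)$ recalled in \S\ref{juliensorel}, together with the uniqueness statement in the previous proposition. Since $\tau$ is generic, it is of Whittaker type, so its Bessel function $\Be_{\tau,\psi}$ exists. It is an element of $\Ww(\tau,\psi)$ satisfying conditions (1) and (2).

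The key steps, in order, are as follows.

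First, I recall that the unique irreducible subrepresentation of $\Ww(\pi,\psi)$ is $\Ww(\tau,\psi)$. Concretely, $\Ww(\tau,\psi)$ sits inside $\Ww(\pi,\psi)$ as a subspace of $\Ind_N^G(\psi)$. So $\Be_{\tau,\psi}$ is a function lying in $\Ww(\pi,\psi)$.

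Second, I check that $\Be_{\tau,\psi}$ satisfies the two defining properties of the Bessel function of $\pi$. These properties are $\Be(1)=1$ and bi-$(N,\psi)$-equivariance. Both are intrinsic properties of the function on $G$, and they hold by definition of $\Be_{\tau,\psi}$.

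Third, I invoke the uniqueness part of the previous proposition applied to $\pi$. It gives $\Be_{\pi,\psi}=\Be_{\tau,\psi}$.

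The only point requiring care is the first step: that $\Ww(\tau,\psi)$ is really contained in $\Ww(\pi,\psi)$ as spaces of functions, not merely up to isomorphism. I would justify this as follows. The Whittaker functional on $\Ww(\pi,\psi)$ is evaluation at $1$. Since $N$ is a $p$-group, taking $(N,\psi)$-coinvariants is exact. Hence the functional is non-zero on the generic subquotient, which forces the irreducible subrepresentation of $\Ww(\pi,\psi)$ to be generic, isomorphic to $\tau$. Its functions are then the Whittaker model of $\tau$, because the Whittaker functional of $\tau$ is unique up to scalar. This is exactly the fact already stated in \S\ref{juliensorel}, so I expect no real obstacle.
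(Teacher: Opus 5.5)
Your proposal is correct and is exactly the paper's argument: $\Be_{\tau,\psi}\in\Ww(\tau,\psi)\subseteq\Ww(\pi,\psi)$ satisfies the two defining conditions, so the uniqueness in the preceding proposition forces $\Be_{\pi,\psi}=\Be_{\tau,\psi}$. Your extra justification of the inclusion is not needed beyond citing \S\ref{juliensorel}. If you keep it, the cleanest reason the socle of $\Ww(\pi,\psi)$ is generic is that evaluation at $1$ cannot vanish identically on a non-zero $\G$-stable subspace of $\Ind_{\N}^{\G}(\psi)$.
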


\begin{proof} 
We have
$\Be_{\tau,\psi} \in \Ww(\tau,\psi) \subseteq \Ww(\pi,\psi)$.
By uniqueness,
it follows that $\Be_{\tau,\psi} = \Be_{\pi,\psi}$.
\end{proof}

\begin{prop}
\label{BesselJell}
Let~$\widetilde{\pi}$ be a generic~$\qlb$-representation of~$\G$
and~$\pi$ be the unique generic~sub\-quo\-tient of its reduction mod $\ell$.
Then~$\Be_{\widetilde{\pi},\psi}$ takes values in~$\zlb$,
and its reduction mod the maximal ideal of $\zlb$
is equal to $\Be_{\pi,\psi}$.
\end{prop}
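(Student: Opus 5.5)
We prove Proposition \ref{BesselJell} by constructing the Bessel function of $\widetilde{\pi}$ from an integral structure and reducing it, then invoking the uniqueness of Bessel functions and Proposition \ref{corsqsbessel}.

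First we produce a $\zlb$-structure. Since $\G$ is finite, $\widetilde{\pi}$ is integral: if $V$ denotes its space, the $\zlb$-span $L$ of the $\G$-orbit of any non-zero vector is a $\G$-stable lattice, because $\zlb$ is a filtered union of discrete valuation rings and the orbit is finite. Since $|\N|$ is a power of $p$ and $p$ is invertible in $\zlb$, the projector
\begin{equation*}
e_\psi = \frac{1}{|\N|}\sum_{x\in\N}\psi(x)^{-1}\widetilde{\pi}(x)
\end{equation*}
preserves $L$. It therefore gives a decomposition $L=L^\psi\oplus L(\psi)$ compatible with $V=V^\psi\oplus V(\psi)$. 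Since $V^\psi$ is a line, $L^\psi$ is a free rank-one $\zlb$-module; choose a generator $v$. Define $\xi$ on $L$ as the composite of $e_\psi$ with the coordinate map $L^\psi\simeq\zlb$ sending $v$ to $1$. Then $\xi$ is $\zlb$-valued on $L$, vanishes on $L(\psi)$, and satisfies $\xi(v)=1$. By the proof of the existence of Bessel functions, $\Be_{\widetilde{\pi},\psi}=\coef_{v,\xi}$, so its values $\xi(\widetilde{\pi}(g)v)$ lie in $\zlb$.

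Next we reduce mod $\ell$. Set $\overline{L}=L\otimes\flb$, a representation whose semisimplification is $\r_\ell(\widetilde{\pi})$. The idempotent $e_\psi$ reduces to the analogous projector on $\overline{L}$, which gives $\overline{L}=\overline{L}^\psi\oplus\overline{L}(\psi)$ with $\overline{L}^\psi=L^\psi\otimes\flb$ of dimension $1$. Hence $\overline{L}$ is of Whittaker type. The reductions $\bar v$ and $\bar\xi$ satisfy the conditions in the construction of the Bessel function of $\overline{L}$, so the reduction of $\Be_{\widetilde{\pi},\psi}$ is $\coef_{\bar v,\bar\xi}=\Be_{\overline{L},\psi}$.

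Finally we identify the generic subquotient. Because $\N$ is a $p$-group, taking $\psi$-isotypic parts is exact in characteristic $\ell$. The unique generic subquotient of $\overline{L}$ is therefore the unique generic constituent of $\r_\ell(\widetilde{\pi})$, namely $\pi$. By Proposition \ref{corsqsbessel}, $\Be_{\overline{L},\psi}=\Be_{\pi,\psi}$, which proves the statement.

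The only delicate point is ensuring that the lattice decomposes compatibly and that $L^\psi$ is free of rank one. Both follow from $p$ being invertible in $\zlb$ and from $\zlb$ being a Bézout domain: its finitely generated torsion-free modules are free.
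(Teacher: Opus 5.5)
Your proof is correct. Its second half is the same as the paper's: reduce a $\G$-stable lattice, check that the reduction is of Whittaker type with Bessel function equal to the reduction of $\Be_{\widetilde{\pi},\psi}$, and conclude with Proposition~\ref{corsqsbessel}.

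The difference is the integrality step. The paper gets it from Gelfand's explicit formula (Proposition 4.5 of his 1970 paper), which expresses the Bessel function through the character of $\widetilde{\pi}$, essentially as $g\mapsto|\N|^{-1}\sum_{u\in\N}\psi^{-1}(u)\,\chi_{\widetilde{\pi}}(gu)$. Integrality then follows from integrality of character values and invertibility of $p$. The paper then works with the specific lattice of $\zlb$-valued functions in $\Ww(\widetilde{\pi},\psi)$, which contains $\Be_{\widetilde{\pi},\psi}$ by construction.

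You instead take an arbitrary stable lattice and use that the projector $e_\psi$ preserves it. This lets you choose the normalized $\psi$-vector and $\psi$-functional integral at the same time. The two arguments rest on the same fact, since $\Be_{\widetilde{\pi},\psi}(g)=\mathrm{tr}(\widetilde{\pi}(g)e_\psi)$.

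What your route buys: it is self-contained, with no appeal to Gelfand's formula or to integrality of characters. It also shows explicitly why the reduced lattice is of Whittaker type, which the paper asserts without comment. What the paper's route buys: it is shorter, given the citation.
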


\begin{proof}
The formula given in \cite{Gelfand70} Proposition 4.5 for the Bessel function 
of~$\widetilde{\pi}$ shows that it~ta\-kes~values in $\zlb$.
Let $L$ be the lattice of~$\zlb$-valued Whittaker functions in 
$\Ww(\widetilde{\pi},\psi)$.
Then~$L\otimes\flb$ is~of Whittaker type and
its Bessel function is the reduction of $\Be_{\widetilde{\pi},\psi} \in L$ 
mod the maximal~ideal~of $\zlb$.
The
claimed equality follows from~Propo\-sition \ref{corsqsbessel}
applied to $L\otimes\flb$ and $\pi$. 
\end{proof}

Let $\P$ denote the standard mirabolic subgroup of $G$,
that is,
the subgroup made of all matrices with last row
$(0\ \dots\ 0\ 1)$.

\begin{lemm}
\label{suppbessel}
Let~$\pi$ be a representation of Whittaker type of~$\G$.
If $g\in P$ and $\Be_{\pi,\psi}(g)\neq0$,~then we have $g\in N$.
\end{lemm}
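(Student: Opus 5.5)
We use the Bruhat decomposition of the mirabolic subgroup together with the two-sided equivariance of $\Be_{\pi,\psi}$ under $N$. The claim is the finite-field analogue of the classical fact (Gelfand) that a Bessel function restricted to $P$ is supported on $N$.

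First, we reduce to elements of the form $g=\mathrm{diag}(h,1)$ with $h$ a monomial-type representative. Write $P=N\,M$, where $M$ is the set of matrices $\mathrm{diag}(h,1)$ with $h\in\GL_{n-1}(\k)$. Since $N\subseteq P$ and $\Be_{\pi,\psi}(xgy)=\psi(xy)\Be_{\pi,\psi}(g)$ for $x,y\in N$, it suffices to show the following: if $h\in\GL_{n-1}(\k)$ and $\Be_{\pi,\psi}(\mathrm{diag}(h,1))\neq0$, then $\mathrm{diag}(h,1)\in N$. Using the Bruhat decomposition of $\GL_{n-1}(\k)$ with respect to its upper triangular Borel subgroup, and absorbing the unipotent parts into $N_{n-1}\subseteq N$ on both sides, we may assume $h=wt$, where $w$ is a permutation matrix and $t$ is diagonal. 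The goal is then to show $w=1$ and $t=1$.

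Second, we use the standard support argument. If $g$ is in the support, then for every $x\in N$ with $gxg^{-1}\in N$ we have
\begin{equation*}
\psi(gxg^{-1})\Be_{\pi,\psi}(g)=\Be_{\pi,\psi}(gx)=\psi(x)\Be_{\pi,\psi}(g),
\end{equation*}
so $\psi(gxg^{-1})=\psi(x)$ for all $x\in N\cap g^{-1}Ng$. We apply this to $g=\mathrm{diag}(wt,1)$ and to root subgroups $x=1+aE_{i,i+1}$.

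For $i=n-1$, conjugation sends the entry $a$ in position $(n-1,n)$ to position $(w(n-1),n)$ with coefficient $t_{n-1}a$, and this always lies in $N$. The condition gives $\psi(t_{n-1}a\,[w(n-1)=n-1])=\psi(a)$ for all $a\in\k$. Since $\psi$ is non-trivial, this forces $w(n-1)=n-1$ and $t_{n-1}=1$.

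We then induct downwards. Suppose we already know $w(j)=j$ and $t_j=1$ for $j>i$. Take the simple root $E_{i,i+1}$. Its conjugate is $t_i t_{i+1}^{-1}aE_{w(i),i+1}$, which lies in $N$ because $w(i)\le i<i+1$, given that $w$ fixes all $j>i$. The character condition $\psi(\cdot)=\psi(a)$ then forces $w(i)=i$ (otherwise the left side is $1$, which fails for suitable $a$) and $t_i=t_{i+1}=1$. Hence $w=1$ and $t=1$, so $g\in N$.

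The main obstacle is the bookkeeping in the Bruhat reduction: one must check that $w$ can be chosen in $\GL_{n-1}$ so that the relevant conjugates really land in $N$, and that the induction on the last fixed index goes through. No special feature of $\pi$ is needed beyond the two-sided $(N,\psi)$-equivariance. In particular, the argument works over any $R$ of characteristic different from $p$, since it only uses the non-triviality of $\psi$ on $\k$.
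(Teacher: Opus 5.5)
Your proof is correct. You reduce via $P = NM$ and the Bruhat decomposition of $\GL_{n-1}(\k)$ to $g = \mathrm{diag}(wt,1)$, then run a downward induction with the simple root subgroups $1 + aE_{i,i+1}$ and the condition $\psi(gxg^{-1}) = \psi(x)$ on $N \cap g^{-1}Ng$, which forces $w = 1$ and $t = 1$. This is essentially the classical support argument of Gelfand's Proposition 1.2, which the paper simply cites. The paper makes the same observation you do: the argument uses only the two-sided $(N,\psi)$-equivariance and the non-triviality of $\psi$, so it works for any $R$.
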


\begin{proof}
The proof for complex representations (see \cite{Gelfand70} Proposition 1.2)
applies to $R$-represen\-ta\-tions. 
\end{proof}

\subsection{}

Let $n,m$ be integers such that $1\<m<n$.
Given
representations of Whittaker type~$\pi$,~$\pi'$~of $\GL_n(\k)$, $\GL_m(\k)$,
respectively,
an integer $j$ such that $0\< j\< n-m-1$,
and Whittaker func\-tions~$W\in\Ww(\pi,\psi)$ and $W'\in\Ww(\pi',\psi^{-1})$,
we define the sum 
\begin{equation*}
\I_j(W,W')= \sum_{g\in \N_m\backslash \G_m}
\sum_{x\in \Mat_{j,m}(\k)}
W\begin{pmatrix}g&0&0\\x& 1_{j} &0\\0&0&1_{n-m-j}\end{pmatrix} 
W'(g).
\end{equation*}
We introduce the
Rankin--Selberg $\g$-factor of the pair $(\pi,\pi')$ following
\cite{Roditty} Theorems 5.1, 5.4,~\cite{Nien} Theorem 2.10 and
\cite{Mossetal} Corollary 3.1.2. 
We set $w_{m,n-m}={\rm diag}({\rm id}_m,w_{n-m})$ as in \S\ref{discdepq12}.
We also let $q$ denote the cardinality of $\k$ and fix a square root $q^{1/2}$
of $q$ in $R^\times$.

\begin{prop} 
\label{FERS} 
Let~$\pi$ be a cuspidal representation of $\GL_n(\k)$
and $\pi'$ be a~repre\-sentation~of Whittaker type of~$\GL_m(\k)$.
Let $\cen_{\pi'}$ denote the central character of the generic subquotient
of $\pi'$.
There exists a unique scalar $\g_{\RS}(\pi,\pi',\psi) \in R$ such that 
\begin{equation}
\label{FERS1} 
\I_{n-m-1-j}\left( w_{m,n-m}\cdot \widetilde{W},\widetilde{W'}\right) =
\cen_{\pi'}(-1)^{n-1} \cdot q^{\frac{m(n-m-1-2j)}{2}}
\cdot \g_{\RS}(\pi,\pi',\psi) \cdot \I_j(W,W')
\end{equation}
for all Whittaker functions $W\in \Ww(\pi,\psi)$, $W'\in \Ww(\pi',\psi^{-1})$
and all integers $j\in\{0,\dots,n-m-1\}$.
\end{prop}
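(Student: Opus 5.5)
Since $\pi$ is cuspidal and the right-hand side of \eqref{FERS1} is linear in $W,W'$, the plan is to reduce existence and uniqueness of $\g_{\RS}(\pi,\pi',\psi)$ to a multiplicity-one statement for a space of invariant bilinear forms, and then to check that the forms $\I_j$ all belong to this space and agree up to the explicit constants. More precisely, for fixed $j$ both sides of \eqref{FERS1} are bilinear forms on $\Ww(\pi,\psi)\times\Ww(\pi',\psi^{-1})$. Let $Q_j$ denote the subgroup of $\GL_n(\k)$ consisting of matrices
\begin{equation*}
\begin{pmatrix} h & 0 & 0 \\ x & 1_j & 0 \\ 0 & 0 & 1_{n-m-j}\end{pmatrix}\begin{pmatrix} 1_m & 0 & * \\ 0 & u & * \\ 0 & 0 & u'\end{pmatrix},
\end{equation*}
with $h\in\GL_m(\k)$ and $u,u'$ upper unipotent. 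The change of variables in the finite sums shows that $\I_j$ is invariant for the diagonal action of $\GL_m(\k)$, with $\GL_m(\k)$ embedded in the upper-left corner, twisted by a character of the unipotent radical. Equivalently, $\I_j$ lies in $\Hom_{\GL_m(\k)\ltimes U}(\pi\otimes\pi',\psi_U)$ for the appropriate unipotent group $U$ and generic character $\psi_U$, and the same holds for the left-hand side after conjugating by $w_{m,n-m}$ and using $W\mapsto\widetilde W$.

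\textbf{Step 1: uniqueness.} I would show that this Hom space has dimension at most $1$ when $\pi$ is cuspidal. Restricting $\pi$ to the mirabolic $P$, cuspidality gives $\pi|_P\simeq\Ind_N^P\psi$, the Kirillov model, and this also holds over $R$ since $N$ is a $p$-group and all functors involved are exact. Frobenius reciprocity together with the Bernstein--Zelevinsky derivative filtration restricted to $\GL_{m+1}$ then identifies the Hom space with $\Hom_{\GL_m}(\Ind_{N_m}^{\GL_m}\psi\otimes\pi',1)\simeq\Hom_{N_m}(\pi',\psi^{-1})$, which is one-dimensional because $\pi'$ is of Whittaker type.

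\textbf{Step 2: non-vanishing and independence of $j$.} Using Lemma \ref{suppbessel}, I would take $W$ to be the Bessel-type function supported on $NP\cap$ a small set, with $W|_P$ a delta function, so that $\I_j(W,W')=W'(1)$ up to a power of $q$. This shows $\I_j\neq0$. Consequently both sides of \eqref{FERS1} are nonzero multiples of a single form, which yields a unique scalar $c_j$ for each $j$. To show that $c_j$ is independent of $j$ once the factor $q^{m(n-m-1-2j)/2}$ is included, I would relate $\I_j$ to $\I_{j+1}$: integrate over the extra row $x$ and apply Fourier inversion on $\k^m$, which produces exactly a factor $q^{m}$ split symmetrically. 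This is the finite-field analogue of the standard $p$-adic shift argument, and it passes to the dual side consistently.

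The main obstacle is Step 1 when $\mathrm{char}\,R=\ell$, since semisimplicity fails. The point to check is that only exactness of $N$-coinvariants is needed ($p$-groups), so the derivative argument carries over verbatim.
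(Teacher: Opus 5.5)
Your proof is correct in substance and has the same skeleton as the paper's: establish the identity for $j=0$, then propagate it in $j$ (the step the paper delegates to the proof of \cite{Nien} Theorem 2.10). The real difference is the base case. The paper imports it from \cite{Mossetal} Corollary 3.1.2, whereas you prove it by multiplicity one. Let $V$ be the unipotent radical of the standard parabolic subgroup of type $(m+1,1,\dots,1)$ and $\psi_V(v)=\psi(v_{m+1,m+2}+\dots+v_{n-1,n})$. Then $\pi|_{P}\simeq\Ind_N^P(\psi)$ and exactness of coinvariants under $p$-groups give $\Hom_{\G_m\ltimes V}(\pi\otimes\pi',\psi_V)\simeq\Hom_{\G_m}(\Ind_{N_m}^{\G_m}(\psi)\otimes\pi',1)\simeq\Hom_{N_m}(\pi',\psi^{-1})$, which is a line. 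Both $\I_0(W,W')$ and $\I_{n-m-1}(w_{m,n-m}\cdot\widetilde W,\widetilde{W'})$ lie in it; the second is a direct check you should write out, since that is where $w_{m,n-m}$ and $W\mapsto\widetilde W$ enter. Finally $\I_j(\Be_{\pi,\psi},W')=W'(1)$ by Lemma \ref{suppbessel}, so the scalar exists and is unique. Your route gives a self-contained argument over any $R$ of characteristic $\neq p$ and shows exactly where cuspidality of $\pi$ and the Whittaker-type hypothesis on $\pi'$ are used; the citation buys brevity.

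Two things to tidy. First, the group you display is too large. $\I_0$ is not invariant under the column $m+1$ above $1_m$: translating $W$ by $1+\sum_k y_kE_{k,m+1}$ multiplies the $g$-term by $\psi(\sum_k g_{m,k}y_k)$, which depends on $g$. Likewise column $m+j+1$ must be removed for general $j$. The group your Step 1 computation actually uses is $\G_m\ltimes V$ above.

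Second, Step 1 only concerns $j=0$, so ``a unique scalar $c_j$ for each $j$'' is not justified by it. You do not need it. Write $\ell_y$ ($y\in\k^m$) for the lower unipotent matrix with $y$ in row $m+j+1$, and set $W_L=\sum_y\pi(\ell_y)W$. Then $\I_{j+1}(W,W')=\I_j(W_L,W')$. On the dual side, $\ell_y$ becomes an upper unipotent column, and summing over it gives $\sum_{z\in\k^m}\psi^{-1}(x_{\mathrm{last}}\cdot z)$. This equals $q^m$ or $0$ according as the last row of the $x$-block vanishes or not, so $\I_{n-m-2-j}(w_{m,n-m}\cdot\widetilde W,\widetilde{W'})=q^{-m}\,\I_{n-m-1-j}(w_{m,n-m}\cdot\widetilde{W_L},\widetilde{W'})$. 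Hence the identity for $j$ implies the identity for $j+1$ with the constant multiplied by $q^{-m}$, which is exactly what the factor $q^{m(n-m-1-2j)/2}$ absorbs.
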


\begin{proof} 
The case where $j=0$ is given by \cite{Mossetal} Corollary 3.1.2
(where the authors have a norma\-li\-zation of the $\g$-factor different from ours).
The functional equation for any~$j$ follows from there 
as in the proof of \cite{Nien} Theorem 2.10.
\end{proof}

\begin{rema}
Our choice of normalization of the $\g$-factor $\g_{\RS}(\pi,\pi',\psi)$
is dictated by~Proposi\-tion \ref{depthzerocompat} below. 
\end{rema}

We collect a couple of properties of the $\g$-factor. 

\begin{prop}
\label{corsqsgamma}
\label{RSBesseldef}
Let~$\pi$ be a cuspidal representation of $\GL_n(\k)$
and $\pi'$ be~a~repre\-sentation~of Whittaker type of~$\GL_m(\k)$.
\begin{enumerate}
\item 
The scalar $ \g_{\RS}(\pi,\pi',\psi)$ is non-zero. 
\item
We have
\begin{equation*}
\g_{\RS}(\pi,\pi',\psi) = 
\cen_{\pi'}(-1)^{n-1} \cdot q^{\frac{m(n-m-1)}{2}} \cdot 
\sum_{\g\in N_m\backslash \G_m} \Be_{\pi,\psi}\begin{pmatrix} 0 & 1_{n-m}\\
  g&0\end{pmatrix} \Be_{\pi',\psi^{-1}}(g).
\end{equation*}
\item 
If~$\tau$ is the unique generic subquotient of~$\pi'$, then
$\g_{\RS}(\pi,\pi',\psi)=\g_{\RS}(\pi,\tau,\psi)$.
\item
Given an $a\in\k^\times$, and denoting by $\psi^a$ the character
$x\mapsto\psi(ax)$ of $\k$, one has
\begin{equation}
\label{epspsiaff}
\g(\pi,\pi',\psi^a)=\cen_\pi(a)^m \cdot \cen_{\pi'}(a)^n
\cdot \g(\pi,\pi',\psi).
\end{equation} 
\end{enumerate}
\end{prop}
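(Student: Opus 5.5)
I would prove the four assertions in the order (2), (1), (3), (4), starting from the functional equation \eqref{FERS1} of Proposition \ref{FERS} with $j=0$ and specialising both Whittaker functions to Bessel functions. Take $W=\Be_{\pi,\psi}$ and $W'=\Be_{\pi',\psi^{-1}}$.

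For the right-hand side $\I_0(W,W')$, the summand is $\Be_{\pi,\psi}\big(\begin{smallmatrix} g&0\\0&1_{n-m}\end{smallmatrix}\big)\Be_{\pi',\psi^{-1}}(g)$. The matrix $\mathrm{diag}(g,1_{n-m})$ lies in the mirabolic $P$ of $G_n$, so Lemma \ref{suppbessel} forces $\mathrm{diag}(g,1)\in N_n$. Hence $g\in N_m$, and only the trivial coset contributes, giving $\I_0(W,W')=1$.

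For the left-hand side with $j=n-m-1$, we have $\I_{0}$ on the left only when $n-m-1-j=0$. So I apply \eqref{FERS1} with $j=n-m-1$: the left-hand side becomes $\I_0(w_{m,n-m}\widetilde W,\widetilde{W'})$ and the right-hand side becomes $\cen_{\pi'}(-1)^{n-1}q^{-m(n-m-1)/2}\g\,\I_{n-m-1}(W,W')$. This seems messy. Instead I use $j=0$ directly, so the right-hand side is $\cen_{\pi'}(-1)^{n-1}q^{m(n-m-1)/2}\g$. The left-hand side is $\I_{n-m-1}(w_{m,n-m}\widetilde W,\widetilde{W'})$, a sum over $g$ and $x\in\Mat_{n-m-1,m}$ of $\Be_{\pi,\psi}\big(w_n\, (\mathrm{diag}(1_m,w_{n-m})\,u(g,x))^*\big)\,\Be_{\pi',\psi^{-1}}(w_m g^*)$. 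I then substitute $g\mapsto w_m g^{*}$-type changes of variable and use the bi-$N$-equivariance of Bessel functions, together with the identities $\Be(g^{-1})=\Be(g)$ up to $\psi\mapsto\psi^{-1}$ (from the matrix-coefficient description and the contragredient). The sum over $x$ should collapse, again by Lemma \ref{suppbessel} after factoring the matrix, leaving $\sum_g \Be_{\pi,\psi}\big(\begin{smallmatrix}0&1_{n-m}\\g&0\end{smallmatrix}\big)\Be_{\pi',\psi^{-1}}(g)$. Solving for $\g$ and using $\cen(-1)^{2}=1$, $q^{\pm}$ bookkeeping gives formula (2).

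This matrix manipulation is the main obstacle. I expect to follow Nien's computation (\cite{Nien}) verbatim, since only support properties and equivariance are used, and these hold over $R$.

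For (1), I would use non-degeneracy: there exist $W,W'$ with $\I_j(W,W')\neq0$. Applying the functional equation in both directions (using $\widetilde{\widetilde W}=\pi(\text{central})W$), we see that $\g(\pi,\pi',\psi)\g(\pi^\vee,\pi'^\vee,\psi^{-1})$ is a nonzero constant, so $\g\ne0$. Alternatively, since the left-hand side is nonzero for the Bessel choice above, (2) is nonzero for some pair.

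For (3), this is immediate from (2) and Proposition \ref{corsqsbessel}, since $\Be_{\pi',\psi^{-1}}=\Be_{\tau,\psi^{-1}}$ and the central characters agree.

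For (4), conjugate by $t=\mathrm{diag}(a^{n-1},\dots,1)$: we have $\Be_{\pi,\psi^a}(g)=\Be_{\pi,\psi}(tgt^{-1})$, and similarly for $\pi'$ with $t'$. I then insert this into (2), change variables $g\mapsto t'gt'^{-1}$ in the sum, and compute the resulting central scalars, which give $\cen_\pi(a)^m\cen_{\pi'}(a)^n$.
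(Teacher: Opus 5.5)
Your plan for all four parts is the paper's. You insert Bessel functions into \eqref{FERS1} for (2); the paper simply cites Nien, Prop.~2.16. You apply the functional equation twice for (1), use Proposition \ref{corsqsbessel} for (3), and conjugate by $t$ for (4). However, the one concrete claim in your computation for (2) is false, and it corrupts the normalization. With $j=0$, the sum over $x\in\Mat_{n-m-1,m}(\k)$ sits on the tilde side. There Lemma \ref{suppbessel} does not apply, and the sum does not collapse to $x=0$. Write $u(g,x)$ for the matrix in the definition of $\I_{n-m-1}$; note that $w_{m,n-m}$ acts by right translation, not on the left as you wrote. One checks
\[
w_n\bigl(u(g,x)\,w_{m,n-m}\bigr)^{*}=\begin{pmatrix}0&1_{n-m}\\ w_mg^{*}&0\end{pmatrix}\begin{pmatrix}1_m&Z\\ 0&1_{n-m}\end{pmatrix},
\]
where $Z$ is an $m\times(n-m)$ matrix, depending only on $x$, whose first column is zero. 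So the second factor lies in $N_n$ and has trivial $\psi$-value, and every $x$ contributes the same term. After the bijection $g\mapsto w_mg^*$ of $N_m\backslash G_m$, one gets, for all $W,W'$,
\[
\I_{n-m-1}\bigl(w_{m,n-m}\cdot\widetilde W,\widetilde{W'}\bigr)=q^{m(n-m-1)}\sum_{g\in N_m\backslash G_m}W\begin{pmatrix}0&1_{n-m}\\ g&0\end{pmatrix}W'(g).
\]
Combine this with $\I_0(\Be_{\pi,\psi},\Be_{\pi',\psi^{-1}})=1$, which you proved correctly. The factor $q^{m(n-m-1)}$, multiplied by $q^{-m(n-m-1)/2}$ from \eqref{FERS1}, is exactly what yields the exponent $+m(n-m-1)/2$ in (2). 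If the $x$-sum reduced to one term, as you claim, solving for $\g_{\RS}$ would give $q^{-m(n-m-1)/2}$, which is wrong. The ``$q^{\pm}$ bookkeeping'' cannot repair this.

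The cleanest fix is the choice $j=n-m-1$, which you dismissed as messy. There the matrices $\left(\begin{smallmatrix} g&0&0\\ x&1_{n-m-1}&0\\ 0&0&1\end{smallmatrix}\right)$ lie in the mirabolic subgroup. So Lemma \ref{suppbessel} really does kill every term except $x=0$, $g\in N_m$, giving $\I_{n-m-1}(\Be_{\pi,\psi},\Be_{\pi',\psi^{-1}})=1$. The other side is $\I_0(w_{m,n-m}\cdot\widetilde W,\widetilde{W'})$, which has no $x$-sum. By the factorization above with $Z=0$, it equals the Bessel sum, and (2) follows at once. No relation between $\Be(g^{-1})$ and $\psi^{-1}$ is needed; only left $N_n$-equivariance of $W$ enters.

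In (1), state the second functional equation precisely, as the paper does. It is Proposition \ref{FERS} for $(\pi^*,\pi'^*,\psi^{-1})$, with functions $w_{m,n-m}\cdot\widetilde W$, $\widetilde{W'}$ and index $n-m-1-j$. Use $\pi'^*$, not $\pi'^\vee$, since $\pi'$ need not be irreducible. Note that $\widetilde{\widetilde W}=W$ exactly, with no central element. This gives $\g_{\RS}(\pi,\pi',\psi)\,\g_{\RS}(\pi^*,\pi'^*,\psi^{-1})=1$ once some $\I_j(W,W')\neq0$. Your ``alternative'' argument is circular: non-vanishing of the Bessel sum in (2) is equivalent to $\g_{\RS}\neq0$. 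Parts (3) and (4) are fine.
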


\begin{proof}
Assertion (2) follows from Lemma \ref{suppbessel},
as in \cite{Nien} Proposition 2.16.
Assertion (3) follows from Proposition \ref{corsqsbessel} and Assertion (1).
Assertion~(4) follows from Assertion (2) together with~the fact that
$\Be_{\pi,\psi^a}$ is equal to $g\mapsto\Be_{\pi,\psi}(tgt^{-1})$~where~$t$~is
the diagonal matrix ${\rm diag}(a^{n-1},\dots,a,1)$. 

Since $\pi^*$ is cuspidal and $\pi'^*$ is of Whittaker type,
we may apply
Pro\-po\-si\-tion \ref{FERS} to the~Whitta\-ker functions
$w_{m,n-m}\cdot \widetilde{W} \in \Ww(\pi^*,\psi^{-1})$, 
$\widetilde{W'} \in \Ww(\pi'^*,\psi)$ and the integer $n-m-1-j$.~Thus
\begin{equation*}
\I_{j}\left( {W},{W'}\right)
= \cen_{\pi'}(-1)^{n-1} \cdot q^{\frac{-m(n-m-1-2j)}{2}} \cdot 
\g_{\RS}(\pi^\vee,\pi'^*,\psi^{-1}) 
\cdot \I_{n-m-1-j}(w_{m,n-m}\cdot\widetilde{W},\widetilde{W}'). 
\end{equation*} 
Combining this identity with \eqref{FERS1}, we get
\begin{equation*}
\label{FEtwice}
\g_{\RS}(\pi,\pi',\psi) \cdot \g_{\RS}(\pi^*,\pi'^{*},\psi^{-1})=1
\end{equation*}
which proves that $\g_{\RS}(\pi,\pi',\psi)$ is non-zero.
\end{proof}

\begin{rema}
\label{Ass} 
When $m$ and $n$ are odd,
$\g_{\RS}(\pi,\pi',\psi)$ depends on the choice of~$q^{1/2}$. 
\end{rema}

Let us recall that
the reduction mod $\ell$ of a cuspidal $\qlb$-representation of
$\GL_n(\k)$ is (irreducible and) cuspidal (\cite{Vigbook} III.1.1).

\begin{prop}
\label{reductionofgamma}
Let~$\widetilde{\pi}$ be a cuspidal $\qlb$-representation of
$\GL_n(\k)$ and $\widetilde{\pi}'$ be~a generic~$\qlb$-re\-pre\-sen\-tation
of~$\GL_m(\k)$. 
Let $\pi$ be the reduction mod $\ell$ of $\widetilde{\pi}$
and $\pi'$ be the unique~ge\-ne\-ric~sub\-quo\-tient of
the reduction mod $\ell$ of $\widetilde{\pi}'$.
% $\r_{\ell}(\widetilde{\pi}')$.
Then the reduction mod $\ell$ of $\g_{\RS}(\widetilde{\pi},\widetilde{\pi}',\psi)$
is equal to $\g_{\RS}(\pi,\pi',\psi)$.
\end{prop}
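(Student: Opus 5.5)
The plan is to use the Bessel function formula of Proposition~\ref{RSBesseldef}(2). Combined with Proposition~\ref{BesselJell}, it reduces compatibility of the $\g$-factor with reduction mod $\ell$ to compatibility of Bessel functions. Throughout, we fix the square root $q^{1/2}\in\overline{\ZZ}{}_\ell^\times$ used over $\qlb$, and use its reduction over $\flb$. We also fix the $\qlb$-character $\psi$ of $\k$ and denote its reduction by $\psi$ as well. This is legitimate since $\psi$ takes values in roots of unity of order $p$, which reduce injectively.

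First, by Proposition~\ref{RSBesseldef}(2) applied over $\qlb$ to the cuspidal $\widetilde{\pi}$ and the generic $\widetilde{\pi}'$, we have
\begin{equation*}
\g_{\RS}(\widetilde{\pi},\widetilde{\pi}',\psi) =
\cen_{\widetilde{\pi}'}(-1)^{n-1} \cdot q^{\frac{m(n-m-1)}{2}} \cdot
\sum_{g\in N_m\backslash G_m} \Be_{\widetilde{\pi},\psi}\begin{pmatrix} 0 & 1_{n-m}\\ g&0\end{pmatrix} \Be_{\widetilde{\pi}',\psi^{-1}}(g).
\end{equation*}
The same formula holds over $\flb$ for $(\pi,\pi')$. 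There $\pi$ is cuspidal, since the reduction of a cuspidal $\qlb$-representation is cuspidal (\cite{Vigbook} III.1.1), and $\pi'$ is generic, so Proposition~\ref{RSBesseldef} applies.

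Next, by Proposition~\ref{BesselJell}, the Bessel functions $\Be_{\widetilde{\pi},\psi}$ and $\Be_{\widetilde{\pi}',\psi^{-1}}$ are $\zlb$-valued. Their reductions are $\Be_{\pi,\psi}$ and $\Be_{\pi',\psi^{-1}}$, respectively, since $\pi$ is the unique generic subquotient of the reduction of $\widetilde{\pi}$ and $\pi'$ that of $\widetilde{\pi}'$. The sum is finite, so it reduces term by term. The power $q^{m(n-m-1)/2}$ is a unit, and it reduces to the corresponding power of the chosen square root in $\flb$.

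It remains to handle the central character sign. The central character $\cen_{\widetilde{\pi}'}$ takes values in roots of unity, and its reduction is the central character of $\r_\ell(\widetilde{\pi}')$. That reduction has a central character, and the center acts by the same scalar on every subquotient, including $\pi'$. Hence $\cen_{\widetilde{\pi}'}(-1)$ reduces to $\cen_{\pi'}(-1)$.

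Combining these steps, the reduction of the right-hand side for $(\widetilde{\pi},\widetilde{\pi}')$ is the right-hand side for $(\pi,\pi')$, which gives the claim. The only delicate point is Proposition~\ref{BesselJell}: its integrality and its identification of the reduced Bessel function via the lattice of $\zlb$-valued Whittaker functions. Since that result is already established, no real obstacle remains beyond keeping the choices of $q^{1/2}$ and $\psi$ consistent.
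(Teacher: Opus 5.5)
Your proposal is correct and is exactly the paper's argument, which the paper states in one line: combine the Bessel-function expression of Proposition~\ref{RSBesseldef}(2) with the integrality and reduction of Bessel functions in Proposition~\ref{BesselJell}. You have simply spelled out the routine compatibilities the paper leaves implicit, namely the choices of $q^{1/2}$ and $\psi$, the sign $\cen_{\widetilde{\pi}'}(-1)$, and the cuspidality of $\pi$.
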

 
\begin{proof}
This follows from Lemma \ref{BesselJell} and Proposition \ref{RSBesseldef}. 
\end{proof}

\subsection{}

We now state the multiplicativity property of $\g$-factors that we will need. 

\begin{lemm}
\label{LemmabasicsWTfams} 
Let $n_1,n_2$ be positive integers,
and let $\pi_i$ be a representation of Whittaker type of $\GL_{n_i}(\k)$,
for $i=1,2$.
Then $\pi_1\times\pi_2$ is a representation of Whittaker type of
$\GL_{n_1+n_2}(\k)$.
\end{lemm}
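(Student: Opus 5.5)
We have to show that $\dim_R\Hom_{N}(\pi_1\times\pi_2,\psi)=1$, where $N=N_{n_1+n_2}$. We will use Mackey theory, exactly as in the classical proof of the heredity of Whittaker models (Bernstein--Zelevinsky, Rodier). The point is that everything still works over $R$ because $N$ is a finite $p$-group and $p$ is invertible in $R$. Consequently $\Hom_N(-,\psi)$ is exact on finite-dimensional representations: it is the functor of $\psi$-isotypic vectors, cut out by the idempotent $|N|^{-1}\sum_{x\in N}\psi(x)^{-1}x$. In particular, the dimension of $\Hom_N(V,\psi)$ is additive along any filtration of $V$.

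Write $P=MU$ for the standard parabolic subgroup with Levi subgroup $M=G_{n_1}\times G_{n_2}$, and $\sigma=\pi_1\otimes\pi_2$, inflated to $P$. The Bruhat decomposition gives $G=\bigsqcup_w PwN$, where $w$ runs over a set of representatives of $W_M\backslash W$; we choose them of minimal length, so that $w^{-1}(\Delta_M^+)>0$. The restriction of $\Ind_P^G\sigma$ to $N$ is then filtered (and, since $N$ acts semisimply, split) as $\bigoplus_w \Ind_{N\cap w^{-1}Pw}^N(\sigma^w)$. Frobenius reciprocity gives
\begin{equation*}
\Hom_N\bigl(\Ind^N_{N\cap w^{-1}Pw}\sigma^w,\psi\bigr)\cong\Hom_{N\cap w^{-1}Pw}(\sigma^w,\psi).
\end{equation*}

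The proof then splits into two steps.

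\textbf{Step 1: cells with $w\neq w_0^M$ contribute nothing.} Let $w_0^M$ be the longest element among these minimal-length representatives. It sends the simple roots of $M$ to simple roots, so $N\cap (w_0^M)^{-1}P w_0^M=(w_0^M)^{-1}N_M w_0^M$. For any other $w$, there is a simple root $\alpha$ of $G$ such that $w\alpha$ is a positive root lying in $U$. On the root subgroup $U_\alpha\subseteq N\cap w^{-1}Pw$, the character $\sigma^w$ is trivial (because $U$ acts trivially on $\sigma$), whereas $\psi$ is non-trivial on $U_\alpha$. Hence $\Hom_{N\cap w^{-1}Pw}(\sigma^w,\psi)=0$. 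This is the standard argument of Bernstein--Zelevinsky, and it carries over verbatim to coefficients in $R$.

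\textbf{Step 2: the open-type cell contributes exactly one dimension.} For $w=w_0^M$, the restriction of $\psi$ to $(w_0^M)^{-1}N_Mw_0^M$ corresponds, after conjugation, to a non-degenerate character of $N_M=N_{n_1}\times N_{n_2}$. This character is of the form $\psi_1\otimes\psi_2$ with each $\psi_i$ non-trivial, possibly up to conjugation by a diagonal element, which does not affect dimensions. So this contribution is
\begin{equation*}
\dim\Hom_{N_{n_1}}(\pi_1,\psi_1)\cdot\dim\Hom_{N_{n_2}}(\pi_2,\psi_2)=1,
\end{equation*}
because being of Whittaker type does not depend on the choice of the non-degenerate character, as noted after Definition \ref{defwtg}.

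Summing over the cells via the exactness above gives the dimension $1$. The main point requiring care is Step 1, namely producing the simple root $\alpha$ for each $w\neq w_0^M$ in the finite-field setting. Rather than redoing the combinatorics, I would quote the corresponding argument for complex representations, e.g.\ Zelevinsky's or Gelfand's finite-field version, and observe that it only uses the root-subgroup computation and the semisimplicity of $N$-representations. Both of these hold over any $R$ of characteristic $\neq p$.
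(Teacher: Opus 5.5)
Your proof is correct, but it takes a genuinely different route from the paper. The paper never decomposes $\pi_1\times\pi_2$ along the cells $PwN$. Instead it writes
\[
\Hom_N(\pi_1\times\pi_2,\psi)\simeq\Hom_G\bigl(\pi_1\times\pi_2,\Ind_N^G(\psi)\bigr)\simeq\Hom_M\bigl(\pi_1\otimes\pi_2,\Ind_N^G(\psi)^U\bigr)
\]
by Frobenius reciprocity; for finite groups, inducing from $P$ is left adjoint to taking $U$-invariants. It then quotes \cite{BR06} Theorem 2.1, which identifies the Harish-Chandra restriction $\Ind_N^G(\psi)^U$ of the Gelfand--Graev representation with $\Ind_{N\cap M}^M(\psi|_{N\cap M})$. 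One more Frobenius reciprocity reduces everything to $\Hom_{N_{n_1}}(\pi_1,\psi_1)\otimes\Hom_{N_{n_2}}(\pi_2,\psi_2)$.

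Your Mackey computation over the cells is essentially a hands-on proof of that cited result. It is longer but self-contained. It also shows directly that only the open cell contributes; the paper recovers this only indirectly, in the proof of Lemma \ref{cuspsatcpsi2}, by combining the present lemma with a computation on the open cell.

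The combinatorics you propose to quote is in fact a short check. Let $w$ have minimal length in $W_Mw$. Then for $\alpha>0$, $w\alpha\in\Phi_M$ forces $w\alpha\in\Phi_M^+$. So if no simple root goes into $\Phi(U)$, then $w(\Delta)\subseteq\Phi_M^+\cup\Phi(U^-)$. Let $w_{0,M}$ be the longest element of $W_M$. Since $W_M$ preserves $\Phi(U^-)$, the element $w_{0,M}w$ sends $\Delta$ into negative roots, hence equals $w_0$.

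Finally, a small simplification for Step 2. If you take permutation matrices as representatives, conjugation carries simple root subgroups of $M$ onto simple root subgroups of $G$ with no scalar. The character you get is then exactly $\psi_1\otimes\psi_2$, so you do not need to invoke independence from the choice of character.
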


\begin{proof}
Let $U$ be unipotent radical of the standard parabolic subgroup of 
$G=\GL_{n_1+n_2}(\k)$~ge\-nerated by~the~stan\-dard Levi subgroup
$M=\GL_{n_1}(\k)\times\GL_{n_2}(\k)$ and the subgroup $N$ of
unipotent upper triangular matrices.
Then
\begin{eqnarray*}
\Hom_{N}(\pi_1\times\pi_2,\psi) &\simeq&
\Hom_{\G}(\pi_1\times\pi_2,\Ind_{\N}^{\G}(\psi)) \\ &\simeq&
  \Hom_{M}(\pi_1\otimes\pi_2,\Ind_{\N}^{\G}(\psi)^{U}). 
\end{eqnarray*}
The lemma follows from the fact that
$\Ind_{\N}^{\G}(\psi)^{U} \simeq \Ind_{\N\cap M}^{M}(\psi|_{\N\cap M})$
by \cite{BR06} Theorem 2.1. 
\end{proof}

\begin{lemm}
\label{Jacopo}
Let $\pi$ be a cuspidal irreducible representation of $\GL_n(\k)$ and
$\pi'$ be a~represen\-ta\-tion of Whittaker type of~$\GL_{m}(\k)$.
Let $S$ be an algebraically closed extension of $R$.
Then $\pi\otimes S$ is irreducible and cuspidal,
$\pi'\otimes S$ is of Whittaker type and 
\begin{equation}
\label{BCRS}
\g_{\RS}(\pi\otimes S,\pi'\otimes S,\psi\otimes S)=\g_{\RS}(\pi,\pi',\psi).
\end{equation}
\end{lemm}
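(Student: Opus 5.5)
We will treat the three assertions separately, working throughout with the finite-dimensional $R$-vector spaces $V$ and $V'$ of $\pi$ and $\pi'$, and with $\pi\otimes S$ acting on $V\otimes_R S$.

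For irreducibility and cuspidality of $\pi\otimes S$, we first use that $R$ is algebraically closed. By Schur's lemma (applied to the finite-dimensional irreducible $R[\G]$-module $V$), the map $R[\G]\to\End_R(V)$ is surjective (Burnside/Jacobson density). Tensoring with $S$ shows that $S[\G]\to\End_S(V\otimes S)$ is surjective, so $\pi\otimes S$ is irreducible. For cuspidality, we use that for any proper standard parabolic subgroup with unipotent radical $U$, the Jacquet module is the space of $U$-coinvariants. Since $U$ is a $p$-group and $p$ is invertible in $R$, this space equals the image of the idempotent $e_U=|U|^{-1}\sum_{u\in U}u$, and this image commutes with base change: $(V\otimes S)_U=V_U\otimes S$. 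Over a finite group with $p\neq\mathrm{char}$, cuspidality in the sense of Paragraph \ref{par23} (no embedding in a proper induced representation) is equivalent, by Frobenius reciprocity, to the vanishing of all proper Jacquet modules. Hence $\pi\otimes S$ is cuspidal.

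For the Whittaker type property, we argue the same way with the idempotent $e_\psi=|N|^{-1}\sum_{x\in N}\psi(x)^{-1}x$. This gives
\[
\Hom_N(\pi'\otimes S,\psi\otimes S)\simeq\Hom_N(\pi',\psi)\otimes_R S,
\]
so the $S$-dimension is still $1$. Here $\psi\otimes S$ denotes $\psi$ composed with $R^\times\subseteq S^\times$, and the chosen $q^{1/2}$ is viewed in $S$. Finally, the central character of the generic subquotient of $\pi'\otimes S$ is $\cen_{\pi'}$ viewed in $S$. The reason is that this generic subquotient is the constituent containing the $\psi$-isotypic line, and base change preserves the line $V'^{\psi}$, on which the centre of $\G$ acts by the same scalar.

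For the equality \eqref{BCRS}, we take the defining identity \eqref{FERS1} over $R$ with some $W\in\Ww(\pi,\psi)$ and $W'\in\Ww(\pi',\psi^{-1})$ such that $\I_j(W,W')\neq0$; such functions exist since $\g_{\RS}$ is defined as the unique scalar making \eqref{FERS1} hold. The Whittaker model of $\pi\otimes S$ with respect to $\psi\otimes S$ is $\Ww(\pi,\psi)\otimes S$, namely the $R$-valued Whittaker functions viewed as $S$-valued ones, because the matrix-coefficient map $v\mapsto\coef_{v,\xi}$ commutes with base change. The same holds for $\pi'$. Moreover, the sums $\I_j$ are finite sums, so they are unchanged by this viewpoint. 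So \eqref{FERS1} over $R$ is also an instance of \eqref{FERS1} over $S$, with the same factors $\cen_{\pi'}(-1)^{n-1}q^{m(n-m-1-2j)/2}$. Since $\I_j(W,W')\neq0$, the uniqueness in Proposition \ref{FERS} forces the two $\g$-factors to coincide. Alternatively, one can use Proposition \ref{RSBesseldef}(2): the Bessel functions are preserved by base change, by the uniqueness characterisation in Paragraph \ref{Bessel}.

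The only mildly delicate point is the cuspidality claim, that is, making sure that the definition via subrepresentations of induced representations matches Jacquet-module vanishing over $S$. Everything else is formal compatibility of finite sums and idempotents with extension of scalars.
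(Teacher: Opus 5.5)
Your proof is correct. Its main difference from the paper's proof lies in the first step.

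\textbf{Irreducibility and cuspidality.} The paper obtains both properties of $\pi\otimes S$ at once from \cite{Vigbook} III.1.1. An irreducible representation is cuspidal exactly when its restriction to the mirabolic subgroup $P$ is $\Ind_N^P(\psi)$, and this representation visibly commutes with extension of scalars and remains irreducible over $S$. You use absolute irreducibility via the density theorem, which is where $R$ algebraically closed enters. For cuspidality you combine the idempotent $e_U$, which shows that Jacquet modules commute with base change, with the Frobenius-reciprocity reformulation of cuspidality. Your route is more elementary and works for any finite group with $p$ invertible in $R$; the paper's is shorter because it leans on the Gelfand--Kazhdan-type structure result.

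\textbf{Whittaker type.} Both arguments rest on the same idea, namely semisimplicity over the $p$-group $N$. You phrase it with $e_\psi$, the paper with the absolutely irreducible $N$-constituents of $\pi'$.

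\textbf{The identity \eqref{BCRS}.} The paper uses the Bessel-function formula of Proposition \ref{RSBesseldef}(2) together with uniqueness of Bessel functions, which you mention as an alternative. Your main route transports the functional equation \eqref{FERS1}, using $\Ww(\pi\otimes S,\psi\otimes S)=S\cdot\Ww(\pi,\psi)$ and a pair with $\I_j(W,W')\neq0$. This is equally valid. The existence of such a pair does follow from the uniqueness in Proposition \ref{FERS}, and it can also be seen directly by taking a translate of the Bessel function, as in the proof of Proposition \ref{Lemma1RSconstant}.

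Your version also handles explicitly two points the paper leaves implicit: that $q^{1/2}$ must be taken in $S$ as the image of the chosen root in $R$, and that the generic subquotient of $\pi'\otimes S$ has the same central character as that of $\pi'$.
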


\begin{proof}
The fact that $\pi\otimes S$ is irreducible and cuspidal
follows from \cite{Vigbook} Théorème III.1.1~toge\-ther~with the
fact that the base change from $R$ to $S$ of the mirabolic $R$-representation
is iso\-mor\-phic to the mirabolic $S$-representation.
The fact that $\pi'\otimes S$ is of Whittaker type follows~from the fact
that, if $\kappa$ is an irreducible component of the restriction of $\pi'$ to $N$, 
then $\kappa\otimes S$ is irreducible.
The identity \eqref{BCRS} follows from Proposition \ref{corsqsgamma}(1)
together with the fact that,
by uniqueness,~the Bessel functions of $\pi'$ and $\pi'\otimes S$ coincide.
\end{proof}

\begin{prop}
\label{mult} 
Let~$\pi$ be a cuspidal representation of $\GL_n(\k)$,
let $m_1$ and $m_2$ be positive integers such that $m_1+m_2=m$
and let $\pi_i$ be a representation~of Whittaker type
of~$\GL_{m_i}(\k)$ for $i=1,2$. 
Then 
\begin{equation*}
\g_{\RS}(\pi,\pi_1\times\pi_2,\psi) =
\g_{\RS}(\pi,\pi_1,\psi) \cdot \g_{\RS}(\pi,\pi_2,\psi).
\end{equation*}
\end{prop}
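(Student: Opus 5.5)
The plan is to reduce to characteristic $0$, where multiplicativity over finite fields is known, and then to transport the identity to characteristic $\ell$ by reduction mod $\ell$. The transport uses Proposition \ref{reductionofgamma} and Proposition \ref{corsqsgamma}(3), which say that both sides only depend on generic subquotients.

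\textbf{Step 1: replace each $\pi_i$ by its generic subquotient.} By Lemma \ref{LemmabasicsWTfams}, $\pi_1\times\pi_2$ is of Whittaker type. Let $\tau_i$ be the generic subquotient of $\pi_i$. Since parabolic induction is exact, the generic subquotient of $\pi_1\times\pi_2$ is a subquotient of $\tau_1\times\tau_2$, which is of Whittaker type. Hence $\pi_1\times\pi_2$ and $\tau_1\times\tau_2$ have the same generic subquotient. By Proposition \ref{corsqsgamma}(3), we may therefore assume that $\pi_1,\pi_2$ are generic (irreducible).

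\textbf{Step 2: reduce to $R=\qlb$ or $R=\flb$.} The representations involved are finite-dimensional and defined over a finitely generated field. The $\g$-factor is given by the explicit Bessel sum of Proposition \ref{corsqsgamma}(2). Using Lemma \ref{Jacopo}, I will base change to reduce to $R=\overline{\QQ}$, hence $R=\CC$ or $R=\qlb$, in characteristic $0$. In characteristic $\ell$ I similarly reduce to $R=\flb$.

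\textbf{Step 3: characteristic $0$.} Here the identity is the multiplicativity of finite-field Rankin--Selberg $\g$-factors. I would cite it from \cite{Roditty}/\cite{Nien}/\cite{Mossetal}, adjusting for the normalization. The factor $q^{m(n-m-1)/2}$ and the sign $\cen_{\pi'}(-1)^{n-1}$ must be checked to be multiplicative in $m=m_1+m_2$, which is a short exponent computation.

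If I have to prove it directly, I would imitate the $p$-adic proof via Bessel functions. One realizes $W\in\Ww(\pi_1\times\pi_2,\psi^{-1})$ as an integral over the induced model. Unfolding $\I_0(W,W')$ gives an iterated sum, and the two functional equations for $(\pi,\pi_1)$ and $(\pi,\pi_2)$ are applied successively. I expect this unfolding, and keeping track of the $q$-power normalizations in \eqref{FERS1}, to be the main obstacle. I would try first to avoid it entirely by citing the known complex result together with base change as in Step 2.

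\textbf{Step 4: characteristic $\ell$.} Lift $\pi$ to a cuspidal $\qlb$-representation $\widetilde\pi$, which exists by \cite{Vigbook} III. Lift each generic $\pi_i$ to a generic $\widetilde\pi_i$ whose reduction has $\pi_i$ as generic subquotient. For instance, write $\pi_i$ as the generic subquotient of an induced representation of supercuspidals, lift those supercuspidals, and take the generic subquotient of the lifted induced representation.

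The reduction mod $\ell$ of $\widetilde\pi_1\times\widetilde\pi_2$ then has the same generic subquotient as $\pi_1\times\pi_2$, by exactness of induction and reduction. Applying Proposition \ref{reductionofgamma} to the three $\g$-factors in the characteristic $0$ identity, and then Proposition \ref{corsqsgamma}(3), gives the result.
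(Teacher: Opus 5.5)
Your architecture is the paper's. You prove the identity over $\qlb$ from a characteristic-zero multiplicativity theorem, pass to $\flb$ by lifting $\pi$ and the generic subquotients of the $\pi_i$ and applying Propositions \ref{reductionofgamma} and \ref{corsqsgamma}(3), and pass to arbitrary $R$ by Lemma \ref{Jacopo}. Steps 1, 2 and 4 are sound. Doing Step 1 first is a harmless reordering, and it means you only ever have to descend irreducible representations in Step 2.

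The weak point is Step 3. In the paper, \cite{Roditty}, \cite{Nien} and \cite{Mossetal} are used only for the definition and the functional equation. Multiplicativity is taken from Soudry--Zelingher \cite{SZ} Theorem 3.3.

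More seriously, the normalisation check you describe does not work: $q^{m(n-m-1)/2}$ is \emph{not} multiplicative in $m$, since
\[
m(n-m-1)=m_1(n-m_1-1)+m_2(n-m_2-1)-2m_1m_2 .
\]
The sign $\cen_{\pi'}(-1)^{n-1}$ is multiplicative, and $\g_{\RS}$ is multiplicative (that is the statement). So the bare Bessel sum of Proposition \ref{corsqsgamma}(2), which is Nien's normalisation (cf.\ the proof of Proposition \ref{depthzerocompat}), satisfies multiplicativity only up to the factor $q^{m_1m_2}\neq1$ in characteristic $0$. No multiplicativity statement can be quoted in that normalisation, and your ``short exponent computation'' would not close the argument.

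The correct input is the paper's comparison of Proposition \ref{corsqsgamma}(2) with \cite{SZ} Theorem 3.4(1). For the Shahidi-type factor $\Ga$ of \cite{SZ} it gives $\Ga(\pi,\pi',\psi)=\cen_{\pi'}(-1)^{n}q^{mn/2}\g_{\RS}(\pi,\pi'^*,\psi)$. Here the comparison factor is multiplicative, because $q^{mn/2}=q^{m_1n/2}q^{m_2n/2}$. You additionally need that $(\pi_1\times\pi_2)^*$ and $\pi_1^*\times\pi_2^*$ have the same generic subquotient. With this in place of your Step 3, the rest of your proof goes through as written.
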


\begin{proof}
Suppose first that $R$ is the field $\qlb$.
Soudry-Zelingher \cite{SZ} defined a factor $\Ga(\pi,\pi',\psi)$
for any representation $\pi'$ of Whittaker type of $\GL_{m}(\k)$.
Comparing our Proposi\-tion \ref{RSBesseldef} with~\cite{SZ} Theorem 3.4(1), 
we deduce that
\begin{equation}
\label{comparaisonSZNRV}
\Ga(\pi,\pi',\psi) = \cen_{\pi'}(-1)^{n} \cdot q^{mn/2} \cdot \g_{\RS}(\pi,\pi'^*,\psi).
\end{equation}
(Note that the representation $\pi'$ is assumed to be irreducible in \cite{SZ} 
Theorem 3.4. 
However,~sin\-ce~the fac\-tor $\Ga(\pi,\pi',\psi)$ only depends
on the generic subquotient $\tau$ of 
$\pi'$ by definition,
the~iden\-ti\-ty of \cite{SZ} Theorem 3.4(1)
also makes sense for $\pi'$ not necessarily irreducible,
provided that~one replaces $\pi'^\vee$ by $\pi'^*$ in the right hand side.)
Putting \eqref{comparaisonSZNRV} together with
\cite{SZ} Theorem 3.3,~and~ob\-serving 
that $(\pi_1\times\pi_2)^*$ and $\pi_1^*\times\pi_2^*$ share 
the same generic subquotient, 
we deduce that Proposition \ref{mult} holds for $R=\qlb$.

Suppose now that $R$ is the field $\flb$.
For $i=1,2$,
fix a generic $\qlb$-representation $\widetilde{\pi}_i$ 
of~$\GL_{m_i}(\k)$ whose reduction mod $\ell$ contains the
generic subquotient $\tau_i$ of $\pi_i$
and fix a cuspidal $\qlb$-lift $\widetilde{\pi}$ of~$\pi$.
We have
\begin{equation}
\label{marmite1}
\g_{\RS}(\widetilde{\pi},\widetilde{\pi}_1\times\widetilde{\pi}_2,\psi) =
\g_{\RS}(\widetilde{\pi},\widetilde{\pi}_1,\psi) \cdot 
\g_{\RS}(\widetilde{\pi},\widetilde{\pi}_2,\psi)
\end{equation}
and,
by Proposition \ref{corsqsgamma}, 
the left hand side of \eqref{marmite1} is equal to 
$\g_{\RS}(\widetilde{\pi},\widetilde{\tau},\psi)$ where 
$\widetilde{\tau}$ is the generic~sub\-quotient of 
$\widetilde{\pi}_1\times\widetilde{\pi}_2$.
Applying Proposition \ref{reductionofgamma}, 
we get 
\begin{equation}
\label{marmite2}
\g_{\RS}({\pi},{\tau},\psi) =
\g_{\RS}({\pi},{\tau}_1,\psi) \cdot \g_{\RS}({\pi},{\tau}_2,\psi)
\end{equation}
where $\tau$ is the generic subquotient of the reduction mod $\ell$ of 
$\widetilde{\tau}$,
and the right hand side of~\eqref{marmite2}~is equal to
$\g_{\RS}({\pi},{\pi}_1,\psi) \cdot \g_{\RS}({\pi},{\pi}_2,\psi)$
by Proposition \ref{corsqsgamma}.
The expected result now follows from Proposition \ref{corsqsgamma} again, 
together with the fact that $\tau$ is also the generic subquotient of 
$\pi_1\times\pi_2$. 

Now assume that $R$ is an arbitrary algebraically closed field
  of characteristic $\ell$.
  Fix a field~em\-bed\-ding $\iota:\flb\to\R$.
There are a cuspidal $\flb$-representation $\pi_\ell$
and $\flb$-representations $\pi_{1,\ell}$, $\pi_{2,\ell}$~of Whittaker type
such that $\pi \simeq\pi_\ell\otimes\R$
and $\pi_i \simeq \pi_{i,\ell}\otimes\R$ for $i=1,2$, 
where tensor products are taken with respect to $\iota$.
The expected result now follows from Lemma \ref{Jacopo}. 
\end{proof}

\subsection{} 
\label{Preuvepadicfinite}

Let us prove  Proposition \ref{padicfinite}. 
We will actually prove the following more general statement.
Let $F$ be a non-Archimedean locally compact field with residue field $\k$. 
Given a cuspidal~re\-pre\-sentation of level $0$ of $\GL_n(F)$,
we associate with it a cuspidal representation $\overline{\pi}$~of $\GL_n(\k)$
as~in Para\-graph \ref{typelevel0},
called the \textit{type} of $\pi$.
{Let $\psi_F$ be a non-trivial character of $F$ of conductor $1$
such that the character of $\k$ it induces is equal to $\psi$.}

\begin{prop}
\label{depthzerocompat}
Let~$\pi$,~$\pi'$ be cuspidal representations of level $0$ of
$\GL_n(F)$, $\GL_m(F)$,~res\-pectively, with $n>m$.
Let $\overline{\pi}$ and $\overline{\pi}'$ be their types. 
Then
\begin{equation*}
\g(X,\pi,\pi',\psi_F)=\g_{\RS}(\overline{\pi},\overline{\pi}',\psi).
\end{equation*}
In particular,~the $\g$-factor $\g(X,\pi,\pi',\psi_F)$ is constant. 
\end{prop}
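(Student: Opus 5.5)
The strategy is a reduction to characteristic $0$, followed by an appeal to the known complex statement of Nien--Zhang (\cite{NienZhang} Theorem 3.11), which compares depth-zero Rankin--Selberg $\g$-factors with finite-field ones. I treat first $R=\qlb$ (equivalently $\CC$ via a field isomorphism), then $R=\flb$ by lifting, and finally an arbitrary $R$ of characteristic $\ell$ by base change, exactly as in the proof of Proposition \ref{mult}.

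\emph{Step 1: $R$ of characteristic $0$.} Transport everything to $\CC$ by an isomorphism $\qlb\simeq\CC$. Our normalisations must match those of \cite{NienZhang}. The functional equation \eqref{FERS1} includes the factor $q^{m(n-m-1-2j)/2}$, while the $p$-adic functional equation of \S\ref{discdepq12} uses $(q^{(n-m)/2}X)^k$ and Haar measures depending on $d(\psi_F)=1$. I will compute the zeta integral $\I_j(X,W,W')$ for $W,W'$ chosen as the Whittaker functions supported on $N\cdot Z\cdot\GL_n(\oo)$ (resp.\ $\GL_m(\oo)$) and inflated from $\overline{W},\overline{W}'$. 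By cuspidality and level $0$, with $m<n$, the Whittaker functions restricted to the mirabolic are supported on $N_n P\cap N\GL_n(\oo)$, so the series reduces to the single term $k=0$. That term equals a volume times the finite sum $\I_j(\overline W,\overline W')$, with $x$ ranging over $\Mat_{j,m}(\oo)$ modulo $\Mat_{j,m}(\pp)$. The same holds on the dual side; note that the support computation requires $\widehat{\ }$-free integrals since $m<n$. Comparing the two functional equations then yields $\g(X,\pi,\pi',\psi_F)=\g_{\RS}(\overline\pi,\overline\pi',\psi)$, once the measure normalisations (giving $q^{jmd/2}$ with $d=1$) are seen to produce exactly the factor $q^{m(n-m-1-2j)/2}$. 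This matching of constants, together with checking that $\cen_{\pi'}(-1)=\cen_{\overline\pi'}(-1)$, is where I expect the care to lie. Alternatively, I can quote \cite{NienZhang} directly and check that the normalisations agree. In either case, constancy in $X$ follows, since the $\L$-factors are trivial: there is no pole because $\pi\not\simeq\pi'^\vee\chi$ when $n\ne m$.

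\emph{Step 2: $R=\flb$.} Lift $\pi,\pi'$ to cuspidal level-$0$ $\qlb$-representations $\widetilde\pi,\widetilde\pi'$ by \cite{Vigbook} III.5.10; level $0$ is preserved. Their types $\overline{\widetilde\pi}$ and $\overline{\widetilde\pi}'$ are cuspidal $\qlb$-representations whose reductions mod $\ell$ are $\overline\pi$ and $\overline\pi'$, because taking $1+\Mat_n(\pp)$-invariants commutes with reduction, this group being pro-$p$. Reducing the characteristic-$0$ identity mod $\ell$ and using Proposition \ref{gammaint}(2) on the left and Proposition \ref{reductionofgamma} on the right gives the claim. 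For both sides, the square root $q^{1/2}$ and the character $\psi$ used in $\flb$ must be the reductions of those chosen in $\qlb$, and $\psi_F$ must be lifted to a conductor-$1$ character.

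\emph{Step 3: general $R$.} For an arbitrary algebraically closed $R$ of characteristic $\ell$, I descend to $\flb$ as in the proof of Proposition \ref{mult}. This uses Lemma \ref{Jacopo} on the finite-field side and the analogous compatibility of the $p$-adic factors with extension of scalars, which is immediate from their definition via zeta integrals. The main obstacle is Step 1's constant bookkeeping.
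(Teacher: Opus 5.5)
Your proposal follows the paper's proof: for $R=\qlb$ the paper quotes Nien--Zhang's Theorem 3.11 and reconciles normalisations via Nien's Bessel-function formula and Proposition \ref{RSBesseldef}. It then reduces mod $\ell$ via Propositions \ref{gammaint} and \ref{reductionofgamma}, and passes to arbitrary $R$ of characteristic $\ell$ as in Proposition \ref{mult}. Your direct zeta-integral alternative with Whittaker functions supported on $N\cdot Z\cdot\GL_n(\oo)$ also works: the volumes $q^{-jm/2}$ and $q^{-(n-m-1-j)m/2}$ exactly absorb $q^{m(n-m-1-2j)/2}$.

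One small correction: constancy in $X$ comes from the identity itself. Triviality of the $\L$-factors alone only makes $\g$ a monomial $eX^{f}$.
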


\begin{proof}
In the case when $\R=\qlb$,
it follows from \cite{NienZhang} Theorem 3.11 that 
\begin{equation*}
  \g(X,\pi,\pi',\psi_F)=\cen_{\pi'}(-1)^{n-1} \cdot q^{m(n-m-1)/2} \cdot
  \overline{\g}(\overline{\pi},\overline{\pi}',\psi),
\end{equation*}
where $\overline{\g}(\overline{\pi},\overline{\pi}',\psi)$ is the factor
associated with the pair $(\overline{\pi},\overline{\pi}')$
by \cite{Nien} Theorem 2.10.
Comparing~our Proposi\-tion \ref{RSBesseldef} with \cite{Nien} Proposition 3.6, 
we deduce that
\begin{equation*}
\label{comparaisonNienNRV}
\overline{\g}(\overline{\pi},\overline{\pi}',\psi) =
\cen_{\pi'}(-1)^{n-1} \cdot q^{-m(n-m-1)/2} \cdot 
\g_{\RS}(\overline{\pi},\overline{\pi}',\psi). 
\end{equation*}
Thus Proposition \ref{depthzerocompat} holds when $R=\qlb$.
From there,
we deduce Proposition \ref{depthzerocompat} when~$R=\flb$
by using Propositions \ref{gammaint} and \ref{reductionofgamma},
and we pass from $\flb$ to any algebraically closed field of~cha\-rac\-teristic 
$\ell$ as in the proof of Proposition \ref{mult}.
\end{proof}

In order to prove Theorem \ref{CONJNRV},
it now only remains to prove Proposition \ref{minelli}.

\subsection{}
\label{GJresults}

Before proceeding to the proof of Proposition \ref{minelli},
let~us collect results from \cite{NRVGJ}~which~we will need.
Recall that we have fixed a square root $q^{1/2}$ of $q$ in $R$.
Associated with any cuspidal~re\-presentation $\pi$ of $\GL_n(\k)$,
there is its~Godement--Jac\-quet $\g$-factor $\g(\pi,\psi)$
defi\-ned in \cite{Kondo,Roditty} for com\-plex representations
and in \cite{NRVGJ} for $R$-repre\-sen\-ta\-tions.

\begin{prop}
\label{RSGJ}
For any cuspidal representation $\pi$ of $\GL_n(\k)$, we have
\begin{equation*}
\g(\pi,1,\psi) = \g(\pi,\psi)
\end{equation*}
where $1$ in the left hand side is the trivial character of $\k^\times$.
\end{prop}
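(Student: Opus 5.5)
We need to prove $\g_{\RS}(\pi,1,\psi)=\g(\pi,\psi)$ for $\pi$ cuspidal on $\GL_n(\k)$ and $m=1$, $\pi'$ the trivial character. The plan is to compare the two factors through their Bessel-function formulas, rather than through the functional equations directly.

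\emph{Rankin--Selberg side.} Specialising Proposition \ref{RSBesseldef}(2) to $m=1$ and $\pi'=1$, we have $N_1=\{1\}$, $\cen_{\pi'}=1$ and $\Be_{1,\psi^{-1}}=1$. This gives
\begin{equation*}
\g_{\RS}(\pi,1,\psi)=q^{\frac{n-2}{2}}\sum_{a\in\k^\times}\Be_{\pi,\psi}\begin{pmatrix}0&1_{n-1}\\ a&0\end{pmatrix}.
\end{equation*}

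\emph{Godement--Jacquet side.} Next we recall from \cite{NRVGJ} an expression for $\g(\pi,\psi)$. Over $\CC$ this is Kondo's formula: $\g(\pi,\psi)$ is, up to a normalising power of $q^{1/2}$ and a sign, the scalar by which $\sum_{g\in G_n}\psi(\mathrm{tr}\, g)\pi(g)$ acts. I expect \cite{NRVGJ} to express it in $R$-coefficients in the same form, so that
\begin{equation*}
\g(\pi,\psi)=c\,q^{-n^2/2}\sum_{g\in G_n}\psi(\mathrm{tr}\,g)\,\Be_{\pi,\psi}(g)
\end{equation*}
with an explicit normalisation $c$. This is obtained by evaluating the operator on the Bessel vector and applying the Whittaker functional.

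\emph{Reducing the Godement--Jacquet sum.} The key step is to bring this sum down to the antidiagonal cells $\begin{pmatrix}0&1\\ a&0\end{pmatrix}$. Write $g$ in Bruhat form $g=xwty$ with $x,y\in N$. Then $\Be(g)=\psi(xy)\Be(wt)$ and $\psi(\mathrm{tr}\,g)$ can be computed in these coordinates. Summing over $x,y$ gives character sums that kill every cell except the one indexed by the long cycle $w=\begin{pmatrix}0&1_{n-1}\\ 1&0\end{pmatrix}$. On that cell the torus sum collapses, by the support properties of Bessel functions of cuspidal representations (Lemma \ref{suppbessel} and its Gelfand analogue, $\Be(wt)=0$ unless $wt$ is in the relevant "relevant" set), to a sum over $a\in\k^\times$. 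This is exactly Nien's computation relating the Godement--Jacquet and Rankin--Selberg factors for $m=1$ in the complex case.

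\emph{Matching constants.} It then remains to check that the powers of $q^{1/2}$ and the signs agree. This is where the specific normalisation of $\g_{\RS}$ matters, and where Remark \ref{Ass} warns that for $n$ odd these constants depend on the choice of $q^{1/2}$.

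\emph{Shortcut.} A shorter route, if available, is to argue by reduction. For $R=\qlb$ (or $\CC$), the identity is known from \cite{Nien,Roditty} after unwinding normalisations. Then lift $\pi$ to a cuspidal $\qlb$-representation $\widetilde\pi$ and use Proposition \ref{reductionofgamma} together with the compatibility of $\g(\pi,\psi)$ with reduction mod $\ell$ from \cite{NRVGJ}. Finally, pass to an arbitrary $R$ of characteristic $\ell$ via Lemma \ref{Jacopo}, as in the proof of Proposition \ref{mult}. I would take this route. The main obstacle is then only the normalisation bookkeeping in characteristic $0$: matching the $q^{(n-2)/2}$ factor and the sign conventions of \cite{NRVGJ} with those of Kondo and Nien.
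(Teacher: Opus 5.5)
Your chosen route is exactly the paper's proof: take the characteristic-zero identity from \cite{Roditty} (Theorem 4.2.1), pass to $\flb$ via Proposition \ref{reductionofgamma} and the compatibility of $\g(\pi,\psi)$ with reduction mod $\ell$ from \cite{NRVGJ} (Proposition 6.1), then extend to arbitrary $R$ as in the proof of Proposition \ref{mult}. The direct Bessel-function computation you sketch first is not needed, and its claim that all other Bruhat cells cancel is unjustified as written, so you were right to set it aside.
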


\begin{proof}
The point is that these two $\g$-factors are defined by two different 
functional equations,
so the fact that they are equal is not immediate. 
(See \cite{Roditty} Theorem 4.1.1 and {\cite{NRVGJ} Definition~2.12}
for the functional equations defining $\g(\pi,\psi)$.)
For $R=\qlb$, the equality follows from \cite{Roditty}~Theo\-rem 4.2.1.
We deduce it for $R=\flb$ by~Propo\-sition \ref{reductionofgamma} and
{\cite{NRVGJ} Proposition 6.1},
and for~ar\-bi\-trary $R$ as in the proof of Proposition \ref{mult}.
\end{proof}

Now assume that $H$ is one of the subgroups of $\GL_n(\k)$ of Proposition 
\ref{minelli},
that is:
\begin{itemize}
\item 
either $\k$ has a subfield $\k_0$ such that $[\k:\k_0]=2$ 
and $H=\GL_n(\k_0)$,
\item
or $n=2m$ for some integer $m\>1$ and $H$ is the standard
Levi subgroup $\GL_{m}(\k)\times\GL_{m}(\k)$.
\end{itemize}
Given an $H$-distingui\-shed cuspidal representation $\pi$ of $\GL_n(\k)$,
the space $\Hom_{H}(\pi,1)$ has dimen\-sion $1$
(\cite{VSANT19} Remark 4.3 in the Galois case,
and \cite{VSANT19} Corollary 2.16 in the Levi case).
In the Levi case,
the element
\begin{equation*}
s = 
\begin{pmatrix}0&1_m\\1_m&0\end{pmatrix} \in \GL_{n}(\k)
\end{equation*}
normalizes $H$.
It thus acts on this space by a sign,
which we denote by ${\rm sgn}(\pi) \in \{-1,1\}\subseteq R^\times$.~In
{\cite{NRVGJ} Corollary 5.2, Theorem 5.3} we proved:

\begin{prop}
\label{valueGJdist}
Let $\pi$ be an ${H}$-distingui\-shed cuspidal representation of
$\GL_n(\k)$.~{In~the~Ga\-lois case,
  assume that $q^{1/2}$ is the cardinality of $\k_0$ and that $\psi$
  is trivial on $\k_0$.}
Then
\begin{equation*}
\g(\pi,\psi) = \left\{ 
\begin{array}{ll}
1 & \text{in the Galois case,} \\
{\rm sgn}(\pi) & \text{in the Levi case.}
\end{array}\right.
\end{equation*}
\end{prop}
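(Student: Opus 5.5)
The plan is to reduce $\g(\pi,\psi)$ to a scalar operator on $\pi$ and then read off that scalar through the $H$-invariant linear form. By the finite field Godement--Jacquet theory of \cite{Kondo,Roditty,NRVGJ}, for $\pi$ cuspidal the operator
\begin{equation*}
T_\psi=\sum_{g\in\GL_n(\k)}\psi(\mathrm{tr}\,g)\,\pi(g)
\end{equation*}
commutes with $\GL_n(\k)$, since $\mathrm{tr}$ is conjugation-invariant. It is therefore a scalar $c(\pi,\psi)$, by Schur's lemma; this holds over any algebraically closed $R$ because $\pi$ is irreducible. Moreover $\g(\pi,\psi)$ equals $c(\pi,\psi)$ up to an explicit normalisation, namely a power of $q^{1/2}$ and a sign, which I would read off from \cite{NRVGJ} Definition 2.12. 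Fix a non-zero $\lambda\in\Hom_H(\pi,1)$; by \cite{VSANT19} this space is a line. For every vector $v$ we then have $\lambda(T_\psi v)=c(\pi,\psi)\lambda(v)$, and it suffices to evaluate the left hand side for one $v$ with $\lambda(v)\neq0$.

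The first step is a symmetry argument that pins $c(\pi,\psi)$ down up to sign. In the Galois case, $\psi$ is trivial on $\k_0$, so $\psi\circ\s=\psi^{-1}$. Distinction gives $\pi^\vee\simeq\pi^\s$. Hence $\g(\pi^\vee,\psi^{-1})=\g(\pi^\s,\psi\circ\s)=\g(\pi,\psi)$. Combining this with the relation $\g(\pi,\psi)\g(\pi^\vee,\psi^{-1})=\cen_\pi(-1)$, suitably normalised, which is the analogue of the proof of Proposition \ref{corsqsgamma}(1), and with $\cen_\pi(-1)=1$ (as $-1\in H$ acts trivially through $\lambda$), I expect to get $\g(\pi,\psi)^2=1$ once $q^{1/2}=q_0$ is taken into account. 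In the Levi case, the element $s$ normalises $H$ and satisfies $\mathrm{tr}(sgs^{-1})=\mathrm{tr}(g)$. Combined with $\lambda\circ\pi(s)={\rm sgn}(\pi)\lambda$, this shows $g\mapsto\lambda(\pi(g)v)$ transforms by ${\rm sgn}(\pi)$ under conjugation by $s$. This is the mechanism by which ${\rm sgn}(\pi)$ will enter the answer.

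The second and decisive step is to compute $\lambda(T_\psi v)$ exactly. I would unfold the sum over $H\backslash\GL_n(\k)$: writing $g=hx$, the inner sum becomes $\sum_{h\in H}\psi(\mathrm{tr}(hx))$, a character sum over $H$. In the Galois case, $\psi\circ\mathrm{tr}$ is trivial on $\Mat_n(\k_0)$. I would therefore choose $v$ to be (the image under $\pi$ of) an averaging of a vector over $H$-cosets, so that the relevant sum collapses, via a Fourier transform on $\Mat_n(\k)=\Mat_n(\k_0)\oplus\Mat_n(\k_0)\sqrt{\epsilon}$, to the contribution of the coset of the identity. This should give exactly $q_0^{n^2}\lambda(v)$, matching the normalisation, so that $\g(\pi,\psi)=1$. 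In the Levi case, the analogous unfolding along the block decomposition of $\Mat_{2m}(\k)$ should reduce to the contribution of $s$ itself, producing ${\rm sgn}(\pi)$. I cannot simply lift to characteristic $0$ here, because in the intended application $\pi$ has no distinguished lift; the argument must be carried out directly over $R$.

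The main obstacle is the sign determination, that is, showing that no extra $\pm1$ survives the unfolding. I expect the only non-vanishing cosets to be governed by Lemma \ref{suppbessel}-type support properties together with cuspidality of $\pi$, which kills contributions from proper parabolic cosets. The residual Gauss-sum factors must then be checked to multiply to exactly $1$, respectively ${\rm sgn}(\pi)$, with the choices $q^{1/2}=q_0$ and $\psi|_{\k_0}=1$.
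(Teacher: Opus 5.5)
Your first step is fine in the Galois case: $\pi^\vee\simeq\pi^\s$, $\psi\circ\s=\psi^{-1}$ and $\g(\pi,\psi)\g(\pi^\vee,\psi^{-1})=1$ do give $\g(\pi,\psi)=\pm1$. The Levi-case remark is not right as stated. One has $\lambda(\pi(sgs^{-1})v)={\rm sgn}(\pi)\,\lambda(\pi(g)\pi(s)^{-1}v)$, which changes the vector, so $g\mapsto\lambda(\pi(g)v)$ does not simply pick up a sign. More importantly, $\pm1$ is exactly the ambiguity that matters: Theorem~\ref{CONJNRV} is proved by playing the value $-1$ of Proposition~\ref{gammais1} against the value $1$. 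The whole content of the statement therefore sits in your second step, and that step has a genuine gap.

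Made precise, your second step is the characteristic-zero argument. What makes the Fourier transform collapse is the following. When $q^{1/2}=q_0$ and $\psi|_{\k_0}=1$, the indicator of $\Mat_n(\k_0)$ is its own Fourier transform. In the Levi case, the transform of the indicator of the block-diagonal matrices $D$ is the indicator of $sD$. Feeding this into the Godement--Jacquet functional equation with the matrix coefficient $g\mapsto\lambda(\pi(g)v)$ gives
\begin{equation*}
\g(\pi,\psi)\cdot|H|\cdot\lambda(v)=|H|\cdot\lambda(v)
\quad\text{resp.}\quad
\g(\pi,\psi)\cdot|H|\cdot\lambda(v)={\rm sgn}(\pi)\cdot|H|\cdot\lambda(v).
\end{equation*}
Likewise, any averaging of $v$ over $H$ produces a vector $w$ with $\lambda(w)=|H|\lambda(v)$. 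No choice of $v$ can make $x\mapsto\lambda(\pi(x)v)$ supported on the trivial coset, since its translates would then span $\Ind^G_H(1)$, which is not irreducible.

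This proves the proposition only when $|H|$ is invertible in $R$. When $\ell$ divides $|H|$, every identity above degenerates to $0=0$. That is precisely the situation the statement is needed for. In Case~1 of \S\ref{mammuth} one has $e_0=e$ dividing $n$, so $\ell$ divides $q_0^n-1$ and hence $|\GL_n(\k_0)|$. In the Levi case, $\ell$ divides $|H|$ as soon as $e\<n/2$.

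Since you also cannot lift to characteristic $0$, what is missing is an $H$-period whose normalising constant is a unit in $R$, a power of $p$ rather than $|H|$. One candidate is the mirabolic realisation $\La$ of $\lambda$ from \S\ref{floquage}, which equals $1$ on the Bessel function by Lemmas~\ref{malebranche} and~\ref{LaPHinv}. You would still need a form of the functional equation that can actually be evaluated against it. The ``residual Gauss sums'' you single out are not the real obstruction. Note that the paper does not reprove this statement: it quotes it from \cite{NRVGJ} (Corollary~5.2 and Theorem~5.3).
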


We now proceed to the proof of Proposition \ref{minelli} in the next section. 

\section{Gamma factors and distinction}
\label{luigivampa}

For any integer $n\>1$, let 
$G_n$~be~the group $\GL_n(\k)$,
$N_n$ be the subgroup~of~its up\-per~triangular unipotent matrices
and $P_n$ be its~mi\-ra\-bolic subgroup.
Let $H_n$ be any of the following sub\-groups of $G_n$:
\begin{itemize}
\item (Galois case)
the subgroup $\GL_n(\k_0)$ where $\k$ is a quadratic extension of $\k_0$, 
\item (Levi case)
the centralizer of the diagonal matrix
${\rm diag}(-1,1,\dots,(-1)^n)$ in $G_n$.~De\-pending on the~pa\-ri\-ty of 
$n$, we thus have 
\begin{equation*}
H_{n} = 
\begin{pmatrix}
* & 0 & * & \cdots & 0 & * & 0 \\
0 & * & 0 & \cdots & * & 0 & * \\
* & 0 & * & \cdots & 0 & * & 0 \\
\vdots & \vdots & \vdots & \ddots & \vdots & \vdots & \vdots \\
0 & * & 0 & \cdots & * & 0 & * \\
* & 0 & * & \cdots & 0 & * & 0 \\
0 & * & 0 & \cdots & * & 0 & * 
\end{pmatrix}
\ \text{($n$ even)}
\quad\text{or}\quad
H_{n} = 
\begin{pmatrix}
* & 0 & * & \cdots & * & 0 & * \\
0 & * & 0 & \cdots & 0 & * & 0 \\
* & 0 & * & \cdots & * & 0 & * \\
\vdots & \vdots & \vdots & \ddots & \vdots & \vdots & \vdots \\
* & 0 & * & \cdots & * & 0 & * \\
0 & * & 0 & \cdots & 0 & * & 0 \\
* & 0 & * & \cdots & * & 0 & * 
\end{pmatrix}
\ \text{($n$ odd)}.
\end{equation*}
Note that,
if $r$ is the smallest integer such that $2r\>n$, 
the subgroup $H_n$ is conjugate to the~stan\-dard Levi subgroup
$\GL_{r}(\k)\times\GL_{n-r}(\k)$ of $G$.
\end{itemize}

\subsection{}
\label{groenland}

The purpose of this section is to prove the following propo\-si\-tion.

\begin{prop}
\label{minelli3}
Let $\pi$ be a non-supercuspidal cuspidal representation of $G_n$,
with~as\-so\-ciated supercuspidal representation $\rho$. 
Suppose that $\pi$~is $H_n$-distingui\-shed
but has no lift distinguished~by $H_n$. 
Then,
for~any~non-trivial~cha\-rac\-ter $\psi$ of $\k$,
we have 
$\g(\pi,\rho^\vee,\psi) = 1$.
\end{prop}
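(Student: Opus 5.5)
Since $\g(\pi,\rho^\vee,\psi^a)=\cen_\pi(a)^{k}\cen_{\rho^\vee}(a)^{n}\g(\pi,\rho^\vee,\psi)$ by \eqref{epspsiaff}, and $\cen_\pi=\cen_\rho^r$ with $rk=n$, the factor $\cen_\rho(a)^{rk}\cen_\rho(a)^{-n}$ is $1$. So the $\g$-factor is independent of $\psi$, and we may fix a convenient $\psi$. In the Galois case we take $\psi$ trivial on $\k_0$ and $q^{1/2}=|\k_0|$, as required by Proposition \ref{valueGJdist}.

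Following Ok, we first attach to the distinguished cuspidal $\pi$ a constant $c(\pi,\psi)\in R^\times$. We compare two $H_n$-invariant linear forms on $\Ww(\pi,\psi)$: one given by summing $W$ over $P_n\cap H_n$ modulo $N_n\cap H_n$ (the "mirabolic" form), and its transform under $W\mapsto\widetilde W$ composed with the same sum. Both are $H_n$-invariant, and one is non-zero via the Bessel function and Lemma \ref{suppbessel}. Since $\Hom_{H_n}(\pi,1)$ has dimension $1$, they are proportional, which defines $c(\pi,\psi)$.

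Next, for a representation $\pi'$ of $G_{n-1}$ of Whittaker type that is $H_{n-1}$-distinguished through an analogous mirabolic invariant form (call such $\pi'$ \emph{special}), we sum the functional equation of Proposition \ref{FERS} (with $m=n-1$, $j=0$) against this form. The aim is to get $\g(\pi,\pi',\psi)=c(\pi,\psi)$, with the same constant for all special $\pi'$. The choices of $H_n$ in the Levi case (centralizer of ${\rm diag}(-1,1,\dots)$) are designed so that $H_n\cap G_{n-1}=H_{n-1}$, which makes the unfolding work.

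We then show that the special class contains distinguished cuspidal representations and that it is stable under parabolic induction $\pi_1\times\pi_2$, with distinction of the induced form obtained from the open orbit. We use Lemma \ref{LemmabasicsWTfams} for Whittaker type and Proposition \ref{mult} for the multiplicativity of $\g$.

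To conclude, we need to compute $\g(\pi,\rho^\vee,\psi)$ for $\rho$ of $G_k$ with $k=n/r$ possibly much smaller than $n-1$. Choose a distinguished cuspidal (or supercuspidal) $\tau$ such that $\rho^\vee\times\tau$-type products of total size $n-1$ are special. More cleanly: take special $\pi'_1=\rho^\vee\times\mathbf 1^{\times(n-1-k)}$-style products and $\pi'_2=\mathbf 1^{\times(n-1)}$. Both consist of trivial characters (distinguished in either case) together with $\rho^\vee$, and $\rho^\vee$ must itself be distinguished: this is where the hypothesis "distinguished with no distinguished lift" enters, via the case list of \S\ref{mammuth} transported to $\k$, giving that $\rho$ (or a twist) is $H_k$-distinguished. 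Dividing the two identities via Proposition \ref{mult} gives $\g(\pi,\rho^\vee,\psi)=\g(\pi,1,\psi)^{k}$. Alternatively, writing $n-1=r'k+\dots$, one gets $\g(\pi,\rho^\vee,\psi)=\g(\pi,\psi)^{n/r}$ using Proposition \ref{RSGJ}.

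Finally, Proposition \ref{valueGJdist} gives $1$ in the Galois case and ${\rm sgn}(\pi)^{k}$ in the Levi case. For the Levi case we must show ${\rm sgn}(\pi)^{k}=1$, using the no-distinguished-lift hypothesis, e.g.\ $k$ even in Case 4 of \S\ref{mammuth}.

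The main obstacle is the stability of the special class under parabolic induction: we must exhibit, on $\pi_1\times\pi_2$, an $H$-invariant form of the required mirabolic shape compatible with both Whittaker models in the modular setting, where semisimplicity and analytic tools are unavailable. We would attempt this by an explicit Mackey-theoretic open-orbit integral over $H$-orbits on the flag variety.
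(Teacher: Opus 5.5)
Your outline follows the paper's architecture: Ok's constant $c(\pi,\psi)$, a class of ``special'' representations $\pi'$ of $G_{n-1}$ with $\g(\pi,\pi',\psi)=c(\pi,\psi)$, stability under induction, and the comparison of $\rho^\vee\times1^{n-1-k}$ with $1^{n-1}$, which gives $\g(\pi,\rho^\vee,\psi)=\g(\pi,1,\psi)^k=\g(\pi,\psi)^k$. But the steps that carry the content are either missing or, for the one you single out, would fail.

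\textbf{Invariance of the two forms.} Your two mirabolic forms are a priori only $P_n\cap H_n$-invariant and $(P_n\cap H_n)^*$-invariant. Their $H_n$-invariance requires $\dim\Hom_{P_n\cap H_n}(\pi,1)=\dim\Hom_{P_n\cap H_n}(\pi^\vee,1)=1$. This is Lemma \ref{malebranche}, which in the Galois case over a general $R$ needs its own argument. The fact that $\dim\Hom_{H_n}(\pi,1)=1$ only gives proportionality once invariance is known.

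\textbf{The unfolding.} ``Summing the functional equation against this form'' is not yet a mechanism, and your notion of special is not pinned down. What works is the following. Call $\pi'$ special if $\Ww(\pi',\psi^{-1})\cap\Ii_{\psi^{-1}}(\Ind_{H_{n-1}}^{G_{n-1}}(1))\neq\{0\}$. Since $\Lat=c(\pi,\psi)\La$ holds on all translates of $W$ (Proposition \ref{BernsteinConstant}), the function $g\mapsto W\left(\begin{smallmatrix}0&1\\g&0\end{smallmatrix}\right)-c(\pi,\psi)W\left(\begin{smallmatrix}g&0\\0&1\end{smallmatrix}\right)$ has zero sum over every coset $H_{n-1}g$. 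Hence, by Lemma \ref{keyabstractlemma}, it pairs to zero with every such $W'$. A translate of $\Be_{\pi,\psi}$ then makes the right-hand side non-zero. If instead ``special'' means that some mirabolic sum on $\pi'$ is $H_{n-1}$-invariant, you cannot verify it for $\rho^\vee\times1^{n-1-k}$, where mirabolic multiplicity one is no longer available.

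\textbf{Stability under induction.} This step fails as you propose it. On the open $H$-orbit of the flag variety the stabilizer in $M$ is twisted; for $n_1=n_2$ it is $\{(g,\t(g))\}$, as in the construction of $\La^1$ in the last section. So the invariant forms supported there are governed by conditions such as $\pi_2^{\t}\simeq\pi_1^\vee$, not by $\Hom_{H_{n_1}}(\pi_1,1)\otimes\Hom_{H_{n_2}}(\pi_2,1)$, and the distinction of $\pi_1,\pi_2$ that you are given cannot be fed in.

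The missing idea is the class $\Cc(H)$ of Definition \ref{Renal}: an $H$-invariant form that is non-zero on the $\psi$-eigenline. This class has three properties.
\begin{itemize}
\item It implies specialness, by applying $\Ii_\psi$ to matrix coefficients (Lemma \ref{basicCpsilemma}).
\item It contains the distinguished cuspidals, by mirabolic multiplicity one (Corollary \ref{CpsiCusp}).
\item It is stable under induction. Realize $\pi_1\times\pi_2$ as induced from the opposite parabolic $Q^-$ (\cite{HL}), and use the form attached to the identity double coset $Q^-H$, which is the closed orbit, not the open one. The $\psi$-eigenvector $j$ is supported on $U^-MU$, and \eqref{kakiIw1}, \eqref{kakiIw2}, \eqref{kaki1} give $\xi(j)=|U\cap H|\,\xi_1(j_1)\xi_2(j_2)\neq0$.
\end{itemize}
Incidentally, it is \eqref{kaki1} ($\psi$ trivial on $N_n\cap H_n$), rather than $H_n\cap G_{n-1}=H_{n-1}$, that dictates the alternating choice of $H_n$ in the Levi case.

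\textbf{Distinction of $\rho^\vee$.} You need not transport \S\ref{mammuth} to $\k$, which would require realizing $\pi$ as the type of a $p$-adic representation and comparing the distinction of $\rho$ with that of its type. Instead, Lemma \ref{distselfdual} makes $\pi$, hence $\rho$, $\s$-self-dual. Lemma \ref{distparitek} then makes $\rho$ (so $\rho^\vee$) distinguished, with $k$ even in the Levi case, except in the Levi case with $k=1$. That case is excluded by \cite{NRV25} Lemma 6.10(2.b), and this is the only place the no-distinguished-lift hypothesis is used.
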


This proposition is equivalent to Propo\-si\-tion \ref{minelli}.
This is clear in the Galois case.
In the~Levi case,
the fact that $\pi$~is $H_n$-distingui\-shed implies that $n=2m$
for some $m\>1$ (by \cite{VSANT19}~Proposition 2.14).
Then, as $H_n$ is conjugate to the standard
Levi subgroup $G_m\times G_m$ in $G_n$,
distinction by~$H_n$ is equivalent to distinction by $G_m\times G_m$. 

Remark that,
for any $a\in\k^\times$,
one has $\g(\pi,\rho^\vee,\psi^a) = \g(\pi,\rho^\vee,\psi)$
from \eqref{epspsiaff}.
It thus~suffices~to prove Proposition \ref{minelli3} for a specific character $\psi$.
\textit{We will thus assume from now on until~the end~of this section, that, 
in the Galois case, the character $\psi$ is trivial on $\k_0$.}
In the Levi case,~we~make no~assumption on $\psi$.

\subsection{}

In this paragraph,
we collect some properties of the groups $H_n$.
First
we observe that~$H_n$~is the group of fixed points of an involution $\s$
of $G_n$.
In the Galois case,
it is the non-trivial~auto\-mor\-phism of $\k/\k_0$ acting componentwise.
In the Levi case
it is the~matrix
${\rm diag}(-1,1,\dots,(-1)^n)$
acting by conjugacy. 
Thanks to the assumption made on $\psi$ in the previous paragraph, 
the~char\-acter $\psi$ of $N_n$ 
satisfies $\psi\circ\s=\psi^{-1}$, which implies that 
\begin{equation}
\label{kaki1}
\text{$\psi$ is trivial on $N_n \cap H_n$.}
\end{equation} 
This explains why we didn't choose for $H_n$ a standard Levi subgroup in the
Levi case.

Let $(n_1,\dots,n_r)$ be a family of positive integers whose sum is equal to
$n$, 
let $M$ be the standard Levi subgroup $G_{n_1}\times\dots\times G_{n_r}$
of $G_n$,
let $Q$ be the standard parabolic subgroup of $G_n$ with Levi subgroup
$M$ and $Q^-$ be the opposite parabolic subgroup with respect to $M$.
Finally,
let $U$, $U^-$ be the unipotent radicals of $Q$, $Q^-$,
respectively.
Then
\begin{eqnarray}
\label{kakiIw1}
U^-MU\cap H_n &=& (U^-\cap H_n)(M\cap H_n)(U\cap H_n),\\
\label{kakiIw2}
M\cap H_n &=& H_{n_1} \times\dots\times H_{n_r}.
\end{eqnarray}

Recall that $g^*$ denotes the transpose of~the inverse of~a ma\-trix 
$g\in\G_n$ and $\wn$ is the antidiago\-nal
per\-mu\-ta\-tion~matrix of maximal length of $G_n$.
Then
\begin{eqnarray} 
\label{kaki3}
\text{$\H_n$ is stable by $*$}, \\
\label{kaki4}
\text{$\H_n$ is normalized by $\wn$}. 
\end{eqnarray}
This list of properties \eqref{kaki1} to \eqref{kaki4} will allow us to treat 
the Galois and Levi cases uniformly.

We will also need the following properties of $H_n$-distinguished generic 
representations of $G_n$.

\begin{lemm}
\label{malebranche}
For any $\H_n$-distinguished cuspidal representation $\pi$ of~$\G_n$, 
one has
\begin{equation*}
\dim \Hom_{\P_n\cap\H_n}(\pi,1) = 1.
\end{equation*}
\end{lemm}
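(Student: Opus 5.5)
The plan is to prove the two inequalities $\dim\Hom_{P_n\cap H_n}(\pi,1)\geqslant1$ and $\leqslant1$ separately.

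The lower bound is immediate. Restriction to $P_n\cap H_n$ gives a map
\begin{equation*}
\Hom_{H_n}(\pi,1)\to\Hom_{P_n\cap H_n}(\pi,1),
\end{equation*}
and this map is injective. The source is non-zero since $\pi$ is $H_n$-distinguished.

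The upper bound is the real content. It rests on the key property of a cuspidal representation $\pi$ of $G_n$ (with $R$ of characteristic not $p$): its restriction to $P_n$ is the Gelfand--Kazhdan mirabolic representation,
\begin{equation*}
\pi|_{P_n}\simeq\Ind_{N_n}^{P_n}(\psi).
\end{equation*}
Over a finite field this is the standard analysis of $P_n$-representations through derivatives. The only non-zero derivative of a cuspidal $\pi$ is the top one, of dimension $1$, and the argument works for any $R$ of characteristic different from $p$. This identification is also the one used in Lemma \ref{Jacopo}.

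Granting it, Frobenius reciprocity and Mackey theory give
\begin{equation*}
\Hom_{P_n\cap H_n}(\pi,1)\simeq\bigoplus_{x}\Hom_{N_n\cap x(P_n\cap H_n)x^{-1}}(\psi,1),
\end{equation*}
where $x$ runs over the double cosets $N_n\backslash P_n/(P_n\cap H_n)$. Each summand has dimension $0$ or $1$, and it is non-zero exactly when $\psi$ is trivial on $N_n\cap x(P_n\cap H_n)x^{-1}$. So the dimension equals the number of \emph{relevant} double cosets, and the task is to show there is at most one.

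The identity coset is relevant by \eqref{kaki1}. To show no other coset is relevant, I would argue by induction on $n$, using the geometry of $P_n/(P_n\cap H_n)$. In the Galois case, $P_n\cap H_n$ is the mirabolic subgroup of $\GL_n(\k_0)$, and one classifies the orbits of $N_n$ on $P_n/P_n(\k_0)$ by a Bruhat-type decomposition adapted to the involution $\s$. One then shows that any orbit other than the open one carries some simple-root subgroup $u_{i,i+1}(t)$ with $\psi(t)\neq1$ inside the conjugated stabilizer. The Levi case is handled the same way, with $H_n$ the centralizer of ${\rm diag}(-1,1,\dots)$.

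An alternative for the upper bound avoids the orbit computation by quoting known multiplicity-one results for mirabolic invariants. In the complex case, $\Hom_{P_n\cap H_n}(\pi,1)$ has dimension at most $1$ for generic $\pi$ by the finite-field analogues of results of Ok and of Anandavardhanan--Kable--Tandon. One would then transfer this to $R$ through the explicit double-coset count, which is independent of $R$ once the Mackey description is in place.

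The main obstacle is therefore the combinatorial step: verifying that exactly one double coset in $N_n\backslash P_n/(P_n\cap H_n)$ is $\psi$-relevant, uniformly in the Galois and Levi cases. I would first try an induction along the chain $P_{n-1}\subset P_n$, using \eqref{kakiIw1} and \eqref{kakiIw2} to reduce the stabilizer computation to $H_{n-1}$.
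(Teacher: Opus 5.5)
Your fallback route is the paper's proof. In the Galois case the paper uses $\pi|_{P_n}\simeq\Ind_{N_n}^{P_n}(\psi)$ and Mackey's formula as you do, takes from the complex case (\cite{AnandMatringe} Proposition 4.3) that exactly one double coset is $\psi$-relevant, and makes your ``independent of $R$'' precise by replacing $\psi$ with $\iota\circ\psi$ for an isomorphism $\iota:\mu_{p^\infty}(R)\to\mu_{p^\infty}(\CC)$, which has the same kernel and so the same relevant cosets; the Levi case is quoted from \cite{VSANT19} Remark 2.15, valid over any $R$, so the orbit computation you flag as the main obstacle is never needed.

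One correction: the complex input must be stated for cuspidal $\pi$ (equivalently for $\Ind_{N_n}^{P_n}(\psi)$), not for generic $\pi$, where it is false. For example, for $n=2$ in the Galois case, $\chi_1\times\chi_2$ with $\chi_1\neq\chi_2$ both trivial on $\k_0^\times$ has a $3$-dimensional space of $P_2\cap H_2$-invariant linear forms, one from each of the pieces $\Ind_{N_2}^{P_2}(\psi)$, $\Psi^+(\chi_1)$, $\Psi^+(\chi_2)$ of its restriction to $P_2$.
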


\begin{proof} 
In the Levi case, the result is given by \cite{VSANT19} Remark 2.15.
In the Galois case,
is it given by \cite{AnandMatringe} Proposition 4.3 for $R=\CC$ only.
Let us consider the case of a general $R$ in the Galois case.
We will write $G=G_n$, $P=P_n$, etc.

First note that restriction of $\pi$ to $P$ is isomorphic to the induced 
representation $\Ind^P_N(\psi)$~(see \cite{Vigbook} III.1.1).
It follows from a simple application of Mackey's formula 
that the dimension of~the space $\Hom_{P\cap H} (\pi,R)$ is the
number of $(H,N)$-double cosets $HgN \subseteq G$ such that
$\psi |_{N \cap H^g}= 1$.

It follows that,
in the case when $R$ is the field of complex numbers,
\cite{AnandMatringe} Proposition 4.3 tells~us that 
there is only one double coset $HgN \subseteq G$ such that
$\psi |_{N \cap H^g}= 1$ (and it is $HN$).

Since $N$ is a $p$-group,
the character $\psi$ takes values in $\mu_{p^\infty}(R)$,
the group of elements of $R^\times$~who\-se order is a $p$-power.
Fix a group isomorphism
\begin{equation*}
\iota : \mu_{p^\infty}(R) \to \mu_{p^\infty}(\CC)
\end{equation*}
(which exists since $R$ is algebraically closed and has characteristic
different from $p$).
Then $\iota\circ\psi$~is well-defined and $HN$ is the
unique double coset $HgN \subseteq G$ such that 
$\iota\circ\psi |_{N \cap H^g}= 1$. 
Since $\iota$ is in\-jective,
the same result holds for $\psi$ itself.
We thus get the expected result. 
\end{proof}

\begin{lemm}
\label{distselfdual}
Any $H_n$-distinguished generic representation of $G_n$ 
is $\s$-self-dual.
\end{lemm}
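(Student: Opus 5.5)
The plan is to compare $\pi^\vee$ and $\pi^\s$ through their Bessel functions. An irreducible representation of Whittaker type is determined up to isomorphism by its Bessel function. Indeed, the $G_n$-span of $\Be_{\pi,\psi}$ inside $\Ind_{N_n}^{G_n}(\psi)$ is a non-zero subrepresentation of the irreducible space $\Ww(\pi,\psi)$, hence equals it. So two generic representations with the same Bessel function with respect to the same character are isomorphic. By \eqref{kaki1}, $\psi\circ\s=\psi^{-1}$ on $N_n$, so both $\pi^\vee$ and $\pi^\s$ are generic with respect to $\psi^{-1}$. Directly from the definitions one checks
\begin{equation*}
\Be_{\pi^\vee,\psi^{-1}}(g)=\Be_{\pi,\psi}(g^{-1}),
\qquad
\Be_{\pi^\s,\psi^{-1}}(g)=\Be_{\pi,\psi}(\s(g)).
\end{equation*}
For the first identity, use the $\psi$-line of $\pi$ and the dual line in $\pi^\vee$. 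The statement is thus reduced to the identity $\Be_{\pi,\psi}(\s(g))=\Be_{\pi,\psi}(g^{-1})$ for all $g\in G_n$.

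To prove this identity I would first use the anti-involution $g\mapsto \wn g^{*}\wn$ (transpose with respect to the antidiagonal). It preserves $N_n$ and $\psi$, and by a Gelfand--Kazhdan type argument it fixes every Bessel function. Over a finite field this can be seen by taking the Bessel function to be a matrix coefficient $\coef_{v,\xi}$ as in \S\ref{Bessel}, and using that $\pi^*\simeq\pi^\vee$. The latter holds for every irreducible $R$-representation of $G_n$: since $g^{*}$ is conjugate to $g^{-1}$, the (Brauer) characters of $\pi^*$ and $\pi^\vee$ coincide, and irreducible representations are determined by them. Granting this, the identity becomes equivalent to the invariance of $\Be_{\pi,\psi}$ under the involution $\theta:g\mapsto \s(\wn g^{*}\wn)$. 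By \eqref{kaki3} and \eqref{kaki4}, $\theta$ commutes with $\s$ and normalizes $H_n$. Moreover $\theta$ preserves $N_n$ and the character $\psi$, so $\pi^\theta$ is again $\psi$-generic and $\Be_{\pi^\theta,\psi}=\Be_{\pi,\psi}\circ\theta$. It therefore suffices to prove $\pi^\theta\simeq\pi$, and this is where distinction enters.

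Fix a non-zero $\lambda\in\Hom_{H_n}(\pi,1)$ and a non-zero vector $v$ in the $\psi$-line of $\pi$. Consider the function $\Lambda(g)=\lambda(\pi(g)v)$, which is left $H_n$-invariant and right $(N_n,\psi)$-equivariant. Mackey theory on the space of such functions organises them by $(H_n,N_n)$-double cosets $H_ngN_n$ with $\psi|_{N_n\cap H_n^{g}}=1$. The same double coset analysis as in Lemma \ref{malebranche} should show that these double cosets, and hence the relevant invariants, are $\theta$-stable. One then shows that $\lambda$ is also an $H_n$-invariant form on $\pi^\theta$ up to the identification $\pi^\theta\simeq\pi$ at the level of Whittaker models. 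The \textbf{main obstacle} is exactly this last step: converting $H_n$-distinction plus the $\theta$-symmetry of the double cosets into an isomorphism $\pi^\theta\simeq\pi$, in arbitrary characteristic $\ell$, where character-theoretic arguments are unavailable.

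If the direct argument proves awkward, the fallback I would try first is a rigidity argument. The Galois and Levi cases would be handled separately, transporting the known result for $R=\CC$ (for instance via Gow's theorem and the results of \cite{VSANT19} for finite groups) along the argument with $\iota:\mu_{p^\infty}(R)\to\mu_{p^\infty}(\CC)$ used in Lemma \ref{malebranche}. That argument replaces each statement about $\psi$-values of Bessel functions on double cosets by its image under $\iota$, which is legitimate because the Bessel function on $N_n$-double cosets is governed by $\psi$-values and by uniqueness.
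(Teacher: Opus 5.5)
There is a genuine gap. Your reductions are correct, but they only restate the lemma, and the step carrying all the content is exactly the one you flag as the main obstacle and leave open.

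Write $\theta=\s\circ\theta_0$ with $\theta_0(g)=\wn g^{*}\wn$. Conjugation by $\wn$ is inner, so $\pi^{\theta_0}\simeq\pi^{*}\simeq\pi^\vee$. Hence $\pi^{\theta}=(\pi^{\s})^{\theta_0}\simeq\pi^{\vee\s}$, and ``it suffices to prove $\pi^\theta\simeq\pi$'' is literally the statement to be proved. Likewise, the identity $\Be_{\pi,\psi}(\s(g))=\Be_{\pi,\psi}(g^{-1})$ is just $\pi^\s\simeq\pi^\vee$ read on Bessel functions. (Also, $g\mapsto\wn g^{*}\wn$ is an automorphism sending $\psi$ to $\psi^{-1}$; the anti-involution fixing Bessel functions is $g\mapsto\wn\,{}^{t}g\,\wn$.)

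The proposed Mackey analysis of $(H_n,N_n)$-double cosets cannot close this gap. As in Lemma \ref{malebranche}, it computes dimensions of spaces such as $\Hom_{P_n\cap H_n}(\pi,1)$; it does not detect the isomorphism class of $\pi$. Your sentence about $\lambda$ being $H_n$-invariant on $\pi^\theta$ ``up to the identification $\pi^\theta\simeq\pi$'' presupposes the conclusion. In the Levi case $\s$ is inner, so what is needed is $\pi\simeq\pi^\vee$. Even for $R=\CC$ this is a genuine theorem (Kapon), not a consequence of double coset symmetry.

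The fallback fails as well. The $\iota$-transport in Lemma \ref{malebranche} is legitimate only because the condition $\psi|_{N\cap H^g}=1$ involves nothing but values of $\psi$. Neither the distinction of a modular representation nor its Bessel function is carried along by an abstract isomorphism $\mu_{p^\infty}(R)\to\mu_{p^\infty}(\CC)$. The Bessel function is a reduction mod $\ell$ of a characteristic-$0$ Bessel function (Proposition \ref{BesselJell}), and its values are not $\psi$-values.

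The missing idea is to reach characteristic $0$ through reduction mod $\ell$; this is the paper's argument.
\begin{itemize}
\item The Galois case is \cite{VSANT19} Remark 4.3, valid for every $R$.
\item In the Levi case, reduce to $R=\flb$. By Frobenius reciprocity, $\pi$ embeds in $\Ind_{H_n}^{G_n}(\flb)$, which is the reduction of the lattice $\Ind_{H_n}^{G_n}(\zlb)$. So $\pi$ is a constituent of $\r_\ell(\widetilde\pi)$ for some $H_n$-distinguished irreducible $\qlb$-representation $\widetilde\pi$ (\cite{NRV25} Lemma 5.8).
\item By \cite{Kapon}, $\widetilde\pi\simeq\widetilde\pi^\vee$.
\item The representation $\widetilde\pi$ occurs in some $\mu_1\times\dots\times\mu_t$ with the $\mu_i$ cuspidal, and $\r_\ell(\mu_1)\times\dots\times\r_\ell(\mu_t)$ is of Whittaker type. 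Hence $\pi$ is the unique generic constituent of $\r_\ell(\widetilde\pi)$.
\item Since $\pi^\vee$ is a generic constituent of $\r_\ell(\widetilde\pi^\vee)=\r_\ell(\widetilde\pi)$, you get $\pi^\vee\simeq\pi$.
\end{itemize}
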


\begin{proof}
As in the proof of the preceding lemma,
we write $G=G_n$, $H=H_n$, etc. 
In~the~Galois case,
any $H$-distinguished~ir\-re\-ducible representation
is $\s$-self-dual (see \cite{VSANT19} Remark 4.3). 
Suppo\-se that we are in the Levi case. 
Since $\s$ is an~inner involution,
an ir\-re\-ducible representation of~$G$
is $\s$-self-dual if and only if it is self-dual.
Let $\pi$ be an $H$-distinguished generic $R$-representation~of $G$. 
Arguing~as in \cite{NRV25} 5.7, we may and will assume that $R=\flb$.

By \cite{NRV25} Lemma 5.8
(whose proof works for any $H$-distinguished irreducible representation~of~$G$
and not only for $H$-distinguished cuspidal representations),
there exists an $H$-distinguished~irre\-du\-cible $\qlb$-representation
$\widetilde{\pi}$ of $G$ whose~re\-duc\-tion mod $\ell$ contains $\pi$.
The main result of \cite{Kapon} tells us that 
this representation $\widetilde{\pi}$ is self-dual.

Let $\mu_1,\dots,\mu_t$ be cus\-pidal $\qlb$-representations of
$\GL_{n_1}(\k),\dots,\GL_{n_t}(\k)$ respectively,
for~integers $n_1,\dots,n_t\>1$ of sum $n$,
such that $\widetilde{\pi}$ occurs in 
$\mu_1\times\dots\times\mu_t$.
By \cite{Vigbook} III.1.1,
the reduction mod $\ell$ of~$\mu_i$
is irreducible~and cuspi\-dal.
The representation $\pi$ is thus the unique generic subquotient~of the 
induced representation 
$\r_\ell(\mu_1)\times\dots\times \r_\ell(\mu_t)$.  
It is \textit{a fortiori} the unique generic subquotient of $\r_\ell(\widetilde{\pi})$.
Since $\widetilde{\pi}$ is self-dual,
$\pi$ and $\pi^{\vee\s}$ are generic subquotients of $\r_\ell(\widetilde{\pi})$.
Thus $\pi$ is self-dual. 
\end{proof}

\begin{lemm}
\label{distparitek} 
Let $\pi$ be a supercuspidal representation of $G_n$.  
If $\pi$ is $\s$-self-dual, then 
\begin{enumerate}
\item 
In the Galois case,
$n$ is odd and $\pi$ is $H_n$-distinguished.
\item 
In the Levi case, 
either $n$ is even and $\pi$ is $H_n$-distinguished, 
or $n=1$ and $\pi$ is a character~of order at most $2$ of $\k^\times$. 
\end{enumerate}
\end{lemm}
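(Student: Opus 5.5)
We handle the two cases separately, using the known results on distinction of supercuspidal representations of finite general linear groups. In both cases, we reduce first to $R=\flb$ or $R=\qlb$ by the base change argument used in the proof of Proposition \ref{mult}. Then we pass between $\ell$-modular and $\ell$-adic representations using the fact that a supercuspidal $\flb$-representation lifts to a cuspidal $\qlb$-representation. Moreover, since $\pi$ is supercuspidal, any lift $\widetilde{\pi}$ has the same "Deligne--Lusztig parameter" up to $\ell$-regular reduction.

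\emph{Galois case.} Parametrize $\pi$ by Green's classification: $\pi$ corresponds to the $\Gal(\k_n/\k)$-orbit of a character $\theta$ of $\k_n^\times$, where $\k_n$ is the degree $n$ extension of $\k$. Then $\pi^\vee$ corresponds to $\theta^{-1}$ and $\pi^\s$ to $\theta^{q_0}$ (with $q_0=|\k_0|$). Hence $\s$-self-duality means $\theta^{-q_0}=\theta^{q^i}=\theta^{q_0^{2i}}$ for some $i$. Because $\pi$ is supercuspidal, the $\ell$-regular part of $\theta$ has orbit of full size $n$ under $x\mapsto x^q$. From $\theta^{q_0^{2i+1}}=\theta^{-1}$ we get $\theta^{q_0^{2(2i+1)}}=\theta$. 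Consequently $n$ divides $2i+1$ in the appropriate sense, which forces $n$ to be odd. Distinction then follows from \cite{VSANT19} (Proposition 9.8 or Theorem 10.8 there, in the finite field version): a $\s$-self-dual supercuspidal representation of $\GL_n(\k)$ with $n$ odd is $\GL_n(\k_0)$-distinguished. Alternatively, lift to a $\s$-self-dual cuspidal $\qlb$-representation and apply Gow's theorem. Finally, reduction preserves distinction, by the finite-group analogue of \cite{KuMaAsai}, which is immediate since the groups are finite and the invariant form can be scaled to be integral and nonzero mod $\ell$.

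\emph{Levi case.} Here $\s$-self-dual means self-dual. The self-dual supercuspidal characters $\theta$ of $\k_n^\times$ satisfy $\theta^{-1}=\theta^{q^i}$. The same orbit argument shows that either $n$ is even with $i=n/2$, or $n=1$ and $\theta^2=1$. The case $n=1$ gives exactly a character of order at most $2$. For $n$ even, we invoke the Levi-case distinction result of \cite{VSANT19} (Corollary 2.16 and Proposition 2.14 there): a self-dual supercuspidal representation of $\GL_{2m}(\k)$ is distinguished by $\GL_m(\k)\times\GL_m(\k)$. Conjugating, it is then distinguished by $H_n$.

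The main obstacle is the parity argument in the modular setting. One must justify that, for supercuspidal $\flb$-representations, the parameter $\theta$ can be chosen $\ell$-regular with orbit of full length $n$, so that the orbit computation goes through. I would get this from the Dipper--James description of supercuspidal representations.
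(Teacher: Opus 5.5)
Your parity computation is correct, and it is more explicit than the paper, whose proof of this lemma is only a citation of \cite{NRV25} Lemmas 2.3, 2.5 (Galois case) and 2.17, 2.19 (Levi case). James's description says two cuspidal $\qlb$-representations have the same reduction iff the $\ell'$-parts of their characters are Frobenius-conjugate, and the reduction is supercuspidal iff that $\ell'$-part has an orbit of length $n$. Granting this, $\theta^{-1}=\theta^{q_0^{m}}$ with $m$ odd gives $\theta^{q^{m}}=\theta$, so $n\mid m$ and $n$ is odd. In the Levi case $\theta^{-1}=\theta^{q^i}$ gives $n\mid 2i$. You should add that $i=0$ means $\theta^2=1$, hence $\theta^q=\theta$ because $q$ is odd, hence $n=1$.

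Your Galois-case distinction through Gow's theorem also works, with two additions. First, take the $\ell'$-order (Teichm\"uller) lift of $\theta$, since an arbitrary lift need not be $\s$-self-dual. Second, use that the reduction of a cuspidal $\qlb$-representation is irreducible (\cite{Vigbook} III.1.1), so the rescaled invariant form descends to a non-zero form on $\pi$ itself. Your other references, \cite{VSANT19} Proposition 9.8 and Theorem 10.8, are $p$-adic statements and do not apply as stated.

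The genuine gap is the existence of distinction in the Levi case when $n$ is even. You justify it by \cite{VSANT19} Corollary 2.16 and Proposition 2.14. As invoked in this very paper, these give multiplicity one ($\dim\Hom_{H}(\pi,1)=1$ for an $H$-distinguished cuspidal $\pi$) and the implication ``$H_n$-distinguished $\Rightarrow$ $n$ even''. Both start from a distinguished representation, so neither produces a non-zero $\GL_m(\k)\times\GL_m(\k)$-invariant form on a representation known only to be self-dual. Your orbit argument cannot supply it either, since it only yields necessary conditions. So what you flag as the main obstacle is the routine part; the real missing input is an existence statement.

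To close the gap, you could use a characteristic-zero result playing the role of Gow's theorem for the pair $(\GL_{2m}(\k),\GL_m(\k)\times\GL_m(\k))$: every self-dual cuspidal $\qlb$-representation is distinguished. After that, the same $\ell'$-lift and reduction argument finishes the proof. Alternatively, cite the result that actually contains this, namely \cite{NRV25} Lemmas 2.17 and 2.19, as the paper does. As written, the non-formal half of assertion (2) is asserted rather than proved.
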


\begin{proof}
In the Galois case, 
this is \cite{NRV25} Lemmas 2.3 and 2.5.
In the Levi case, 
this is \cite{NRV25}~Lem\-mas 2.17 and 2.19. 
\end{proof}

\subsection{}
\label{floquage}

In this paragraph,
we fix an integer $n$ and abbreviate $G=G_n$, $P=P_n$, 
etc. 
Let $\U$ be the unipotent radical of $P$ 
and $\G'$ be the image of $\G_{n-1}$ in $\G$
via the~embed\-ding $g\mapsto{\rm diag}(g,1)$.
We thus have $P=\G'\U$.
We also write $N'=N\cap\G'$ and $H'=H\cap\G'$.
From \eqref{kakiIw1},~\eqref{kakiIw2}~one~has
\begin{equation}
\label{kakiMi}
\P\cap\H = \H'(\U\cap \H).
\end{equation}
The goal of this paragraph is to associate a non-zero scalar $c(\pi,\psi)$ 
to any $H$-distinguished~cus\-pidal representation $\pi$ of $G$,
which is a proportionality constant between two explicit $H$-invariant
linear forms on the Whittaker model of $\pi$
(see Proposition \ref{BernsteinConstant}).

Let $\pi$ be a representation of Whittaker type of~$\G$.
Associated with it,
there are $\R$-linear forms $\La=\La_{\pi}$ and $\Lat=\Lat_{\pi}$ 
on its Whittaker space $\Ww(\pi,\psi)$ defined
for all $W\in\Ww(\pi,\psi)$ by
\begin{eqnarray*}
\La(W) &=& \sum\limits_{h' \in \N'\cap\H' \backslash \H'} W(h'), \\
\Lat(W) &=& \sum\limits_{h' \in \N'\cap\H' \backslash \H'} \widetilde{W}(h') 
\ \ = 
\sum\limits_{h' \in \N'\cap\H' \backslash \H'} W(\wn h'^*).
\end{eqnarray*}
The first sum over $\N'\cap\H' \backslash \H'$
is well-defined thanks to \eqref{kaki1}. 
For the second sum,
observe that the conjugate of $(\N'\cap\H')^*$ by $\wn$
is contained in
$\wn^{\phantom{1}} (\N\cap\H)^*\wn^{-1} =
\N \cap \wn^{\phantom{1}} \H^* \wn^{-1}$, 
which~is equal to $\N \cap H$ thanks to \eqref{kaki3} and \eqref{kaki4}. 

\begin{lemm}
\label{LaPHinv}
The linear form~$\La$ is~$\P\cap \H$-invariant and non-zero. 
\end{lemm}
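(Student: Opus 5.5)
We check invariance and non-vanishing separately, working throughout in the Whittaker model $\Ww(\pi,\psi)$ and using \eqref{kakiMi}, $P\cap H=H'(U\cap H)$.

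\emph{Invariance under $H'$.} For $h_0\in H'$, the map $h'\mapsto h'h_0$ permutes the cosets $(N'\cap H')\backslash H'$. Hence $\La(\pi(h_0)W)=\sum_{h'}W(h'h_0)=\La(W)$.

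\emph{Invariance under $U\cap H$.} Let $u\in U\cap H$ and $h'\in H'$. Since $G'$ normalizes $U$, we have $h'uh'^{-1}\in U\cap H\subseteq N\cap H$. The character $\psi$ of $N$ is trivial on $N\cap H$ by \eqref{kaki1}, so
\begin{equation*}
W(h'u)=W\bigl((h'uh'^{-1})h'\bigr)=\psi(h'uh'^{-1})\,W(h')=W(h').
\end{equation*}
Together with the first step, this gives invariance of $\La$ under all of $P\cap H$.

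\emph{Non-vanishing.} We produce $W$ whose restriction to $G'$ is supported on $N'$. The Bessel function $\Be_{\pi,\psi}\in\Ww(\pi,\psi)$ of \S\ref{Bessel} does this. By Lemma \ref{suppbessel}, any $g\in P$ with $\Be_{\pi,\psi}(g)\neq0$ lies in $N$. Since $H'\subseteq P$, the only terms surviving in $\La(\Be_{\pi,\psi})$ are those with $h'\in H'\cap N=N'\cap H'$, which is the trivial coset. Therefore
\begin{equation*}
\La(\Be_{\pi,\psi})=\Be_{\pi,\psi}(1)=1\neq0.
\end{equation*}
This works for any $\pi$ of Whittaker type, since Lemma \ref{suppbessel} holds in that generality.

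The only delicate point is the normalization step, that $h'uh'^{-1}$ lies in $N\cap H$ and not merely in $U$. This holds because $U\subseteq N$, $H$ is a group, and $G'$ normalizes $U$. No other obstacle is expected.
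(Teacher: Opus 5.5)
Your proposal is correct and is essentially the paper's proof, and the non-vanishing step via the Bessel function and Lemma \ref{suppbessel} is identical to it. The only difference is cosmetic: for invariance, the paper writes $\sum_{g\in P\cap H}W(g)=|U\cap H|\cdot|N'\cap H'|\cdot\La(W)$, a manifestly invariant form times a power of $p$ invertible in $R$, whereas you check invariance directly on the generators $H'$ and $U\cap H$ from \eqref{kakiMi}, which is equally valid.
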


\begin{proof}
It follows from \eqref{kaki1} and \eqref{kakiMi} that,
for all $W\in\Ww(\pi,\psi)$,
we have
\begin{equation*}
\sum\limits_{g\in \P\cap\H} W(g)
= \sum\limits_{u\in\U\cap\H} \sum\limits_{h'\in\H'} W(uh')
= |\U\cap\H| \cdot |\N'\cap\H'| \cdot \La(W)
\end{equation*}
and the left hand side is $\P\cap \H$-invariant as a linear form on
$\Ww(\pi,\psi)$.~By Lemma~\ref{suppbessel},~we have
\begin{equation*}
\La(\Be_{\pi,\psi}) = \Be_{\pi,\psi}(1) = 1
\end{equation*}
hence~$\La$ is non-zero.
\end{proof}

\begin{lemm}
\label{LatPHinv}
The linear form $\Lat$ is~$(\P\cap \H)^*$-in\-variant and non-zero.
\end{lemm}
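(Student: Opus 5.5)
The plan is to reduce Lemma \ref{LatPHinv} to Lemma \ref{LaPHinv}, applied to the representation $\pi^*$ and the character $\psi^{-1}$, using the isomorphism $W\mapsto\widetilde{W}$ from $\Ww(\pi,\psi)$ onto $\Ww(\pi^*,\psi^{-1})$ of \S\ref{Appert}.

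\emph{Invariance.} Let $p\in\P\cap\H$ and $W\in\Ww(\pi,\psi)$. Right translation by $p^*$ on $W$ corresponds under the isomorphism to right translation by $p$ on $\widetilde{W}$, since
\begin{equation*}
\widetilde{\pi(p^*)W}(g)=W(\wn g^*p^*)=W(\wn (gp)^*)=\widetilde{W}(gp).
\end{equation*}
Hence
\begin{equation*}
\Lat(\pi(p^*)W)=\sum_{h'\in \N'\cap\H'\backslash\H'}\widetilde{W}(h'p)=\La_{\pi^*}(\pi^*(p)\widetilde{W}),
\end{equation*}
where $\La_{\pi^*}$ is the form $\La$ attached to the representation $\pi^*$, which is of Whittaker type, and to the character $\psi^{-1}$. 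One checks that $\psi^{-1}$ still satisfies \eqref{kaki1}, because $\psi^{-1}\circ\s=\psi$. The proof of Lemma \ref{LaPHinv} then applies verbatim, since it only used \eqref{kaki1}, \eqref{kakiMi} and Lemma \ref{suppbessel}. It shows that $\La_{\pi^*}$ is $\P\cap\H$-invariant, so $\Lat(\pi(p^*)W)=\Lat(W)$. Thus $\Lat$ is $(\P\cap\H)^*$-invariant.

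\emph{Non-vanishing.} Choose $W\in\Ww(\pi,\psi)$ with $\widetilde{W}=\Be_{\pi^*,\psi^{-1}}$; this is possible because $W\mapsto\widetilde{W}$ is surjective. By Lemma \ref{suppbessel} applied to $\pi^*$, the Bessel function is supported in $N$ on $P$. So the only contributing coset in $\N'\cap\H'\backslash\H'$ is the trivial one, and $\Lat(W)=\Be_{\pi^*,\psi^{-1}}(1)=1$.

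The only point needing care, which I expect to be the main (though mild) obstacle, is checking that the identification $\Lat_\pi=\La_{\pi^*}\circ(W\mapsto\widetilde{W})$ is legitimate. This requires the sum defining $\La_{\pi^*}$ over $\N'\cap\H'\backslash\H'$ to be well-defined for $\psi^{-1}$, which is \eqref{kaki1} again. It also requires the invariance computation to be compatible with \eqref{kaki3} and \eqref{kaki4}, so that $(\P\cap\H)^*$ is a subgroup of $\H$ on which the statement makes sense.
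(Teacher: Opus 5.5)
Your proof is correct. For non-vanishing it is the paper's argument: evaluate $\Lat$ on the preimage of the Bessel function of the dual Whittaker model and apply Lemma \ref{suppbessel}.

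For invariance you take a different route. The paper works directly with $\Lat$. It splits $(\P\cap\H)^*=\H'^*(\U\cap\H)^*$ and treats $\H'^*$-invariance as a reindexing of the sum. For $x\in(\U\cap\H)^*$ it moves $x$ to the left through $\wn h'^*$. This uses that $\H'$ normalizes $\U\cap\H$, together with \eqref{kaki3} and \eqref{kaki4}, to land in $\N\cap\H$, where \eqref{kaki1} applies.

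You instead observe that $\Lat_\pi=\La_{\pi^*}\circ(W\mapsto\widetilde W)$, and that $W\mapsto\widetilde W$ intertwines $\pi(p^*)$ with right translation by $p$. Invariance then becomes Lemma \ref{LaPHinv} for $(\pi^*,\psi^{-1})$, whose proof indeed needs only \eqref{kaki1} for $\psi^{-1}$ and \eqref{kakiMi}. This also gives non-vanishing for free, since Lemma \ref{LaPHinv} already shows $\La_{\pi^*}\neq0$ and the tilde map is surjective.

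Your route is shorter and more conceptual, and it shows that \eqref{kaki3} and \eqref{kaki4} are not needed for this lemma at all. The paper's route is self-contained and avoids re-running Lemma \ref{LaPHinv} with $\psi^{-1}$.

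Consequently your closing caveat is unnecessary. Well-definedness of the sum only needs $\widetilde W$ to be $(\N,\psi^{-1})$-equivariant together with \eqref{kaki1}. Also, $(\P\cap\H)^*$-invariance makes sense for any subgroup of $\G$. The inclusion $(\P\cap\H)^*\subseteq\H$ coming from \eqref{kaki3} is only used later, in Lemma \ref{BernsteinConstantGeneric}.
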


\begin{proof}
First,
it follows from \eqref{kakiMi} that 
\begin{equation*}
(\P\cap \H)^* = (\H'(\U\cap\H))^* = \H'^*(\U\cap\H)^*
\end{equation*}
and $\Lat$ is clearly invariant by $\H'^*$. 
It thus remains to prove that it is invariant by $(\U\cap\H)^*$.~For 
any $x\in(\U\cap\H)^*$,
we have
\begin{equation*}
\Lat(x \cdot W) = \sum\limits_{h' \in \N'\cap\H' \backslash \H'} W(\wn h'^* x).
\end{equation*}
For a given $h'\in\H'$,
write $u=\wn^{\phantom{1}} h'^* x h'^{*-1}\wn^{-1}$.
Since $H'$ normalizes $U\cap H$,
we have
\begin{equation*}
  u \in \wn^{\phantom{1}} (U\cap H)^*\wn^{-1} \subseteq
  \wn^{\phantom{1}} (N\cap H)^*\wn^{-1}
= \N \cap \wn^{\phantom{1}} \H^* \wn^{-1} = \N \cap \H.
\end{equation*}
It thus follows from \eqref{kaki1} 
that $W(\wn h'^* x)=W(u\wn h'^*)=\psi(u)W(\wn h'^*)=W(\wn h'^*)$.
The~ex\-pected result follows. 

Now let us consider the Bessel function 
$\Be_{\widetilde{\pi},\psi^{-1}}\in\Ww(\widetilde{\pi},\psi^{-1})$
and its~ima\-ge 
$\widetilde{\Be}_{\widetilde{\pi},\psi^{-1}} \in \Ww({\pi},\psi)$
by the map $W \mapsto\widetilde{W}$ defined in Paragraph \ref{juliensorel}. 
By Lemma~\ref{suppbessel},~we have
\begin{equation*}
  \Lat(\widetilde{\Be}_{\widetilde{\pi},\psi^{-1}}) = 
  \sum\limits_{h' \in \N'\cap\H' \backslash \H'} 
  \Be_{\widetilde{\pi},\psi^{-1}}(h') = \Be_{\widetilde{\pi},\psi^{-1}}(1) = 1 
\end{equation*}
hence~$\Lat$ is non-zero.
\end{proof}

\begin{lemm}
\label{BernsteinConstantGeneric}
Let~$\pi$ be an $\H$-distinguished {generic} representation of~$\G$.
Suppose that 
\begin{equation*}
\dim \Hom_{\P\cap\H}(\pi,1) = \dim \Hom_{\P\cap\H}(\pi^\vee,1) = 1.
\end{equation*}
Then there exists~a unique non-zero scalar $c(\pi,\psi)\in\R^\times$ such that
$\Lat = c(\pi,\psi) \cdot \La$.
\end{lemm}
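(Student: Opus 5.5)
The plan is to show that $\La$ and $\Lat$ are both $\P\cap\H$-invariant linear forms on the same space $\Ww(\pi,\psi)\simeq\pi$. The hypothesis $\dim\Hom_{\P\cap\H}(\pi,1)=1$ then forces them to be proportional. Since both are non-zero, the constant will be a non-zero scalar, and it is unique.

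\textbf{Invariance of $\La$.} By Lemma \ref{LaPHinv}, $\La$ is non-zero and $\P\cap\H$-invariant on $\Ww(\pi,\psi)$. So $\La$ spans $\Hom_{\P\cap\H}(\pi,1)$.

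\textbf{Transporting $\Lat$.} Lemma \ref{LatPHinv} only gives that $\Lat$ is non-zero and invariant under $(\P\cap\H)^*$, not under $\P\cap\H$. I will move it to the dual side. The map $W\mapsto\widetilde{W}$ is a linear isomorphism $\Ww(\pi,\psi)\to\Ww(\pi^*,\psi^{-1})$. It intertwines the right translation by $g$ on $\Ww(\pi,\psi)$ with the right translation by $g^*$ on $\Ww(\pi^*,\psi^{-1})$. By construction, $\Lat_\pi(W)=\La'(\widetilde{W})$, where $\La'$ is the analogue of $\La$ for $\pi^*$ and $\psi^{-1}$; it is well defined because $\psi^{-1}$ is also trivial on $\N\cap\H$. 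Hence:
\begin{itemize}
\item $\Lat_\pi$ is $(\P\cap\H)^*$-invariant on $\pi$ if and only if $\La'$ is $\P\cap\H$-invariant on $\pi^*$;
\item by Lemma \ref{LaPHinv} applied to $\pi^*$, $\La'$ is indeed $\P\cap\H$-invariant.
\end{itemize}
Next, $\pi^*$ is irreducible and generic with the same generic quotient as $\pi^\vee$, so $\pi^*\simeq\pi^\vee$. Therefore $\dim\Hom_{\P\cap\H}(\pi^*,1)=1$, and $\La'$ spans it.

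\textbf{Main obstacle: comparing $\La$ and $\Lat$ on the same space.} Both forms now sit in one-dimensional spaces, but attached to different representations: $\pi$ and $\pi^\vee$. I have to transport one to the other. For this I use Lemma \ref{distselfdual}: $\pi$ is $\s$-self-dual, so $\pi^\vee\simeq\pi^\s$. I then seek a concrete intertwiner $\pi\to\pi^*$ that carries $\P\cap\H$-invariant forms to $\P\cap\H$-invariant forms. Concretely, $g\mapsto\s(g)^*$ composed with conjugation by $\wn$ preserves $\H$, $\P\cap\H$ and the character $\psi$, by \eqref{kaki1}, \eqref{kaki3} and \eqref{kaki4}.

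An alternative is to argue directly on $\Ww(\pi,\psi)$. Here I would show that $\Lat$ is also $\P\cap\H$-invariant. Since $\pi$ is $\H$-distinguished and $\dim\Hom_{\P\cap\H}(\pi,1)=1$, the unique $\P\cap\H$-invariant line is spanned by the restriction of an $\H$-invariant form. The same holds for $(\P\cap\H)^*$-invariant forms, because $\H^*=\H$ by \eqref{kaki3}. The hypothesis $\dim\Hom_{\P\cap\H}(\pi^\vee,1)=1$ gives the uniqueness needed on the $*$ side. Applying this to both $\La$ and $\Lat$, each is a multiple of the same $\H$-invariant form on $\pi$ (the space $\Hom_H(\pi,1)$ being one-dimensional). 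This yields $\Lat=c(\pi,\psi)\La$.

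The delicate point is checking that the one-dimensional spaces match. I will verify this carefully with the $\wn$-conjugation, since the twist $*$ is the only subtle step.
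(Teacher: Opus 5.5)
Your ``alternative'' paragraph is exactly the paper's proof. The space $\Hom_{\H}(\pi,1)=\Hom_{\H^*}(\pi,1)$ is non-zero and lies in both $\Hom_{\P\cap\H}(\pi,1)$ and $\Hom_{(\P\cap\H)^*}(\pi,1)$, and each of these has dimension $1$; for the second, this comes from $\Hom_{(\P\cap\H)^*}(\pi,1)\simeq\Hom_{\P\cap\H}(\pi^*,1)$ and $\pi^*\simeq\pi^\vee$, as in your transport step. So all three spaces coincide, and the non-zero forms $\La$ and $\Lat$ lie on the same line, which gives a unique $c(\pi,\psi)\neq0$.

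You should drop the detour through $\s$-self-duality. The obstacle it addresses does not exist, since both forms already live on $\Ww(\pi,\psi)$. It also rests on a false claim: $g\mapsto\wn\s(g)^*\wn^{-1}$ sends $\P$ onto the group of matrices with first column $(1,0,\dots,0)^{\rm T}$, not onto $\P$.
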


\begin{proof}
We will identify~$\pi$ with its Whittaker model $\Ww(\pi,\psi)$.
By assumption, the containment
\begin{equation*}
\Hom_{\H}(\pi,1) \subseteq \Hom_{\P\cap\H}(\pi,1)
\end{equation*}
is an equality. 
The linear form~$\La$,
which is~$\P\cap\H$-invariant and non-zero by Lemma \ref{LaPHinv},
is thus $\H$-invariant. 

On the other hand,
the fact that the spaces $\Hom_{\P\cap\H}(\pi^*,1)$ and
$\Hom_{(\P\cap\H)^*}(\pi,1)$ are isomorphic together with the fact that
$\pi^*\simeq\pi^\vee$ imply that 
\begin{equation*}
\dim \Hom_{(\P\cap\H)^*}(\pi,1) = \dim \Hom_{\P\cap\H}(\pi^\vee,1) = 1.
\end{equation*}
Since $(P\cap H)^*=P^*\cap H^* \subseteq\H^*=\H$,
where the latter equality follows from \eqref{kaki3},
we have 
\begin{equation*}
\Hom_{\H}(\pi,1) = \Hom_{\H^*}(\pi,1) \subseteq \Hom_{(\P\cap\H)^*}(\pi,1)
\end{equation*}
and the containment is an equality. 
The form~$\Lat$,
which is~$(\P\cap\H)^*$-invariant and non-zero~by Lemma \ref{LatPHinv},
is thus $\H$-invariant. 
Since the space $\Hom_{\H}(\pi,1)$ has dimension $1$ by assumption,~it
follows that 
there exists~a unique non-zero scalar $c(\pi,\psi)\in\R^\times$ such that
$\Lat  = c(\pi,\psi) \cdot \La$.
\end{proof}

\begin{prop}
\label{BernsteinConstant}
For any $\H$-distinguished cuspidal representation $\pi$ of~$\G$,
there is a unique non-zero scalar $c(\pi,\psi)\in\R^\times$ such that
$\Lat = c(\pi,\psi) \cdot \La$.
\end{prop}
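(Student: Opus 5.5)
The plan is to deduce the statement directly from Lemma \ref{BernsteinConstantGeneric}. A cuspidal representation is irreducible and of Whittaker type, hence generic. So it suffices to check the two dimension hypotheses
\begin{equation*}
\dim \Hom_{\P\cap\H}(\pi,1) = \dim \Hom_{\P\cap\H}(\pi^\vee,1) = 1
\end{equation*}
for an $\H$-distinguished cuspidal representation $\pi$ of $\G$.

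The first equality is exactly Lemma \ref{malebranche} applied to $\pi$. For the second, we need that $\pi^\vee$ is again an $\H$-distinguished cuspidal representation, and then apply Lemma \ref{malebranche} to $\pi^\vee$.
\begin{itemize}
\item Cuspidality of $\pi^\vee$ is standard: the contragredient of a cuspidal irreducible representation of a finite general linear group is cuspidal, since Jacquet modules commute with duality.
\item For distinction, Lemma \ref{distselfdual} shows that $\pi$ is $\s$-self-dual, so $\pi^\vee\simeq\pi^\s$.
\item Since $\H$ is the fixed-point group of $\s$, a non-zero $\H$-invariant form on $\pi$ is also $\H$-invariant for $\pi^\s$, because $\pi^\s(h)=\pi(\s(h))=\pi(h)$ for $h\in\H$. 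Hence $\pi^\vee$ is $\H$-distinguished.
\end{itemize}

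With both hypotheses verified, Lemma \ref{BernsteinConstantGeneric} provides the unique non-zero scalar $c(\pi,\psi)$ with $\Lat=c(\pi,\psi)\cdot\La$. Uniqueness is automatic, since $\La\neq0$ by Lemma \ref{LaPHinv}.

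There is no real obstacle; the only point needing care is the distinction of $\pi^\vee$, which rests on Lemma \ref{distselfdual}.
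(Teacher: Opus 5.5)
Your proof is correct and has the same structure as the paper's: you verify the hypotheses of Lemma~\ref{BernsteinConstantGeneric} by applying Lemma~\ref{malebranche} to both $\pi$ and $\pi^\vee$.

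The one difference is how you show that $\pi^\vee$ is $\H$-distinguished. You use Lemma~\ref{distselfdual} to get $\pi^\vee\simeq\pi^\s$. The paper instead uses the more elementary facts $\pi^\vee\simeq\pi^*$ and $\H^*=\H$ (see \eqref{kaki3}), which give $\Hom_{\H}(\pi^\vee,1)\simeq\Hom_{\H^*}(\pi,1)=\Hom_{\H}(\pi,1)$ directly. Your route is valid, but in the Levi case Lemma~\ref{distselfdual} relies on Kapon's theorem and a lifting argument, which are not needed at this point.
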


\begin{proof}
As $\pi^\vee$ is isomorphic to $\pi^*$ and $H^*=H$,
the spaces $\Hom_{H}(\pi^\vee,1)$ and $\Hom_{H}(\pi,1)$~have
the same dimension.
The representation $\pi^\vee$ is thus both cuspidal and $\H$-distinguished.
The~pro\-po\-si\-tion now follows from Lemmas \ref{BernsteinConstantGeneric} and 
\ref{malebranche}.
\end{proof}

\subsection{}

In preparation of the next paragraph, 
we prove the following lemma. 
As in the previous~pa\-ragraph, 
we fix an integer $n$ and abbreviate $G=G_n$, $H=H_n$, etc.

Let $RG$ denote the space of $R$-valued functions on $G$ equipped with
the action of $G$ by~right trans\-lations. 
Let $\Ii_\psi$ be the idempotent endo\-mor\-phism of $RG$ defined by
\begin{equation*}
\Ii_{\psi} (f)(g) = \frac 1 {|\N|} \sum_{u\in\N} \psi^{-1}(u)f(ug),
\quad
f\in RG, \ g\in G.
\end{equation*}
Its image is equal to the induced representation $\Ind_{\N}^{\G}(\psi)$.
We denote by $\Ind^G_H(1)$ the representa\-tion of $G$ induced by
the~tri\-vial~character of $H$,
whose space is made of all functions of $RG$ which are invariant by
left~trans\-lations by $H$.

\begin{lemm}
\label{keyabstractlemma}
Given~any $W\in\Ind_\N^\G(\psi^{-1})$,
the following assertions are equivalent.
\begin{enumerate}
\item 
For all $g \in\G$, one has
\begin{equation*}
\sum_{h\in \H} W(hg) = 0. 
\end{equation*}
\item
For all functions $\phi\in\Ii_{\psi}(\Ind^G_H(1))$, one has 
\begin{equation*}
\sum\limits_{g \in G} \phi(g) W(g) = 0. 
\end{equation*}
\end{enumerate} 
\end{lemm}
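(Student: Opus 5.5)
The plan is to make the pairing in (2) explicit and reduce (2) to (1) by a change of variables in the sum over $G$. First, I will describe the space $\Ii_\psi(\Ind^G_H(1))$. The space $\Ind^G_H(1)$ is spanned by the characteristic functions $\mathbf 1_{Hg_0}$ of right cosets $Hg_0$, for $g_0\in G$. So $\Ii_\psi(\Ind^G_H(1))$ is spanned by the functions $\phi_{g_0}=\Ii_\psi(\mathbf 1_{Hg_0})$, and it suffices to check (2) on these. Since $\Ii_\psi$ is linear, condition (2) is equivalent to $\sum_g \Ii_\psi(\mathbf 1_{Hg_0})(g)W(g)=0$ for all $g_0\in G$.

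Next, I will compute the pairing $\langle \Ii_\psi(f),W\rangle=\sum_{g\in G}\Ii_\psi(f)(g)W(g)$ for any $f\in RG$ and $W\in\Ind_N^G(\psi^{-1})$. Expanding $\Ii_\psi$ gives
\begin{equation*}
\frac1{|N|}\sum_{u\in N}\sum_{g\in G}\psi^{-1}(u)f(ug)W(g).
\end{equation*}
Substituting $g\mapsto u^{-1}g$ and using $W(u^{-1}g)=\psi^{-1}(u^{-1})W(g)=\psi(u)W(g)$, the factors $\psi^{-1}(u)\psi(u)$ cancel. Each of the $|N|$ terms in $u$ is then equal, which leaves $\sum_g f(g)W(g)$. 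Thus $\Ii_\psi$ is self-adjoint against $\Ind_N^G(\psi^{-1})$ in the sense that $\langle\Ii_\psi f,W\rangle=\langle f,W\rangle$. Note that $|N|$ is a power of $p$, hence invertible in $R$, which is needed already for $\Ii_\psi$ to be defined.

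Applying this with $f=\mathbf 1_{Hg_0}$ gives
\begin{equation*}
\langle\phi_{g_0},W\rangle=\sum_{g\in Hg_0}W(g)=\sum_{h\in H}W(hg_0),
\end{equation*}
because $h\mapsto hg_0$ is a bijection $H\to Hg_0$. Hence (2) holds for all $\phi$ if and only if $\sum_{h\in H}W(hg_0)=0$ for every $g_0$, which is exactly (1).

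There is no real obstacle here. The only points needing care are the sign conventions: $\Ii_\psi$ uses $\psi^{-1}(u)f(ug)$, and $W$ transforms by $\psi^{-1}$, so that the characters cancel rather than square. One also needs the reduction to the spanning set $\{\mathbf 1_{Hg_0}\}$, which is immediate since $G$ is finite.
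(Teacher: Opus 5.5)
Your proof is correct and follows essentially the same route as the paper. The paper likewise shows that $\sum_{g\in G}\Ii_\psi(f)(g)W(g)=\sum_{g\in G}f(g)W(g)$ for $f\in\Ind^G_H(1)$ by absorbing $\psi^{-1}(u)$ into $W$, and then regroups the sum over $H\backslash G$, which amounts to your reduction to the indicator functions of the cosets $Hg_0$.
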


\begin{proof}
Given any $W\in\Ind_\N^\G(\psi^{-1})$ and $f\in\Ind^G_H(1)$, 
we have
\begin{eqnarray*}
\sum\limits_{g \in G} \Ii_{\psi}(f)(g) W(g) &=& 
\sum\limits_{g \in G} \frac 1 {|\N|} 
\sum_{u\in\N} \psi^{-1}(u) f(ug) W(g) \\
&=& \frac 1 {|\N|} \sum\limits_{g \in G} 
\sum_{u\in\N} f(ug) W(ug) \\
&=& \sum\limits_{g \in G} f(g) W(g) \\
&=& \sum\limits_{x \in H\backslash\G} f(x) \bigg(\sum_{h\in \H} W(hx)\bigg).
\end{eqnarray*}
The lemma follows.
\end{proof}

\subsection{}
\label{1966anneemirifique}

The following proposition will be crucial to our proof of 
Proposition \ref{minelli3}.
In this proposition, we introduce the technical assumption \eqref{saintepelagie}
which we will discuss in \S\ref{montdor1} and \S\ref{montdor2}. 

\begin{prop}
\label{Lemma1RSconstant}
Let~$\pi$ be an~$\H_n$-distinguished cuspidal representation 
of~$\G_n$
and let~$\pi'$~be a representation of Whittaker type of~$\G_{n-1}$
such that
\begin{equation}
\label{saintepelagie}
\Ww(\pi',\psi^{-1}) \cap
\Ii_{\psi^{-1}}\left(\Ind_{\H_{n-1}}^{\G_{n-1}}(1)\right)
\neq \{0\}.
\end{equation}
Then~$\g_{\RS}(\pi,\pi',\psi) = c(\pi,\psi)$.
\end{prop}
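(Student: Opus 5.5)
The plan is to compare the two $\H_n$-invariant linear forms $\La$ and $\Lat$ on $\Ww(\pi,\psi)$ through the functional equation \eqref{FERS1}. Here $m=n-1$, so the only admissible value is $j=0$, and $w_{n-1,1}$ is the identity. Writing $\G'\simeq\G_{n-1}$, $\H'=\H\cap\G'=\H_{n-1}$ and $\N'=\N_{n-1}$ as in \S\ref{floquage}, Proposition \ref{FERS} then reads
\begin{equation*}
\sum_{g\in\N'\backslash\G'}\widetilde{W}(g)\widetilde{W'}(g)
= \cen_{\pi'}(-1)^{n-1}\cdot\g_{\RS}(\pi,\pi',\psi)\cdot\sum_{g\in\N'\backslash\G'}W(g)W'(g)
\end{equation*}
for $W\in\Ww(\pi,\psi)$ and $W'\in\Ww(\pi',\psi^{-1})$. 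Using \eqref{saintepelagie}, I fix a non-zero $W'=\Ii_{\psi^{-1}}(f)$ with $f\in\Ind_{\H'}^{\G'}(1)$. The goal is to rewrite both sides of the functional equation in terms of $\La$ and $\Lat$, respectively.

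For the right-hand side, I would unfold $\Ii_{\psi^{-1}}$ as in the proof of Lemma \ref{keyabstractlemma}, now in $\G'$ and with the roles of $\psi$ and $\psi^{-1}$ swapped. This gives
\begin{equation*}
\sum_{g\in\N'\backslash\G'}W(g)W'(g)=\frac{|\N'\cap\H'|}{|\N'|}\sum_{x\in\H'\backslash\G'}f(x)\,\La(\pi(x)W).
\end{equation*}
For the left-hand side, I first note that $\widetilde{W'}=\Ii_{\psi}(\tilde f)$ with $\tilde f(g)=f(w_{n-1}g^*)$. By \eqref{kaki3} and \eqref{kaki4} applied in $\G_{n-1}$, the function $\tilde f$ is again left $\H'$-invariant. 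The same unfolding then gives a sum over $x\in\H'\backslash\G'$ of $\tilde f(x)\sum_{h\in\H'}\widetilde W(hx)$. Next I identify $\sum_{h\in\H'}\widetilde W(hx)=\sum_h W(\wn h^*x^*)$ with $|\N'\cap\H'|\,\Lat(\pi(x^*)W)$. I then replace $\Lat$ by $c(\pi,\psi)\,\La$ using Proposition \ref{BernsteinConstant}, and change variables $y=w_{n-1}x^*$.

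After the change of variables, the left-hand side becomes $c(\pi,\psi)$ times a sum of $f(y)\,\La(\pi(\mathrm{diag}(w_{n-1},1)\,y)W)$. To match the right-hand side, I would use that $\La$ is $\H$-invariant; this was shown in the proof of Lemma \ref{BernsteinConstantGeneric}, whose hypotheses hold by Lemma \ref{malebranche}. It remains to absorb the element $\mathrm{diag}(w_{n-1},1)$ and the sign $\cen_{\pi'}(-1)^{n-1}$. I expect this bookkeeping to be the main obstacle: one must check that $\mathrm{diag}(w_{n-1},1)$ lies in $\H$ (it does in the Galois case, and in the Levi case for the alternating pattern of $\H_n$, possibly up to the central element $-1$). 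Any discrepancy should produce exactly a central factor $\pi'(-1)$, which cancels $\cen_{\pi'}(-1)^{n-1}$ once one uses the centrality of $-1$ and the left $\H'$-invariance of $f$. If the exact sign does not come out this way, I would instead track the matrix $\wn\,\mathrm{diag}(w_{n-1},1)^{-1}$, which is the transposition-type matrix relating the two long elements.

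Finally, I must show that the common quantity $\sum_x f(x)\La(\pi(x)W)$ is non-zero for some $W$; then $\g_{\RS}(\pi,\pi',\psi)=c(\pi,\psi)$ follows. Since $\pi$ is cuspidal, its restriction to $\P$ is $\Ind_\N^\P(\psi)$, so restrictions to $\G'$ of functions in $\Ww(\pi,\psi)$ exhaust $\Ind_{\N'}^{\G'}(\psi)$. Hence $I_0(\cdot,W')$ is a non-zero pairing against the non-zero function $W'\in\Ind_{\N'}^{\G'}(\psi^{-1})$, which gives the required $W$.
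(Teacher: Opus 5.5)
Your argument is essentially the paper's proof. The paper first substitutes $g\mapsto w_{n-1}g^*$ so that both sides of the functional equation pair against the same $W'$. It then packages your unfolding of $W'=\Ii_{\psi^{-1}}(f)$ as Lemma~\ref{keyabstractlemma} applied to $g\mapsto W\left(\begin{smallmatrix}0&1\\ g&0\end{smallmatrix}\right)-c(\pi,\psi)\,W\left(\begin{smallmatrix}g&0\\ 0&1\end{smallmatrix}\right)$, and concludes with $W=g_0^{-1}\cdot\Be_{\pi,\psi}$, which is your Kirillov-model nonvanishing step.

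The bookkeeping you flag is cleaner than you expect, and the mechanism you guess for it is not the right one. The element ${\rm diag}(w_{n-1},1)$ lies in $\H_{n-1}\subseteq\P\cap\H_n$ exactly, not up to $-1$; in the Levi case this holds because distinction forces $n$ to be even. So Lemma~\ref{LaPHinv} absorbs it, and no discrepancy arises. The sign $\cen_{\pi'}(-1)^{n-1}$ is simply $1$: $W'=\Ii_{\psi^{-1}}(f)$ is invariant under the central element $-1\in\H_{n-1}$, whereas every function in $\Ww(\pi',\psi^{-1})$ is multiplied by $\cen_{\pi'}(-1)$ under it. The paper uses this tacitly when it writes the functional equation without the sign.
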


\begin{proof}
By the functional equation,
for all~$W\in\Ww(\pi,\psi)$ and all~$W'\in\Ww(\pi',\psi^{-1})$,
we have
\begin{equation*}
\sum_{g\in \N_{n-1}\backslash \G_{n-1}}
W\begin{pmatrix} 0&1\\g&0\end{pmatrix} W'(g) = 
\g_{\RS}(\pi,\pi',\psi) \cdot \sum_{g\in \N_{n-1}\backslash \G_{n-1}}
W\begin{pmatrix} g&0\\0&1\end{pmatrix} W'(g).
\end{equation*}
Let ${\it\Delta}$ denote the function 
\begin{equation*}
{\it\Delta}(g) = W\begin{pmatrix}0&1\\g&0\end{pmatrix} - 
c(\pi,\psi) \cdot W \begin{pmatrix} g&0\\0&1\end{pmatrix}
\end{equation*}
for all $g\in G_{n-1}$.
It belongs to the space $\Ind^{G_{n-1}}_{N_{n-1}}(\psi)$.
By Proposition \ref{BernsteinConstant},
we have
\begin{equation*}
\sum\limits_{h \in H_{n-1}} {\it\Delta}(h) = |N'\cap\H'| (\Lat(W) - c(\pi,\psi) \La(W)) = 0.
\end{equation*}
By Lemma \ref{keyabstractlemma} applied to $G_{n-1}$, $H_{n-1}$ and
$\psi^{-1}$, 
we thus have
\begin{equation*}
\sum_{g\in\N_{n-1}\backslash \G_{n-1}}\left(
W\begin{pmatrix}0&1\\g&0\end{pmatrix} - 
c(\pi,\psi) W \begin{pmatrix} g&0\\0&1\end{pmatrix}
\right) \phi(g) = 0
\end{equation*} 
for all~$\phi\in \Ii_{\psi^{-1}}\left(\Ind_{\H_{n-1}}^{\G_{n-1}}(1)\right)$.
Taking a non-zero $W'$ in the left hand side of
\eqref{saintepelagie}, 
we have 
\begin{equation*}
\sum_{g\in\N_{n-1}\backslash \G_{n-1}}W\begin{pmatrix} 0&1\\g&0\end{pmatrix} W'(g) 
= c(\pi,\psi)\sum_{g\in\N_{n-1}\backslash \G_{n-1}}
W\begin{pmatrix} g&0\\0&1\end{pmatrix} W'(g).
\end{equation*}
Choosing a $g_0\in G_{n-1}$ such that $W'(g_0)\neq0$,
and setting $W=g_0^{-1} \cdot \Be_{\pi,\psi}$,
we get
\begin{equation*}
  \sum_{g\in\N_{n-1}\backslash \G_{n-1}}
  W\begin{pmatrix} g&0\\0&1\end{pmatrix} W'(g) =
  \sum_{g\in\N_{n-1}\backslash \G_{n-1}}
  \Be_{\pi,\psi}\begin{pmatrix} gg_0^{-1}&0\\0&1\end{pmatrix} W'(g) =
W'(g_0) \neq 0.
\end{equation*} 
As $\g_{\RS}(\pi,\pi',\psi)$ is the unique scalar which satisfies this
equation, we get~$\g_{\RS}(\pi,\pi',\psi)=c(\pi,\psi)$.
\end{proof}

\begin{coro}
\label{cabrespine}
Let $\pi$ be an $H_n$-distingui\-shed 
cuspidal representation~of $G_n$.
Let $\pi'$ and~$\pi''$ be $H_{n-1}$-distingui\-shed re\-pre\-sentations 
of Whittaker type of $G_{n-1}$.
Sup\-pose
that $\Ww(\pi',\psi^{-1})$ and $\Ww(\pi'',\psi^{-1})$ have a
non-zero intersection with
$\Ii_{\psi^{-1}}(\Ind_{\H_{n-1}}^{\G_{n-1}}(1))$.
Then
\begin{equation*}
\g(\pi,\pi',\psi) = \g(\pi,\pi'',\psi).
\end{equation*}
\end{coro}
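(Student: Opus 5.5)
This follows directly from Proposition \ref{Lemma1RSconstant}, applied twice. The hypothesis on $\pi$ is exactly the one in that proposition: $\pi$ is an $H_n$-distinguished cuspidal representation of $G_n$. So the constant $c(\pi,\psi)$ of Proposition \ref{BernsteinConstant} is defined, and it depends only on $\pi$ and $\psi$, not on any representation of $G_{n-1}$.

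First I take $\pi'$. By assumption it is a representation of Whittaker type of $G_{n-1}$, and the assumed non-zero intersection of $\Ww(\pi',\psi^{-1})$ with $\Ii_{\psi^{-1}}(\Ind_{\H_{n-1}}^{\G_{n-1}}(1))$ is exactly condition \eqref{saintepelagie}. Proposition \ref{Lemma1RSconstant} then gives
\begin{equation*}
\g_{\RS}(\pi,\pi',\psi)=c(\pi,\psi).
\end{equation*}
The same argument applied to $\pi''$ gives $\g_{\RS}(\pi,\pi'',\psi)=c(\pi,\psi)$. Comparing the two identities yields $\g(\pi,\pi',\psi)=\g(\pi,\pi'',\psi)$, where $\g$ here means the finite-field factor $\g_{\RS}$.

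The $H_{n-1}$-distinction of $\pi'$ and $\pi''$ is not needed in this argument; \eqref{saintepelagie} alone suffices. Indeed, \eqref{saintepelagie} already forces distinction: a non-zero $\phi$ in $\Ww(\pi',\psi^{-1})\cap\Ii_{\psi^{-1}}(\Ind_{\H_{n-1}}^{\G_{n-1}}(1))$ gives a non-zero $G_{n-1}$-map from $\Ind_{\H_{n-1}}^{\G_{n-1}}(1)$, through $\Ii_{\psi^{-1}}$, into the Whittaker model of $\pi'$.

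There is no real obstacle here. The only point to check is that the same constant $c(\pi,\psi)$ appears in both applications, and this holds because it is defined intrinsically from $\pi$ and $\psi$ through the linear forms $\La$ and $\Lat$ on $\Ww(\pi,\psi)$.
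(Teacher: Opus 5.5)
Your proof is correct and is exactly the paper's: Proposition \ref{Lemma1RSconstant} applied to $\pi'$ and to $\pi''$ gives $\g(\pi,\pi',\psi)=c(\pi,\psi)=\g(\pi,\pi'',\psi)$, and you are right that the $H_{n-1}$-distinction hypothesis is never used. Your aside claiming that \eqref{saintepelagie} forces distinction is not justified as written and should be dropped, for two reasons: $\Ii_{\psi^{-1}}$ does not map $\Ind_{H_{n-1}}^{G_{n-1}}(1)$ into $\Ww(\pi',\psi^{-1})$ (only the preimage of the intersection maps there), and a map out of $\Ind_{H_{n-1}}^{G_{n-1}}(1)$ would in any case give $H_{n-1}$-fixed vectors, not $H_{n-1}$-invariant linear forms on $\pi'$, and these need not correspond when $\ell$ divides $|H_{n-1}|$.
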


\begin{proof}
This follows from the equalities
$\g(\pi,\pi',\psi) = c(\pi,\psi) = \g(\pi,\pi'',\psi)$
given by~Pro\-position \ref{Lemma1RSconstant} applied to $\pi'$ and $\pi''$.
\end{proof}

\subsection{}
\label{montdor1}

As in \S\ref{floquage},
the integer $n$ is fixed and we abbreviate $G=G_n$, $H=H_n$, etc. 
Let~us~in\-tro\-du\-ce the following definition. 

\begin{defi}
\label{WTspecialH}
A representation $\pi$ of Whittaker type of $G$ is said to be 
$H$-\textit{special},
or \textit{special} with respect to $H$,
if the intersection $\Ww(\pi,\psi) \cap \Ii_{\psi}(\Ind^G_H(1))$
is non-zero.
\end{defi}

We need to produce enough $H$-special representations of Whittaker type
in order to prove~Pro\-position \ref{minelli3}.
For this,
we introduce the following criterion. 

\begin{defi}
\label{Renal}
An $H$-distinguished representation $\pi$ of Whittaker type of $\G$ is
said to~be \textit{of class} 
$\Cc(H)$ if there is an $H$-invariant linear form on $\pi$ which is
non-zero on the $1$-dimensional space $\Hom_{\N}(\psi,\pi)$ of
vectors $v$ of $\pi$ such that $\pi(u)v=\psi(u)v$ for all $u\in N$.
\end{defi}

This gives us a sufficient condition for being $H$-special.

\begin{lemm}
\label{basicCpsilemma}
Any representation of $\G$ of class $\Cc(H)$ is $H$-special.
\end{lemm}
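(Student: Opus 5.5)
We prove the lemma by writing down an explicit non-zero element of $\Ww(\pi,\psi)\cap\Ii_\psi(\Ind^G_H(1))$.

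Let $\pi$ be of class $\Cc(H)$. Choose an $H$-invariant linear form $\lambda$ on $\pi$ and a non-zero vector $v_\psi$ with $\pi(u)v_\psi=\psi(u)v_\psi$ for all $u\in N$, such that $\lambda(v_\psi)\neq0$. The linear form $\lambda$ provides the matrix coefficient
\begin{equation*}
f : g\mapsto\lambda(\pi(g)v_\psi).
\end{equation*}
Since $\lambda$ is $H$-invariant, $f$ is invariant under left translation by $H$, so $f$ lies in $\Ind^G_H(1)$. We then apply the idempotent $\Ii_\psi$ to $f$. By definition,
\begin{equation*}
\Ii_\psi(f)(g)=\frac1{|N|}\sum_{u\in N}\psi^{-1}(u)\,\lambda(\pi(ug)v_\psi)=\lambda(\pi(g)\,v'),
\end{equation*}
hmm, with the left translation acting on the argument of $f$ it is more convenient to rewrite this using the adjoint action. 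So we use instead
\begin{equation*}
f(g)=\lambda(\pi(g^{-1})\cdots)
\end{equation*}
as follows. Take the dual picture: $\lambda$ is an $H$-fixed vector of $\pi^\vee$, and for each $v$ in $\pi$ consider $\phi_v(g)=\lambda(\pi(g)v)$. This function is left $H$-invariant, so $\phi_v\in\Ind^G_H(1)$, and the map $v\mapsto\phi_v$ is $G$-equivariant for right translations.

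Applying $\Ii_\psi$ then gives
\begin{equation*}
\Ii_\psi(\phi_v)(g)=\xi(\pi(g)v),
\qquad
\xi=\frac1{|N|}\sum_{u\in N}\psi^{-1}(u)\,\lambda\circ\pi(u),
\end{equation*}
where $\xi$ is a linear form on $\pi$. One checks directly that $\xi\circ\pi(u)=\psi(u)\xi$ for all $u\in N$, so $\xi\in\Hom_N(\pi,\psi)$. Because $N$ is a $p$-group and $p$ is invertible in $R$, averaging against $\psi^{-1}$ is the projection onto the $\psi$-isotypic part. Hence $\xi$ is the restriction of $\lambda$ to the $\psi$-isotypic line of $\pi^\vee$ side, and we get
\begin{equation*}
\xi(v_\psi)=\lambda(v_\psi)\neq0.
\end{equation*}
Here we use that $\pi(u)v_\psi=\psi(u)v_\psi$, so each term of the average equals $\lambda(v_\psi)$. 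Therefore $\xi$ is a non-zero element of the one-dimensional space $\Hom_N(\pi,\psi)$.

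The functions $g\mapsto\xi(\pi(g)v)$ are exactly the elements of the Whittaker model $\Ww(\pi,\psi)$, as in \S\ref{juliensorel}. Taking $v=v_\psi$ gives a function that lies both in $\Ww(\pi,\psi)$ and in $\Ii_\psi(\Ind^G_H(1))$. It is non-zero, since its value at $1$ is $\lambda(v_\psi)\neq0$. Hence $\pi$ is $H$-special.

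The main point to get right is the identification of $\Ii_\psi(\phi_v)$ with a Whittaker function. This requires the left-translation convention in $\Ii_\psi$, namely $f(ug)$, to match the coefficient $\lambda(\pi(ug)v)$, which it does because $\lambda(\pi(ug)v)=(\lambda\circ\pi(u))(\pi(g)v)$.
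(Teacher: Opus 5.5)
Your argument is correct and is essentially the paper's proof: the matrix coefficient $g\mapsto\lambda(\pi(g)v)$ lies in $\Ind^G_H(1)$, applying $\Ii_\psi$ replaces $\lambda$ by its $\psi^{-1}$-average $\xi\in\Hom_N(\pi,\psi)$, and $\xi(v_\psi)=\lambda(v_\psi)\neq0$. The paper concludes slightly more, namely $\Ww(\pi,\psi)\subseteq\Ii_\psi(\Ind^G_H(1))$, but your single non-zero function suffices; you should, however, delete the abandoned first computation ending in $\lambda(\pi(g)v')$, which cannot hold in general because the averaging acts on the linear form rather than on the vector, as your second version correctly shows.
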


\begin{proof}
Let $\xi$ be an $H$-invariant linear form on $\pi$.
Given any vector $v\in\pi$,
the matrix coefficient $\coef_{v,\xi}$ is in $\Ind^G_H(1)$~and the function 
$\Ii_{\psi} (\coef_{v,\xi})$ is equal to $\coef_{v,\xi'}$ where 
\begin{equation*}
\xi' = \frac 1 {|N|} \sum\limits_{u\in\N} \psi(u) \xi \circ \pi(u^{-1})
\in \Hom_{N}(\pi,\psi).
\end{equation*}
If $\xi'$ is non-zero,
the image of $v\mapsto \coef_{v,\xi'}$ is $\Ww(\pi,\psi)$,
thus $\Ww(\pi,\psi)$
is contained in $\Ii_{\psi}(\Ind^G_H(1))$.

Now let $j\in\Hom_N(\psi,\pi)$ be non-zero and
suppose that $\xi(j)$ is non-zero.
One has
\begin{equation*}
\xi'(j) = \frac 1 {|N|} \sum\limits_{u\in\N} \psi(u) \xi(\pi(u^{-1})j) 
= \frac 1 {|N|} \sum\limits_{u\in\N} \xi(j)
= \xi(j)
\end{equation*}
which is non-zero.
\end{proof}

\begin{rema}
In the case when $R=\CC$,
it follows from \cite{AnandMatringe} Theorem 1.1 that,
in the Galois case,
any $\H$-distinguished generic representation of $\G$
is of class $\Cc(H)$.
\end{rema}

Recall that $P$ is the mirabolic subgroup of $G$.

\begin{lemm}
\label{cuspsatcpsi}
Let $\pi$ be an~$\H$-distinguished
generic representation of~$\G$.
Suppose that the~spa\-ce $\Hom_{\P\cap\H}(\pi,1)$ has dimension $1$.
Then~$\pi$ is of class $\Cc(H)$.
\end{lemm}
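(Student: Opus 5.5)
We produce an $H$-invariant linear form on $\pi$ that does not vanish on the $\psi$-eigenline, using the Whittaker model and the form $\La$ of \S\ref{floquage}. Identify $\pi$ with $\Ww(\pi,\psi)$. Since $\pi$ is generic, its restriction to $P$ contains a copy of $\Ind^P_N(\psi)$ as its socle constituent (Bernstein--Zelevinsky/Gelfand--Kazhdan theory, valid for $R$-representations because $N$ is a $p$-group). In particular, the Bessel function $\Be_{\pi,\psi}$ spans the line of vectors $W$ with $u\cdot W=\psi(u)W$ for $u\in N$. This line is $\Hom_N(\psi,\pi)$, since $\Be_{\pi,\psi}(gu)=\psi(u)\Be_{\pi,\psi}(g)$.

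By Lemma \ref{LaPHinv}, the linear form $\La$ is $P\cap H$-invariant and non-zero. More precisely, its proof shows that $\La(\Be_{\pi,\psi})=1$, using Lemma \ref{suppbessel}: the only $h'\in H'$ contributing lie in $N'\cap H'$, and on these $\Be_{\pi,\psi}$ equals $\psi=1$ by \eqref{kaki1}. Note that this computation only uses that $\pi$ is of Whittaker type, not cuspidality.

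Now use the hypothesis. Since $\pi$ is $H$-distinguished, pick a non-zero $\xi\in\Hom_H(\pi,1)$. Its restriction to $P\cap H$ is a non-zero element of $\Hom_{P\cap H}(\pi,1)$, which is one-dimensional by assumption. Hence $\Hom_H(\pi,1)=\Hom_{P\cap H}(\pi,1)$, so $\La$ is a non-zero multiple of $\xi$ and is therefore $H$-invariant. This is the same argument as the first half of Lemma \ref{BernsteinConstantGeneric}.

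Thus $\La$ is an $H$-invariant linear form with $\La(\Be_{\pi,\psi})=1\neq0$, and $\Be_{\pi,\psi}$ spans $\Hom_N(\psi,\pi)$, so $\pi$ is of class $\Cc(H)$. The only delicate point is identifying the $\psi$-eigenline with the line spanned by the Bessel function. This follows from the Whittaker-type hypothesis, since $\dim\Hom_N(\pi,\psi)=1$ and, $N$ being a finite $p$-group, the $\psi$-isotypic part has the same dimension. The rest is the computation already done in Lemma \ref{LaPHinv}.
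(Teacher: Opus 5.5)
Your argument is correct and is essentially the paper's proof: the one-dimensionality of $\Hom_{P\cap H}(\pi,1)$ forces a $P\cap H$-invariant form to be $H$-invariant, and Lemma~\ref{suppbessel} shows that this form does not vanish on $\Be_{\pi,\psi}$. The only differences are cosmetic. The paper uses $\xi(W)=\sum_{h\in P\cap H}W(h)$ rather than your normalised $\La$; since $\xi=|U\cap H|\cdot|N'\cap H'|\cdot\La$, it gets $\xi(\Be_{\pi,\psi})=|N\cap H|\neq0$, a power of $p$. Your opening claim about the socle of the restriction of $\pi$ to $P$ is superfluous, because your closing $p$-group argument already identifies the $\psi$-eigenline with the line spanned by $\Be_{\pi,\psi}$.
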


\begin{proof}
We identify~$\pi$ with its Whittaker model~$\Ww(\pi,\psi)$.
By assumption, the containment
\begin{equation*}
\Hom_{\H}(\pi,1) \subseteq \Hom_{\P\cap\H}(\pi,1)
\end{equation*}
is an equality. 
Let $\xi$ be the linear form on $\Ww(\pi,\psi)$ defined by
\begin{equation*}
\xi(W) = \sum_{h\in \P\cap \H} W(h).
\end{equation*}
It is $\P\cap\H$-invariant, thus $\H$-invariant.
Evaluating it on the Bessel function~$\Be_{\pi,\psi}$,
it follows from Lemma \ref{suppbessel} that
\begin{equation*}
  \xi(\Be_{\pi,\psi}) = \sum_{h\in \N\cap \H} \Be_{\pi,\psi}(h)
  = |\N\cap \H| \neq0.
\end{equation*}
Hence~$\pi$ is of class $\Cc(H)$.
\end{proof} 

It follows immediately from Lemma \ref{malebranche} that:

\begin{coro}
\label{CpsiCusp}
Any $H$-distinguished cuspidal representation of $\G$ is of class $\Cc(H)$.
\end{coro}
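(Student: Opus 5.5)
The plan is to derive Corollary \ref{CpsiCusp} directly from Lemma \ref{cuspsatcpsi}, which requires exactly two inputs: that $\pi$ is an $\H$-distinguished generic representation of $\G$, and that $\dim\Hom_{\P\cap\H}(\pi,1)=1$.

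So let $\pi$ be an $\H$-distinguished cuspidal representation of $\G$. The first step is to observe that $\pi$ is generic in the sense of Definition \ref{defwtg}. It is irreducible by definition. It is of Whittaker type because its restriction to the mirabolic subgroup $\P$ is isomorphic to $\Ind^{\P}_{\N}(\psi)$ (\cite{Vigbook} III.1.1, as recalled in the proof of Lemma \ref{malebranche}). Frobenius reciprocity then gives $\dim\Hom_{\N}(\pi,\psi)=1$.

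The second step is the dimension hypothesis, which is exactly the content of Lemma \ref{malebranche}: since $\pi$ is cuspidal and $\H$-distinguished, $\dim\Hom_{\P\cap\H}(\pi,1)=1$. This holds in both the Galois and the Levi case and for arbitrary $R$, since the proof of that lemma already covers general $R$.

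With both hypotheses in hand, Lemma \ref{cuspsatcpsi} applies. Concretely, the form $\xi(W)=\sum_{h\in\P\cap\H}W(h)$ is $\H$-invariant and takes the non-zero value $|\N\cap\H|$ on the Bessel function. Hence $\pi$ is of class $\Cc(\H)$.

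There is no real obstacle here. The only point needing care is that $|\N\cap\H|$ is non-zero in $R$, which holds because $\N\cap\H$ is a $p$-group and $\mathrm{char}\,R\neq p$. This fact is already built into Lemma \ref{cuspsatcpsi}.
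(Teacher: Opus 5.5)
Your proposal is correct and is essentially the paper's argument: the paper also deduces the corollary by feeding Lemma \ref{malebranche} (which gives $\dim\Hom_{\P\cap\H}(\pi,1)=1$) into Lemma \ref{cuspsatcpsi}. Your extra remarks are the only additions, and both are accurate. Cuspidal representations are generic because $\pi|_{\P}\simeq\Ind_{\N}^{\P}(\psi)$, and $|\N\cap\H|$ is a power of $p$, hence invertible in $R$.
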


\subsection{}
\label{montdor2}

We now prove a preservation property of $\Cc(H)$ under parabolic
induction.
Let $n_1$ and $n_2$~be positive integers such that $n=n_1+n_2$. 
Let $M$ be the standard Levi subgroup~$G_{n_1}\times G_{n_2}$~of~$G_n$
and $Q$ be the standard parabolic subgroup of $G$ generated by $M$ and $N$. 
Let $U$ be the unipotent radical of $Q$ and $U^-$ be the
unipotent~ra\-di\-cal of 
the~para\-bo\-lic~subgroup $Q^-$ opposite to $Q$ with respect to $M$.
We will write $\psi_i$ for the character of $N_{n_i}$ induced by $\psi$.

\begin{lemm}
\label{cuspsatcpsi2} 
For $i=1,2$, let $\pi_i$ be an $H_{n_i}$-distinguished representation of 
Whittaker type~of class $\Cc(H_{n_i})$ of $\G_{n_i}$.  
The induced representation $\pi_1\times\pi_2$ is of class $\Cc(H_n)$.
\end{lemm}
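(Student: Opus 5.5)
We will construct an explicit $H_n$-invariant linear form on $\pi=\pi_1\times\pi_2$ and check that it is non-zero on the $\psi$-eigenline $\Hom_N(\psi,\pi)$. For $i=1,2$, let $\xi_i$ be an $H_{n_i}$-invariant linear form on $\pi_i$ which is non-zero on the line of $\psi_i$-eigenvectors, and let $j_i$ span that line. Realize $\pi$ as functions $f:G\to\pi_1\otimes\pi_2$ with $f(mug)=\delta^{1/2}(m)(\pi_1\otimes\pi_2)(m)f(g)$ (on a finite group we may drop the modulus, or just keep it: it is trivial on the relevant pieces). The form will be
\begin{equation*}
\Xi(f) = \sum_{h\in (Q\cap H_n)\backslash H_n} (\xi_1\otimes\xi_2)(f(h)),
\end{equation*}
which is well defined and $H_n$-invariant because, by \eqref{kakiIw1} and \eqref{kakiIw2}, $Q\cap H_n=(M\cap H_n)(U\cap H_n)=(H_{n_1}\times H_{n_2})(U\cap H_n)$, and $\xi_1\otimes\xi_2$ is $H_{n_1}\times H_{n_2}$-invariant, while $U$ acts trivially. (So $\Xi$ is the Mackey-type form attached to the open-or-closed orbit $QH_n$.)

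Next, identify the $\psi$-eigenvector of $\pi$. By Lemma \ref{LemmabasicsWTfams} $\pi$ is of Whittaker type, and the standard computation (the one behind \cite{BR06} Theorem 2.1) shows that the $\psi$-eigenline is spanned by the function $J$ supported on the big cell $Q w U'$ — more convenient here, on $Q\,w_{M}\,N$ where $w_M$ is the Weyl element sending $M$ to the opposite block order — with $J(w_M u)=\psi(u)\,(j_1\otimes j_2)$ suitably. Rather than fixing a Weyl element, I would take $J=\frac{1}{|N|}\sum_{u\in N}\psi^{-1}(u)\,\pi(u)f_0$ where $f_0$ is supported on $QU^-$ with $f_0(\bar u)=j_1\otimes j_2$ for $\bar u\in U^-$ small (e.g. $f_0$ supported on $Q$ times the identity only is too small; supported on $QU^-$ with value $\psi^{-1}$-twisted is natural). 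The key identity I would aim for is that the projection to the $\psi$-line of $f_0$ is non-zero and that evaluation of $\Xi$ on it reduces, via $U^-MU\cap H_n=(U^-\cap H_n)(M\cap H_n)(U\cap H_n)$, to a sum over $h\in U^-\cap H_n$ of $(\xi_1\otimes\xi_2)(f(h))$.

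Then compute: for $h\in U^-\cap H_n$, $f(h)$ equals, after averaging over $N=N_M\ltimes U$ with $N_M=N_{n_1}\times N_{n_2}$, a multiple of $\psi$-averages of $j_1\otimes j_2$ under $N_M$, which reproduces $j_1\otimes j_2$ and gives $(\xi_1\otimes\xi_2)(j_1\otimes j_2)=\xi_1(j_1)\xi_2(j_2)\neq0$; contributions with $h\neq1$ should vanish because $\psi$ is trivial on $N\cap H_n$ by \eqref{kaki1} while $J$ is a $\psi$-eigenvector, so terms over $U^-\cap H_n$ collapse (using that $J$ restricted to $U^-$ is supported at the identity modulo the relevant group). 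Hence $\Xi(J)=c\,\xi_1(j_1)\xi_2(j_2)$ with $c$ a positive power of $p$-free cardinalities, non-zero in $R$ since $\mathrm{char}R\neq p$ and the groups involved ($U\cap H_n$, $N\cap H_n$) are $p$-groups. The main obstacle is precisely this step: pinning down the support of the $\psi$-eigenvector $J$ on $Q\backslash G$ and showing that the only $H_n$-orbit piece $Qh$ contributing to $\Xi(J)$ is $h\in Q$; I would first try working in the model where $J$ is supported on $Q\,U^-$ near the identity (after conjugating $\psi$ by $w_M$-type elements, which is allowed since $\psi$ is only fixed up to the normalizer of $N$, as long as \eqref{kaki1} is preserved), and if that fails, use the cosets $Q\backslash G/N$ decomposition to show other cosets give zero by a character-sum argument with \eqref{kaki1}.
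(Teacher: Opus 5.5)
There is a genuine gap. It sits exactly at the step you single out as the main obstacle, and the concrete claims you make about that step are false in the model you chose.

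If $\pi_1\times\pi_2$ is realized by inducing $\pi_1\otimes\pi_2$ from the upper parabolic $Q$, then Mackey's formula and Lemma~\ref{LemmabasicsWTfams} show the following. The $\psi$-eigenline is spanned by a function $J$ supported on the open $(Q,N)$-double coset $QwN$, where $w=\left(\begin{smallmatrix}0&1_{n_1}\\ 1_{n_2}&0\end{smallmatrix}\right)$, so that $Q\cap wNw^{-1}=N_{n_1}\times N_{n_2}$. Moreover $J(w)$ spans the $\psi_1\otimes\psi_2$-eigenline of $\pi_1\otimes\pi_2$.

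This double coset does not contain $1$. Already for $n_1=n_2=1$ one has $J(1)=0$, whereas $J$ is non-zero at every non-trivial element of $U^-$. So $\Xi(J)$ does not reduce to a sum over $U^-\cap H_n$ in which only the identity survives. Averaging a function $f_0$ supported on $QU^-$ over $N$ cannot change this, since the result is forced onto $QwN$.

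Your proposed remedy does not help either. The normalizer of $N$ is the Borel subgroup $B\subseteq Q$, and $QwN=QwB$ is right $B$-stable, so twisting $\psi$ by such elements does not move the support. Twisting by $w$ itself, which does not normalize $N$, changes the pair $(N,\psi)$ with respect to which $\Cc(H_n)$ is defined and \eqref{kaki1} holds.

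The missing idea is to change the parabolic rather than the character. By \cite{HL} Theorem 1.1, $\pi_1\times\pi_2$ is isomorphic to the representation induced from $\pi_1\otimes\pi_2$ along the opposite parabolic $Q^-$. There $N\cap Q^-=N_{n_1}\times N_{n_2}$, so the double coset $NQ^-$ alone contributes a line to $\Hom_N(\psi,\pi)$. By Lemma~\ref{LemmabasicsWTfams} it is the only contributing one. Hence the $\psi$-eigenvector $j$ is supported on $Q^-N=U^-MU$, which contains $1$, and $j(u^-mu)=\psi(u)\,(\pi_1\otimes\pi_2)(m)(j_1\otimes j_2)$.

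Now use your form with $(Q^-\cap H_n)\backslash H_n$ in place of $(Q\cap H_n)\backslash H_n$. The decompositions \eqref{kakiIw1} and \eqref{kakiIw2} show that the contributing cosets are represented by the $u\in U\cap H_n$. Then \eqref{kaki1} gives $j(u)=j_1\otimes j_2$ for each of them. So nothing vanishes: all terms are equal, and the value is $|U\cap H_n|\,\xi_1(j_1)\xi_2(j_2)\neq0$, because $|U\cap H_n|$ is a power of $p$.

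One can also run this computation in your $Q$-model, at the points $w(U'\cap H_n)$ with $U'$ the unipotent radical of the standard parabolic of type $(n_2,n_1)$. This works only if $w$ normalizes $H_n$. That holds in the Galois case and in the Levi case when $n_1\equiv n_2\bmod 2$. It fails in the Levi case when $n_1\not\equiv n_2\bmod 2$, and that case is needed to iterate the lemma in Corollary~\ref{Aubusson}, for instance for $1^a$. Working with $Q^-$ avoids this case distinction.
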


\begin{proof}
Let $\pi$ be the representation of $G_n$ parabolically induced from
$\pi_M=\pi_1\otimes\pi_2$ along~the parabolic subgroup $Q^-$.
By \cite{HL} Theorem 1.1, it is isomorphic to $\pi_1\times\pi_2$.
It thus suffices~to~pro\-ve~that $\pi$ is of class $\Cc(H)$.
First,
a~sim\-ple application of Mackey's formula gives us
\begin{equation*}
\label{verrieres}
\Hom_{N}(\psi,\pi) \simeq
\bigoplus\limits_{g\in N\backslash G/Q^-} \Hom_{N^{g} \cap Q^-}(\psi^{g},\pi_{Q^-}) 
\end{equation*}
where $\pi_{Q^-}$ is the inflation of $\pi_M$ to $Q^-$.
By Lemma \ref{LemmabasicsWTfams},
we know that $\pi_1\times\pi_2$ (thus $\pi$) is~a~re\-presentation
of Whittaker type.
There is thus a unique double coset $NgQ^-$ contributing to this 
decomposition. 
For $g\in NQ^-$,
we have $N \cap Q^- = N \cap M = N_{n_1} \times N_{n_2}$, thus
\begin{equation*}
\Hom_{N \cap Q^-}(\psi,\pi_{Q^-}) \simeq
\Hom_{N_{n_1}}(\psi_1 ,\pi_1) \otimes \Hom_{N_{n_2}}(\psi_2,\pi_2) 
\end{equation*}
which has dimension $1$. 
Thus $NQ^-$ is the only contributing double coset
and one has an isomor\-phism of $R$-vector spaces
\begin{equation*}
\label{Doubs}
\Hom_{N}(\psi,\pi) \simeq 
\Hom_{N_{n_1} }(\psi_1,\pi_1) \otimes \Hom_{N_{n_2}}(\psi_2,\pi_2)
\end{equation*}
given by $f \mapsto f(1)$. 

For~$i=1,2$,
fix a non-zero vector $j_i \in \Hom_{N_{n_i}}(\psi_i,\pi_i)$,
and let $j \in \Hom_{N}(\psi,\pi)$ be the unique ele\-ment of $\pi$
such that $j(1) = j_1 \otimes j_2$.
Thus $j$ is supported on $Q^-N=U^-MU$ and 
\begin{equation*}
j(u^-mu) = \psi(u) \pi_M(m) (j_1\otimes j_2),
\quad
u^-\in U^-, \
m\in M,\ 
u\in U,
\end{equation*}
and more generally one has $j(gx)=\psi(x)j(g)$ for all $g\in G$ and all $x\in N$.

For~$i=1,2$,
fix $\xi_i\in\Hom_{\H_{n_i}}(\pi_i,1)$ such that $\xi_i(j_i)\neq0$.
Define an $H_n$-invariant linear form~$\xi$ on the space of $\pi$ by 
\begin{equation*}
\xi(f) = \sum\limits_{h \in Q^-\cap\H_n\backslash H_n}
\langle f(h), \xi_1\otimes\xi_2 \rangle
\end{equation*}
where $\langle \cdot , \cdot \rangle$ is the duality bracket.
It is well-defined thanks to \eqref{kakiIw1} and \eqref{kakiIw2}.
Let us now~compu\-te~$\xi(j)$.
Thanks to \eqref{kakiIw1}, \eqref{kakiIw2} and \eqref{kaki1},
we have
\begin{eqnarray*}
\xi(j) &=& \sum\limits_{h \in Q^-\cap H_n \backslash H_n}
\langle j(h), \xi_1 \otimes \xi_2 \rangle \\ &=&
\sum\limits_{u \in U\cap H_n}
\langle j(u), \xi_1 \otimes\xi_2 \rangle \\ &=&
| U\cap H_n | \cdot \xi_1(j_1) \cdot \xi_2(j_2)
\end{eqnarray*}
which is non-zero.
This finishes the proof.
\end{proof}

\begin{coro}
\label{Aubusson}
Let $n_1,\dots,n_r$ be positive integers of sum $n$.
For $i=1,\dots,r$, 
let $\pi_i$ be some $H_{n_i}$-distinguished cuspidal representation
of $\G_{n_i}$. 
Then $\pi_1\times\dots\times\pi_r$ is of class $\Cc(H_n)$.
\end{coro}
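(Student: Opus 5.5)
The plan is an induction on $r$, combining Corollary \ref{CpsiCusp} with Lemma \ref{cuspsatcpsi2}.

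For $r=1$ there is nothing new to prove. The representation $\pi_1$ is an $H_n$-distinguished cuspidal representation of $G_n$. By Corollary \ref{CpsiCusp} (which ultimately rests on Lemma \ref{malebranche} and Lemma \ref{cuspsatcpsi}), it is of class $\Cc(H_n)$.

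For the inductive step with $r\geq2$, set $n'=n_1+\dots+n_{r-1}$ and $\pi'=\pi_1\times\dots\times\pi_{r-1}$, a representation of $G_{n'}$.
\begin{itemize}
\item By the induction hypothesis, $\pi'$ is of class $\Cc(H_{n'})$. Definition \ref{Renal} then gives directly that $\pi'$ is $H_{n'}$-distinguished, and Lemma \ref{LemmabasicsWTfams}, applied repeatedly, gives that it is of Whittaker type.
\item By Corollary \ref{CpsiCusp}, $\pi_r$ is of class $\Cc(H_{n_r})$.
\item Lemma \ref{cuspsatcpsi2}, applied with $n_1$ replaced by $n'$ and $n_2$ by $n_r$, then shows that $\pi'\times\pi_r$ is of class $\Cc(H_n)$.
\end{itemize}

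The only point needing care is to identify $\pi'\times\pi_r$ with $\pi_1\times\dots\times\pi_r$. This is transitivity of parabolic induction along standard parabolic subgroups (induction in stages), which is routine.

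One should also check that the subgroups $H_m$ are compatible with the block decomposition. Property \eqref{kakiIw2} is stated for arbitrary families $(n_1,\dots,n_r)$, and Lemma \ref{cuspsatcpsi2} was proved for arbitrary $n_1,n_2$. In the Levi case, the pattern ${\rm diag}(-1,1,\dots)$ restricted to the second block may start with $+1$ rather than $-1$. However, this has already been absorbed in \eqref{kakiIw2} as the paper states it, and it is harmless for the argument here since only the formal properties \eqref{kaki1}--\eqref{kaki4} are used. So no real obstacle is expected, and the proof is a two-line induction.
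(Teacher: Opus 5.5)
Your proof is correct and is the paper's argument: the paper says only that the statement follows from Corollary \ref{CpsiCusp} and Lemma \ref{cuspsatcpsi2}, which is your induction on $r$, absorbing one cuspidal factor at a time. Your Levi-case concern about the sign pattern is moot, since ${\rm diag}(-1,1,\dots,(-1)^{n})$ and its negative have the same centralizer, so $M\cap H_n=H_{n_1}\times H_{n_2}$ holds exactly.
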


\begin{proof}
This follows from Corollary \ref{CpsiCusp} and Lemma \ref{cuspsatcpsi2}.
\end{proof}

\subsection{}
\label{parthenon}
\label{EUE}
\label{union}

We now prove Proposition \ref{minelli3},
which will end the proof of our main Theorem \ref{CONJNRV}.
We will actual\-ly~prove the more general following result.

\begin{prop}
\label{minelli4}
Let $\pi$ be an $H_n$-distingui\-shed cuspidal representation of $\G_n$
and~let~$\mu$~be~an $H_m$-distinguished supercuspidal representation
of $\G_m$ for some $m<n$.
Then
\begin{equation*}
\g(\pi,\mu,\psi) = \left\{
\begin{array}{rl}
{\rm sgn}(\pi) & \text{in the Levi case with $m=1$,} \\
1 & \text{otherwise.}
\end{array}
\right.
\end{equation*}
\end{prop}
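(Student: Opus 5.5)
The idea is to compare $\g(\pi,\mu,\psi)$ with a $\g$-factor against an $(n-1)$-dimensional representation that is special, and then use Proposition \ref{Lemma1RSconstant} twice.

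Write $n-1=m+s$ with $s\>0$. Let $\mathbf{1}_s$ be the representation $1\times\dots\times1$ ($s$ copies of the trivial character of $\k^\times=G_1$). It is of Whittaker type by Lemma \ref{LemmabasicsWTfams}. The trivial character of $G_1$ is cuspidal and $H_1$-distinguished, since $H_1=\k_0^\times$ in the Galois case and $H_1=G_1$ in the Levi case. I will consider
\[
\pi'=\mu\times\mathbf{1}_s, \qquad \pi''=\mathbf{1}_{n-1}.
\]
The representation $\mu$ is $H_m$-distinguished and supercuspidal, hence cuspidal. So by Corollary \ref{Aubusson} both $\pi'$ and $\pi''$ are of class $\Cc(H_{n-1})$. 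Lemma \ref{basicCpsilemma} shows they are $H_{n-1}$-special for the character $\psi$. Condition \eqref{saintepelagie} requires specialness for $\psi^{-1}$. I will obtain it either by running the same argument with $\psi^{-1}$ (which also satisfies $\psi^{-1}\circ\s=\psi$, so \eqref{kaki1} holds) or by noting that class $\Cc(H)$ does not depend on the choice of generic character.

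Corollary \ref{cabrespine} then gives $\g(\pi,\pi',\psi)=\g(\pi,\pi'',\psi)$. Multiplicativity (Proposition \ref{mult}) turns this into
\[
\g(\pi,\mu,\psi)\,\g(\pi,1,\psi)^{s}=\g(\pi,1,\psi)^{n-1}.
\]
By Proposition \ref{corsqsgamma}(1) the factor $\g(\pi,1,\psi)$ is non-zero, so
\[
\g(\pi,\mu,\psi)=\g(\pi,1,\psi)^{m}.
\]
Proposition \ref{RSGJ} identifies $\g(\pi,1,\psi)$ with $\g(\pi,\psi)$. Proposition \ref{valueGJdist} evaluates this as $1$ in the Galois case, using the normalizations $q^{1/2}=|\k_0|$ and $\psi|_{\k_0}=1$ fixed in \S\ref{groenland}, and as ${\rm sgn}(\pi)$ in the Levi case. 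So the answer is $1$ in the Galois case and ${\rm sgn}(\pi)^m$ in the Levi case. The distinguished representation $\pi^\vee\simeq\pi^*$ has $H$ conjugate to the standard Levi subgroup, so I will be careful that the sign is the one attached to $H_n$ rather than to $G_m\times G_m$; the conjugation should be harmless.

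It remains to show that ${\rm sgn}(\pi)^m=1$ when $m\>2$ in the Levi case. By Lemma \ref{distparitek}, an $H_m$-distinguished supercuspidal $\mu$ with $m\>2$ forces $m$ even, since the case $m=1$ is handled separately. Hence ${\rm sgn}(\pi)^m=1$ because the sign is $\pm1$. For $m=1$ we get ${\rm sgn}(\pi)$, as claimed.

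The main obstacle is the bookkeeping in the first step. One must check that \eqref{saintepelagie} really holds for $\psi^{-1}$, via Lemma \ref{basicCpsilemma} applied with $\psi^{-1}$ in place of $\psi$. One must also check that the ordering and induction conventions in Corollary \ref{Aubusson} (standard parabolics versus $Q^-$, handled by \cite{HL}) match those in Proposition \ref{mult}. If the class-$\Cc$ argument for $\psi^{-1}$ fails, my fallback is to replace $\pi'$ by $\pi'^{*}$ and exploit $\pi'^{*}\simeq\pi'^\vee$ together with the $\s$-self-duality of Lemma \ref{distselfdual}.
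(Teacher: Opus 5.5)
Your proof is correct and is essentially the paper's argument. You compare $\mu\times 1^{n-1-m}$ with $1^{n-1}$ via Corollary \ref{Aubusson}, Lemma \ref{basicCpsilemma} and Proposition \ref{Lemma1RSconstant}, use multiplicativity to get $\g(\pi,\mu,\psi)=\g(\pi,1,\psi)^m$, and conclude with Propositions \ref{RSGJ}, \ref{valueGJdist} and the evenness of $m$ from Lemma \ref{distparitek}. Your uniform handling of the Levi case with $m=1$, where $\mu$ is trivial, is equivalent to the paper's separate treatment of that case.

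Your worry that \eqref{saintepelagie} is specialness for $\psi^{-1}$ rather than $\psi$ points at something the paper passes over silently, and your resolution is right. Indeed ${\rm diag}((-1)^{n-2},\dots,-1,1)\in H_{n-1}$ conjugates $\psi$ to $\psi^{-1}$, so being of class $\Cc(H_{n-1})$ does not depend on which of the two characters is used. Your fallback argument is therefore unnecessary.
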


Let us first explain how to deduce Proposition \ref{minelli3} from Proposition 
\ref{minelli4}.
Let $\pi$ be a non-super\-cuspidal~cus\-pi\-dal representation of
$G_n$,
and $\rho$ be the supercuspidal repre\-sentation of~$G_k$~associa\-ted~with it. 
Suppose that $\pi$~is~${H}_n$-distingui\-shed~but has no
${H}_n$-dis\-tinguished lift.
By Lemma \ref{distselfdual},
it is $\s$-self-dual.
By uniqueness of its supercuspidal support,
$\rho$ is $\s$-self-dual as well.~By~Lem\-ma \ref{distparitek},
$\rho$ is distinguished,
unless we are in the Levi case and $\rho$ is the 
character of order $2$ of $\k^\times$.~But the Levi case with $k=1$
is excluded by \cite{NRV25} Lemma 6.10(2.b),
which says that $\pi$ has a~dis\-tin\-gui\-shed $\qlb$-lift in that case. 
To deduce Proposition \ref{minelli3},
it thus remains to apply Proposition~\ref{minelli4}~to the distinguished
representation $\mu=\rho^\vee$.

Let us prove Proposition \ref{minelli4} in the exceptional
case where $H_n$ is a Levi subgroup and $m=1$. 
In~this case,
$\mu$ is the trivial character of $\k^\times$.
We thus have $\g(\pi,\mu,\psi) = {\rm sgn}(\pi)$ thanks to~Pro\-po\-sitions 
\ref{RSGJ} 
and \ref{valueGJdist}.
It now remains to prove Proposition \ref{minelli4}:
\begin{itemize}
\item either in the Galois case (in which case $\psi$ is trivial on $\k_0$),
\item or in the Levi case with $m\neq1$ (in which case $m$ is even by
Lemma \ref{distparitek}).
\end{itemize}
Let $\pi$ be an ${H}_n$-distingui\-shed cus\-pi\-dal representation of $G_n$.
Denote by $1^a$, 
for any integer~$a\>1$,
the induced repre\-sen\-ta\-tion
$1\times\dots\times1$~whe\-re~the trivial character $1$ of $\k^\times$ occurs 
$a$ times.
It~is~of Whittaker type and satisfies $\Cc(H_{a})$~by Corollary \ref{Aubusson}.
By Corollary \ref{CpsiCusp},
the representation $\mu$ satisfies $\Cc (H_m)$.
By~Co\-rollary \ref{Aubusson} again, 
the re\-pre\-sentation $\tau=\mu \times 1^{n-1-m}$~satisfies~$\Cc(H_{n-1})$.
By Lemma \ref{basicCpsilemma},
the representations $1^{n-1}$ and $\tau$
are both $H_{n-1}$-special. 
It follows from Proposi\-tions \ref{mult} and \ref{Lemma1RSconstant} that
\begin{eqnarray*}
\g(\pi,\mu,\psi) &=& \g(\pi,\mu \times 1^{n-1-m},\psi) 
\cdot \g(\pi,1^{n-1-m},\psi)^{-1} \\ 
                       &=& \g(\pi,1^{n-1},\psi) \cdot \g(\pi,1,\psi)^{-n+1+m} \\
&=& \g(\pi,1,\psi)^{m} 
\end{eqnarray*}
which is equal to $\g(\pi,\psi)^{m}$ by Proposition \ref{RSGJ}.
Proposition \ref{minelli4} follows automatically~from~Pro\-po\-si\-tion
\ref{valueGJdist},
together with the fact that $m$ is even in the Levi case. 
Theorem \ref{CONJNRV} is proven.

\begin{coro}
For any $H_n$-distinguished cuspidal representation $\pi$ of $G_n$, 
one has
\begin{equation*}
c(\pi,\psi) = \left\{ 
\begin{array}{ll}
1 & \text{in the Galois case,} \\ 
{\rm sgn}(\pi)^{n-1} & \text{in the Levi case.}
\end{array}\right.
\end{equation*}
\end{coro}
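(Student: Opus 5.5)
The plan is to read off $c(\pi,\psi)$ from Proposition \ref{Lemma1RSconstant}, applied to a convenient $H_{n-1}$-special representation whose $\g$-factor is already computable. The natural choice is $\pi'=1^{n-1}=1\times\dots\times1$ (with $n-1$ factors) of $G_{n-1}$, for $n\>2$. By Corollary \ref{Aubusson}, the trivial character of $\k^\times=G_1$ being $H_1$-distinguished, $1^{n-1}$ is of class $\Cc(H_{n-1})$. By Lemma \ref{basicCpsilemma} it is therefore $H_{n-1}$-special, that is, condition \eqref{saintepelagie} holds. Proposition \ref{Lemma1RSconstant} then gives
\begin{equation*}
c(\pi,\psi)=\g(\pi,1^{n-1},\psi).
\end{equation*}

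Next, multiplicativity (Proposition \ref{mult}, applied inductively to the induced representation) gives $\g(\pi,1^{n-1},\psi)=\g(\pi,1,\psi)^{n-1}$. Proposition \ref{RSGJ} identifies $\g(\pi,1,\psi)$ with the Godement--Jacquet factor $\g(\pi,\psi)$. Proposition \ref{valueGJdist} evaluates the latter as $1$ in the Galois case, under the standing normalisations that $q^{1/2}$ is the cardinality of $\k_0$ and $\psi$ is trivial on $\k_0$, and as ${\rm sgn}(\pi)$ in the Levi case. This gives $c(\pi,\psi)=1$, respectively ${\rm sgn}(\pi)^{n-1}$.

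The only points needing care are these. First, the degenerate case $n=1$: there $H'$ is trivial, $\La(W)=W(1)$ and $\Lat(W)=W(\wn)=W(1)$, so $c=1={\rm sgn}(\pi)^0$ directly. Second, the Levi case: distinction forces $n$ even, and the definition of ${\rm sgn}(\pi)$ refers to the standard Levi $G_m\times G_m$ while $H_n$ is a conjugate of it; I would transport ${\rm sgn}$ through that conjugation, as is implicitly done in the proof of Proposition \ref{minelli4}. I expect no genuine obstacle, since the statement is essentially the computation in the proof of Proposition \ref{minelli4} specialised to $\mu$ trivial.
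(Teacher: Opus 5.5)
Your proposal is correct and is exactly the paper's argument. Proposition~\ref{Lemma1RSconstant} gives $c(\pi,\psi)=\g(\pi,1^{n-1},\psi)$, with $1^{n-1}$ special via Corollary~\ref{Aubusson} and Lemma~\ref{basicCpsilemma}; multiplicativity turns this into $\g(\pi,1,\psi)^{n-1}$, and Propositions~\ref{RSGJ} and~\ref{valueGJdist} finish. Your separate treatment of $n=1$, where Proposition~\ref{Lemma1RSconstant} does not apply, is a small point the paper leaves implicit.
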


\begin{proof}
By Proposition \ref{Lemma1RSconstant}, we have
$c(\pi,\psi) = \g(\pi,1^{n-1},\psi) = \g(\pi,1,\psi)^{n-1}$.
The result~fol\-lows from Propositions \ref{RSGJ} and \ref{valueGJdist}.
\end{proof}

\begin{rema}
\label{minelli2}
Let $\pi$ be a non-supercuspidal cuspidal representation of $\G_n$,
with~associa\-ted supercuspidal representation $\rho$ of degree $k$. 
Suppose that $\pi$~is $H_n$-distingui\-shed.
Then
\begin{equation*}
\g(\pi,\rho^\vee,\psi) = \left\{
\begin{array}{rl}
-1 & \text{in the Levi case with $k=1$,} \\
1 & \textit{otherwise.}
\end{array}
\right.
\end{equation*} 
The only case which is not covered by Proposition \ref{minelli4} is the 
Levi case where $\rho$ is the character of~order $2$ of $\k^\times$.
In~this case, we have $\g(\pi,\rho^\vee,\psi) = \g(\pi',1,\psi)$
where $\pi'=\pi\rho$ is the unique~cus\-pi\-dal~subquotient
of $1^n$.
We then have $\g(\pi',1,\psi)={\rm sgn}(\pi')$ thanks to Propositions
\ref{RSGJ}, \ref{valueGJdist},
and the result now follows from \cite{NRV25} Lemma 6.12.
\end{rema}

\subsection{}

We give the following corollaries to Theorem \ref{CONJNRV}.

\begin{coro}
\label{pafpifnonramifie}
Let $\ke/\kf$ be a quadratic extension of finite fields~of
characteristic~$p$ and~$\pi$~be a non-supercuspidal,
cuspi\-dal representation of $\GL_n(\k)$.
The following assertions are equivalent.
\begin{enumerate}
\item $\pi$ is $\GL_n(\k_0)$-dis\-tin\-guished. 
\item $\pi$ has a $\GL_n(\k_0)$-dis\-tingui\-shed cuspidal lift to $\qlb$. 
\item $\pi$ has a $\s$-self-dual cuspidal lift to $\qlb$. 
\item $\pi$ is $\s$-self-dual, $n$ is odd and $e_0$ is even.
\end{enumerate}
\end{coro}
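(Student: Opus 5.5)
The equivalence (1)$\Leftrightarrow$(2) is the finite-field counterpart of Theorem \ref{CONJNRV}. I would deduce it by passing to a level $0$ cuspidal representation of $\GL_n(F)$, and then read off (3) and (4) from the classification of Theorem \ref{DLT2}.

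Choose an unramified quadratic extension $F/F_0$ of non-Archimedean locally compact fields with residue fields $\k/\k_0$. By the level $0$ theory (\cite{MSc} Proposition 5.11, \cite{Vigbook} III.3.14), $\pi$ is the type of a level $0$ cuspidal $\flb$-representation $\Pi$ of $\GL_n(F)$. We may choose $\Pi$ with a central character prescribed on a uniformizer $\w_0$ of $F_0$, for instance trivial there. Then \cite{NRV25} Theorem 4.45 identifies distinction: $\Pi$ is $\GL_n(F_0)$-distinguished if and only if $\pi$ is $\GL_n(\k_0)$-distinguished. Likewise, \cite{NRV25} Lemma 6.5 identifies distinguished lifts: $\Pi$ has a $\GL_n(F_0)$-distinguished cuspidal lift if and only if $\pi$ has a $\GL_n(\k_0)$-distinguished one. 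Indeed, lifts of $\pi$ extend to lifts of $\Pi$ with the same central character normalization. Theorem \ref{CONJNRV} then gives (1)$\Leftrightarrow$(2).

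Alternatively, one can avoid the $p$-adic detour by arguing by contradiction directly over $\k$. Suppose $\pi$ is distinguished with no distinguished lift. Proposition \ref{minelli3} gives $\g(\pi,\rho^\vee,\psi)=1$. Lemma \ref{CalculMagique} transported via Proposition \ref{padicfinite}, together with the parity analysis of Proposition \ref{gammais1}, gives $-1$. This contradicts $\ell\neq2$.

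(2)$\Rightarrow$(3) holds because a distinguished cuspidal $\qlb$-representation is $\s$-self-dual, by \cite{VSANT19} Remark 4.3.

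For (3)$\Rightarrow$(4), let $\widetilde{\pi}$ be a $\s$-self-dual cuspidal lift. Reduction mod $\ell$ commutes with $\vee$ and $\s$, so $\pi$ is $\s$-self-dual. Over $\qlb$, a $\s$-self-dual cuspidal representation of $\GL_n(\k)$ must have $n$ odd, since the Frobenius-orbit argument forces its associated character of $\mathbb{F}_{q^n}^\times$ to be conjugate-dual. Writing $\pi$ as the generic subquotient of $\rho^{\times r}$ with $\rho$ supercuspidal of degree $k$, this forces $k$ and $r$ odd. For the condition on $e_0$, I would use \eqref{molde}: $r=\ell^u e/(e,k)$. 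With $r$ odd, $e$ is odd when $k$ is odd. Then I would transport Remark \ref{SSDLT}, the unramified case, to the finite field through $\Pi$ as above: a $\s$-self-dual cuspidal lift of $\Pi$ forces $e,k$ odd and $e_0$ even. Exhibiting the lift of $\Pi$ requires twisting $\widetilde{\pi}$ by a suitable central character so that it remains $\s$-self-dual.

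For (4)$\Rightarrow$(2), Lemma \ref{distparitek} makes $\rho$ distinguished, since $\rho$ is $\s$-self-dual with $k$ odd. Theorem \ref{DLT2}(1), applied to $\Pi$ and transported via \cite{NRV25} Lemma 6.5, gives a distinguished lift, using $e$ and $k$ odd and $e_0$ even. Here $e$ is odd because $e\mid$ the order of $q^k$ power structure and $r$ is odd; note $n$ odd forces $r$ odd.

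The main obstacle is the bookkeeping in this transfer between $\GL_n(\k)$ and the level $0$ representation $\Pi$. One must check that the central character normalizations make distinction and lifting of $\pi$ and $\Pi$ genuinely equivalent in both directions. One must also check the parity relations among $e$, $e_0$, $k$, $r$: in particular that $e_0$ even and $e$ odd are compatible, which holds since $e=e_0/2$ when $e_0$ is even.
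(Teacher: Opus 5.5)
Your argument is correct, and its core is exactly the paper's proof of (1)$\Rightarrow$(2): realize $\pi$ as the type of a level $0$ cuspidal representation of $\GL_n(F)$, for an unramified $F/F_0$, with central character trivial at a uniformizer of $F_0$, then chain \cite{NRV25} Theorem 4.45, Theorem \ref{CONJNRV} and \cite{NRV25} Lemma 6.5. The paper takes the other implications as known, whereas you derive them from Theorem \ref{DLT2} and Remark \ref{SSDLT} through the same level $0$ construction, which is sound; note that your ``alternative'' does not avoid the $p$-adic detour, since Lemma \ref{CalculMagique} and Proposition \ref{padicfinite} live over $F$, and Proposition \ref{gammais1} needs the level $0$ representation itself to be distinguished with no distinguished lift.
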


\begin{coro}
\label{pafpiframifie}
Let $\ke$ be a finite field of characteristic $p$
and $\pi$ be~a non-supercuspidal,~cuspi\-dal 
representation of $\GL_{n}(\k)$ with $n=2m$.
The following assertions are equivalent.
\begin{enumerate}
\item $\pi$ is $\GL_{m}(\k)\times\GL_{m}(\k)$-dis\-tin\-guished.
\item $\pi$ has a $\GL_{m}(\k)\times\GL_{m}(\k)$-dis\-tin\-guished
  cuspidal lift to $\qlb$. 
\item $\pi$ has a self-dual cuspidal lift to $\qlb$. 
\item $\pi$ is self-dual and either $r$, $n/e$ are odd, or $r=n$. 
\end{enumerate}
\end{coro}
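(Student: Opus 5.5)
We plan to derive all four equivalences from the $p$-adic results of Section \ref{stamant}, by realizing $\pi$ as the type of a level $0$ cuspidal representation over a \emph{ramified} quadratic extension. Fix a ramified quadratic extension $F/F_0$ of non-Archimedean locally compact fields with common residue field $\k$ (this exists since $p\neq2$). As in \S\ref{typelevel0}, every cuspidal $\flb$-representation of $\GL_n(\k)$ is the type of a level $0$ cuspidal representation $\Pi$ of $G=\GL_n(F)$. The class of $\Pi$ is determined by its type together with the value $\cen_\Pi(\w)$. We will choose this value in the way \cite{NRV25} Theorem 4.45 and Lemma 6.12 require, so that the following two transfers hold, with $\overline{H}=\GL_m(\k)\times\GL_m(\k)$:
\begin{itemize}
\item $\Pi$ is $H$-distinguished if and only if $\pi$ is $\overline{H}$-distinguished;
\item $\Pi$ has an $H$-distinguished cuspidal lift if and only if $\pi$ has an $\overline{H}$-distinguished cuspidal lift (\cite{NRV25} Lemma 6.5, applied to $\qlb$-lifts of level $0$).
\end{itemize}
Making this choice of central character precise, uniformly in both directions, is the step I expect to be the main obstacle. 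For it I would follow the computation \eqref{bathilde}--\eqref{bathilde3} in the proof of Proposition \ref{DLT05}.

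Granting this setup, the implications go as follows. For (1)$\Rightarrow$(2): if $\pi$ is $\overline{H}$-distinguished, then $\Pi$ is $H$-distinguished, so Theorem \ref{CONJNRV} gives an $H$-distinguished cuspidal lift of $\Pi$, and transferring back gives (2). For (2)$\Rightarrow$(1): the $\overline{H}$-invariant form on the lift can be normalized to be integral and nonzero mod $\ell$ on an invariant lattice; its reduction is then a nonzero form on the cuspidal (hence irreducible) reduction. Alternatively one passes through $\Pi$ and uses \cite{KuMaAsai} Theorem 3.4. For (2)$\Rightarrow$(3): since the Levi involution is inner, an $\overline{H}$-distinguished irreducible $\qlb$-representation is self-dual by \cite{Kapon}, exactly as in the proof of Lemma \ref{distselfdual}.

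For (3)$\Rightarrow$(4): the reduction mod $\ell$ of a self-dual lift is self-dual, so $\pi$ is self-dual. The type of the corresponding $\s$-self-dual lift of $\Pi$ shows, by Remark \ref{SSDLT}(2), that either $k=n/r$ has the same dyadic valuation as $e$, or $k=1$. By \eqref{molde}, the first alternative says exactly that $r$ and $n/e$ are odd, and the second says that $r=n$.

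For (4)$\Rightarrow$(1): write $\pi\simeq\St_r(\rho)$. Self-duality of $\pi$ forces $\rho$ to be self-dual, by uniqueness of the supercuspidal support (Proposition \ref{Coupure}(3)). Lemma \ref{distparitek} then shows that either $k$ is even and $\rho$ is $\GL_{k/2}(\k)\times\GL_{k/2}(\k)$-distinguished, or $k=1$ and $\rho$ is a character of order at most $2$.

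Next we check that $e$ is even in each case. If $r$ and $n/e$ are odd, then $n$ is even while $n/e$ is odd, so $e$ is even. If $r=n$, then $r=e(\rho)\ell^u=e\ell^u$ is even and $\ell\neq2$, so again $e$ is even.

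We then choose the extension $\Pi=\St_r(\widetilde\rho)$, with $\widetilde\rho$ of level $0$ and type $\rho$, so that $\widetilde\rho$ satisfies the hypothesis of Theorem \ref{DLT2}(2.a) or (2.b); for $k=1$ this means arranging $\widetilde\rho|_{F_0^\times}\in\{\nu_0^{-1},\cla\}$ by adjusting the value at $\w$. Theorem \ref{DLT2} then says that $\Pi$ has an $H$-distinguished lift. In particular $\Pi$ is $H$-distinguished, and so $\pi$ is $\overline{H}$-distinguished by \cite{NRV25} Theorem 4.45. This closes the cycle of implications.
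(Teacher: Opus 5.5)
Your argument is correct, and its central step (1)$\Rightarrow$(2) is the paper's own. You realize $\pi$ as the type of a level $0$ cuspidal representation of $\GL_n(F)$ for a ramified $F/F_0$ with residue field $\k$, transfer distinction by \cite{NRV25} Theorem 4.45, apply Theorem \ref{CONJNRV}, and come back by \cite{NRV25} Lemma 6.5.

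The step you flag as the main obstacle needs no computation like \eqref{bathilde}--\eqref{bathilde3}, and no uniformity in both directions. The paper takes the conjugate of $\GL_n(\oo)$ by ${\rm diag}(\w,\dots,\w,1,\dots,1)$, on whose reductive quotient $\s$ acts as conjugation by ${\rm diag}(-1_m,1_m)$. It then prescribes $\cen(\w)={\rm sgn}(\pi)$ once $\pi$ is distinguished. In the other direction, any distinguished $\Pi$ with type $\pi$ already forces $\pi$ to be distinguished.

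Where you genuinely differ is in closing the cycle. The paper asserts $(4)\Rightarrow(3)\Rightarrow(2)\Rightarrow(1)$ without argument. Its written proof of the remaining direction stops at (2), so (4) is only implicit, via Theorem \ref{DLT2} applied to the $p$-adic representation. You run the cycle the other way, $(2)\Rightarrow(3)\Rightarrow(4)\Rightarrow(1)$. You use \cite{Kapon} for self-duality, then Remark \ref{SSDLT} with \eqref{molde} for the numerics, then Theorem \ref{DLT2} for a suitably normalized $\St_r(\widetilde\rho)$ to return to (1). This derives everything from results stated in the paper and avoids the unproved $(4)\Rightarrow(3)$.

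Two details remain to fill in.
\begin{itemize}
\item In $(3)\Rightarrow(4)$ you must choose $\cen_\Pi(\w)$ with $\cen_\Pi(\w)^2=\cen_\pi(-1)$. You then lift it to a square root of $\cen_{\widetilde\pi}(-1)$, which is possible since $\ell\neq2$, to obtain a $\s$-self-dual lift of $\Pi$.
\item Remark \ref{SSDLT} is only sketched in the paper, so you may prefer a different route to (4) from (3). Go through (2) by Lemma \ref{distparitek} over $\qlb$, where cuspidal means supercuspidal. Then pass to (1), apply Theorem \ref{CONJNRV} to $\Pi$, and read (4) off Theorem \ref{DLT2}.
\end{itemize}
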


\begin{proof}
We will prove the two corollaries at the same time. 
In both cases,
we have the chain~of implications
$(4) \Rightarrow (3) \Rightarrow (2) \Rightarrow (1)$.
It thus remains to prove that (1) implies (4) in both cases.

Fix a quadratic extension $F/F_0$ of $p$-adic fields such that 
$\ke$, $\kf$ are the re\-si\-due fields of $\F$,~$\F_0$
respectively,
where we write $\k_0=\k$~in the Levi case.
Fix a uni\-formizer $\w$ of $\F$ such that $\w^{e_{F/F_0}}$
is~a uni\-formizer of $\F_0$,
where $e_{F/F_0}$ denotes the ramification order of $F/F_0$.
Thus $\s(\w)\in\{-\w,\w\}$ in any case.
Let $\boldsymbol{J}^0$ denote 
\begin{itemize}
\item the maximal compact open subgroup $\GL_n(\oo)$ if $F/F_0$
  is unramified,
\item 
the conjugate of $\GL_n(\oo)$ by 
\begin{equation*}
{\rm diag}(\w,\dots,\w,1,\dots,1) \in \GL_n(\F)
\end{equation*}
where $\w$ occurs $m$ times,
if $F/F_0$ is ramified.
\end{itemize}
Let $\boldsymbol{J}^1$ be the normal maximal pro-$p$-subgroup of 
$\boldsymbol{J}^0$ and $\boldsymbol{J}$ be its normalizer in $\GL_n(\F)$.
The~na\-tu\-ral group~iso\-morphism 
$\boldsymbol{J}^0/\boldsymbol{J}^1\simeq\GL_n(\ke)$
transports the action~of $\s\in\Gal(\F/\F_0)$ on 
$\boldsymbol{J}^0/\boldsymbol{J}^1$~to 
\begin{itemize}
\item the action~of $\s\in\Gal(\k/\k_0)$ if $F/F_0$ is unramified,
\item
the adjoint action of
\begin{equation*}
\begin{pmatrix}
-{\rm id}_{m} & 0 \\ 0 & {\rm id}_{m}
\end{pmatrix} \in \GL_n(\ke)
\end{equation*}
on $\GL_n(\ke)$ if $F/F_0$ is ramified.
\end{itemize}

Let us consider $\pi$ as a representation of $\boldsymbol{J}^0$~by~in\-flation,
and extend it to a representation $\bl$~of 
$\boldsymbol{J}=\F^\times\boldsymbol{J}^0$ by demanding that the central character
of $\bl$ at a uni\-formizer $\w$ of $\F$ is~equal~to~$1$~if $F/F_0$ is
unramified,
and to ${\rm sgn}(\pi)$ if $F/F_0$ is ramified.
In any case,
$(\boldsymbol{J},\bl)$ is a generic $\s$-self-dual level $0$ type in the sense 
of \cite{NRV25} Definition 4.31.
Inducing $\bl$ to $\GL_n(\F)$, 
we get~a~cuspidal~(irre\-du\-cible) repre\-sen\-tation $\pi_F$ of level $0$
of $\GL_n(\F)$. 
By \cite{NRV25} Theorem 4.45,
the fact~that $\pi$ is~dis\-tinguished im\-plies that $\pi_F$ is 
$\GL_n(\F_0)$-distinguished. 
It follows from Theorem \ref{CONJNRV}~that~$\pi_F$
has a $\GL_n(\F_0)$-distinguished cuspidal $\qlb$-lift,
and from \cite{NRV25} Lemma 6.5 that
$\pi$ has an $H$-distinguished cuspidal $\qlb$-lift.
\end{proof}

\section{The second main result ($\ell=2$)}

We now assume that the field $R$ has characteristic $\ell=2$.

\subsection{}

Here is our second main theorem.

\begin{theo}
\label{MAINTHM2}
Suppose that $\ell=2$.
A cuspidal $R$-representation of $\GL_n(\F)$ is $\GL_n(\F_0)$-dis\-tin\-gui\-shed 
if and only if it is $\s$-self-dual. 
\end{theo}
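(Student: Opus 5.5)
The forward implication is already known: any $\GL_n(\F_0)$-distinguished irreducible representation is $\s$-self-dual (\cite{VSANT19} Theorem 4.1). For the converse, the supercuspidal case is \cite{VSANT19} Theorem 10.8. We therefore fix a non-supercuspidal, cuspidal, $\s$-self-dual $\pi$ and prove that it is distinguished. Writing $\pi=\St_r(\rho)$, we have $r=e(\rho)2^u$. When $\ell=2$ and $q$ is odd, $q\equiv1$ mod $2$, so $e(\rho)=1$ and $r=2^u$ with $u\>1$. I plan to argue by induction on $u$, or equivalently on $n$. The key structural fact, to be checked via \cite{MSc}, is that $\pi$ is the unique generic subquotient of $\tau\times\tau$, where $\tau=\St_{r/2}(\rho)$ is cuspidal and uniquely determined by $\pi$. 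Uniqueness then shows that $\tau$ is $\s$-self-dual, and by induction (or by the supercuspidal case when $r=2$) $\tau$ is $\GL_{n/2}(\F_0)$-distinguished. So everything reduces to: $\tau$ cuspidal and distinguished implies $\pi$ distinguished.

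One option is to reduce to level $0$ via \cite{NRV25} as in Proposition \ref{lamiel}, and then to finite fields using the types, as in the proof of Corollaries \ref{pafpifnonramifie}, \ref{pafpiframifie} and \cite{NRV25} Theorem 4.45. Alternatively, one can work directly with $\tau\times\tau$ over $\GL_n(\F)$. In either setting, I first determine the structure of $\tau\times\tau$ in characteristic $2$. It should be indecomposable of length $3$, with socle and cosocle both isomorphic to the non-generic subquotient (a Speh-type representation $Z(\tau)$ or its analogue), and with $\pi$ sitting in the middle. I would prove this by comparing Jacquet modules or Hecke algebras, using the fact that $\tau\nu\simeq\tau$ modulo $2$.

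Next, assuming $\tau$ distinguished, I construct two linearly independent $H$-invariant linear forms on $\tau\times\tau$ via the Mackey/orbit decomposition of $P\backslash G/H$. The closed orbit gives a form built from $\xi_\tau\otimes\xi_\tau$ on the Levi, as in Lemma \ref{cuspsatcpsi2}. The open orbit, whose stabiliser is $\GL_{n/2}(\F)$ embedded via $g\mapsto(g,\s(g))$, contributes a form coming from the pairing $\tau\otimes\tau^\s\to R$, which exists because $\tau^\s\simeq\tau^\vee$. In characteristic $2$ (or at level $0$ over a finite field) the relevant modulus character is trivial, so no twist obstructs this. The closed orbit contribution then extends to the whole induced representation. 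I must also check that both forms vanish on the socle. Here I plan to use that the socle is the image of an intertwining map whose composition with these forms vanishes, or an explicit support/Bessel-type computation.

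If $\pi$ were not distinguished, both forms would then factor through the cosocle, giving $\dim\Hom_H(\mathrm{cosoc},1)\>2$. This contradicts the bound of at most $1$, which is the expected content of Proposition \ref{onedim}. I would prove that bound by realising the cosocle inside a degenerate (Zelevinsky-type) induced module and applying a Gelfand-pair or multiplicity-one argument. I expect the main obstacle to be the combination of the vanishing on the socle with this multiplicity-one bound for the non-generic constituent. Over finite fields, a geometric-lemma count of $H$-invariants on each constituent should handle both.
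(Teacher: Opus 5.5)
Your outline matches the paper's architecture: reduction to level $0$ and to the finite-field Theorem~\ref{arcueil}, induction along $\tau\mapsto\st_1(\tau)$ as in Proposition~\ref{AndreaCavalcanti}, the length-$3$ structure of $\tau\times\tau$ with socle and cosocle equal to its non-generic constituent $Z$, a closed-orbit form and an open-orbit form, and a contradiction with Proposition~\ref{onedim}. The gap is that the two steps you defer carry the whole proof. For the first, vanishing on the socle, you name the right tool (an intertwiner whose image is the socle) but give no reason the forms kill its image.

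Beyond the shape of $\tau\times\tau$, your outline uses $\ell=2$ only to kill modulus characters, and over a finite field there are none. Yet the same picture occurs for odd $\ell$. Over $\k$, take the Galois case with $n=2$ and $\tau$ trivial. Then $\tau\times\tau$ is the space of functions on $\mathbb{P}^1(\k)$ and its socle is the constants. If $\ell$ is odd and divides $q_0^2+1$, then:
\begin{itemize}
\item $\tau\times\tau$ is still indecomposable of length $3$ with a cuspidal middle constituent, and $Z$ is trivial;
\item both forms exist;
\item yet that constituent is not $\GL_2(\k_0)$-distinguished (Corollary~\ref{pafpifnonramifie}).
\end{itemize}
Correspondingly, the closed- and open-orbit forms take the values $q_0+1$ and $q_0^2-q_0$ on the constant function $1$. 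Both are even but prime to $\ell$. So vanishing on the socle is exactly where $\ell=2$ must be used.

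The paper handles this with the intertwiner $Tf(g)=\sum_{u\in U}Af(sug)$. One has $T^2=1$, hence $(T+1)^2=0$, and the socle is the image of $T+1$. Vanishing on the socle thus means $\Lambda\circ T=\Lambda$, which is checked double coset by double coset via Lemma~\ref{baeckaoffa}. Characteristic $2$ enters in three places: $|U\cap H|$ is odd; the normalisation $\Lambda^1_\pi\circ A=\Lambda^1_\pi$ is forced because $\mu^2=1$; and the contributions of $t$ and $-t$ cancel in pairs in the open orbit.

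Be careful also with your single closed orbit in the Levi case with $n=2$, which is the base case where $F/F_0$ is ramified and $\tau$ is a character. There $s\notin H$, and the double cosets $QH$ and $QsH$ are distinct with the same stabiliser $M\cap H$. Only the sum of the two corresponding forms vanishes on the socle.

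For the bound $\dim\Hom_H(Z,1)\<1$, a Gelfand-pair argument does not give what you need. Since $2$ divides $|H|$, $\Ind_H^G(1)$ is not semisimple, and commutativity of its endomorphism algebra does not control the multiplicity of $Z$ in its socle. Mackey theory applies to $\tau\times\tau$, not to $Z$. It only gives $\dim\Hom_H(Z,1)\<\dim\Hom_H(\tau\times\tau,1)$, which is at least $2$.

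The paper cites \cite{VSANT19} Remark~4.3 in the Galois case. In the Levi case, which is what the ramified case becomes, it uses derivatives. Exactness, the Leibniz rule, cuspidality of $\tau$ and $\pi$, and non-genericity of $Z$ give $Z^{(i)}=0$ for $i\neq n/2$ and $Z^{(n/2)}=\tau$. Lemma~\ref{psiprop} then yields $\dim\Hom_{P\cap H}(Z,1)\<\dim\Hom_{H_{n/2}}(\tau,1)=1$.

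Your direct $p$-adic alternative does not work as stated. The open-orbit sum runs over the non-compact quotient $\GL_{n/2}(F)\backslash\GL_n(F_0)$. It therefore only makes sense on functions compactly supported in the open orbit, and there is no analytic continuation modulo $2$ to extend it to all of $\tau\times\tau$. On the finite-field route, note that applying \cite{NRV25} Theorem~4.45 requires the central character to be trivial on $F_0^\times$. This holds for $\s$-self-dual representations precisely because $\ell=2$.
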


Note that:
\begin{itemize}
\item 
any $H$-distinguished cuspidal representation of $G$ is $\s$-self-dual
(\cite{VSANT19} Theorem 4.1),
\item
Theorem \ref{MAINTHM2} holds for supercuspidal representations
(\cite{VSANT19} Theorem 10.8).
\end{itemize}

To prove Theorem \ref{MAINTHM2},
it thus suffices to prove that any $\s$-self-dual 
non-su\-per\-cuspi\-dal,~cuspidal representation of $G$ is $H$-distinguished.
As in the proof of Proposition \ref{lamiel},
it follows from~\cite{NRV25}~Pro\-position 4.40 that the proof of Theorem 
\ref{MAINTHM2} reduces to the level $0$ case.
As $\ell=2$,
the central~char\-acter of any $\s$-self-dual irreducible representation of $G$
is trivial on $F_0^\times$.
Applying \cite{NRV25}~Theorem 4.45,
we are thus reduced to proving the following result. 

\begin{theo}
\label{arcueil}
Let $\s$ be one of the two following involutions of $\GL_n(\k)$: 
\begin{itemize}
\item 
either $\k$ is a quadratic extension of $\k_0$ and $\s$ is the 
non-trivial~auto\-mor\-phism of $\k/\k_0$, 
\item
or $n=2m$ for some integer $m\>1$ and $\s$ is the inner auto\-mor\-phism
of conjugacy by~the~dia\-go\-nal element
$\d_{n}={\rm diag}(1,-1,1,\dots,-1)$, 
\end{itemize}
and let $H$ be the subgroup of $\s$-fixed elements of $\GL_n(\k)$.
Then any~$\s$-self-dual cuspi\-dal~represen\-tation of $\GL_n(\k)$ is
$\H$-distinguished. 
\end{theo}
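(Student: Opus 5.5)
Our plan follows the strategy sketched in the introduction. Let $\pi$ be a $\s$-self-dual cuspidal $R$-representation of $\GL_n(\k)$ with $\ell=2$. If $\pi$ is supercuspidal, we invoke the known result for supercuspidal representations (Lemma \ref{distparitek} in the finite setting, or \cite{VSANT19}). In the Levi case with $n$ even, Lemma \ref{distparitek} gives distinction directly. In the Galois case with $n$ even, we need to check that no $\s$-self-dual supercuspidal occurs at all, which should also follow from \cite{NRV25} Lemmas 2.3 and 2.5. So we may assume that $\pi$ is not supercuspidal.

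For the non-supercuspidal case, the first step is structural. Since $\ell=2$, the invariant $e(\rho)$ for $\GL_n(\k)$ should always be $1$, because $q$ is odd and hence $q\equiv1$ mod $2$. So $r=2^u$ with $u\geq1$. We then show that there is a unique cuspidal $\tau$ of $\GL_{n/2}(\k)$ such that $\pi$ is the unique generic subquotient of $\tau\times\tau$: take $\tau$ to be the cuspidal representation attached to $\rho$ and $r/2$, using the classification recalled in \S\ref{parlev}. By uniqueness of $\tau$, and since $\s$-duality commutes with parabolic induction up to taking the generic subquotient, $\pi$ is $\s$-self-dual if and only if $\tau$ is. We then argue by induction on $n$: $\tau$ is $\s$-self-dual, hence $H_{n/2}$-distinguished either by induction or by the supercuspidal case. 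In the Levi case we first arrange the relevant Levi subgroup $\GL_{n/4}\times\GL_{n/4}$ inside each factor. If $n/2$ is odd there, $\tau$ is a character of order at most $2$, which is trivial since $\ell=2$, and $\tau$ is then distinguished by $\GL_1$-type considerations.

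The heart of the argument is to study $\Pi=\tau\times\tau$. We expect it to be indecomposable of length $3$, with socle and cosocle both isomorphic to a non-generic irreducible $\varkappa$ and with $\pi$ in the middle; this should follow from Hecke algebra computations for $\ell$ dividing $q^{n/2}+1$ or $q^{n/2}-1$, as in \cite{MSc}. Using $\xi_\tau\in\Hom_{H_{n/2}}(\tau,1)$, we construct two invariant linear forms on $\Pi$ via the $H$-orbit decomposition of $Q\backslash G$. The first is the closed-orbit form $f\mapsto\sum_{h}\langle f(h),\xi_\tau\otimes\xi_\tau\rangle$, as in Lemma \ref{cuspsatcpsi2}. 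The second comes from the open orbit, where the stabilizer is a twisted diagonal copy of $\GL_{n/2}$, and uses a pairing $\tau\otimes\tau^{\s}\to R$ coming from $\tau^\vee\simeq\tau^\s$. We then show that both forms vanish on the socle $\varkappa$. The second form should vanish there because $\varkappa$ is not generic, while the first should vanish by a direct Mackey computation; here $\ell=2$ is needed to kill the relevant sign or orbit-cardinality constants, which are multiples of $2$ or of $q^{n/2}\pm1$. The two forms are linearly independent because they have distinct orbit supports.

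Granting this, suppose $\pi$ is not distinguished. Then the two forms descend to independent $H$-invariant forms on $\Pi/\varkappa$, and restricting to $\pi\subseteq\Pi/\varkappa$ kills them, so they factor through the cosocle $\varkappa$. This contradicts Proposition \ref{onedim}, which states that $\dim\Hom_H(\varkappa,1)\leq1$; if needed, we would prove that statement via a Mackey or multiplicity-one argument for $\varkappa$. We expect the main obstacle to be proving the vanishing of both forms on the socle and their independence modulo it. The first approach we would try is to evaluate each form on explicit vectors supported on single $Q$-orbits, using the explicit description of the socle as the image of an intertwining operator $\tau\times\tau\to\tau\times\tau$.
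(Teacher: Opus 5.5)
Your architecture matches the paper's. You induct through $\pi=\st_1(\tau)$ and use that $\Pi=\tau\times\tau$ is indecomposable of length $3$ with socle and cosocle $\varkappa$. You build the closed-orbit form $\La^0$ from $\xi_\tau\otimes\xi_\tau$ and the open-orbit form $\La^1$ from a pairing on the twisted-diagonal stabilizer, then contradict Proposition~\ref{onedim}.

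The gap is the step that carries the whole proof: that $\La^0$ and $\La^1$ vanish on the socle. You only assert it. The reason you give for $\La^1$, that $\varkappa$ is non-generic, cannot work: the socle is non-generic for every $\ell$, while the vanishing is a characteristic-$2$ phenomenon. Take the Galois case with $\tau$ the trivial character of $\k^\times$.
\begin{itemize}
\item Then $\Pi=\Ind_B^{\GL_2(\k)}1$ is the space of functions on $\mathbb{P}^1(\k)$, and its socle is the constants.
\item $\La^0$ and $\La^1$ are summation over $\mathbb{P}^1(\k_0)$ and over its complement. Their values on the constant function are $q_0+1$ and $q_0^2-q_0$.
\item Suppose $\ell\neq2$ divides $q_0^2+1$ (e.g.\ $q_0=3$, $\ell=5$). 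Then $\Pi$ has the same uniserial shape (trivial, cuspidal, trivial) with a non-generic socle, yet neither $q_0+1$ nor $q_0^2-q_0$ is divisible by $\ell$.
\end{itemize}
Both forms vanish on the socle only because $\ell=2$.

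What is needed is the paper's computation with the intertwiner $Tf(g)=\sum_{u\in U}Af(sug)$, where $A$ is the flip of $\tau\otimes\tau$. One has $T^2=1$, hence $(T+1)^2=0$ and the socle is the image of $T+1$. So it suffices to show $\La\circ T=\La$ on functions supported on a single $(Q,H)$-double coset (Lemma~\ref{baeckaoffa}).
\begin{itemize}
\item For $\La^0$, only $u=1$ and $h\in s(U\cap H)$ contribute. This gives $\La^0\circ T=|U\cap H|\cdot\La^0$, and $|U\cap H|$ is odd.
\item For $\La^1$, two inputs from $\ell=2$ are used. First, the pairing satisfies $\La^1_\tau\circ A=\mu\La^1_\tau$ with $\mu^2=1$, so $\mu=1$.
\item Second, the Mackey analysis reduces $\La^1(Tf)$ to $\sum_{t\in\Omega}\langle{\rm diag}(1,1-t\t(t))\cdot f(x),\La^1_\tau\rangle$. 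Here $x$ represents the open orbit, $\t$ is $\s$ in the Galois case and the identity in the Levi case, and $\Omega$ is the set of $t\in\Mat_{n/2}(\k)$ with $1-t\t(t)$ invertible. The terms for $t$ and $-t$ coincide, so only $t=0$ survives.
\end{itemize}

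Separately, Proposition~\ref{onedim} is not a Gelfand-pair formality in the Levi case in characteristic $2$. The paper proves it by showing $\varkappa^{(i)}=0$ for $i\neq n/2$ and $\varkappa^{(n/2)}=\tau$, then using the mirabolic filtration and Lemma~\ref{malebranche}.
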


\begin{rema}
\label{soitditenpassant2}
When $\ell\neq2$, there are $\s$-self-dual 
cuspidal $\overline{\FF}_\ell$-repre\-sen\-ta\-tions of $\GL_2(F)$ which are 
not distinguished
(see Remark \ref{haydee}).
\end{rema}

\subsection{}
\label{eugeniedanglars}

Let $G_n=\GL_n(\k)$ for some $n\>1$,
let $\s$ be one of the involutions of $G_n$ of Theorem \ref{arcueil}~and
let $H_n$ be the subgroup of $\s$-fixed points of $G_n$.
The following lemma follows from \cite{MSc} 2.4.

\begin{lemm}
Let $\pi$ be a cuspidal representation of $G_n$.
\begin{enumerate}
\item 
There are a unique divisor $r$ of $n$
and a supercuspidal representation $\rho$ of $G_{n/r}$,
unique~up~to isomorphism,
such that $\pi$ occurs as a subquotient of the induced
representation $\rho^{\times r}$.
\item
One has $r=2^a$ for some $a\>0$.
\item
$\pi$ is the unique cuspidal subquotient
of $\rho^{\times r}$ and it occurs with multiplicity $1$.
\end{enumerate} 
In this situation, we will write $\pi=\st_a(\rho)$.
\end{lemm}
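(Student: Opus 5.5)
Let $\pi$ be a cuspidal $R$-representation of $G_n$, with $R$ of characteristic $\ell=2$. The plan is to deduce everything from the general description of cuspidal representations of finite general linear groups in \cite{MSc} \S2.4, specialised to $\ell=2$.

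For (1), I will take the supercuspidal support of $\pi$. Every irreducible representation of $G_n$ is a subquotient of some $\rho_1\times\dots\times\rho_t$ with supercuspidal $\rho_i$, and the multiset $\{\rho_i\}$ is unique up to permutation and isomorphism. Since $\pi$ is cuspidal, the classification of \cite{MSc} (equivalently \cite{NRV25} Proposition 3.9, as recalled in \S\ref{parlev}) forces all the $\rho_i$ to be isomorphic to a single $\rho$ of degree $n/r$. Uniqueness of $r$ and of $\rho$ then follows from uniqueness of the supercuspidal support.

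For (2), I will recall that in \cite{MSc} the possible values of $r$ are $1$ and $e(\rho)\ell^u$ for $u\geq0$. Here $e(\rho)$ is the order of $q^{n/r}$ mod $\ell$, and more intrinsically it is the order of $q^{\deg\rho}$ in $R^\times$. When $\ell=2$, $q$ is odd, so $q^{n/r}\equiv1$ mod $2$ and $e(\rho)=1$. Hence $r=2^u$, and the case $r=1$ is $a=0$. This is the step where the characteristic enters, and it is immediate.

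For (3), I will use the fact from \cite{MSc} that a cuspidal subquotient of $\rho^{\times r}$ exists only for the allowed values of $r$. In that case the cuspidal subquotient is the generic one, that is, the unique generic subquotient, and it occurs with multiplicity one. The multiplicity-one property is the $\ell$-modular analogue of the uniqueness of the generic subquotient. I would justify it directly: $\rho^{\times r}$ is of Whittaker type by Lemma \ref{LemmabasicsWTfams}, so exactly one of its irreducible subquotients is generic, and that one has multiplicity $1$ because $\dim\Hom_N(\rho^{\times r},\psi)=1$ and $\Hom_N(-,\psi)$ is exact, $N$ being a $p$-group. Any cuspidal irreducible representation of $G_n$ is generic (its restriction to $P_n$ contains $\Ind^P_N\psi$; see \cite{Vigbook} III.1.1). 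So any cuspidal subquotient is the generic one, which gives uniqueness and multiplicity one.

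The main obstacle I anticipate is not any computation. It is correctly citing the precise statement in \cite{MSc} that cuspidal non-supercuspidal representations arise exactly for $r=e(\rho)\ell^u$ with identical supercuspidal support. Given that statement, (2) and (3) follow as above.
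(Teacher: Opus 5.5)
Your proposal is correct and takes essentially the same route as the paper, which simply deduces the lemma from \cite{MSc} 2.4: uniqueness of the supercuspidal support gives (1), the allowed values $r\in\{1\}\cup\{e(\rho)\ell^u\}$ together with $e(\rho)=1$ (since $q$ is odd when $\ell=2$) give (2), and the same reference gives (3). Your direct proof of (3) supplements what the paper leaves to the citation: cuspidal representations are generic, and $\rho^{\times r}$ is of Whittaker type, so its generic subquotient is unique and occurs with multiplicity one.
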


Let $\pi$ be a cuspidal representation of $G_n$.

\begin{lemm}
The induced~repre\-sentation $\pi\times\pi$ is indecomposable of length $3$.
It has~a~uni\-que cuspidal subquotient, which occurs with multiplicity $1$,
and a non-cuspidal irreducible subquotient~occur\-ring with multiplicity $2$. 
\end{lemm}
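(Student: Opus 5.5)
We have to show that $\pi\times\pi$ is indecomposable of length $3$, with a unique cuspidal subquotient of multiplicity $1$ and one non-cuspidal irreducible subquotient of multiplicity $2$. Here $\ell=2$ and $\pi$ is a cuspidal representation of $G_n$. The plan is to use the Hecke algebra $\End_{G_{2n}}(\pi\times\pi)$, which has dimension $2$, together with the order of $q^n$ modulo $\ell$.

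\textbf{Step 1: the endomorphism algebra.} By the Mackey formula and cuspidality of $\pi$, one has
\[
\dim\End(\pi\times\pi)=|W(M,M)|=2 .
\]
Its basis is the identity and an intertwining operator $T$ attached to the nontrivial Weyl element. Let $\pi_1=\rho^{\times 2^a}$-cuspidal, with $\rho$ of degree $k$ and $n=2^ak$. The algebra is a Hecke algebra with quadratic relation
\[
T^2=(Q-1)T+Q,
\]
where $Q$ is the relevant parameter for the cuspidal $\pi$. Since $\pi\times\pi$ contains a cuspidal subquotient $\st_{a+1}(\rho)$, the reducibility condition of \cite{MSc} says that $Q\equiv -1=1$ modulo $2$. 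The relation therefore becomes $(T-1)^2=0$ in characteristic $2$, so the endomorphism algebra is local and non-semisimple. A local endomorphism algebra gives indecomposability.

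\textbf{Step 2: the composition factors.} I would count them through the Jacquet module. Since $\pi$ is cuspidal, the Jacquet module of $\pi\times\pi$ along $M=G_n\times G_n$ is $(\pi\otimes\pi)^{\oplus2}$, and all other proper Jacquet modules vanish.

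The cuspidal support of every subquotient is $\rho^{\times 2^{a+1}}$. By part (3) of the previous lemma applied with $r=2^{a+1}$, exactly one subquotient is cuspidal, namely $\st_{a+1}(\rho)$, and it occurs with multiplicity $1$.

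Every non-cuspidal subquotient has a nonzero Jacquet module along some parabolic. Its $M$-Jacquet module is nonzero and a multiple of $\pi\otimes\pi$. The reason is that cuspidal support forces the relevant Jacquet module to be along $M$, since $\pi$ is cuspidal and not further decomposable. By exactness, the non-cuspidal subquotients together contribute $(\pi\otimes\pi)^{2}$. So there are either two non-cuspidal factors, each contributing once, or one factor contributing twice.

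\textbf{Step 3: ruling out distinct non-cuspidal factors (main obstacle).} To show the two non-cuspidal factors are one and the same irreducible $\tau$, I would use Frobenius reciprocity:
\[
\Hom(\pi\times\pi,\tau)=\Hom(\pi\otimes\pi,r_M\tau).
\]
Both the socle and the cosocle of $\pi\times\pi$ are non-cuspidal, because a cuspidal subrepresentation would contradict cuspidality of $\pi\otimes\pi$ under the second adjunction. Both are also irreducible, because
\[
\dim\Hom(\pi\otimes\pi,(\pi\otimes\pi)^2)=2
\]
together with the local algebra from Step 1 forces a simple head. By the contragredient/duality argument the same holds for the socle.

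The socle is $\sigma\subseteq\pi\times\pi$, and $\Hom(\pi\times\pi,\sigma)\neq0$ through $T-1$. So $\sigma$ also appears as a quotient, and the head equals the socle, both being $\tau$. As a result, $\tau$ appears twice. The cuspidal factor then sits in the middle, which gives length $3$ with the stated multiplicities.

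The subtle point is checking that $T-1$ has image exactly the socle $\tau$. The plan is to use that $(T-1)^2=0$ while $T-1\neq0$: its image is a nonzero quotient contained in the kernel, and this identifies the head with the socle.
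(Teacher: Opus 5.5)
Your route differs from the paper's. There, the cuspidal factor comes from uniqueness of the generic subquotient, and the length-$2$ Jacquet module bounds the non-cuspidal factors. The fact that the cuspidal factor is neither a sub nor a quotient then forces length $3$ and indecomposability. Finally, the two non-cuspidal factors are identified by \cite{MSc} Th\'eor\`eme 5.3: there are only two isomorphism classes of subquotients, as there are two partitions of $2$. You instead get indecomposability from the locality of $\End(\pi\times\pi)$, and you identify the factors through the nilpotent endomorphism $T-1$. This trades the counting theorem of \cite{MSc} for the relation $T^2=1$, which the paper itself uses in \S\ref{strategik}. Steps 1 and 2 are fine.

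Step 3, however, is not complete as written.

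First, ``a local endomorphism algebra forces a simple head'' is not a valid inference in general. It does hold here, because $r_M(\pi\times\pi)\simeq(\pi\otimes\pi)^{\oplus 2}$ is semisimple. Frobenius reciprocity then makes $\End(\pi\times\pi)\to\End(\mathrm{hd}(\pi\times\pi))$ surjective, and a quotient of a local algebra is local. You did not state this.

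Second, and more importantly, ``$\Hom(\pi\times\pi,\sigma)\neq0$ through $T-1$'' needs $\mathrm{Im}(T-1)$ to be irreducible. That is exactly the point you leave open, and $(T-1)^2=0$ alone only gives $\mathrm{Im}(T-1)\subseteq\Ker(T-1)$.

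What is missing is a count, and it makes the head/socle discussion unnecessary:
\begin{itemize}
\item Since $\mathrm{Im}(T-1)\subseteq\Ker(T-1)$ and $(\pi\times\pi)/\Ker(T-1)\simeq\mathrm{Im}(T-1)$, every composition factor of the non-zero module $\mathrm{Im}(T-1)$ occurs at least twice in $\pi\times\pi$.
\item The cuspidal factor occurs only once, so $\mathrm{Im}(T-1)$ has only non-cuspidal factors.
\item By your Step 2 there are at most two non-cuspidal factors counted with multiplicity.
\end{itemize}
Hence $\mathrm{Im}(T-1)$ is an irreducible non-cuspidal $\tau$ occurring exactly twice. Together with $\st_{a+1}(\rho)$, this gives length $3$ with the stated multiplicities.
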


\begin{proof}
First,
$\pi\times\pi$ contains a uni\-que generic subquotient, occurring with 
multiplicity $1$.~Wri\-ting $\pi$ under the form $\st_a(\rho)$,
it also contains the cuspidal (thus generic) representation $\st_{a+1}(\rho)$.
This proves the second assertion.
Its unique non-zero proper Jacquet module has length $2$
and is made of $\pi\otimes\pi$ with multiplicity $2$.
It thus has at most two non-cuspidal irreducible subquotients. 
Since its cuspidal subquotient can't appear as a subrepresentation
nor a quotient, 
this length~has to be~$3$ and $\pi\times\pi$ is indecomposable.
By \cite{MSc} Théorème 5.3, 
the number of its non-iso\-morphic ir\-re\-ducible subquotients is equal to
the number of partitions of $2$, which is $2$. 
Thus the non-cuspidal irreducible subquotients are isomorphic. 
\end{proof}

Let us de\-no\-te by $\st_1(\pi)$ the cuspidal subquotient of $\pi\times\pi$
and by $\tau$ its unique irreducible~quo\-tient,
which is isomorphic to its unique irreducible subrepresentation.
In this paragraph, we~re\-du\-ce the proof of Theorem \ref{arcueil} to that of the 
the following proposition.

\begin{prop}
\label{AndreaCavalcanti}
Let $\pi$ be an $H_n$-distinguished cuspidal representation of $G_n$ for
some $n\>1$.
Then the cuspidal representation $\st_1(\pi)$ of $G_{2n}$ is
$H_{2n}$-distinguished.
\end{prop}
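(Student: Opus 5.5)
The plan is the strategy sketched in the introduction: construct two linearly independent $H_{2n}$-invariant linear forms on $\pi\times\pi$ that vanish on its socle $\tau$, and play them off against the multiplicity one of Proposition \ref{onedim} for the cosocle. Since $\pi\times\pi$ is indecomposable of length $3$ with socle $\tau$, cosocle $\tau$ and middle factor $\st_1(\pi)$, the invariant forms vanishing on the socle are exactly $\Hom_{H_{2n}}((\pi\times\pi)/\tau,1)$. The quotient $(\pi\times\pi)/\tau$ is an extension with subrepresentation $\st_1(\pi)$ and quotient $\tau$. Hence
\begin{equation*}
\dim\Hom_{H_{2n}}((\pi\times\pi)/\tau,1)\<\dim\Hom_{H_{2n}}(\tau,1)+\dim\Hom_{H_{2n}}(\st_1(\pi),1).
\end{equation*}
If $\st_1(\pi)$ were not distinguished, the right-hand side would be at most $1$ by Proposition \ref{onedim}. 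So it suffices to exhibit two independent invariant forms on $\pi\times\pi$ killing $\tau$.

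To construct invariant forms, I will use Mackey theory for the restriction of $\pi\times\pi$ to $H_{2n}$. Write the involution so that $H_{2n}\cap Q$ is well behaved, using a non-standard parabolic if necessary, as in \eqref{kakiIw1}, \eqref{kakiIw2}. The $(Q,H_{2n})$-double cosets contributing to $\Hom_{H_{2n}}(\pi\times\pi,1)$ are then of two kinds.
\begin{itemize}
\item The closed, $\s$-stable coset $QH_{2n}$, where $Q\cap H_{2n}$ has Levi part $H_n\times H_n$. It contributes $\Hom_{H_n}(\pi,1)\otimes\Hom_{H_n}(\pi,1)$, which is one-dimensional since $\pi$ is distinguished; this gives a form $\xi_1$ built as in the proof of Lemma \ref{cuspsatcpsi2}.
\item A coset $Q\eta H_{2n}$ with $\s(\eta)\eta^{-1}$ exchanging the two blocks. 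Its stabiliser is essentially $\{(g,\s(g))\}\simeq G_n$ inside the Levi, and it contributes $\Hom_{G_n}(\pi\otimes\pi^\s,1)$. This is non-zero because $\pi$ is $\s$-self-dual (Lemma \ref{distselfdual}), giving a second form $\xi_2$.
\end{itemize}
In characteristic $2$ with finite groups, the sum over an open orbit may fail to extend, so I will define $\xi_2$ directly by a sum over $H_{2n}$ against functions supported anywhere. The two forms are independent because their supports on $Q\backslash G_{2n}$ differ; I will check this on suitable test functions supported on a single double coset.

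The main obstacle is showing that both forms, or two independent combinations of them, vanish on the socle $\tau$. Here I would exploit $\ell=2$. The intertwining operator $M:\pi\times\pi\to\pi\times\pi$ has image $\tau$, since its image is a proper subrepresentation and the socle is $\tau$, and it is nilpotent: $M^2=0$ for the analogous finite-field argument in characteristic $2$. So $\tau=\operatorname{Im}M$, and a form $\xi$ vanishes on $\tau$ iff $\xi\circ M=0$. I will compute $\xi_i\circ M$ via Mackey. $M$ maps the double coset pieces into each other, and $\xi_1\circ M$ and $\xi_2\circ M$ become multiples of $\xi_1,\xi_2$ with coefficients given by orbit cardinalities, such as $|U\cap H_{2n}|$ or the index of subgroups. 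I expect these coefficients to be even, hence $0$ in $R$, with Proposition \ref{onedim} guaranteeing there is no room for anything else.

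If a direct computation is messy, the fallback is a dimension count. Show $\dim\Hom_{H_{2n}}(\pi\times\pi,1)\>2$ via the two orbits, and $\dim\Hom_{H_{2n}}(\tau,1)\<1$ by Proposition \ref{onedim}. Then the restriction map to the socle has a kernel of dimension at least $1$. Pushing this further requires controlling the middle factor: one needs forms that factor through the quotient by the socle, which requires at least $2$ independent ones. I would try to get the bound $3$ on $\dim\Hom_{H_{2n}}(\pi\times\pi,1)$ by finding a third orbit contribution when available, or else rely on the computation of $\xi_i\circ M$.
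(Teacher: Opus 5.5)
Your architecture is the paper's, and the reduction to Proposition~\ref{onedim} via $0\to\st_1(\pi)\to(\pi\times\pi)/\tau\to\tau\to0$ is correct. Your $\xi_1$ (closed orbit $QH_{2n}$, built from $\lambda\otimes\lambda$) is the paper's $\Lambda^0$, your $\xi_2$ (orbit of $x$ with $\sigma(x)x^{-1}=\delta s$, built from an isomorphism $\pi\simeq\pi^{\vee\theta}$) is its $\Lambda^1$, and independence by supports is fine. The gap is that the only non-formal step, that both forms vanish on the socle, is never proved. ``I expect these coefficients to be even'' is exactly where the proof lives and where $\ell=2$ is used. Take the intertwiner $Tf(g)=\sum_{u\in U}Af(sug)$, for which $T^2=1$ and the socle is the image of $T+1$. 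You must compute the matrix of $T$ on the span of $\xi_1,\xi_2$ and show it is the identity in $R$. Two smaller points. First, the image of your nilpotent operator is $\tau$ not because it is a proper subrepresentation, but because $M^2=0$, $M\neq0$, and $\pi\times\pi$ has length $3$ with simple socle. Second, your fallback cannot replace this computation: a lower bound on $\dim\Hom_{H_{2n}}(\pi\times\pi,1)$ says nothing about forms killing $\tau$.

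The computation also does not have the shape you predict.

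For $\xi_1$: if $f$ is supported on $Q$, the only pairs with $suh\in Q$ are $u=1$, $h=sv$ with $v\in U\cap H_{2n}$ (this uses $s\in H_{2n}$). So $\xi_1(Tf)=|U\cap H_{2n}|\,\xi_1(f)$, and this cardinality is a power of the odd prime $p$, hence $1$ in $R$, not $0$.

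For $\xi_2$ you need two ingredients you do not mention. The first is the normalization $\Lambda^1_\pi\circ A=\Lambda^1_\pi$: the scalar $\mu$ with $\Lambda^1_\pi\circ A=\mu\Lambda^1_\pi$ satisfies $\mu^2=1$, so $\mu=1$ because $R$ has characteristic $2$. The second concerns $f$ supported on $Qx$. Then $\xi_2(Tf)$ is the sum of $\langle\mathrm{diag}(1,1-t\theta(t))\cdot f(x),\Lambda^1_\pi\rangle$ over all $t\in\Mat_n(\k)$ with $1-\theta(t)t$ invertible. These terms are values at different vectors, not multiples of $\xi_2(f)$. They cancel only in pairs $\{t,-t\}$, leaving the $t=0$ term.

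The off-diagonal components must be controlled too, and they do not vanish for support reasons. Already in the Galois case with $n=1$, for $f$ supported on $Qx$ one gets $\xi_1(Tf)=(q_0+1)\,\pi(\det x)^{-1}f(x)$ with $q_0=|\k_0|$. This is zero only because $q_0+1$ is even.

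Finally, consider the Levi case with $n=1$. There $s\notin H_2$, so $QsH_2$ is a second closed orbit carrying an invariant form, and $\xi_1$ alone does not kill the socle: for $\pi=1$ the socle is the constants and $\xi_1(f)=f(1)$. In that case you need the sum of the two closed-orbit forms.
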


Let us explain how this proposition implies Theorem \ref{arcueil}.
Let $\pi$ be a $\s$-self-dual cuspidal~repre\-sen\-tation of $G_n$ for some $n\>1$.
It is of the form $\st_{a}(\rho)$ for an integer~$a\>0$ 
and a supercuspidal representation $\rho$.
By uniqueness, $\rho$ is $\s$-self-dual,
thus it is distinguished by \cite{VSANT19} Theorem 10.8. 
On the other hand,
we denote by $\pi_i$ the cuspidal representation $\st_{i}(\rho)$.
One has $\pi_{i+1}=\st_1(\pi_i)$ for all $i\>0$.
By induction,
thanks to Proposition \ref{AndreaCavalcanti} and the supercuspidal case,
$\pi_i$ is distin\-gui\-shed for all $i\>0$.
In particular, $\pi_a=\pi$ is distinguished.

It thus remains to prove Proposition \ref{AndreaCavalcanti}.~We
will prove it by contradiction,
assuming that the cuspidal representation
$\st_1(\pi)$ is not $H_{2n}$-distinguished.

\subsection{}

Let $\pi$ be an $H_n$-distinguished cuspidal representation of $G_n$ for
some $n\>1$,
and let $\tau$ be the unique~irreduci\-ble quotient of $\pi\times\pi$. 
We first prove the following multiplicity $1$ result.

\begin{prop}
\label{onedim}
The space $\Hom_{H_{2n}}(\tau,1)$ has dimension at most $1$. 
\end{prop}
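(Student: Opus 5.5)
The plan is to bound $\dim\Hom_{H_{2n}}(\tau,1)$ by studying $H_{2n}$-invariant forms on $\pi\times\pi$ through the Mackey (orbit) decomposition. Since $\tau$ is the unique irreducible quotient of $\pi\times\pi$, we have $\Hom_{H_{2n}}(\tau,1)\subseteq\Hom_{H_{2n}}(\pi\times\pi,1)$. The latter space is computed by Mackey's formula:
\begin{equation*}
\Hom_{H_{2n}}(\pi\times\pi,1)\simeq\prod_{x\in Q\backslash G_{2n}/H_{2n}}\Hom_{Q^{x}\cap H_{2n}}\big((\pi\otimes\pi)^{x},1\big),
\end{equation*}
where $Q$ is the parabolic subgroup with Levi $M=G_n\times G_n$. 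This is a genuine direct decomposition, because we are over a finite group, although each factor must be computed with modular coefficients.

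The first step is to classify the double cosets $Q\backslash G_{2n}/H_{2n}$, that is, the $\s$-orbits of $n$-dimensional subspaces, and to compute $M^x\cap H_{2n}$ for each of them. Cuspidality of $\pi$ kills every orbit for which $Q^x\cap H_{2n}$ projects onto a subgroup of $M$ containing the unipotent radical of a proper parabolic of one factor. The standard argument, as in the Galois and Levi cases in the literature over $\CC$ and as in \cite{VSANT19}, should leave only two contributions. The first is the closed orbit, where $M\cap H_{2n}=H_n\times H_n$; it contributes $\Hom_{H_n}(\pi,1)\otimes\Hom_{H_n}(\pi,1)$, which is one-dimensional because $\pi$ is distinguished. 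The second is the open orbit, where $M^x\cap H_{2n}$ is the twisted diagonal $\{(g,\s(g))\}$ in the Galois case, or the diagonal $G_n$ in the Levi case. It contributes $\Hom_{G_n}(\pi\otimes\pi^{\s},1)$, which is one-dimensional since $\pi$ is $\s$-self-dual by Lemma \ref{distselfdual}. Hence $\dim\Hom_{H_{2n}}(\pi\times\pi,1)\le 2$, and we will need to improve this bound by one for $\tau$.

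The second step is to use the structure of $\pi\times\pi$ as a uniserial module $\tau\,/\,\st_1(\pi)\,/\,\tau$, with $\tau$ in the socle and in the cosocle. I would identify which orbit's invariant form restricts non-trivially to the socle $\tau$. The natural candidate is the open-orbit form: it is supported on the open cell, and its restriction to the subrepresentation generated by functions supported there is non-zero. One then has to show that a form factoring through the cosocle $\tau$ must vanish on the socle, and that any two forms factoring through $\tau$ have linearly dependent open-orbit components. A cleaner formulation is this: any $\lambda\in\Hom_{H_{2n}}(\tau,1)$, pulled back to $\pi\times\pi$, vanishes on the kernel of $\pi\times\pi\to\tau$, and in particular on the socle copy of $\tau$. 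It therefore suffices to show that the map from $\Hom_{H_{2n}}(\pi\times\pi,1)$ to $\Hom_{H_{2n}}(\mathrm{soc},1)$, given by restriction, is non-zero. This gives dimension at most $2-1=1$.

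The main obstacle is the non-vanishing of this restriction to the socle. I would attempt it by realising the socle $\tau$ explicitly inside $\pi\times\pi$, for instance as the image of the intertwining operator $\pi\times\pi\to\pi\times\pi$, whose image is $\tau$ in this reducible modular situation. I would then evaluate the open-orbit form on a function supported in the open cell lying in that image. An alternative is to compute $\Hom_{H_{2n}}(\tau,1)$ directly by applying the same Mackey analysis to $\tau$ viewed as a subrepresentation, using exactness properties of Jacquet modules. If the explicit construction fails, I would fall back on a dimension count modulo the cuspidal piece: $\st_1(\pi)$ has $\Hom_{H_{2n}}$ of dimension at most $1$ by the multiplicity-one results for cuspidals, and then inclusion--exclusion along the filtration gives the bound.
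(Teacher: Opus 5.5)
Your reduction is logically sound, but its key input fails. You need the restriction map $\Hom_{H_{2n}}(\pi\times\pi,1)\to\Hom_{H_{2n}}(\mathrm{soc}(\pi\times\pi),1)$ to be non-zero, and this is false here because $R$ has characteristic $2$.

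The socle is the image of $T+1$, where $Tf(g)=\sum_{u\in U}Af(sug)$ and $T^2=1$. So, in characteristic $2$, a form $\Lambda$ kills the socle if and only if $\Lambda\circ T=\Lambda$. Test your two orbit contributions in the Galois case, or in the Levi case with $n$ even (so that $s\in H_{2n}$).
\begin{itemize}
\item The closed-orbit form $\Lambda^0(f)=\sum_{h\in Q\cap H\backslash H}\langle f(h),\lambda\otimes\lambda\rangle$ satisfies $\Lambda^0\circ T=|U\cap H|\cdot\Lambda^0=\Lambda^0$, since $|U\cap H|$ is a power of $p$, hence odd.
\item For the open-orbit form $\Lambda^1$, the normalisation $\Lambda^1_\pi\circ A=\Lambda^1_\pi$ is forced by $A^2=1$ in characteristic $2$. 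In $\Lambda^1(Tf)$ the terms indexed by $t$ and $-t$, with $t\neq0$, are equal and so add up to $0$. Hence $\Lambda^1\circ T=\Lambda^1$.
\end{itemize}
By your own orbit count these two forms span $\Hom_{H_{2n}}(\pi\times\pi,1)$. So your restriction map is identically zero, and the Mackey analysis only yields $\dim\Hom_{H_{2n}}(\tau,1)\le 2$. (In the Levi case with $n=1$, which is needed for Theorem \ref{arcueil}, $s\notin H_2$ and three orbits contribute, but the invariant forms on $1\times1$ killing its socle still span a plane.)

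This is not an accident. The vanishing of $\Lambda^0,\Lambda^1$ on the socle is exactly what the paper exploits afterwards. Inside $\pi\times\pi$, the only way to get from $2$ down to $1$ would be to exhibit an invariant form that is non-zero on the middle constituent $\st_1(\pi)$. That is Proposition \ref{AndreaCavalcanti}, which Proposition \ref{onedim} is meant to prove.

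Your fallback fails as well. Since $2$ divides $|H_{2n}|$, the functor $\Hom_{H_{2n}}(-,1)$ is only left exact. The filtration then gives $\dim\Hom_{H_{2n}}(\pi\times\pi,1)\le 2\dim\Hom_{H_{2n}}(\tau,1)+\dim\Hom_{H_{2n}}(\st_1(\pi),1)$, which is a lower bound on $\dim\Hom_{H_{2n}}(\tau,1)$, not an upper one.

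What is missing is an argument carried out on $\tau$ itself. In the Galois case the bound is simply the Gelfand-pair property for irreducible $R$-representations (\cite{VSANT19} Remark 4.3). In the Levi case the paper bounds the larger space $\Hom_{P\cap H}(\tau,1)$ through the Bernstein--Zelevinsky filtration of $\tau|_P$:
\[
\dim\Hom_{P\cap H}(\tau,1)\le\sum_i\dim\Hom_{P\cap H}\big((\Phi^+)^{i-1}\Psi^+\tau^{(i)},1\big).
\]
It then computes the derivatives of $\tau$.
\begin{itemize}
\item By exactness of derivatives and the Leibniz rule, $(\pi\times\pi)^{(i)}=0$ for $i\notin\{n,2n\}$, and $(\pi\times\pi)^{(n)}$ has semisimplification $\pi+\pi$.
\item Since $\st_1(\pi)$ is cuspidal, $\st_1(\pi)^{(n)}=0$, so $\tau^{(n)}=\pi$.
\item Since $\tau$ is not generic, $\tau^{(2n)}=0$.
\end{itemize}
Hence only the term $i=n$ survives. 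Lemma \ref{psiprop} identifies it with $\Hom_{H_n}(\pi,1)$, which is one-dimensional.
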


\begin{proof}
In the Galois case, 
this is \cite{VSANT19} Remark 4.3.
We will thus assume from now on that~we are in the Levi case. 
For simplicity, we wri\-te~$H=H_{2n}$, $P=P_{2n}$ and $G=G_{2n}$.
Since $\Hom_{H}(\tau,1)$ is~con\-tained in
$\Hom_{P\cap H}(\tau,1)$, it~suf\-fices to prove 
\begin{equation*} 
\dim \Hom_{P\cap H}(\tau,1) \< 1.
\end{equation*}
Our argument is inspired from \cite{AnandMatringe} Corollary 4.4. 
We will use the theory of derivatives for $R$-re\-pre\-sentations of general
linear groups over $\k$,
for which we refer to \cite{Vigbook}~III.1.~(Unlike~\cite{Vigbook}, 
we will use the usual notation $\Phi^+$, $\Phi^-$, $\Psi^+$, $\Psi^-$,
the definition of which can be found~in \cite{AnandMatringe}~Section 4
for instance.)
It will be convenient to define $G_0=H_0$ to be the trivial group.
We will need~the following property (see \cite{AnandMatringe} Proposition
4.3 in the Galois case for $R=\CC$).

\begin{lemm}
\label{psiprop}
\begin{enumerate}
\item 
For any $i\>2$ and any representation $\kappa$ of $P_{i-1}$,
one has an isomorphism of $R$-vector spaces
\begin{equation}
\label{iso1}
\Hom_{P_i\cap H_i}(\Phi^+\kappa,1) \simeq \Hom_{P_{i-1}\cap H_{i-1}}(\kappa,1).
\end{equation}
\item
For any $i\>1$ and any representation $\mu$ of $G_{i-1}$,
one has an equality
\begin{equation}
\label{iso2}
\Hom_{P_i\cap H_i}(\Psi^+\mu,1) = \Hom_{G_{i-1}\cap H_{i-1}}(\mu,1).
\end{equation}
\end{enumerate} 
\end{lemm}

\begin{proof}
Let us embed $G_{i-1}$ in $G_i$ via $g \mapsto {\rm diag}(g,1)$.
Then \eqref{iso2} immediately follows from~the fact that
$P_i\cap H_i = H_{i-1}(U_i\cap H_i)$ (this is \eqref{kakiMi}),
and \eqref{iso1} is given by \cite{VSANT19} Lemma 2.10. 
\end{proof}

First,
it follows from \cite{Vigbook} III.1.3 that
\begin{equation}
\label{majdim}
\dim \Hom_{P\cap H}(\tau,1) \<
\sum \limits_{i=1}^{2m} \dim \Hom_{P\cap H}((\Phi^+)^{i-1}\Psi^+\tau^{(i)},1).
\end{equation}
We are going to prove that $\tau^{(i)}$ is zero,
unless $i=n$ in which case $\tau^{(n)}=\pi$.
It will follow that
\begin{equation*}
\label{majdim2}
\dim \Hom_{P\cap H}(\tau,1) \<
\dim \Hom_{P\cap H}((\Phi^+)^{i-1}\Psi^+\pi,1) =
\dim \Hom_{H_n}(\pi,1) = 1
\end{equation*}
where the first equality follows from Lemma \ref{psiprop}
and the second one from Lemma \ref{malebranche}. 

Recall that the semi-simplification of $\pi\times\pi$ is equal to
$\st_1(\pi) + 2\tau$.
Since the derivative functors are exact,
the semi-simplification of $(\pi\times\pi)^{(i)}$ is equal to
$\st_1(\pi)^{(i)} + 2\tau^{(i)}$ for all $i\>0$.~The Leib\-niz~rule
(\cite{Vigbook} III.1.10)
together with the fact that $\pi$ is cuspidal imply that
$(\pi\times\pi)^{(i)}$ is zero,~un\-less $i\in\{n,2n\}$,
and that $(\pi\times\pi)^{(n)}$ has length $2$ with subquotient
$\pi$ occurring twice.
For $i=2n$,~the
derivative $\tau^{(2n)}$ is zero since $\tau$ is not generic.
For $i=n$,
$\st_1(\pi)^{(n)}$ is zero since $\st_1(\pi)$ is cuspidal,
thus $\tau^{(n)}=\pi$.
\end{proof}

\subsection{}
\label{strategik}

We are now going to construct two independent non-zero $H_{2n}$-invariant 
linear forms on~the induced representation
$\pi\times\pi$ vanishing on the socle of $\pi\times\pi$. 
Assuming that the cuspidal~repre\-sen\-tation
$\st_1(\pi)$ is not $H_{2n}$-distinguished,
they will thus vanish on the maximal proper subrepre\-sentation of 
$\pi\times\pi$,
thus inducing two independent
non-zero $H_{2n}$-invariant linear forms~on $\tau$~which
will contradict Proposition \ref{onedim}.
For simplicity, we will wri\-te~$H=H_{2n}$, $P=P_{2n}$ and $G=G_{2n}$.

Let $Q$ be the standard~para\-bo\-lic subgroup of $G$ of size $(n,n)$
together with its standard Levi subgroup $M$ and unipotent radical $U$.
The endomorphism algebra $\End_G(\pi\times\pi)$ contains~an~ele\-ment
$T$ defined by
\begin{equation*}
Tf(g) = \sum \limits_{u \in U} A f(sug)
\end{equation*}
for $f\in\pi\times\pi$ and $g\in G$,
where $A$ is the isomorphism $v \otimes w \mapsto w \otimes v$ on the tensor
square of the space of $\pi$.
Denoting by $s$ the element
\begin{equation*}
s = \begin{pmatrix}0&1_n\\ 1_n&0\end{pmatrix} \in G,
\end{equation*}
this isomorphism is the unique intertwiner between the representation 
$\pi\otimes\pi$ of $M$ and its conjugate by $s$ such that $A^2$ is the 
identity. 
The element $T$ satisfies $T^2=1$, or equivalently $(T+1)^2=0$,
and the image of $T+1$ is the socle of $\pi\times\pi$.
A linear form $\La$ on the space of $\pi\times\pi$ thus vanishes on the socle of 
$\pi\times\pi$ if and only if $\La\circ T=\La$.

Note that $Q$, $U$ and $M$ are $\s$-stable and that $s\in H$.
We will need the following lemma.

\begin{lemm}
\label{baeckaoffa}
\begin{enumerate}
\item (Galois case)
For $i=0,\dots,n$, define
\begin{equation*}
\g_i =
\begin{pmatrix}
1_{i} & & & \\ & 0 & 1_{n-i} & \\ & 1_{n-i} & 0 & \\ & & & 1_{i}
\end{pmatrix}.
\end{equation*}
For any choice of $x_i \in G$ such that $\s(x_i^{\phantom{1}})x_i^{-1}=\g_i$, 
the set $\{x_0,\dots,x_n\}$ is a set of representatives of $(Q,H)$-double
cosets of $G$.
\item (Levi case)
For $i,j\in\{0,\dots,n\}$ such that $0\<i+j\<n$, define
\begin{equation*}
\g_{i,j} =
\d_{2n} 
\begin{pmatrix}
  1_i & & & & \\
  & -1_j & & & \\
  & & 0 & 1_{n-i-j} & & \\
  & & 1_{n-i-j} & 0 & & \\
  & & & & 1_j & \\
  & & & & & -1_i
\end{pmatrix}. 
\end{equation*}
For any choice of $x_{i,j} \in G$ such that $\s(x_{i,j}^{\phantom{1}})x_{i,j}^{-1}=\g_{i,j}$, 
the set $\{x_{i,j},0\<i+j\<n\}$ is a set~of~re\-presentatives of $(Q,H)$-double
cosets of $G$.
\end{enumerate}
\end{lemm}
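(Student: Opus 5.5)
The plan is to identify $Q\backslash G/H$ with the $G$-orbit of $Q$ in the flag variety, i.e. with the set of $n$-dimensional subspaces $V\subseteq\k^{2n}$ (via $g\mapsto g^{-1}V_0$, with $V_0$ the span of the first $n$ basis vectors stabilised by $Q$), and to classify $H$-orbits of such subspaces.

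\emph{Galois case.} $H=\GL_{2n}(\k_0)$ acts on $n$-dimensional $\k$-subspaces $V$ of $\k^{2n}$. The invariant is $i=\dim_\k(V\cap\s(V))$, which ranges over $0,\dots,n$. Two subspaces with the same $i$ are $H$-conjugate. To see this, write $V\cap\s(V)=W\otimes\k$ and $V+\s(V)=W'\otimes\k$ by Galois descent, with $W\subseteq W'$ defined over $\k_0$ of dimensions $i$ and $2n-i$. Then $V/(V\cap\s V)$ is a complement in $(V+\s V)/(V\cap\s V)$ to its $\s$-conjugate. Such complements form a single orbit under $\GL(W'/W)(\k_0)$, since a descended basis $e$ of the quotient can be written as $v+\s(v)$ with $v$ running over a basis of $V/(V\cap\s V)$. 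This gives exactly $n+1$ double cosets. To match the statement, compute for $x_i$ with $\s(x_i)x_i^{-1}=\g_i$ the invariant of $V=x_i^{-1}V_0$. We have $\s(V)=\s(x_i)^{-1}V_0=x_i^{-1}\g_i^{-1}V_0$, so $\dim(V\cap\s V)=\dim(V_0\cap\g_i^{-1}V_0)$. This equals $i$ by inspection of the permutation matrix $\g_i$ (it fixes the first $i$ coordinates and swaps the middle blocks). The existence of $x_i$ follows from Lang's theorem, or directly from $H^1(\Gal(\k/\k_0),\GL_{2n}(\k))=1$ since $\g_i\s(\g_i)=1$. Hence the $x_i$ represent pairwise distinct double cosets, and they are all of them.

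\emph{Levi case.} Here $H$ is the centralizer of $\d_{2n}$, so $\k^{2n}=E_+\oplus E_-$ with $\dim E_\pm=n$, and $H=\GL(E_+)\times\GL(E_-)$. The $H$-orbits on $n$-dimensional $V$ are classified by $a=\dim(V\cap E_+)$ and $b=\dim(V\cap E_-)$, subject to $a+b\leqslant n$, $a\leqslant n$, $b\leqslant n$. The proof is the standard argument that the projections of $V$ to $E_\pm$, together with the graph of an isomorphism between the quotients, are determined up to $H$ by $(a,b)$. With $\s(V)=\d V$ in general, one again computes $V\cap\s V$-type data for $V=x_{i,j}^{-1}V_0$. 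The relevant quantities are $V\cap\s V$ and its $\pm1$ eigenspaces for $\d$, which reduce to the $\pm1$ eigenspaces of $\g_{i,j}^{-1}$ intersected with $V_0$. The explicit form of $\g_{i,j}$ gives $(a,b)$ as $(i,j)$, possibly up to a relabelling that is a bijection of the index set. The existence of $x_{i,j}$ follows because $\g_{i,j}$ is an involution conjugate to $\d_{2n}$: its $+1$ and $-1$ eigenspaces both have dimension $n$. Choosing $y$ with $\g_{i,j}\d_{2n}^{-1}\cdots$, i.e. solving $\d x\d^{-1}x^{-1}=\g_{i,j}$, amounts to $x^{-1}\d x=\d\g_{i,j}$ conjugate to $\d$, which holds since $p\neq2$.

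The main obstacle is the bookkeeping in the Levi case: checking that the invariants of $x_{i,j}^{-1}V_0$ really run bijectively over all admissible pairs, with the correct sign conventions coming from the factor $\d_{2n}$ in $\g_{i,j}$. A cleaner alternative I would also try is the general fact that $Q\backslash G/H$ injects via $x\mapsto \s(x)x^{-1}$ into $Q$-twisted-conjugacy classes of twisted involutions, and then match these classes with the normal forms $\g_i$ and $\g_{i,j}$ (an analogue of Springer's parametrisation).
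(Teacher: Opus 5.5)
Your overall route is the paper's. You identify $Q\backslash G/H$ with $H$-orbits of $n$-dimensional subspaces via $g\mapsto g^{-1}V_0$. You classify these orbits by $\dim(V\cap\sigma V)$ in the Galois case, and by $(\dim V\cap E_+,\dim V\cap E_-)$ in the Levi case; the latter is exactly the paper's pair $\dim\ker(\delta\mp1\mid V\cap\delta V)$. You then evaluate these invariants on $x^{-1}V_0$. The paper quotes the orbit classifications from Matringe's papers, whereas you prove them directly (Galois descent plus complements; Goursat's lemma), which is fine. Your Galois case is complete, with one small correction: to get a full $\k_0$-basis of $W'/W$ you need both $v+\sigma(v)$ and $\alpha v+\sigma(\alpha v)$ for some $\alpha\in\k\smallsetminus\k_0$, not just $v+\sigma(v)$.

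The gap is in the Levi case, at the step that carries the content of the lemma: matching the invariants of $x_{i,j}^{-1}V_0$ with $(i,j)$.

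\emph{The operator to use.} For $V=x^{-1}V_0$ you have $x(V\cap E_\pm)=V_0\cap\ker(x\delta x^{-1}\mp1)$. The relation $\sigma(x)x^{-1}=\delta x\delta x^{-1}=\gamma_{i,j}$ then gives $x\delta x^{-1}=\delta\gamma_{i,j}=D$, the block matrix multiplying $\delta_{2n}$ in the definition of $\gamma_{i,j}$. It is not $\gamma_{i,j}^{-1}$.

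\emph{Your recipe fails.} Your claim that $\gamma_{i,j}$ is an involution conjugate to $\delta_{2n}$ is false. On the swap block one has $\gamma_{i,j}^2=(-1)^{n-i-j}$. For example, with $n=1$, $\gamma_{0,0}=\begin{pmatrix}0&1\\-1&0\end{pmatrix}$ squares to $-1$ and has no $\pm1$-eigenvectors at all. Even when $\gamma_{i,j}$ is an involution, the recipe gives wrong answers. For $n=2$, $\gamma_{2,0}=\mathrm{diag}(1,-1,-1,1)$ and $\gamma_{0,2}=\mathrm{diag}(-1,1,1,-1)$ both yield $(1,1)$, so no relabelling turns it into a bijection.

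\emph{The fix.} With $D$ in place of $\gamma_{i,j}^{-1}$ the computation is immediate. $V_0\cap\ker(D-1)$ is spanned by $e_1,\dots,e_i$, and $V_0\cap\ker(D+1)$ by $e_{i+1},\dots,e_{i+j}$. So $x_{i,j}^{-1}V_0$ has invariants exactly $(i,j)$, which is what is needed.

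\emph{Existence of $x_{i,j}$.} Your argument ends on the right criterion: $D$ has $\pm1$-eigenspaces of dimension $n$, hence is conjugate to $\delta_{2n}$ since $p\neq2$. Note, however, that the equation to solve is $x\delta x^{-1}=\delta\gamma_{i,j}$, not $x^{-1}\delta x=\delta\gamma_{i,j}$. This is harmless: replace $x$ by $x^{-1}$.
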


\begin{proof}
Let $V$ be the $\k$-vector space $\k^{2n}$ equipped with its canonical 
basis $(e_1,\dots,e_{2n})$
and~$W_\circ$ be the subspace of $V$ spanned by $(e_1,\dots,e_{n})$.
The map $g \mapsto g^{-1}W_\circ$ induces a bijection between 
the $(Q,H)$-double cosets of $G$ and
the $H$-orbits of subspaces of dimension $n$ of $V$.

Let us start with the Galois case.
The space $V$ is equipped with the action of $\s$ componentwise.
By \cite{MatringeIMRN11} Section 3,
these $H$-orbits are in bijection with $\{0,\dots,n\}$ through
$W \mapsto \dim (W \cap \s(W))$.
For any $g\in G$,
one has
\begin{equation*}
\dim (g^{-1}W_\circ \cap \s(g^{-1}W_\circ))
  = \dim (\s(g)g^{-1}W_\circ \cap W_\circ).
\end{equation*}
It thus follows that $x_i^{-1}W^{\phantom{1}}_\circ$ corresponds to $i$
for all $i\in\{0,\dots,n\}$,
and the result follows. 
See \cite{MatringeIMRN11} Proposition 3.7.

Let us now consider the Levi case. 
The space $V$ is equipped with the natural action of
$\d=\d_{2n}$. By \cite{MatringeCRELLE15} Section 3,
the $H$-orbits of subspaces of dimension $n$ of $V$
are in bijection with the set of pairs of
integers $(i,j)\in\{0,\dots,n\}^2$ such that $i+j\<n$ through
\begin{equation*}
W \mapsto (\dim\Ker (\d-1\ |\ W \cap \d W), \dim\Ker (\d+1\ |\ W \cap \d W)).
\end{equation*}
For any $g\in G$ and any sign $\e\in\{-1,1\}$,
one has
\begin{equation*}
\dim\Ker (\d-\varepsilon\cdot1\ |\ g^{-1}W_\circ \cap \d g^{-1} W_\circ)
= \dim \Ker (g\d g^{-1}-\varepsilon\cdot1 \ |\ g\d g^{-1}W_\circ \cap W_\circ).
\end{equation*}
It then follows that
{$x_{i,j}^{-1}W^{\phantom{1}}_\circ$ corresponds to
the pair $(i,j)$ for all $i,j\in\{0,\dots,n\}$ such that $i+j\<n$.}
The result follows. 
See \cite{MatringeCRELLE15} Proposition 3.2.
\end{proof}

\subsection{}

Let us define a first $H$-invariant linear form on $\pi\times\pi$.
Since $\pi$ is $H_n$-distinguished,
we fix~a non-zero linear form $\l\in\Hom_{H_n }(\pi,1)$
and write $\La_{\pi}^0 = \l\otimes\l\in \Hom_{H_n \times H_n}(\pi\otimes\pi,1)$.
It defines~an $H$-invariant linear form $\La^0$ on $\pi\times\pi$ given by
\begin{equation}
\label{defLa0}
  \La^0(f) = \sum \limits_{h \in Q\cap\H\backslash\H}
  \langle f(h), \La_{\pi}^0\rangle.
\end{equation}
This linear form is non-zero.
Indeed, 
if $f$ is supported on $Q$ and $f(1)=v \otimes v$ where $v$ is a
vector in the space of $\pi$ such that $\l(v)\neq0$, one has
$\La^0(f)=\langle f(1), \La_{\pi}^0\rangle=\l(v)^2\neq0$.
Let us prove that $\La^0$ vanishes on the socle of $\pi\times\pi$.

\begin{prop}
One has $\La^0\circ T = \La^0$. 
\end{prop}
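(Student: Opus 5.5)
The plan is a direct computation. Write $F_f(g)=\langle f(g),\La_\pi^0\rangle$ for $f\in\pi\times\pi$. Since $M\cap H=H_n\times H_n$, $U$ acts trivially in $\pi\times\pi$ and $\La_\pi^0=\l\otimes\l$ is $H_n\times H_n$-invariant, $F_f$ is left $(Q\cap H)$-invariant. Also $\La_\pi^0\circ A=\La_\pi^0$, since $\La_\pi^0$ is a tensor square. Hence
\begin{equation*}
\La^0(Tf)=\sum_{h\in Q\cap H\backslash H}\ \sum_{u\in U}F_f(suh).
\end{equation*}
Write $suh=(sus^{-1})(sh)$ and use $s\in H$ with $s(Q\cap H)s^{-1}=Q^-\cap H$. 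The sum becomes
\begin{equation*}
\sum_{h\in Q^-\cap H\backslash H}\ \sum_{v\in U^-}F_f(vh).
\end{equation*}
By \eqref{kakiIw1}, $Q^-\cap H=(M\cap H)(U^-\cap H)$, so the double sum is well defined. We then reorganise it according to the $(Q,H)$-double coset of $G$ containing $vh$, using the representatives of Lemma~\ref{baeckaoffa}.

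\textbf{The open cell.} The elements $vh$ with $v\in U^-\cap QH$ lie in $QH$. Since $U^-\cap Q=\{1\}$, one checks via \eqref{kakiIw1} that $U^-\cap QH$ reduces to $U^-\cap H$ up to the $Q$-part. So these terms reassemble into $\La^0(f)$, multiplied by a counting factor that is a power of $p$. I expect this factor, after matching the $Q\cap H$ and $Q^-\cap H$ normalisations, to be exactly $1$. If it is not, it is at least a unit in $R$ and has to be tracked carefully.

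\textbf{The other cells.} For each other coset $Qx H$ meeting $U^-H$, where $x=x_i$ with $i<n$ in the Galois case and $x=x_{i,j}$ with $(i,j)$ not giving $QH$ in the Levi case, the contribution is a sum over a coset space. It factors through a linear form on $\pi\otimes\pi$ averaged over $M\cap xHx^{-1}$. Computing $xHx^{-1}\cap Q$ from $\g_i$ (resp.\ $\g_{i,j}$) shows that its projection to $M$ contains, up to conjugation, the unipotent radical $U'$ of a proper parabolic subgroup in at least one factor $G_n$. Summing $\pi(u')$ over $U'$ kills every vector of $\pi$ because $\pi$ is cuspidal, and $|U'|$ is a $p$-power, hence invertible in $R$. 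So these contributions vanish, which yields $\La^0\circ T=\La^0$.

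\textbf{Main obstacle.} The delicate step is the bookkeeping of multiplicities in characteristic $2$. One cannot divide by $|Q\cap H|$ or $|H|$, since these may be even. So the regrouping has to be done with sums over coset spaces only, with every division by a $p$-group order, which is legitimate. The other point needing care is checking, case by case from the explicit $\g_i$ and $\g_{i,j}$, that the non-open cells really produce a nontrivial unipotent subgroup of $M$ on which the averaging kills $\pi$.
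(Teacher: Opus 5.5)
\textbf{Verdict: genuine gap.} The open orbit $Qx_0H$ is not killed by cuspidality, and it needs a separate characteristic-$2$ argument.

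Your treatment of $QH$ and of the intermediate orbits $0<k<n$ is sound. For those, the stabiliser of $W=x^{-1}W_\circ$ in $H$ does act on the first factor through the full unipotent radical of the parabolic fixing $W\cap\s W$, so cuspidality applies. The failing step is the claim that \emph{every} other orbit meeting $U^-H=sUH$ is killed the same way. This is false for the open orbit $Qx_0H$ (Galois case $i=0$, Levi case $(i,j)=(0,0)$).

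There, as the paper notes when building $\La^1$, $Q\cap{}^{x_0}H=M\cap{}^{x_0}H=\{(g,\t(g)):g\in G_n\}$ is a twisted diagonal. It contains no unipotent radical of a proper parabolic of a single factor. The linear forms on $\pi\otimes\pi$ invariant under it form the line spanned by $\La^1_\pi$, which is non-zero since $\pi\simeq\pi^{\vee\t}$. So on functions supported on $Qx_0$ you only get $\La^0\circ T=b\,\La^1$ for some scalar $b$, and cuspidality says nothing about $b$.

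This orbit is genuinely hit. In the Galois case $s\s(u)u^{-1}s$ has lower-left block $\s(t)-t$, and $su\in Qx_0H$ as soon as that block is invertible. Take $n=1$ and $\pi$ trivial, viewing $\pi\times\pi$ as functions on $\mathbb{P}^1(\k)$. A function $f$ supported on the non-rational point $Qx_0$ gives $\La^0(Tf)=(q_0+1)f(x_0)$, which vanishes only because $\ell=2$.

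\textbf{How to close the gap.} You need a pairing argument of the same kind as $t\mapsto -t$ in the proof of $\La^1\circ T=\La^1$.
\begin{itemize}
\item For $f$ supported on $Qx_0$, the non-zero terms of $\La^0(Tf)$ are indexed by the $\s$-stable complements $Z=h^{-1}W_\circ$ of $W_{00}=x_0^{-1}W_\circ$ lying in the $H$-orbit of $W_\circ$; the element $u$ is then unique.
\item Let $\iota$ be $-1$ on $W_{00}$ and $+1$ on its $\s$-conjugate. Then $Z\mapsto\iota Z$ is a fixed-point-free involution of this index set, since a fixed $Z$ would equal $(Z\cap W_{00})\oplus(Z\cap\s W_{00})=0$.
\item The terms at $Z$ and $\iota Z$ agree up to a sign. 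In the Galois case, $\varepsilon\iota\in H\cap x_0^{-1}Mx_0$ for $\varepsilon\in\k^\times$ with $\s(\varepsilon)=-\varepsilon$, and it acts on $f(x_0)$ by $\omega_\pi(-\varepsilon^2)=1$. In the Levi case, compose $\iota$ with the permutation in $M\cap N_G(H)$ exchanging the eigenspaces of $\d$; this rescales $\lambda\otimes\lambda$ by the square of a sign.
\end{itemize}
Hence the open orbit contributes $2(\cdots)=0$ in $R$.

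\textbf{Two smaller points.} First, $QH$ is the \emph{closed} orbit, not the open one; the open orbit is exactly where your argument breaks. Second, the factor coming from $QH$ is $|U\cap H|$, a power of the odd prime $p$. It equals $1$ in $R$ precisely because $\ell=2$; being a unit would not suffice.

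\textbf{On the paper's proof.} The paper dismisses all $g\notin QH$ by asserting that $Q\gamma Q\cap U^-$ is empty for $k<n$. This does not hold as written: $\begin{pmatrix}1&0\\ c&1\end{pmatrix}$ with ${\rm rank}\,c=n-k$ lies in that double coset. So an orbit-by-orbit analysis like yours, completed by the pairing above, is what is actually needed.
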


\begin{proof}
Since $\La^0$ is $H$-invariant,
it suffices to prove that $\La^0(Tf) = \La^0(f)$ for any
$f \in \pi\times\pi$~sup\-ported on $Qg$,
where $g$ ranges over a set of representatives of $(Q,H)$-double
cosets of $G$.
  
First,
an $h\in Q\cap\H\backslash\H$ contributes~to the sum in \eqref{defLa0}
if and only if $h\in\H\cap Qg$,
which~is~non-empty if and only if $g\in QH$,
in which case we may assume that $g=1$,
thus $\La^0(f) = \langle f(1), \La_{\pi}^0\rangle$.

Since $\La_{\pi}^0\circ A=\La_{\pi}^0$,
one has
\begin{equation*}
\La^0(Tf) = \sum \limits_{h \in Q\cap\H\backslash\H} \sum \limits_{u \in U} 
\langle f(suh), \La_{\pi}^0\rangle.
\end{equation*}
We are now going to determine the $h\in Q\cap\H\backslash H$ and
$u\in U$ such that $suh \in Qg$.
Set $\g=\s(g)g^{-1}$.
By  Lem\-ma~\ref{baeckaoffa},
we may assume that $Q\g Q$ is equal to
$Q\g_i Q$ for some parameter $i$ (in the Galois case)
or $Q\g_{i,j}Q$ for some $i,j$ (in the Levi case). 
If $suh \in Qg$,
then $s\s(u)u^{-1}s \in Q\g Q \cap U^-$~with
\begin{equation}
\label{QgaQ}
Q\g Q = Q
\begin{pmatrix}
1_{k} & & & \\ & 0 & 1_{n-k} & \\ & 1_{n-k} & 0 & \\ & & & 1_{k}
\end{pmatrix}
Q
\end{equation}
for some $k\in\{0,\dots,n\}$.
(More precisely,
one has $k=i$ in the Galois case if $\g=\g_i$,
and $k=i+j$ in the Levi case if $\g=\g_{i,j}$.)
This double coset has a non-empty intersection with $U^-$ if and
only if $k=n$,
in which case this intersection is reduced to $\{1\}$. 
It follows that $s\s(u)u^{-1}s=1$,
whence $u\in U\cap H$.
Since $s\in H$,
the intersection $(U\cap H)sQg \cap H$ is non-empty if and
only if $g\in QH$,~in which case
{we may assume that $g=1$.}
One then has
\begin{equation*}
h\in (U\cap H)sQ \cap H=
(U\cap H)s(Q \cap H) = (Q\cap H)s(U \cap H).
\end{equation*}
Setting $h=sv$ with $v\in U\cap H$,
one has $susv\in Q$ if and only if $sus\in Q\cap U^-=\{1\}$.
Thus $u=1$ and
\begin{equation*}
\La^0(Tf) = \sum \limits_{v \in U\cap\H} 
\langle f(v), \La_{\pi}^0\rangle
= |U\cap H| \cdot \langle f(1), \La_{\pi}^0\rangle
= \La^0(f)
\end{equation*}
since $ |U\cap H| \equiv 1$ mod $2$.
\end{proof}

\subsection{}

Let us now define a second $H$-invariant linear form on $\pi\times\pi$.
In order to treat the Galois~and Levi cases uniformly,
we set $\d=1$ in the Galois case and $\d=\d_{2n}$ in the Levi case.
We also set
\begin{equation*}
\t = 
\left\{ 
\begin{array}{ll}
\s & \text{in the Galois case}, \\ 
{\rm id} & \text{in the Levi case}.
\end{array}\right.
\end{equation*}
Fix an $x\in G$ such that $\s(x)x^{-1}=\d s$. 
Note that $\d s$ is $\g_{0}$ in the Galois case
and $\g_{0,0}$ in the Levi case,
and that
\begin{equation*}
{}^xH\cap Q = {}^xH\cap M = \{ (g,\t(g)) \in M \ |\ g \in G_n \}.
\end{equation*}
Fix an isomorphism between $\pi$ and $\pi^{\vee\t}$,
or equivalently a non-zero linear form 
$\La_{\pi}^1 : \pi \otimes\pi \to R$ such that
$\La_{\pi}^1(\t(g)v,gw)=\La_{\pi}^1(v,w)$.
It defines an $H$-invariant linear form $\La^1 $ on $\pi\times\pi$
given by
\begin{equation}
\label{defLa1}
  \La^1(f) = \sum \limits_{h \in M^x\cap\H\backslash\H}
  \langle f(xh), \La_{\pi}^1 \rangle.
\end{equation}
It is non-zero.
Indeed, 
if $f$ is supported on $Qx$ and $f(x) \notin \Ker(\La_{\pi}^1)$,
then $\La^1(f)=\langle f(x), \La_{\pi}^1\rangle\neq0$.
Let us prove that $\La^1$ vanishes on the socle of $\pi\times\pi$.

\begin{prop}
One has $\La^1\circ T = \La^1$. 
\end{prop}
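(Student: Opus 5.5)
We mimic the proof for $\La^0$. Since $\La^1$ is $H$-invariant and $T$ commutes with the $G$-action, it suffices to check $\La^1(Tf)=\La^1(f)$ for $f$ supported on a single coset $Qg$, with $g$ running over the representatives of Lemma \ref{baeckaoffa}. An element $h$ contributes to \eqref{defLa1} only if $xh\in Qg$, so only the double coset $QxH$ contributes. For $f$ supported on $Qx$ we may therefore take $g=x$, and then $\La^1(f)=\langle f(x),\La^1_\pi\rangle$. For $f$ supported on another double coset, $\La^1(f)=0$, and we must show $\La^1(Tf)=0$ as well.

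Expanding, we get
\begin{equation*}
\La^1(Tf)=\sum_{h\in M^x\cap H\backslash H}\sum_{u\in U}\langle A f(suxh),\La^1_\pi\rangle .
\end{equation*}
The first step is to see when $suxh\in Qg$ with $f$ supported on $Qg$. Apply $y\mapsto \s(y)y^{-1}$, using $\s(x)x^{-1}=\d s$, $h\in H$, $s\in H$ and the $\s$-stability of $Q$ and $U$. This gives $s\,\s(u)\,\d s\,u^{-1}s\in Q\g Q$, where $\g=\s(g)g^{-1}$. We then read off which $k$ in \eqref{QgaQ} arises. Conjugating by $s$ moves $\s(u)$ and $u^{-1}$ into $U^-$, so the element is $u_1^-(\d s)u_2^-$ with $u_1^-,u_2^-\in U^-$; here $\d$ is diagonal and commutes with the relevant blocks up to the sign twist. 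One checks that $U^-\,\d s\,U^-\subseteq Q\d sQ$, which is the open double coset $k=0$, because $s$ is the longest element relative to $Q$. Consequently only $g$ with $\g\in Q\d sQ$, i.e.\ $g\in QxH$, can give nonzero terms, and for $g=x$ every $u\in U$ potentially contributes.

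The second step is the computation for $g=x$. For each $u\in U$ we need the $h$ with $suxh\in Qx$, i.e.\ $x^{-1}s u x\in Q^xH$. Writing $h$ modulo $M^x\cap H$, the expected outcome is that the pairs $(u,h)$ which contribute are parametrized by $u\in U$ itself: for each $u$ there should be exactly one class of $h$, with $suxh=q_u x$, $q_u\in Q$, and the Levi part of $q_u$ computable. One then sums $\langle A\,\pi\otimes\pi(q_u)f(x),\La^1_\pi\rangle$ over $u$. Because $\La^1_\pi$ is invariant under the twisted diagonal ${}^xH\cap M=\{(g,\t(g))\}$, and $\La^1_\pi\circ A=\pm\La^1_\pi$ (the twisted pairing is either symmetric or antisymmetric), the summands should group into $M$-orbits. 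All terms except those with $u$ in a small subgroup should then cancel in pairs, since $\mathrm{char}\,R=2$ kills even-size orbits. The surviving terms should number an odd count $|U\cap{}^xH|$-type quantity, each equal to $\langle f(x),\La^1_\pi\rangle$; the sign $\pm$ is irrelevant in characteristic $2$.

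The main obstacle is this second step: describing explicitly the set of $(u,h)$ with $suxh\in Qx$ and the resulting Levi elements $q_u$. I would first try to choose $x$ concretely; in the Levi case, a matrix built from the blocks $1_n$ and $\pm1_n$ that diagonalizes $\d s$. Then $x^{-1}Ux$ and $x^{-1}sUx$ become explicit, and the condition reduces to a linear-algebra condition on the off-diagonal block $X\in \Mat_n(\k)$ of $u$, such as $X$ invertible and $X$ lying in a $\t$-twisted symmetric set. The count of such $X$ modulo the $M^x\cap H$-action, taken mod $2$, should then give $1$, provided the residual Levi action on $\pi\otimes\pi$ is absorbed by the invariance of $\La^1_\pi$. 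If a direct count is messy, an alternative is to use the fact that $\La^1\circ T$ is again an $H$-invariant form supported on the open orbit. Its restriction to functions supported on $Qx$ is then a multiple $c\La^1$ by uniqueness of $\Hom_{\{(g,\t g)\}}(\pi\otimes\pi,1)$, which is $1$-dimensional since $\pi$ is irreducible. One would then compute $c$ from a single test function, for instance $f$ supported on $Qx$ times a small neighbourhood, and show $c=1$ in characteristic $2$.
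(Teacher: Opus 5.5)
Your reduction to $f$ supported on a single coset $Qg$, the evaluation $\La^1(f)=\langle f(x),\La^1_\pi\rangle$ on the open orbit, and the remark that $\La^1_\pi\circ A=\pm\La^1_\pi$ with the sign invisible in characteristic $2$ all agree with the paper.

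The first gap is that the containment $U^-\d sU^-\subseteq Q\d sQ$, on which your first step rests, is false. For $u=\begin{pmatrix}1&t\\0&1\end{pmatrix}$ the element $s\s(u)\d su^{-1}s$ equals $\d\begin{pmatrix}-t&1\\1-\t(t)t&\t(t)\end{pmatrix}$. Its $(Q,Q)$-double coset is governed by the rank of $1-\t(t)t$, which can drop. For $t=1_n$ this block is $0$, the element lies in $Q$ (so $k=n$), and in the Galois case $sux\in QH$. Hence for $f$ supported on $Q$, or on an intermediate double coset, $\La^1(Tf)$ does receive non-zero terms, and you must prove that they cancel. The paper's text disposes of these orbits with the same assertion (``such an element is in $Q\g Q$ if and only if $k=0$''), so this step cannot be taken from it either.

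One way to fill it is to use that $\pi\times\pi$ restricted to $H$ is the direct sum of the spaces of functions supported on the various $QgH$, and compare $\La^1\circ T$ with $\La^1$ on each. For $0<k<n$, the image in $M=G_n\times G_n$ of the relevant stabiliser contains $U'\times\{1\}$ (or $\{1\}\times U'$), where $U'$ is the unipotent radical of a proper parabolic subgroup of $G_n$. By cuspidality of $\pi$ there is then no non-zero $H$-invariant form there at all. For $k=n$ a computation is needed. In the Galois case, take $x=\begin{pmatrix}1&\varepsilon\\1&-\varepsilon\end{pmatrix}$ with $\s(\varepsilon)=-\varepsilon$, and $f$ supported on $Q$ with $f(1)=v\otimes w$. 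One finds that $\La^1(Tf)$ is a non-zero multiple of $\La^1_\pi\bigl(\sum_{\mu\in\GL_n(\k_0)}\pi(\mu)w,v\bigr)$. This vanishes because $\sum_{\mu\in\GL_n(\k_0)}\pi(\mu)$ is zero on $\pi$ in characteristic $2$. To see this, note that the partial sum over $P_n\cap H_n$ has rank one by the Mackey argument in the proof of Lemma \ref{malebranche}, and that $[H_n:P_n\cap H_n]=|\k_0|^n-1$ is even.

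The second gap is that your second step, which is where the paper's proof actually lies, is left as a prediction, and the predicted shape is not right. Not every $u$ contributes. For $f$ supported on $Qx$, only the $t$ in ${\it\Omega}=\{t\in\Mat_n(\k) : 1-\t(t)t\in G_n\}$ do, each with exactly one class of $h$. The paper finds that class from ${}^xH=\bigl\{\begin{pmatrix}a&b\\\t(b)&\t(a)\end{pmatrix}\bigr\}$. The condition $suxh\in Qx$ forces $b$ to be invertible, so one may normalise $b=1_n$ modulo $M\cap{}^xH=\{{\rm diag}(a,\t(a)) : a\in G_n\}$. Then $suxhx^{-1}\in Q$ forces $a=-t$, with $suxhx^{-1}=\begin{pmatrix}1&-\t(t)\\0&1-t\t(t)\end{pmatrix}$. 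Thus $\La^1(Tf)=\sum_{t\in{\it\Omega}}\langle{\rm diag}(1,1-t\t(t))\cdot f(x),\La^1_\pi\rangle$. The cancellation comes not from $M$-orbits but from the involution $t\mapsto-t$ of ${\it\Omega}$. It preserves each summand and has $t=0$ as its only fixed point since $p\neq2$, so in characteristic $2$ only $\langle f(x),\La^1_\pi\rangle$ survives.

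Your fallback does not avoid any of this: that $\La^1\circ T$ is ``supported on the open orbit'' is exactly what the first gap shows must be proved. It does become a genuine shortcut once the orbits with $k=n$ are handled. Then $\La^1\circ T=c\,\La^1$ for a scalar $c$, because the twisted-diagonal invariant forms on $\pi\otimes\pi$ form a line. Since $T^2=1$ gives $c^2=1$, you get $c=1$ in characteristic $2$ with no computation on $Qx$ at all.
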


\begin{proof}
Since $\La^1$ is $H$-invariant,
it suffices to prove that $\La^1(Tf) = \La^1(f)$ for any
$f \in \pi\times\pi$~sup\-ported on $Qg$
where $g$ ranges over a set of representatives of $(Q,H)$-double
cosets of $G$.

We set $\g=\s(g)g^{-1}$.
By  Lemma~\ref{baeckaoffa},
we may assume that $Q\g Q$ is equal to
$Q\g_i Q$ for some~$i$~(in the Galois case)
or $Q\g_{i,j}Q$ for some $i,j$ (in the Levi case).
In any case, we can write it as in \eqref{QgaQ} for some $k$.

First,
an $h\in M^x\cap\H\backslash\H$ contributes to the sum in \eqref{defLa1}
if and only if $xh\in x\H\cap Qg$.
If~this~in\-ter\-section is non-empty,
then,
by applying the map $y \mapsto \s(y)y^{-1}$,
we get $\d s\in Q\g Q$,
which can happen if and only if $k=0$,
thus $g\in QxH$,
in which case we may assume that $g=x$.
One then has $h\in H\cap Q^x=H\cap M^x$
and $\La^1(f) = \langle f(x), \La_{\pi}^1\rangle$.

By uniqueness of the isomorphism $\pi\simeq\pi^{\vee\t}$
up to a non-zero scalar,
the linear form $\La_{\pi}^1$ that we have fixed is unique
up to a non-zero scalar. 
Thus $\La_{\pi}^1\circ A=\mu\cdot\La_{\pi}^1$ for some
$\mu\in\R^\times$.
Since $A^2$ is the identity and $R$ has characteristic $2$,
it follows that $\mu=1$,
that is, $\La_{\pi}^1\circ A=\La_{\pi}^1$.
One has
\begin{equation*}
\La^1(Tf) = \sum \limits_{h \in M^x\cap\H\backslash\H} \sum \limits_{u \in U} 
\langle f(suxh), \La_{\pi}^1\rangle.
\end{equation*}
We are now going to determine the $h\in M^x\cap\H\backslash H$ and
$u\in U$ such that $suxh \in Qg$. 
If $suxh \in Qg$,
then $s\s(u)\d su^{-1}s \in Q\g Q$.
Let us write 
\begin{equation*}
s\s(u)\d su^{-1}s = s
\begin{pmatrix} 1&\s(t)\\&1 \end{pmatrix} \d s 
\begin{pmatrix} 1&-t\\&1 \end{pmatrix} s =
\d \begin{pmatrix} -t&1\\ 1-\t(t)t&\t(t) \end{pmatrix} 
\quad\text{with}\quad
u = \begin{pmatrix} 1&t\\&1 \end{pmatrix}
\end{equation*}
for some $t\in\Mat_n(\k)$.
Such an element is in $Q\g Q$ if and only if $k=0$,
in which case we may~assu\-me that $g=x$ and 
$\g=\d s$.
Then $QsQ$ is made of those $a\in\Mat_2(\Mat_n(\k))$
such that $a_{2,1}$ is invertible.
Let $t\in\Mat_n(\k)$ and suppose~that $1-\t(t)t \in G_n$.
Then $suxh \in Qx$ implies that
$xhx^{-1} \in QsQ \cap {}^xH$.
Since $ {}^xH$ is the subgroup of elements of $G$ fixed by the involution
$g \mapsto s\t(g)s$, an easy calculation shows that it 
is made of the matrices of $G$ of the form
\begin{equation}
\label{formedexhx}
\begin{pmatrix} a&b\\ \t(b)&\t(a) \end{pmatrix},
\quad a,b \in\Mat_n(\k)
\end{equation}
thus $QsQ \cap {}^xH$ corresponds to those matrices with $b$ invertible.
Since we consider $h$ up to~mul\-ti\-plication by $M^x\cap H$ on the left,
and since
\begin{equation*}
M \cap {}^xH = \begin{pmatrix} a& \\ &\t(a) \end{pmatrix},
\quad a \in\G_n,
\end{equation*}
we may assume that $xhx^{-1}$ is of the form \eqref{formedexhx} with $b=1_n$.
Thus
\begin{equation*}
suxhx^{-1} = \begin{pmatrix} 1 & -\t(t) \\ t+a & 1+t\t(a) \end{pmatrix}
\end{equation*}
belongs to $Q$ if and only if $a=-t$. 
We thus get
\begin{eqnarray*}
\La^1(Tf) &=& \sum \limits_{t \in {\it\Omega}} 
\langle f\left(\begin{pmatrix} 1 & -\t(t) \\ 0 & 1-t\t(t) \end{pmatrix}x\right), \La_{\pi}^1\rangle\\
&=& \sum \limits_{t \in {\it\Omega}} 
\langle \begin{pmatrix} 1 & \\ & 1-t\t(t) \end{pmatrix} \cdot f(x), \La_{\pi}^1\rangle
\end{eqnarray*}
where ${\it\Omega}$ denotes the set of $t\in\Mat_n(\k)$ such that
$1-\t(t)t \in G_n$.
Note that $0\in{\it\Omega}$ and that $t$, $-t$ have the same image by 
$t\mapsto 1-t\t(t)$,
thus ${\it\Omega}$ is stable by $t\mapsto -t$.
Fix a set ${\it\Omega}^+$ of representatives of the cosets $\{t,-t\}$ in 
${\it\Omega}-\{0\}$. 
We then have
\begin{equation*}
\La^1(Tf) = \langle f(x), \La_{\pi}^1\rangle
+ 2 \sum \limits_{t \in {\it\Omega}^+} 
\langle \begin{pmatrix} 1 & \\ & 1-t\t(t) \end{pmatrix} \cdot f(x),
\La_{\pi}^1\rangle = \langle f(x), \La_{\pi}^1\rangle.
\end{equation*}
This finishes the proof.
\end{proof}

\subsection{}

In conclusion,
we have defined two independent non-zero $H$-invariant 
linear forms $\La^0$, $\La^1$ on $\pi\times\pi$ vanishing
on its socle.
If $\st_1(\pi)$ were not $H$-distinguished,
these linear forms would~in\-du\-ce two independent non-zero $H$-invariant 
linear forms on the irreducible quotient $\tau$ of $\pi\times\pi$,
thus contradicting Proposition \ref{onedim}.
This proves Proposition \ref{AndreaCavalcanti}
and finishes the proof of our~se\-cond main Theorem \ref{MAINTHM2},
as explained in \S\ref{eugeniedanglars}.

\providecommand{\bysame}{\leavevmode ---\ }

\end{document}